
\documentclass{amsart}
\usepackage{amssymb, amscd, amsmath, float, graphicx, longtable,
  color, enumerate, bm, stmaryrd} 
\usepackage{mathrsfs}
\usepackage[all]{xy}
\SelectTips{eu}{} 
\xyoption{curve}

\newcommand{\hide}[1]{}

\usepackage[hypertex]{hyperref}



\DeclareMathOperator{\Cliff}{Cliff}

\DeclareMathOperator{\coh}{{{coh}}}

\DeclareMathOperator{\cok}{cok}
\DeclareMathOperator{\cone}{{\mathsf{cone}}}

\DeclareMathOperator{\End}{End}
\DeclareMathOperator{\Ext}{Ext}

\DeclareMathOperator{\GL}{GL}

\DeclareMathOperator{\Hom}{Hom}
\DeclareMathOperator{\id}{id}
\DeclareMathOperator{\idmap}{id}

\DeclareMathOperator{\Proj}{Proj}
\DeclareMathOperator{\rad}{rad}
\DeclareMathOperator{\rank}{rank}
\DeclareMathOperator{\Rep}{{\mathfrak{rep}}}
\DeclareMathOperator{\Spec}{Spec}

\DeclareMathOperator{\Sym}{Sym}


\renewcommand{\phi}{\varphi}
\newcommand{\vp}{\varphi}

\renewcommand{\tilde}{\widetilde}

\renewcommand{\leq}{\leqslant}
\renewcommand{\le}{\leqslant}
\renewcommand{\geq}{\geqslant}
\renewcommand{\ge}{\geqslant}

\renewcommand{\to}{\longrightarrow}
\newcommand\lto{{\longrightarrow}}
\newcommand\into{{\hookrightarrow}}
\newcommand\xto{\ -\negmedspace\negmedspace\negthinspace\xrightarrow}
\newcommand\onto{\twoheadrightarrow}


\renewcommand{\L}{{\textstyle\bigwedge}}
\newcommand{\dd}[1]{\frac{\partial}{\partial #1}}
\newcommand{\svee}{{\scriptscriptstyle \vee}}
\newcommand{\sbullet}{{\scriptscriptstyle \bullet}}


\newcommand{\m}{{\mathfrak{m}}}

\newcommand{\cata}{\mathsf{A}}

\newcommand{\K}{\mathsf{K}}
\newcommand{\Q}{\mathsf{Q}}
\newcommand{\Qtilde}{\tilde{\mathsf{Q}}}


\newcommand{\EE}{{\mathbb E}}

\newcommand{\KK}{{\mathbb K}}
\newcommand{\LL}{{\mathbb L}}
\newcommand{\NN}{{\mathbb N}}
\newcommand{\PP}{{\mathbb P}}
\newcommand{\QQ}{{\mathbb Q}}

\renewcommand{\SS}{{\mathbb S}}

\newcommand{\ZZ}{{\mathbb Z}}


\newcommand{\calb}{{\mathcal B}}

\newcommand{\cald}{{\mathcal D}}
\newcommand{\cale}{{\mathcal E}}
\newcommand{\calf}{{\mathcal F}}
\newcommand{\calg}{{\mathcal G}}
\newcommand{\calh}{{\mathcal H}}

\newcommand{\calm}{{\mathcal M}}

\newcommand{\calo}{{\mathcal O}}

\newcommand{\calq}{{\mathcal Q}}
\newcommand{\calr}{{\mathcal R}}

\newcommand{\calt}{{\mathcal T}}
\newcommand{\calu}{{\mathcal U}}
\newcommand{\calv}{{\mathcal V}}

\newcommand{\caly}{{\mathcal Y}}
\newcommand{\calz}{{\mathcal Z}}


\newcommand{\caloy}{{\calo_\caly}}
\newcommand{\caloz}{{\calo_\calz}}
\newcommand{\calop}{{\calo_\PP}}
\newcommand{\F}{\mathcal{F{}}}
\newcommand{\G}{\mathcal{G{}}}

\newcommand{\tilt}{{\mathcal T}}  

\def\cHom{{\mathscr H}\!om}
\newcommand{\rHom}[1][{}]{{\mathbf R}^{#1}\!\Hom}
\newcommand{\R}{{\mathbf R}}


\newcommand{\quiverQ}{\xymatrix@C=5pc{
1 &
2 \ar@<2ex>[l]_{\vdots}|{u_1} \ar@<-2ex>[l]|{u_n}  & 
\cdots \cdots \ar@<2ex>[l]_{\vdots}|{u_1} \ar@<-2ex>[l]|{u_n} & 
n \ar@<2ex>[l]_{\vdots}|{u_1} \ar@<-2ex>[l]|{u_n} 
}}

\newcommand{\quiverQlambdar}{\xymatrix@C=5pc{
1 &
2 \ar@<2ex>[l]_{\vdots}|{\lambda_1} \ar@<-2ex>[l]|{\lambda_n}  & 
\cdots \cdots \ar@<2ex>[l]_{\vdots}|{\lambda_1} \ar@<-2ex>[l]|{\lambda_n} & 
r \ar@<2ex>[l]_{\vdots}|{\lambda_1} \ar@<-2ex>[l]|{\lambda_n} 
}}

\newcommand{\quiverQtilde}{\xymatrix@C=5pc{
1 \ar@/_/[r]|{v_1} \ar@/_/@<-4ex>[r]^{\vdots}|{v_n}  & 
2 \ar@/_/[r]|{v_1} \ar@/_/@<-4ex>[r]^{\vdots}|{v_n}
\ar@/_/[l]_{\vdots}|{u_1} \ar@/_/@<-4ex>[l]|{u_n}  &  
\cdots \cdots \ar@/_/[l]_{\vdots}|{u_1} \ar@/_/@<-4ex>[l]|{u_n}
\ar@/_/[r]|{v_1} \ar@/_/@<-4ex>[r]^{\vdots}|{v_n}&  
n  \ar@/_/[l]_{\vdots}|{u_1} \ar@/_/@<-4ex>[l]|{u_n}
}}

\newcommand{\Qtildelambdageer}{\xymatrix@C=5pc{
1 \ar@/_/[r]|{g_1} \ar@/_/@<-4ex>[r]^{\vdots}|{g_n}  & 
2 \ar@/_/[r]|{g_1} \ar@/_/@<-4ex>[r]^{\vdots}|{g_n}
\ar@/_/[l]_{\vdots}|{\lambda_1} \ar@/_/@<-4ex>[l]|{\lambda_m}  &  
\cdots \cdots \ar@/_/[l]_{\vdots}|{\lambda_1} \ar@/_/@<-4ex>[l]|{\lambda_m}
\ar@/_/[r]|{g_1} \ar@/_/@<-4ex>[r]^{\vdots}|{g_n}&  
m  \ar@/_/[l]_{\vdots}|{\lambda_1} \ar@/_/@<-4ex>[l]|{\lambda_m}
}}

\newcommand{\quiverQinfty}{\xymatrix@C=4pc{
\cdots \ar@/_/[r]|{v_1} \ar@/_/@<-4ex>[r]^{\vdots}|{v_n}  & 
0 \ar@/_/[r]|{v_1} \ar@/_/@<-4ex>[r]^{\vdots}|{v_n}
 \ar@/_/[l]_{\vdots}|{u_1} \ar@/_/@<-4ex>[l]|{u_n}  &  
1 \ar@/_/[r]|{v_1} \ar@/_/@<-4ex>[r]^{\vdots}|{v_n}
 \ar@/_/[l]_{\vdots}|{u_1} \ar@/_/@<-4ex>[l]|{u_n}  &  
\cdots \ar@/_/[l]_{\vdots}|{u_1} \ar@/_/@<-4ex>[l]|{u_n}
 \ar@/_/[r]|{v_1} \ar@/_/@<-4ex>[r]^{\vdots}|{v_n} & 
m \ar@/_/[r]|{v_1} \ar@/_/@<-4ex>[r]^{\vdots}|{v_n}
 \ar@/_/[l]_{\vdots}|{u_1} \ar@/_/@<-4ex>[l]|{u_n}  & 
\cdots  \ar@/_/[l]_{\vdots}|{u_1} \ar@/_/@<-4ex>[l]|{u_n}
}}

\newcommand{\quiverQinftylabels}{\xymatrix@C=4pc{
\cdots \ar@/_/[r]|{g_1} \ar@/_/@<-4ex>[r]^{\vdots}|{g_n}  & 
0 \ar@/_/[r]|{g_1} \ar@/_/@<-4ex>[r]^{\vdots}|{g_n}
 \ar@/_/[l]_{\vdots}|{\lambda_1} \ar@/_/@<-4ex>[l]|{\lambda_m}  &  
1 \ar@/_/[r]|{g_1} \ar@/_/@<-4ex>[r]^{\vdots}|{g_n}
 \ar@/_/[l]_{\vdots}|{\lambda_1} \ar@/_/@<-4ex>[l]|{\lambda_m}  &  
\cdots \ar@/_/[l]_{\vdots}|{\lambda_1} \ar@/_/@<-4ex>[l]|{\lambda_m}
 \ar@/_/[r]|{g_1} \ar@/_/@<-4ex>[r]^{\vdots}|{g_n} & 
m \ar@/_/[r]|{g_1} \ar@/_/@<-4ex>[r]^{\vdots}|{g_n}
 \ar@/_/[l]_{\vdots}|{\lambda_1} \ar@/_/@<-4ex>[l]|{\lambda_m}  & 
\cdots  \ar@/_/[l]_{\vdots}|{\lambda_1} \ar@/_/@<-4ex>[l]|{\lambda_m}
}}

\newcommand{\simplequiver}{\xymatrix@=1.5cm{
0 \ar@<1ex>[r]^{G} & 1\ar@<1ex>[l]^{F^{*}} \ar@<1ex>[r]^{G} &
\ar@<1ex>[l]^{F^{*}} \cdots \ar@<1ex>[r]^{G} &\ar@<1ex>[l]^{F^{*}} n-1
}}


\theoremstyle{plain}
\newtheorem{theorem}{Theorem}

\newtheorem{prop}[theorem]{Proposition}
\newtheorem{proposition}[theorem]{Proposition}

\newtheorem{lemma}[theorem]{Lemma}

\newtheorem{cor}[theorem]{Corollary}

\newtheorem*{theorem*}{Theorem}
\newtheorem*{prop*}{Proposition}

\theoremstyle{definition}
\newtheorem{defn}[theorem]{Definition}

\newtheorem{remark}[theorem]{Remark}

\newtheorem{example}[theorem]{Example}

\newtheorem{sit}[theorem]{}
\newenvironment{nsit}{\begin{sit}\textit}{\end{sit}}
\numberwithin{theorem}{section}
\numberwithin{equation}{theorem}

 \hyphenation{mod-ule com-plex com-plex-es mor-phism ho-mo-mor-phism
   iso-mor-phism ho-mo-morphic pro-jec-tive in-jec-tive re-so-lu-tion
   ho-mo-lo-gy ho-mo-lo-gi-cal ho-mo-lo-gi-cally du-a-liz-ing
   re-si-due pa-ral-lel semi-dualizing mono-morphism
   de-sing-u-lar-i-za-tion rep-re-sen-ta-tion mod-ules comm-u-ta-tive}


\begin{document}

\title[Desingularization of Determinantal Varieties]{%
Non-commutative Desingularization of Determinantal Varieties I}

\author[R.-O. Buchweitz]{Ragnar-Olaf Buchweitz}
\address{Dept.\ of Computer and Mathematical Sciences, University of
Toronto at Scarborough, Toronto, Ont.\ M1C 1A4, Canada}
\email{ragnar@math.utoronto.ca}

\author[G.J. Leuschke]{Graham J. Leuschke}
\address{Dept.\ of Math., Syracuse University,
Syracuse NY 13244, USA}
\email{gjleusch@math.syr.edu}
\urladdr{http://www.leuschke.org/}

\author[M. Van den Bergh]{Michel Van den Bergh}
\address{Departement WNI, Universiteit Hasselt, 3590
  Diepenbeek, Belgium} 
\email{michel.vandenbergh@uhasselt.be}

\thanks{The first author was partly supported by NSERC grant
3-642-114-80. The second author was partly supported by NSA grant
H98230-05-1-0032 and NSF grant DMS~0556181.  The third author is
director of research at the FWO}

\date{\today}

\subjclass[2000]{Primary: 
  13C14, 
  14A22, 
  14E15, 
  14C40, 
  16S38; 
Secondary:
  13D02, 
  12G50, 
  16G20
}

\begin{abstract}
  We show that determinantal varieties defined by maximal minors of a
  generic matrix have a non-commutative desingularization, in that we
  construct a maximal Cohen-Macaulay module over such a variety whose
  endomorphism ring is Cohen-Macaulay and has finite global dimension.
  In the case of the determinant of a square matrix, this gives a
  non-commutative crepant resolution.
\end{abstract}
\maketitle

\setcounter{tocdepth}{1}
{\footnotesize\tableofcontents} 

\section{Introduction}\label{sect:intro}
Let $K$ be a field and $X = (x_{ij})$ an $(m \times n)$-matrix of
indeterminates over $K$ having $n\ge m$.  With $S=K[x_{ij}]$ the
polynomial ring in the $x_{ij}$, the matrix $X$ determines the
\emph{generic $S$-linear map} $\phi\colon S^n \to S^m$.  Let $\Spec R$
be the locus in $\Spec S$ where $\phi$ has non-maximal rank;
equivalently $R$ is the quotient of $S$ given by the maximal minors of
$X$.

The classical $R$-modules $M_a=\cok \L_S^a \phi$ are familiar objects
in commutative algebra.  In particular it is known that they are
maximal Cohen--Macaulay and are resolved by the Buchsbaum-Rim complex
(\cite[Corollary 2.6]{Buchsbaum-Rim:1964}, see also
\cite{Vetter:1992}).
In this paper we show that the $(M_a)_a$ conspire to yield a kind
of \emph{non-commutative desingularization} of the singular variety
$\Spec R$.  More precisely we prove the following result.

{
 \def\thetheorem{A}
\begin{theorem}[Thm.\ \ref{thm:mcm}] \label{thm:A}
For $1\le a \le m$ put $M_a=\cok \L_S^a \phi$ and $M=\bigoplus_a M_a$.
Then the endomorphism algebra $E = \End_R(M)$ is maximal
Cohen--Macaulay as an $R$-module, and has moreover finite global
dimension.
\end{theorem}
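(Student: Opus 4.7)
The plan is to realize $M$ as the derived pushforward of a tilting bundle on a classical commutative resolution of $\Spec R$ and transfer the homological properties through the resulting derived equivalence. Let $\mathbb{G}=\mathrm{Gr}(m-1,m)\cong\PP^{m-1}$ with tautological rank-$(m-1)$ subbundle $\mathcal{S}\subset\calo_{\mathbb{G}}^{m}$, and let $Z=\cHom(\calo_{\mathbb{G}}^{n},\mathcal{S})$ be the total space of the corresponding vector bundle on $\mathbb{G}$. Sending $(\phi,L)$ to the composition $K^{n}\to L\hookrightarrow K^{m}$ defines a proper birational morphism $\pi\colon Z\to\Spec R$ from a smooth variety, i.e.\ a classical Springer-type resolution of singularities.

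On $Z$, pulling back Beilinson's strong full exceptional collection $\calo(-a)$ for $0\le a\le m-1$ along the affine projection $p\colon Z\to\mathbb{G}$ produces a tilting bundle $T=\bigoplus_{a=0}^{m-1}T_{a}$ by Kapranov's theorem, provided the requisite Ext-vanishing on $Z$ holds. The central calculation is the identification of the pushforwards with the $M_{a}$ (up to normalizing the indexing): the Koszul resolution of $T_{a}$ coming from the tautological exact sequence on $\mathbb{G}$ is pushed down first to $\mathbb{G}$ using $p_{*}\calo_{Z}=\Sym(\mathcal{S}^{*\,n})$ and then to $\Spec S$; Bott vanishing on the Grassmannian converts this to an $S$-free complex which one recognizes as a strand of the Buchsbaum-Rim complex, whose cokernel is $M_{a}$. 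The same calculation yields $\mathbf{R}^{i}\pi_{*}T=0$ for $i>0$, and more generally $\Ext^{>0}_{Z}(T,T)=0$.

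With this in place, both conclusions follow along well-trodden lines. The projection formula and vanishing of higher direct images give $E=\End_{R}(M)=\Gamma(Z,\cEnd(T))=\End_{Z}(T)$, and since $Z$ is smooth, the tilting equivalence $\rHom_{Z}(T,-)\colon D^{b}(\coh Z)\xrightarrow{\sim}D^{b}(\mathrm{mod}\,E)$ forces $\gldim E<\infty$. For maximal Cohen-Macaulayness of $E$ over $R$, one bounds the depth by relating the local cohomology of $E$ along the singular locus of $\Spec R$ to sheaf cohomology of $\cEnd(T)$ on $Z$ via Grothendieck duality for the resolution $\pi$; Bott-type vanishing on the Grassmannian again supplies the necessary vanishing in the relevant degrees.

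The principal obstacle is the explicit pushforward identification $\pi_{*}T_{a}\cong M_{a}$ together with the simultaneous Ext-vanishing of $T$ on $Z$: both reduce to Bott vanishing on the Grassmannian combined with the fiberwise symmetric-algebra structure $p_{*}\calo_{Z}$, but the Koszul bookkeeping is involved. The MCM property in the non-square case $n>m$, where $R$ is not Gorenstein and a clean crepant-resolution argument is unavailable, is the second delicate point: without crepancy one cannot invoke Serre duality across the exceptional locus directly, and must instead control the depth of $\Gamma(Z,\cEnd(T))$ through a finer local-cohomology argument.
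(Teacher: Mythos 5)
Your strategy coincides with the paper's: your $Z$ is the paper's incidence variety $\calz$ (a vector bundle over $\PP^{m-1}$), the tilting bundle is a pulled-back Be\u\i linson collection, finite global dimension follows from the tilting equivalence plus smoothness of $Z$, the identification $E\cong\End_Z(T)$ transfers it to $E$, and MCM-ness comes from cohomology vanishing for the pushforward of $\cEnd(T)$. Two steps, however, are stated in a form that would not go through as written.

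First, the collection you pull back must be $\bigoplus_{a=1}^{m}\L^{a-1}\mathcal{S}\cong\bigoplus_{a}\Omega^{a-1}_{\PP}(a)$, not $\bigoplus_{a}\calo(-a)$. Both are tilting bundles on $\PP^{m-1}$, but only the former has pushforward $\bigoplus_a M_a$: it is the twisted truncation $0\to\Omega^{a}(a)\to\L^aF\otimes\calo\to\Omega^{a-1}(a)\to 0$ of the tautological Koszul complex that matches the presentation $\L^a\G\xrightarrow{\L^a\phi}\L^a\F\to M_a\to 0$. Your later reference to ``the Koszul resolution of $T_a$ coming from the tautological exact sequence'' shows you intend the exterior-power collection, but you should be aware that even then $\pi_*T_a\cong M_a$ is not pure bookkeeping: one must check that the differential of the pushed-down complex really is $\L^a\phi$ and that the comparison map is an isomorphism, not merely a generic one. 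The paper does this by checking it over the locus where $\phi$ has corank exactly one and then using that $M_a$ is reflexive (being MCM by Buchsbaum--Rim), that $q'_*$ of a bundle is torsion-free, and that the complement has codimension at least $2$.

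Second, ``$\End_R(M)=\Gamma(Z,\cEnd(T))$'' does not follow from the projection formula and vanishing of higher direct images alone: those give $\End_Z(T)=\Gamma(Z,\cEnd(T))$, but the natural map $\End_Z(T)\to\End_R(\pi_*T)$ is a priori neither injective nor surjective. The paper closes this by the same codimension-two device: the map is an isomorphism where $q'$ is, and both sides are reflexive $R$-modules. As for MCM-ness, the paper does not argue via local cohomology and Grothendieck duality on $Z$; it resolves $j_*\caloz$ by a Koszul complex on $\caly=\PP\times H$, pushes down, and uses a degeneracy criterion for the resulting sparse spectral sequence to produce an explicit finite $S$-free resolution of $\Hom_R(M_a,M_b)$ of length $n-m+1=\grade$, i.e., perfection over $S$. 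Your duality route should work in characteristic zero, but note that the paper deliberately avoids Bott's theorem and computes all the needed cohomology of $\cHom(\Omega^{b-1}(b),\Omega^{a-1}(a))(-c)$ characteristic-freely from the Koszul complex; since the base here is $\PP^{m-1}$ the relevant vanishing does hold in all characteristics, but invoking ``Bott vanishing on the Grassmannian'' as a black box would not.
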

} 
%
\nocite{Buchweitz-Leuschke:MCMsdetX}

If $m=n$ then $R$ is the hypersurface ring $R=S/(\det \phi)$ and hence
$R$ is Gorenstein.  In this case our non-commutative desingularization
is an example of a \emph{non-commutative crepant resolution} as
defined in~\cite{vandenbergh:crepant}.  Non-commutative
desingularizations occurred probably first in theoretical physics
(e.g.\  \cite{berenstein-leigh:2001}) but they have recently been
encountered in a number of purely mathematical contexts (e.g.\
\cite{Bezrukavnikov:2006, iyama-reiten:2006, kaledin:2005,
Leuschke:finrepdim, szendroi:2007}).

The next result is a description by generators and relations
of the non-commuta\-tive resolution $E$. 
{
\def\thetheorem{B}
\begin{theorem}[Rem.\ \ref{rem:cubic}, Thm.\ \ref{thm:qCliffisoEndM}]
\label{thm:B}
As a $K$-algebra, $E$ is isomorphic to the path algebra $K\Qtilde$ of
the quiver
\[
\Qtilde: \qquad 
\Qtildelambdageer
\]
modulo relations 
\begin{align*}
\lambda_i \lambda_j + \lambda_j \lambda_i &= 0 = \lambda_i^2  &\text{for $i,j = 1, \dots, m$;}\\
g_i g_j + g_j g_i &= 0 = g_i^2 \qquad &\text{for $i,j = 1, \dots, n$;}\\
\lambda_k(\lambda_i g_j + g_j \lambda_i) &= (\lambda_i g_j + g_j
\lambda_i)\lambda_k \qquad &\text{for $i,k=1, \dots, 
  m$, $j=1, \dots, n$; and}\\
g_l(\lambda_i g_j + g_j \lambda_i) &= (\lambda_i g_j + g_j
\lambda_i)g_l \qquad &\text{for $i=1, \dots, 
  m$, $j,l=1, \dots, n$.}
\end{align*}
(terms in those relations which go outside the quiver are silently
suppressed, see \S\ref{sit:extBquiver}).
\end{theorem}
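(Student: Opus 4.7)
The plan is to exhibit explicit $R$-module morphisms realizing each arrow label of $\tilde{Q}$ as an element of $E=\End_R(M)$, verify that the four stated families of relations hold, and then argue that the induced $K$-algebra map $\Phi\colon K\tilde{Q}/I\to E$ is an isomorphism.

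First, I would realize the arrows concretely. Since $M_a=\cok\bigwedge^a_S\phi$ is presented as a quotient of $\bigwedge^a S^m$, wedge multiplication by the $i$-th standard basis vector of $V=K^m$ gives an $S$-linear map $\bigwedge^a S^m\to\bigwedge^{a+1} S^m$, which I would check descends to an $R$-linear map $\lambda_i\colon M_a\to M_{a+1}$. Dually, contraction against the $j$-th column of the matrix $X$, regarded as an element of $S^m$, produces an $R$-linear map $g_j\colon M_{a+1}\to M_a$. The vertex idempotents are the obvious projections onto the summands $M_a$ of $M$, and any occurrence of $\lambda_i$ at vertex $m$ or $g_j$ at vertex $1$ (which would take us outside the quiver) is set to zero, matching the ``silently suppressed'' convention.

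Second, I would verify the four families of relations. The quadratic relations $\lambda_i\lambda_j+\lambda_j\lambda_i=0$, $\lambda_i^2=0$, $g_ig_j+g_jg_i=0$, $g_i^2=0$ are immediate from the antisymmetry of wedge and contraction. The content of the cubic relations lies in the Clifford-type identity $\lambda_ig_j+g_j\lambda_i=\pm x_{ij}\cdot\id$ acting on each $M_a$, which I would establish by direct computation from the definitions of $\lambda_i$ and $g_j$. Since $x_{ij}\in R\subseteq Z(E)$, this central value automatically commutes with every $\lambda_k$ and every $g_l$, giving the cubic relations; the cubic formulation is thus a way of recording centrality without naming the central element, which matches the spirit of the anticipated Remark on the cubic presentation.

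Third, for surjectivity of $\Phi$, the Clifford identity already exhibits every matrix entry $x_{ij}$, hence all of $R$, inside $\Phi(K\tilde{Q}/I)$. It remains to check that each $\Hom_R(M_a,M_b)$ is generated, as an $R$-module, by the residues of alternating compositions of $\lambda$'s and $g$'s. This I would settle by using the Buchsbaum--Rim resolution of $M_b$ to reduce $\Hom_R(M_a,M_b)$ to a question about maps between exterior powers of $S^m$ and $S^n$, where the exterior-algebra action of the $\lambda_i$ and $g_j$ is manifest.

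Finally, for injectivity I would exploit the normal form the relations permit. After using the cubic relations to absorb every anticommutator $\lambda_ig_j+g_j\lambda_i$ into the central subring $R$, each nontrivial path is equivalent to a monomial in the $\lambda_i$'s followed by a monomial in the $g_j$'s, with a coefficient in $R$. Matching the resulting $R$-rank of each $(a,b)$-component against the $R$-rank of $\Hom_R(M_a,M_b)$, known from the maximal Cohen--Macaulay property in Theorem~\ref{thm:A}, then forces $\Phi$ to be bijective. The main obstacle lies precisely here: producing a clean normal form that respects the truncations at the two ends of the quiver, and then carrying out the rank count tightly enough to rule out any hidden relations.
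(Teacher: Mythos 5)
Your overall architecture (realize the arrows by wedge/contraction operators, verify the Clifford-type relations, then prove the induced map from the path algebra is bijective) matches the paper's, but the concrete realization of the arrows is wrong. Wedge multiplication by the \emph{constant} basis vector $f_i\in F$ does not carry the image of $\L^a\phi$ into the image of $\L^{a+1}\phi$, hence does not descend to a map $M_a\to M_{a+1}$: already for $m=n=2$ one has $f_1\wedge\phi(g_1)=x_{21}\,f_1\wedge f_2$, which is not a multiple of $\det(X)\,f_1\wedge f_2$. The operators that do descend are the contractions $\partial_{\lambda_i}$ against the constant coordinate forms (these \emph{lower} the exterior degree and realize the $m$ leftward arrows $\lambda_i$, lifting to $\partial_{\lambda_i\phi}$ on $\L\G$) and the wedge maps $\theta_{g_j}=\phi(g_j)\wedge-$ with the \emph{columns} of $X$ (these raise the degree and realize the $n$ rightward arrows $g_j$), as in \S\ref{sec:relCE}; the anticommutator is then $\partial_{\lambda_i}\theta_{g_j}+\theta_{g_j}\partial_{\lambda_i}=x_{ij}$ and the cubic relations follow as you say. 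As written, your $\lambda_i$ is ill-defined and the operations are attached to the wrong arrow labels.

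The more serious gaps are in the bijectivity argument. For surjectivity, ``reduce $\Hom_R(M_a,M_b)$ to maps between exterior powers via the Buchsbaum--Rim resolution'' is not a proof but a restatement of the problem: one must lift an arbitrary $R$-homomorphism to the resolutions and recognize it as a polynomial in the wedge/contraction operators, and this is exactly the content of Theorem~\ref{thm:expcan}, which the paper only reaches through the tilting bundle on $\calz$, the direct-image computations of Theorems~\ref{thm:directimage} and~\ref{thm:hdi}, and the explicit identification of generators in Lemma~\ref{lem:concrete} (equivalently, the presentations of \S\ref{sec:explicit}). For injectivity, a comparison of $R$-ranks does not suffice: a surjection of $R$-modules of equal rank is injective only if the source is $R$-torsion-free, and establishing that each graded piece $C_{ab}$ of the abstract quotient $K\Qtilde/I$ is torsion-free is precisely where the work lies. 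The PBW normal form you invoke exists for the untruncated algebra $C^\infty$ (Lemma~\ref{lem:Cinftyfree}), but the truncation to vertices $1,\dots,m$ introduces the ideal $D$ whose graded pieces must be controlled exactly (Proposition~\ref{prop:Disfree}), and the paper's injectivity argument hinges on the nontrivial minimality statement $\ker(\rho\mid_{P_1})\subseteq S_{>0}P_1$ of Lemma~\ref{lem:minresCij2} rather than on a rank count. Without a substitute for that step your argument cannot rule out hidden relations.
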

} 
\smallskip
Despite the fact that Theorems~\ref{thm:A} and~\ref{thm:B} have
purely algebraic statements, we will prove them
by relying on algebraic geometry. 
In our proofs we use the classical fact that $\Spec R$ has a
\emph{Springer type resolution of singularities}. To be precise,
define the incidence variety
\[
\calz = \lbrace \,([\lambda],\theta)\in \PP^{m-1}(K) \times M_{m\times n}(K)\,|\,
\lambda\theta=0\,\rbrace 
\]
with projections $p'\colon \calz \to \PP^{m-1}$ and $q'\colon \calz \to
\Spec R$. The following theorem contains  the key geometric facts we use. 
{
\def\thetheorem{C}
\begin{theorem}[Thm.\ \ref{thm:geometric}, Thm.\ \ref{tilting},
    Thm.\ \ref{thm:mcm}]
\label{thm:C}
The scheme $\calz$ is projective over $\Spec R$, which is of finite
type over $K$.  The $\caloz$-module
\[
\tilt := p'^{*}\left(\bigoplus_{a=1}^{m}
\left(\L^{a-1}\Omega_{\PP^{m-1}}\right)(a)\right) 
\]
is a classical tilting bundle on $\calz$ in the sense of
\cite{Hille-VdB:2007}, i.e.\
\begin{enumerate}
\item $\tilt$ is a locally free sheaf, in particular, a perfect
  complex on $\calz$,
\item $\tilt$ generates the derived category
  $\cald(\operatorname{Qch}(\calz))$, in that
  $\Ext^{\bullet}_{\calo_\calz}(\tilt,C)=0$ for a complex $C$ in
  $\cald(\operatorname{Qch}(\calz))$ implies $C\cong 0$, and
\item $\Hom_{\calo_\calz}(\tilt,\tilt[i]) = 0$ for $i \neq 0$.
\end{enumerate}
Furthermore we have 
\begin{enumerate}
\setcounter{enumi}{2}
\item $M\cong \R q'_\ast \calt$, and
\item $E\cong \End_{\calz}(\calt)$.
\end{enumerate}
\end{theorem}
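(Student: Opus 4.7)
The plan is to realize $\calz$ as the total space of a vector bundle over $\PP^{m-1}$ and then transfer a twisted Beilinson tilting bundle upward via a standard pullback principle. The defining equation $\lambda\theta=0$ is linear in $\theta$, so $p'\colon\calz\to\PP^{m-1}$ presents $\calz$ as the total space of $\cale:=\Omega_{\PP^{m-1}}(1)^{\oplus n}$: the fibre over $[\lambda]$ consists of those $m\times n$ matrices whose columns lie in the hyperplane $\ker\lambda\subset K^{m}$, and the Euler sequence $0\to\Omega_{\PP^{m-1}}(1)\to\calo^{m}\to\calo(1)\to 0$ identifies this hyperplane subbundle with $\Omega_{\PP^{m-1}}(1)$. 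Any matrix $\theta$ annihilated by a nonzero $\lambda$ has rank at most $m-1$, so $q'$ factors through $\Spec R$; since $\calz$ is closed in $\PP^{m-1}\times\Spec R$, this settles (1), and local freeness of $\calt$ in (2)(i) is immediate.

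For (2)(ii)--(iii), observe that $F:=\bigoplus_{a=1}^{m}\L^{a-1}\Omega_{\PP^{m-1}}(a)=\bigoplus_{i=0}^{m-1}\Omega^{i}_{\PP^{m-1}}(i+1)$ is a twist of Beilinson's tilting bundle and hence itself tilting on $\PP^{m-1}$. Because $p'$ is the vector bundle $\mathrm{Tot}(\cale)\to\PP^{m-1}$, we have $\R p'_{*}\calo_{\calz}=\Sym\cale^{\vee}$, and the pullback principle of Hille--Van den Bergh reduces (2)(iii) to the Ext-vanishing
\[
\Ext^{i}_{\PP^{m-1}}\!\bigl(F,\,F\otimes\Sym\cale^{\vee}\bigr)=0\quad\text{for }i>0.
\]
Expanding $\Sym^{k}\cale^{\vee}=\Sym^{k}\bigl(T_{\PP^{m-1}}(-1)^{\oplus n}\bigr)$ via the Cauchy formula into Schur functors of $T_{\PP^{m-1}}$ twisted by $\calo(-k)$, and combining with the exterior powers of $\Omega_{\PP^{m-1}}$ already present in $F\otimes F^{\vee}$, the task reduces to computing $H^{i}$ for $i>0$ of Schur functors of $\Omega_{\PP^{m-1}}$ tensored with appropriate line bundles, all of which vanish by Bott's theorem because the weights stay inside the Beilinson window. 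Generation of $\cald(\calz)$ in (2)(ii) then follows because $F$ generates $\cald(\PP^{m-1})$ and $p'$ is affine.

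For (3), I would resolve $\calo_{\calz}$ on $\PP^{m-1}\times\Spec S$ by the Koszul complex of the tautological section of $\calo_{\PP^{m-1}}(1)^{\oplus n}$ given by $\lambda\Phi$, where $\Phi$ is the universal $m\times n$ matrix. Tensoring with $p'^{*}F$ and pushing forward to $\Spec S$ via flat base change collapses, through Bott vanishing on $\PP^{m-1}$, to exactly the Eagon--Northcott/Buchsbaum--Rim complex resolving $M_{a}=\cok\L^{a}\phi$ for each summand indexed by $a$, giving $\R q'_{*}\calt\cong M$ concentrated in degree zero. For (4), the tilting equivalence $\R\Hom_{\calz}(\calt,-)\colon\cald(\calz)\xrightarrow{\sim}\cald(E')$ with $E':=\End_{\calz}(\calt)$ transports $\R q'_{*}$ to the functor $-\otimes^{\mathbf{L}}_{E'}M$, making $M$ a natural $R$-$E'$-bimodule; the induced ring map $E'\to\End_{R}(M)$ is trivially an isomorphism after restriction to the smooth locus of $\Spec R$ (where $q'$ is an isomorphism), and since this locus has complement of codimension at least two in $\Spec R$ and both sides are reflexive $R$-modules, the map is an isomorphism globally.

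The main obstacle is the Bott-vanishing bookkeeping in the second paragraph: one must verify that the full range of Schur-functor summands appearing in $F\otimes F^{\vee}\otimes\Sym\cale^{\vee}$ falls uniformly inside the Bott window as $k$ ranges over all non-negative integers, a task requiring careful weight-level analysis. The identification $E\cong\End_R(M)$ in step (4) is not formal from $\R q'_{*}\calt\cong M$ alone; it relies crucially on smoothness of $\calz$ (so that $\cEnd(\calt)=\calt^{\vee}\otimes\calt$ is locally free) together with the codimension-at-least-two singular locus of $\Spec R$, which feeds the reflexivity-plus-agreement-in-codimension-one argument.
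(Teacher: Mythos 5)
Your overall architecture---realizing $\calz$ as the total space of the bundle $\Omega^1(1)\otimes G^\svee$ over $\PP$, pulling back the twisted Be\u\i linson bundle, using affineness of $p'$ for generation, and finishing $E\cong\End_{\calo_\calz}(\calt)$ by reflexivity plus agreement in codimension one---matches the paper. Two steps, however, either diverge in an essential way or fall short.

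First, your Ext-vanishing argument runs through the Cauchy decomposition of $\Sym^k(\calu^\svee\otimes G)$ into Schur functors followed by Bott's theorem. Both tools are special to characteristic zero (in positive characteristic the Cauchy formula only yields a filtration and Bott vanishing fails), whereas Theorem~\ref{thm:C} is asserted, and proved in the paper, over an arbitrary field. The paper's substitute is the characteristic-free computation of $\R^{\nu}\pi_{*}\cHom(\Omega^{b-1}(b),\Omega^{a-1}(a))(-c)$ by direct manipulation of the tautological Koszul complex (Theorem~\ref{thm:directimage}), fed into the Koszul resolution of $j_*\caloz$ on $\caly$ and the sparse-spectral-sequence degeneration criterion of Proposition~\ref{projresfromSS} (Theorem~\ref{thm:hdi} with $c=0$). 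Even granting characteristic zero, the claim that ``the weights stay inside the Bott window'' for all $k\ge 0$ is precisely the content one must prove; as written this step is a placeholder, and you have flagged it yourself as the main obstacle.

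Second, in your step (3) you assert that pushing forward $p'^{*}\Omega^{a-1}(a)$ tensored with the Koszul resolution of $j_*\caloz$ collapses ``to exactly the Buchsbaum--Rim complex resolving $M_a$.'' The spectral sequence does produce a complex with the right \emph{terms} (this is the paper's Example~\ref{eg:presOmega-a}), but to conclude that its zeroth cohomology is $\cok\L^{a}\phi$ you must identify the resulting \emph{differential} with $\L^{a}\phi$, which does not come for free from the degeneration. The paper supplies this separately in Theorem~\ref{thm:geometric}: one factors $\L^{a}q'^{*}\phi$ through $p'^{*}\Omega^{a-1}(a)$ via the tautological subbundle, checks the induced comparison map $M_a\to q'_{*}p'^{*}\Omega^{a-1}(a)$ is an isomorphism over the locus where $\phi$ has rank exactly $m-1$, and concludes by torsion-freeness and reflexivity since the complement has codimension at least two. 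Your step (4) is correct and is the paper's argument essentially verbatim.
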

} 
This theorem implies in particular that the geometric resolution
$\calz$ and the non-commutative resolution $E$ are derived equivalent
\cite{rickard:1989}.  Hence $\calz$ parametrizes certain objects in
the derived category of $E$.  
The
following result gives a more precise interpretation of this idea.  
{
\def\thetheorem{D}
\begin{theorem}[Thm.\ \ref{thm:representQtilde}]
\label{thm:D}
The variety $\calz$ is the fine moduli space for the\/
 $\Qtilde$-rep\-re\-sen\-ta\-tions $W$ of dimension vector $(1, m-1,
 \binom{m-1}{2}, \dots, 1)$ that are generated by the last component $W_m$.
\end{theorem}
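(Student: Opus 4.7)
The plan is to show that $\tilt$ is the universal family on $\calz$, thereby identifying $\calz$ with the moduli space. By Theorems~\ref{thm:B} and~\ref{thm:C}, $\tilt$ already carries a $\Qtilde$-module structure in locally free $\calo_\calz$-modules via $E = \End_\calz(\tilt) \cong K\Qtilde/I$, and the summand ranks $\rank \tilt_a = \binom{m-1}{a-1}$ match the required dimension vector (noting that $\tilt_m = p'^{*}(\omega_{\PP^{m-1}}(m)) = \calo_\calz$ is a line bundle). The only family-axiom to check is that $\tilt$ is generated by $\tilt_m$, i.e.\ that $\tilt_m \otimes_K E \to \tilt$ is surjective. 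I would verify this fiberwise at $z = ([\lambda],\theta) \in \calz$ using the identification $\tilt_a|_z \cong \L^{a-1}(\Omega_{\PP^{m-1}}(1)|_{[\lambda]}) \otimes \calo(1)|_{[\lambda]}$ and the realisation of the $\lambda_i$ as Beilinson/Koszul contractions; iterated contractions then surject onto each $\tilt_a|_z$ by exactness of the Koszul complex $(\L^\bullet K^m, \lambda\wedge)$.

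For universality, given a flat family $W = (W_a;\lambda_i,g_j)$ on a $K$-scheme $T$, I build $f \colon T \to \calz$ as follows. The dimension vector makes $W_m$ a line bundle; generation at vertex $m$ forces the aggregated length-one map $\bigoplus_i \lambda_i \colon W_m \otimes K^m \to W_{m-1}$ to be a vector-bundle surjection whose kernel $L$ is a line subbundle, so that $L \otimes W_m^{-1} \hookrightarrow \calo_T^{\oplus m}$ defines a morphism $\mu \colon T \to \PP^{m-1}$. The centrality relations make each $x_{ij} := \lambda_i g_j + g_j \lambda_i$ act on the line bundle $W_m$ as a scalar $\theta_{ij} \in \Gamma(T,\calo_T)$, defining $\nu \colon T \to M_{m\times n}(K) = \AA^{mn}$. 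For a local generator $w$ of $W_m$ and local basis $(c_i)$ of $L \otimes W_m^{-1}$, the identity $\sum_i c_i \lambda_i(w) = 0$ gives
\[
\Bigl(\sum_i c_i\,\theta_{ij}\Bigr) w \;=\; \sum_i c_i\, g_j\lambda_i(w) \;=\; g_j(0) \;=\; 0,
\]
so $\lambda\theta = 0$, and $\mu \times \nu$ factors through a morphism $f \colon T \to \calz$ which is forced by $W$ since both $\mu$ and $\nu$ are read off intrinsically from the representation.

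The main obstacle is the final compatibility $f^*\tilt \cong W$ as $\Qtilde$-representations: both are generated at vertex $m$, share the dimension vector, and project to the same point of $\PP^{m-1}\times\AA^{mn}$, but turning these matching invariants into an actual isomorphism requires care. The cleanest route is through the derived equivalence of Theorem~\ref{thm:C}: the inverse functor $-\otimes^{\mathbf{L}}_E \tilt$ carries a flat family of $E$-modules on $T$ to an object of $\cald^b(\calz \times T)$, and the generation plus dimension-vector constraints should force this object to be the structure sheaf of the graph of $f$, from which $f^*\tilt \cong W$ follows and the uniqueness of $f$ is automatic.
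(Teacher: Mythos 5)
Your construction of the classifying map $f\colon T\to\calz$ is essentially the paper's: generation at vertex $m$ plus the absence of a vertex $m+1$ (so $g_jW_m=0$) makes $\bigoplus_i\lambda_i\colon W_m\otimes F^\svee\to W_{m-1}$ a surjection of bundles whose kernel line gives the point of $\PP(F^\svee)$, the central elements $x_{ij}$ give the matrix $\theta$, and your computation of $\lambda\theta=0$ is exactly the right use of the Clifford relation. The genuine gap is the step you yourself flag: showing $W\cong f^{*}$(universal bundle). Matching the dimension vector, the generation property, and the point of $\PP^{m-1}\times\AA^{mn}$ does not by itself produce an isomorphism of representations, and the proposed detour through the derived equivalence of Theorem~\ref{thm:C} is not a proof --- the assertion that ``generation plus dimension-vector constraints force the object to be $\calo_{\Gamma_f}$'' is precisely a reformulation of the statement to be proved (it amounts to characterizing the point-like objects of $\cald^b(E)$), so as written it is circular, and no argument is given for it.

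The paper closes this gap by an elementary and completely explicit classification (Lemma~\ref{lem:allrepsQ}, Propositions~\ref{prop:xijscalars} and~\ref{prop:allrepsQtilde}), which you should supply. First, since every PBW monomial acting nontrivially on $W_m$ begins with $\lambda$'s only, $W$ is generated by $W_m$ over the exterior algebra $\L_A(F^\svee\otimes A)$ alone; hence $W$ is a quotient of $\L_A W_{m-1}$ (as a representation of the one-directional Be\u\i linson quiver), and equality of ranks forces $W\cong\L_A W_{m-1}=\L_A P^\svee$ with the $\lambda$-action given by exterior multiplication. Second, the relation $g_j\lambda_i+\lambda_ig_j=x_{ij}$, acting by the scalar $a_{ij}=g_j\lambda_i(w)$, shows that each $g_j$ satisfies $g_j(\lambda_i\wedge\omega)=a_{ij}\omega-\lambda_i\wedge g_j(\omega)$ and kills $W_m$, so $g_j$ is the unique super-derivation (contraction against $\beta^\svee(g_j)$) with these values; thus the entire representation is determined by the pair $(\alpha,\beta)$, which is exactly an $A$-point of $\calz=\underline{\Spec}\,\Sym_{\PP}(\calu^\svee\otimes G)$. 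This gives both $f^{*}\calt\otimes W_m\cong W$ and the uniqueness of $f$ in one stroke, and is what makes $\calz$ a \emph{fine} moduli space.
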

}
\smallskip 
The proof of Theorem \ref{thm:C} is based on the explicit (and
characteristic-free) computation of the cohomology of certain
homogeneous bundles on $\PP^{m-1}$. More precisely, for
\[
\calm^b_a=\cHom_{\calo_{\PP^{m-1}}}\left((\L^{b-1}\Omega)(b),
  (\L^{a-1}\Omega)(a)\right)
\]
we compute in Theorem~\ref{thm:directimage} the cohomology of
$\calm^b_a(-c)$ for $c\in \ZZ$.  (The interested reader may wish to
compare the Appendix by Weyman to
\cite{Eisenbud-Schreyer-Weyman:2003}, which, by different methods,
computes as a special case $\Ext^i(\L^p \Omega, \L^q \Omega)$ for all
$i \geq 0$.)  This result is used in Theorem \ref{thm:hdi} to compute
the shape of the minimal $S$-projective resolution of $q'_\ast
p^{\prime \ast} \calm^b_a(-c)$ in many cases. This yields in
particular a large supply of maximal Cohen-Macaulay $R$-modules.

To prove Theorem \ref{thm:hdi} we use a new ``degeneracy criterion for
sparse spectral sequences'' (see Proposition \ref{projresfromSS})
which we think is interesting in its own right. Under mild boundedness
hypotheses this result asserts that if a page of a spectral sequence
has projective entries then we can obtain from it a projective
resolution of its limit.

\smallskip 
Two additional results occupy the last two sections: In
\S\ref{sec:explicit} we give an explicit minimal $S$-free presentation
for the maximal Cohen-Macaulay $R$-modules $\Hom_R(M_a,M_b)$ in terms
of certain minors in~$X$, and in \S\ref{sect:char0simples} we compute
(in characteristic zero) the shape of the minimal graded free
resolution of the graded simples of $E$.

\medskip

In characteristic zero we know how to generalize most of our results to
arbitrary determinantal varieties. This will be covered in a sequel to
the current paper. The present paper is largely characteristic-free.

\section{Notation}\label{sect:notation}


\setlongtables
\begin{longtable}{p{.2\linewidth} p{.75\linewidth}}
\bfseries Symbol & \bfseries Meaning\\
\endfirsthead
$K$ & a commutative base ring, most often a field\\
$F,G$ & projective $K$-modules of finite ranks $m \leq n$\\
$\L^a F$, $F_a$ & indicated exterior power of $F$ \\
$|F|$ & determinant of $F$, $\L^{\rank F}F$\\
$\SS^b =\Sym_K^b$ & symmetric power \\
$H$ & $\Hom_K(G,F)$\\
$S$ & $\SS(H^\svee)$, a polynomial algebra over $K$\\
${-}^\svee$ & dual over $K$ or $S$, as context implies\\
$\cata$, $\K^-(\PP\cata)$ & abelian category with
  enough projectives and the homotopy category of its projectives\\
$\G, \F$ & free $S$-modules induced from $G$ and $F$\\
$\phi\colon \G \to \F$ & the generic $S$-linear map\\
$X = (x_{ij})$ & generic $(m\times n)$-matrix of local coordinates on
  $\Spec S$\\ 
$R$ & the quotient of $S$ determined by the maximal minors of $X$\\
$\PP = \PP(F^\svee)$ & $K$-projective space on the dual of $F$\\
$\pi\colon\PP \to K$ & structure morphism\\
$\caly$ & $\PP \times_{\Spec K} H$, with projections
  $p\colon\caly\to\PP$, $q\colon\caly\to H$ \\
$\calz$ & the incidence variety desingularizing $\Spec R$, with
  inclusion $j\colon \calz \to \caly$\\
$\coh$ and $\operatorname{Qch}$ & categories of coherent and
quasi-coherent sheaves\\
$\cald^b_f$ & bounded derived category of complexes with finite
homology \\
$\KK(\id_F)$ & affine tautological Koszul complex over $\id_F$\\
$\KK$ & projective tautological Koszul complex\\
$F_a^b$&$\Hom_K(F_b,F_a)=F_a\otimes F^\svee_b$\\
$\Omega = \Omega_{\PP/K}$ & cotangent bundle on $\PP$ over $K$\\
$\Omega^a = \L_{\calop}^a\Omega$ & $\calop$-module of degree-$a$ differential
  forms\\
$\calu=\Omega(1)$ & the tautological subbundle of rank $m-1$ in $\pi^\ast F$\\
$\cale=\calu^\ast$ & the tautological quotient bundle of rank $m-1$ of
  $\pi^\ast F^\svee$\\ 
$\KK_{>a}$, $\KK_{\leq a}$ & certain (shifted) truncations of $\KK$\\
$\calm_a^b(-c)$ & $\cHom_\PP(\Omega^{b-1}(b),\Omega^{a-1}(a))(-c)$\\
$\tilt_a$ and $\tilt$ & $p'^*(\Omega^{a-1}(a))$ and $\bigoplus_a
  \tilt_a$\\
$M_a$ and $M$
& $\cok (\L^a\phi)$ and $ \bigoplus_a M_a$\\
$E$
& $\End_R(M)$
\\
$\Q$ & Be\u\i linson quiver on $F$\\
$B$ & path algebra of $\Q$\\
$\Qtilde$ & doubled Be\u\i linson quiver on $F$ and $G$\\
$C$ & quiverized Clifford algebra, path algebra of $\Qtilde$\\
$\Q^\infty$ and $C^\infty$ & infinite doubled Be\u\i linson quiver and
its path algebra\\
$\Cliff(b)$ & Clifford algebra on the quadratic map $b$\\
$\Rep(\Gamma)$ & abelian category of finite-dimensional
  representations of $\Gamma$\\
$\calr$, $\tilde\calr$ & certain representations of $\Q$
  and $\Qtilde$\\
$L_\alpha$ & Schur module corresponding to the partition $\alpha$\\
\end{longtable}

\[
\xymatrix{
\calz  \ar@{_{(}->}[dr]^{j} \ar@/^4ex/[drr]^{p'} \ar[dd]_{q'}\\
& \caly = H\times \PP \ar[r]^{p} \ar[d]_{q} & \PP=\PP(F^\svee)\ar[r]^-\pi& 
 \Spec K\\
\Spec R \ar@{^{(}->}[r] & H = \Hom_K(G,F)\\
}
\]

\section{Direct Images of \texorpdfstring{$\cHom$}{} Between
  Bundles of Differential Forms}
\label{sect:higherdirect}
In Sections~\ref{sect:higherdirect} through~\ref{sect:di-det} we prove
the technical results needed for the proofs of the theorems stated in
the Introduction.  At first reading the reader may wish to go directly
to Section~\ref{sec:geometricmethods} (after a pit stop in
\S\S\ref{nsit:genmorph}--\ref{nsit:resRjOZ} to pick up the notation)
where the applications start of the results obtained here.

\medskip

The aim of the present section is to determine the higher direct images of
the twisted bundles of homomorphisms between the modules of relative
differential forms on a projective bundle.  
The result is surely not new; it contains, for example, Bott's formula
for the twists of the differential forms themselves and the fact,
first exploited by Be{\u\i}linson in \cite{Beilinson:1978}, that the
direct sum $\bigoplus_{i}\Omega^{i}_{\PP/K}(i)$ is a tilting bundle with
its endomorphism ring isomorphic to a triangular version of the
exterior algebra. 

Not being aware of a complete, concise and explicit treatment of this
general case in the literature, although it is certainly contained in
the even more general treatment in \cite{Weyman:book}, as well as to
be able to use the ingredients of the proof later on, we recall here
the argument that relies entirely on properties of the tautological
Koszul complex, with the only challenge to keep the combinatorics at
bay. To this end we first introduce compact notation we use throughout
and then embark upon the actual computation after stating the result
as Theorem \ref{thm:directimage}.
\begin{nsit}{Notation.}
We fix in this section a commutative base ring $K$ and a projective 
$K$-module $F$ of \emph{constant finite rank} $m >0$ in the sense
that the $K$-module $\L_{K}^{m}F$ is invertible and faithful,
equivalently, locally free of rank one.

The considerations to come will involve various multilinear operations
on $F$ and we choose abbreviated notation as follows.
 
\begin{enumerate}[\quad\rm $\bullet$]
\item Unadorned tensor products are understood over $K$.
\item $M^{\svee}$ denotes the $K$-dual of the $K$-module $M$. 
\item $F_{a}=\L^{a}_{K}F$ represents the indicated \emph{exterior
power} of $F$ over $K$. It is a finite projective $K$-module, of
constant rank $\binom{m}{a}$, non-zero for $0\le a\le m$.

The resulting abbreviation $F^{\svee}_{a}$ is unambiguous, as
$\L^{a}(F^{\svee})\cong (\L^{a}F)^{\svee}$ via a canonical and natural
isomorphism of $K$-modules; see \cite[XIX, Prop. 1.5]{Lang}.
\item $F^{b}_{a}=\Hom_{K}(F_{b},F_{a})\cong F_{a}\otimes
  F^{\svee}_{b}$. The lower index thus indicates the covariant, the
  upper one the contravariant argument in the space of $K$-linear
  maps involved. 
\item $|F| =F_{m}$ denotes the \emph{determinant} of $F$, a projective
  $K$-module of (constant) rank $1$ by assumption. Again,
  $|F|^{\svee}\cong |F^{\svee}|$ canonically. 
\item $\SS^{b} = \Sym_{K}^{b}(F)$ represents the indicated
  \emph{symmetric power} of $F$ over $K$. 
It is again a projective $K$-module, of constant rank
  $\binom{m-1+b}{b}$, non-zero for $b\ge 0$. 
We write $\SS=\bigoplus_{b\ge 0}\SS^{b}=\Sym_{K}F$ for the
\emph{symmetric algebra} on $F$ over $K$, endowed with its canonical
grading that places $\SS^{b}$ into (internal) degree $b$. 
\end{enumerate}
Complexes will be graded \emph{cohomologically}, so that the
differential increments the complex degree by $1$. Recall that the
(simple) \emph{translation} of a complex, denoted~$[1]$, then
shifts a complex one place against the direction of the differential
and changes the sign of said differential. 
\end{nsit}

\begin{nsit}{The tautological Koszul complex.}
\label{nsit:affineKoszul}
Exterior and symmetric algebra on $F$ over $K$ combine to define the
(affine) \emph{tautological Koszul complex} $\KK(\id_{F})$ over the
identity map on $F$; see \cite[9.3 AX.151]{ALGX}.  That Koszul complex
often plays a dominant role in (co-)homological considerations, and
this instance is no exception.

Recall that the underlying bigraded ${\Sym_{K}}F$-module of
$\KK(\id_{F})$ is $\L_{K}(F[1])\otimes {\Sym_{K}}F$ and that the
differential can be described in a coordinate-free manner through the
comultiplication on the exterior algebra and the multiplication on the
symmetric algebra. Namely, denote $\Delta^{a-1,1}\colon F_{a}\to
F_{a-1}\otimes F$ the indicated bihomogeneous component of the
comultiplication defined by applying the exterior algebra functor to
$\Delta\colon F\to F\oplus F$ followed by the canonical isomorphism
$\L_{K}(F\oplus F)\cong \L_{K}F\otimes \L_{K}F$. With $\mu^{1,b}\colon
F\otimes \SS^{b}=\SS^{1}\otimes\SS^{b}\to \SS^{b+1}$ the indicated
bihomogeneous component of the multiplication on the symmetric
algebra, the differential $\partial$ is then simply the direct sum of
its bihomogeneous components
\begin{align*}
\partial_{a}^{b}\colon
F_{a}\otimes\SS^{b}\xto{\Delta^{a-1,1}\otimes\SS^{b}}F_{a-1}\otimes
F\otimes\SS^{b} \xto{F_{a-1}\otimes \mu^{1,b}} F_{a-1}\otimes
S^{b+1}\,.
\end{align*}
\end{nsit}

We continually use the following basic fact.  

\begin{prop}[{cf. \cite[9.3 Prop. 3]{ALGX}}]
The homogeneous strand of the Koszul complex $\KK(\id_{F})$ in
internal degree $a\in\ZZ$ is a complex of finite projective
$K$-modules of constant rank
\begin{align}
\label{eq:strand}
0\to |F|\otimes\SS^{a-m}\to F_{m-1}\otimes\SS^{a-m+1}\to\cdots\to
F \otimes\SS^{a-1} \to \SS^{a}\to 0\,.
\end{align}
It is supported on the integral interval $[-\min\lbrace a,m\rbrace ,0]$ and,
unless $a=0$, it is \emph{exact}, thus, even \emph{split exact} as its
terms are finite projective $K$-modules. If $a=0$, the complex
reduces to the single copy of $K\cong \SS^{0}$ placed in
(cohomological) degree $0$.\qed
\end{prop}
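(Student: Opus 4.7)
The plan is to read off the strand (\ref{eq:strand}) from the bidegree bookkeeping, then reduce exactness to the classical statement that $\KK(\id_{F})$ is a free resolution of $K$ in the case when $F$ is free.

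First I would verify the shape of the complex. The component $F_{j}\otimes \SS^{b}$ of the underlying bigraded module $\L_{K}(F[1])\otimes \SS$ sits in cohomological degree $-j$ (because of the shift $F[1]$) and in internal degree $j+b$ (each $e\in F$ contributes $1$ to both the exterior and the symmetric grading). Fixing internal degree $j+b=a$ forces $b=a-j\ge 0$ and $0\le j\le m$, so the surviving terms are indexed by $0\le j\le \min\{a,m\}$, lying in cohomological degrees $-\min\{a,m\},\dots,-1,0$; this gives the claimed support interval and the listing in (\ref{eq:strand}), with the maps between consecutive terms being the bihomogeneous pieces $\partial_{j}^{a-j}$ of \S\ref{nsit:affineKoszul}. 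Each term $F_{j}\otimes\SS^{a-j}$ is a tensor product of two finite projective $K$-modules of constant rank ($\binom{m}{j}$ and $\binom{m-1+a-j}{a-j}$), hence is itself finite projective of constant rank.

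Next I would establish exactness when $a\neq 0$ by reducing to the free case. If $F$ is $K$-free with basis $e_{1},\dots,e_{m}$, then $\SS=\Sym_{K}F=K[e_{1},\dots,e_{m}]$ is a polynomial ring in which $e_{1},\dots,e_{m}$ is a regular sequence. By the classical acyclicity theorem for Koszul complexes on regular sequences (the reference \cite[9.3 Prop.~3]{ALGX} already cited), $\KK(\id_{F})$ is then a free resolution of $\SS/(e_{1},\dots,e_{m})\cong K$, and this quotient is concentrated in cohomological and internal degree $0$. Therefore every internal strand in degree $a\neq 0$ has vanishing cohomology, whereas the strand for $a=0$ consists of the single term $\SS^{0}\cong K$ in degree $0$, as asserted.

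To pass from free to projective $F$, I would localize. Since $F$ has constant rank $m$, it is locally free on $\Spec K$, and the formation of the bigraded module $\L_{K}(F[1])\otimes\SS$ together with its differential commutes with localization in $K$. Exactness of a complex of $K$-modules may be tested after localizing at every prime of $K$, and at each such localization $F$ becomes free so the previous paragraph applies. Finally, an exact bounded complex of finite projective $K$-modules is automatically split exact, yielding the last assertion.

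The only genuine obstacle is the characteristic-free step: one cannot simply invoke the contracting homotopy (which has denominators $a+b$) that would work over $\QQ$. Routing the argument through the Koszul regularity of $e_{1},\dots,e_{m}$ in the polynomial ring $\Sym_{K}F$, rather than through a homotopy, is what makes the proof valid over an arbitrary commutative base.
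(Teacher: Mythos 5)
Your proposal is correct. The paper itself offers no argument for this proposition --- it is stated with a reference to Bourbaki, \cite[9.3 Prop.~3]{ALGX}, and a \verb|\qed| --- so there is no in-text proof to compare against; your write-up supplies the standard argument that the citation stands in for. The bidegree bookkeeping correctly identifies the terms and the support interval $[-\min\lbrace a,m\rbrace,0]$, and the reduction of exactness to the free case is sound: after a chosen basis the tautological differential is exactly the Koszul differential of the regular sequence $e_{1},\dots,e_{m}$ in $\Sym_{K}F=K[e_{1},\dots,e_{m}]$ (regular over any commutative $K$), so the complex resolves $K$ concentrated in internal degree $0$ and every strand with $a\neq 0$ is exact; localization at primes of $K$ commutes with $\L$, $\Sym$ and cohomology and reduces the projective case to the free one; and a bounded exact complex of finite projectives splits by peeling off from the right. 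One tiny imprecision: on the strand of internal degree $a$ the standard homotopy identity reads $ds+sd=a\cdot\id$ uniformly (every term $F_{j}\otimes\SS^{a-j}$ has total internal degree $a$), so the obstruction is invertibility of $a$ rather than of a varying $a+b$; this does not affect your argument, which correctly avoids the homotopy altogether.
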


\begin{nsit}{The projective tautological Koszul complex.}
\label{nsit:projKoszul}
Now we turn to $\PP = \PP(F^{\svee})= \Proj_{K}(\Sym_{K}F)$, the
projective space of linear forms on $F$ over $K$ with structure
morphism $\pi\colon\PP\to \Spec K$ and its canonical very ample line bundle
$\calop(1)$. If $M$ is any $K$-module, we write $M\otimes\calop$ for
the induced $\calop$-module $\pi^{*}M$, and even
$M(i)=M\otimes\calop(i)$ for any integer $i$.

The $\calop$-linear \emph{Euler derivation} $e\colon F \otimes \calop(-1)
\to \calop$ corresponds to the identity on $F$ under the standard
identifications
\[\Hom_{\PP}(F \otimes \calop(-1), \calop)
\cong \Hom_{\PP}(\pi^{*}F, \calop(1)) \cong
\Hom_{K}(F,\pi_{*}\calop(1))\cong  \Hom_{K}(F,F) 
\]
It gives rise to the \emph{(projective) tautological Koszul complex}
of $\calop$-modules 
\begin{equation}
\label{eq:sheafKoszul}
\KK\quad \equiv\quad 0 \to F_{m}(-m) \to \cdots \to F (-1) \to \calop \to 0\,,
\end{equation}
where we place $\calop$ in (cohomological) degree zero, so that the
complex is supported again on the interval $[-m,0]$. This complex on
$\PP$ is the sheafification of the affine tautological Koszul complex
$\KK(\idmap_{F})$ in \ref{nsit:affineKoszul}, and, conversely, if one
applies $\pi_{*}$ to $\KK(a)$ for some integer $a$, the result is the
homogeneous strand of the affine Koszul complex displayed in
(\ref{eq:strand}) above.
\end{nsit}

\begin{nsit}{Differential forms.}
\label{diffforms}
The Koszul complex $\KK$ on $\PP$ is exact and decomposes into short
exact sequences 
\begin{equation}
\label{eq:defOmega-a}
0 \to \Omega^{a} \to F_{a}(-a) \to \Omega^{a-1} \to 0
\end{equation}
where $\Omega^{a}=\L_\PP^a \Omega^{1}_{\PP/K}$ denotes
the $\calop$-module of relative K\"ahler differential forms on $\PP$
of degree $a$, equivalently, the locally free sheaf of sections of the
$a^\text{th}$ exterior power of the cotangent bundle on $\PP$ relative
to $K$.

Recall as well that the locally free $\calop$-module
$\Omega^{m-1}\cong F_{m}(-m) = |F|(-m)$ of rank $1$ represents
$\omega_{\PP/K}$, the \emph{relative dualizing $\calop$-module} for
the projective morphism $\pi$.
\end{nsit}

\begin{nsit}{The canonical (co-)resolutions of the differential
	forms.}
\label{nsit:(co)resns}
Twisting, truncating, and translating the Koszul complex
(\ref{eq:sheafKoszul}) appropriately provides locally free resolutions
and coresolutions of the $\calop$-modules $\Omega^{a}(a')$, for any
integers $a, a'$. These (co-)resolutions are represented by the
following quasi-isomorphisms of complexes, where we view
$\Omega^{a}(a')[0]$ as a complex concentrated in degree zero,
\begin{align}
\label{Klea}
\xymatrix{
&\Omega^{a}(a')[0] \ar[d]_{i_{a}}^{\simeq}\\
\big(0 \ar[r] &F_{a}(a'{-}a) \ar[r]& \cdots\ar[r]&  F(a'{-}1) \ar[r]&
\calop(a')\ar[r]& 0\big)[-a]} 
\intertext{and}
\label{K>a}
\xymatrix{
\big(0\ar[r]& |F|(a'{-}m)\ar[r] &\cdots\ar[r]
&F_{a+1}(a'{-}a{-}1)\ar[r]\ar[d]_{p_{a}}^{\simeq} & 0\big)[-a-1]\\ 
          &&&\Omega^{a}(a')[0]}
\end{align}

Denote $\KK_{\leqslant a}(a')$ the locally free \emph{coresolution}
displayed in (\ref{Klea}). It is thus the (cochain) complex
concentrated on the interval $[0,a]$ with non-zero terms  
$$
\KK_{\leqslant a}(a')^{i}=F_{a-i}(a'-a+i)
$$ 
for $i=0,\ldots,a$, and with $\calh^{0}(\KK_{\leqslant a}(a')) \cong
\Omega^{a}(a')$ the only possibly non-vanishing cohomology
$\calop$-module. 

Analogously the locally free \emph{resolution} displayed in
(\ref{K>a}) is denoted $\KK_{> a}(a')$. It is a (chain) complex
concentrated on the interval $[a-m+1,0]$ with terms
$$
\KK_{> a}(a')^{j} = F_{a+1-j}(a'-a+j-1)
$$ 
for $j=a-m+1,\ldots,0$, and with $\calh^{0}(\KK_{>a}(a')) \cong
\Omega^{a}(a')$ the only possibly non-vanishing cohomology. 

The proper signs of the differentials in these (co-)resolutions are
uniquely determined by the requirement that the mapping cone over the
composition 
\begin{align*}
i_{a}p_{a}\colon\KK_{> a}(a')\xto{p_{a}}\Omega^{a}(a')[0]
\xto{i_{a}}\KK_{\leqslant a}(a') 
\end{align*}
returns exactly $\KK(a')[-a]$.
\end{nsit}

\begin{nsit}{The higher direct images.}
The (co-)resolutions displayed in (\ref{Klea}) and (\ref{K>a}) combine
to produce, for any integers $a,b$, and $c$,  four ways to represent
the locally free $\calop$-modules\footnote{%
The curious looking notation will be justified later.}
\begin{align*}
\calm^{b}_{a}(-c)&=
\cHom_{\calop}(\Omega^{b-1}(b),\Omega^{a-1}(a))(-c)\\
&\cong \cHom_{\calop}(\Omega^{b-1}(b-1),\Omega^{a-1}(a-1))(-c)
\end{align*} 
as sole cohomology sheaf in total degree zero of a bicomplex\footnote{
 bicomplex = total complex obtained from the corresponding (naive)
 double complex.}  
of locally free $\calop$-modules, each supported in exactly one of
the four quadrants in the plane, representing a suitably twisted
``cut-out'' of the endomorphism complex $\End_{\calop}(\KK)$ of the
projective tautological Koszul complex. Choosing the appropriate
bicomplex, the total derived direct image of $\calm^{b}_{a}(-c)$ can
be obtained as the cohomology of just the (dual of the) direct image
of that bicomplex. 
\end{nsit}

The work is reduced considerably in view of the following.

\begin{lemma}
\label{lem:iden}
For any integers $a,b$, and $c$, there are canonical isomorphisms of
locally free $\calop$-modules
\begin{align}
\label{lem:iden1}
\calm^{b}_{a}(-c)&\cong \calm^{m+1-a}_{m+1-b}(-c)
\end{align}
and
\begin{align}
\label{lem:iden2}
\cHom_{\calop}(\calm^{b}_{a}(-c),\Omega^{m-1})&\cong
\calm^{a}_{b}(c-m)\otimes_{\calop}\pi^{*}|F|\,. 
\end{align}
\end{lemma}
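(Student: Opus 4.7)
The plan is to reduce both isomorphisms to the standard local self-duality of the exterior algebra on the rank-$(m-1)$ cotangent bundle $\Omega=\Omega^1_{\PP/K}$. Since $\PP$ is smooth of relative dimension $m-1$ over $\Spec K$, the wedge product gives a pairing
\[
\Omega^{a}\otimes_{\calop}\Omega^{m-1-a}\lto \Omega^{m-1}
\]
which is perfect because on every affine chart $\Omega$ is free of rank $m-1$, so this is just the familiar top-pairing on $\L(K^{m-1})$. Rewriting the resulting isomorphism using $\cHom_{\calop}(\calf,\calg)\cong\calf^{\svee}\otimes_{\calop}\calg$ for locally free $\calf$ yields the key identity
\begin{equation}
\label{eq:Omegadual}
(\Omega^{a})^{\svee}\;\cong\;\Omega^{m-1-a}\otimes_{\calop}(\Omega^{m-1})^{\svee}
\qquad(0\le a\le m-1).
\end{equation}
This is the only non-formal input.

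First I would prove \eqref{lem:iden1}. Using $\cHom(\calf,\calg)\cong\calf^{\svee}\otimes\calg$ and pulling twists out, one has
\[
\calm^{b}_{a}(-c)\;\cong\;(\Omega^{b-1})^{\svee}\otimes\Omega^{a-1}\otimes\calop(a-b-c)
\]
and similarly
\[
\calm^{m+1-a}_{m+1-b}(-c)\;\cong\;(\Omega^{m-a})^{\svee}\otimes\Omega^{m-b}\otimes\calop(a-b-c).
\]
Apply \eqref{eq:Omegadual} once to replace $(\Omega^{b-1})^{\svee}$ by $\Omega^{m-b}\otimes(\Omega^{m-1})^{\svee}$, and once (in its dualized form) to replace $\Omega^{a-1}$ by $(\Omega^{m-a})^{\svee}\otimes\Omega^{m-1}$. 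The factors $(\Omega^{m-1})^{\svee}$ and $\Omega^{m-1}$ cancel since $\Omega^{m-1}$ is invertible, and the two expressions match.

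Next, for \eqref{lem:iden2} I would argue directly without using \eqref{eq:Omegadual}, appealing only to the well-known identification $\Omega^{m-1}\cong \pi^{*}|F|\otimes\calop(-m)$ recalled in \ref{diffforms}, equivalently $\pi^{*}|F|\cong \Omega^{m-1}(m)$. Dualising $\calm^{b}_{a}(-c)$ against the line bundle $\Omega^{m-1}$ yields
\[
\cHom_{\calop}\bigl(\calm^{b}_{a}(-c),\Omega^{m-1}\bigr)
\;\cong\;\Omega^{b-1}\otimes(\Omega^{a-1})^{\svee}\otimes\calop(b-a+c)\otimes\Omega^{m-1}.
\]
On the other hand,
\[
\calm^{a}_{b}(c-m)\otimes\pi^{*}|F|
\;\cong\;(\Omega^{a-1})^{\svee}\otimes\Omega^{b-1}\otimes\calop(b-a+c-m)\otimes\Omega^{m-1}(m),
\]
and the two $\calop(m)$ factors cancel. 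No use of the perfect pairing is required for this part.

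The main obstacle is purely notational: keeping straight the twist shifts, the $\svee$'s, and the distinction between $a$ and $a-1$ in the definition of $\calm^{b}_{a}$. All invocations of \eqref{eq:Omegadual} and of $\pi^{*}|F|\cong\Omega^{m-1}(m)$ are canonical and natural in $F$, so the resulting isomorphisms in \eqref{lem:iden1} and \eqref{lem:iden2} are canonical as claimed.
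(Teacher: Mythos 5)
Your proof is correct and follows essentially the same route as the paper: part (1) rests on the perfect wedge pairing $\Omega^{a}\otimes\Omega^{m-1-a}\to\Omega^{m-1}$ (the paper states this as the induced duality $\Omega^{m-a}(m+1-a)\cong\cHom_{\calop}(\Omega^{a-1}(a),\Omega^{m-1}(m+1))$ and then applies $\cHom_{\calop}(-,\Omega^{m-1}(m+1))$ to both arguments), and part (2) is the same formal $\cHom$/tensor bookkeeping combined with $\Omega^{m-1}\cong|F|(-m)$. The only cosmetic difference is that you expand everything via $\cHom(\calf,\calg)\cong\calf^{\svee}\otimes\calg$ where the paper phrases the manipulations as adjunction isomorphisms; the non-formal inputs are identical.
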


\begin{proof}
The non-degenerate pairing resulting from exterior multiplication
\begin{equation}
\label{eq:pairing}
-\wedge-\colon \Omega^{a-1}(a)\otimes_{\calop}
\Omega^{m-a}(-a)\to \Omega^{m-1}
\end{equation}
induces for each integer $a$ a natural isomorphism
\begin{align}
\label{eq:dualityomega}
 \Omega^{m-a}(m+1-a)\xto{\cong}
 \cHom_{\calop}(\Omega^{a-1}(a),\Omega^{m-1}(m+1))
\end{align}
whence applying the contravariant functor
$\cHom_{\calop}(-,\Omega^{m-1}(m+1))$ to each argument returns an
isomorphism 
\begin{align*}
\xymatrix{
\calm^{b}_{a}(-c) =
\cHom_{\calop}(\Omega^{b-1}(b),\Omega^{a-1}(a))(-c)\ar[d]^{\cong}\\ 
\calm^{m+1-a}_{m+1-b}(-c)=
\cHom_{\calop}(\Omega^{m-a}(m+1-a),\Omega^{m-b}(m+1-b))(-c) 
}
\end{align*}
as desired. Similarly one obtains from the definition of
$\calm^{b}_{a}(-c)$ and adjunction the first three isomorphisms in
\begin{align*}
\cHom_{\calop}(\calm^{b}_{a}(-c),\Omega^{m-1})
\cong\ &
\cHom_{\calop}(\cHom_{\calop}(\Omega^{b-1}(b),\Omega^{a-1}(a)),\Omega^{m-1})(c)\\    
\cong\ & 
\cHom_{\calop}(\Omega^{a-1}(a)\otimes_{\calop}\cHom_{\calop}(\Omega^{b-1}(b),\calop), \Omega^{m-1})(c)\\
\cong\ & 
\cHom_{\calop}(\Omega^{a-1}(a),\Omega^{b-1}(b)\otimes_{\calop} \Omega^{m-1})(c)\\
\cong\ &
\cHom_{\calop}(\Omega^{a-1}(a),\Omega^{b-1}(b))(c-m)\otimes_{\calop}\pi^{*}|F|\\
=\ &
\calm^{a}_{b}(c-m)\otimes_{\calop}\pi^{*}|F|
\end{align*}
while the fourth one uses the isomorphism $\Omega^{m-1}\cong |F|(-m)$
recalled in \ref{diffforms}, and the final equality substitutes the
definition of $\calm^{a}_{b}$.
\end{proof}

After these preliminary considerations we turn now to the
determination of the higher direct images of $\calm^{b}_{a}(-c)$ with
respect to the projective morphism $\pi\colon\PP\to \Spec K$. The
result is as follows, and the remainder of this section contains its
detailed proof, followed by some immediate consequences.
 
\begin{theorem}
\label{thm:directimage}
For any integers $a,b,c$, and each $\nu\in\ZZ$, the higher direct
image $\R^{\nu}\pi_{*}(\calm^{b}_{a}(-c))$ of the locally free
$\calop$-module
$$
\calm^{b}_{a}(-c)=\cHom_{\calop}\left(\Omega^{b-1}(b),\Omega^{a-1}(a)\right)(-c)
$$ 
is a finite projective $K$-module. 
In particular, the higher direct images
$\R^{\nu}\pi_{*}(\calm^{b}_{a}(-c))$ are non-zero for at most one value
of $\nu$. In case $a+b\ge m+1$, the precise situation is as follows.
\begin{enumerate}[\quad\rm (1)]
\item For $c < 0$, only the direct image
  $\R^{0}\pi_{*}(\calm^{b}_{a}(-c))= \pi_{*}(\calm^{b}_{a}(-c))$
  itself is non-zero. 
\item
\label{thm:di2}
For $0\le c < a-b$, for $m-b< c < a$, and for $a+m-b < c\le m$, all
(higher) direct images vanish.
\item For $\max\lbrace 0,a-b\rbrace \le c \le m-b$, the only non-vanishing higher
direct image is
\begin{align*}
\R^{c}\pi_{*}\left(\calm^{b}_{a}(-c)\right)\cong F^{\svee}_{c+b-a}\,.
\end{align*}
\item For $a\le c \le \min\lbrace a+m-b,m\rbrace $, the only non-vanishing higher
  direct image is 
\begin{align*}
\R^{c-1}\pi_{*}(\calm^{b}_{a}(-c))\cong F^{\svee}_{c+b-a}\,.
\end{align*}
\item For $m < c$, only the highest direct image is non-zero, and it satisfies
\begin{align*}
\R^{m-1}\pi_{*}(\calm^{b}_{a}(-c))\cong
\pi_{*}(\calm^{a}_{b}(c-m))^{\svee}\otimes |F|^{\svee}\,. 
\end{align*}
\end{enumerate}
The case $a+b < m+1$ reduces to the previous one in light of
Lemma~\ref{lem:iden}.
\end{theorem}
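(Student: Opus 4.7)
The plan is to realise $\R\pi_*\calm^{b}_{a}(-c)$ as the pushforward of a bicomplex built from the (co-)resolutions of \ref{nsit:(co)resns}. For each of the four choices of (co-)resolution for $\Omega^{a-1}(a)$ and $\Omega^{b-1}(b)$, applying $\cHom_{\calop}(-,-)(-c)$ gives a bicomplex of locally free $\calop$-modules whose total cohomology is concentrated in degree zero and equals $\calm^{b}_{a}(-c)$. Every term has the shape $F_{i}\otimes F^{\svee}_{j} \otimes \calop(k)$, so Serre's calculation on $\PP(F^{\svee})$ — namely $\pi_{*}\calop(k)\cong\SS^{k}$ for $k\geq 0$, $\R^{m-1}\pi_{*}\calop(k)\cong |F|^{\svee}\otimes(\SS^{-k-m})^{\svee}$ for $k\leq -m$, and zero otherwise — makes each term contribute in at most one direct-image degree. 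This already establishes the general finite-projectivity statement.

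The key observation is that, after pushing forward, the columns (or rows, depending on the bicomplex) become, up to a global twist by some $F_{\bullet}$ or $F_{\bullet}^{\svee}$, homogeneous strands of the \emph{affine} tautological Koszul complex $\KK(\id_{F})$ introduced in \ref{nsit:affineKoszul}. By the proposition quoted there, such a strand of internal degree $\alpha$ is split exact whenever $\alpha\neq 0$, and reduces to a single copy of $K$ in cohomological degree zero when $\alpha=0$. Each bicomplex therefore collapses, after pushforward, to a single surviving strand whose internal degree is a linear function of $c$; its cohomology — either zero or exactly one $F^{\svee}_{\ell}$ — is the only contribution to $\R\pi_{*}\calm^{b}_{a}(-c)$.

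The four choices of bicomplex then partition the $c$-axis. For $c<0$ one uses both coresolutions: every twist is non-negative, $\pi_{*}$ is exact on the bicomplex, and one reads off $\R^{0}\pi_{*}\calm^{b}_{a}(-c)=\pi_{*}\calm^{b}_{a}(-c)$, proving (1). Case (5), $c>m$, is the Serre dual of (1), obtained by combining Lemma \ref{lem:iden}\eqref{lem:iden2} with relative Serre duality $\R^{m-1-\nu}\pi_{*}\cHom(-,\omega_{\pi})\cong (\R^{\nu}\pi_{*}(-))^{\svee}$. For case (3) one uses the resolution in the covariant slot and the coresolution in the contravariant slot; the surviving strand has internal degree zero precisely for $\max\{0,a-b\}\leq c\leq m-b$, it sits in column $c$, and the residual differential realises the isomorphism $\R^{c}\pi_{*}\calm^{b}_{a}(-c)\cong F^{\svee}_{c+b-a}$. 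Case (4) is the Serre dual of (3) via Lemma \ref{lem:iden}\eqref{lem:iden2}, producing the same exterior power shifted one cohomological degree down. The three sub-ranges in (2) are precisely the intervals on which every surviving strand has strictly positive internal degree, forcing all $\R^{\nu}\pi_{*}\calm^{b}_{a}(-c)$ to vanish.

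The main obstacle is the bookkeeping: matching the break-points $\{0,a-b,m-b,a,a+m-b,m\}$ with the precise internal degree of the surviving strand for each of the four quadrant-bicomplexes, and verifying that the remnant differential returns exactly $F^{\svee}_{c+b-a}$ rather than some other exterior power. Because the collapse in each quadrant is governed by the \emph{same} strand proposition, this verification reduces to index accounting rather than further cohomological input, and the reduction of the case $a+b<m+1$ to the case $a+b\ge m+1$ is immediate from Lemma \ref{lem:iden}\eqref{lem:iden1}.
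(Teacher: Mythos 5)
Your overall architecture is the paper's own: represent $\calm^{b}_{a}(-c)$ by a quadrant bicomplex built from the Koszul (co\nobreakdash-)resolutions of \S\ref{nsit:(co)resns}, push forward, identify the rows with (truncations of) homogeneous strands of the affine Koszul complex, and use relative Serre duality with Lemma~\ref{lem:iden} to halve the case analysis. The genuine gap is in your assignment of bicomplexes to ranges of $c$, and it bites exactly in case (3). For $\max\{0,a-b\}\le c\le m-b$ the correct choice is the \emph{same} bicomplex as for $c<0$, namely both coresolutions, $\cHom_{\calop}(\KK_{\le b-1}(b),\KK_{\le a-1}(a))(-c)$: its twists lie in $[1-b-c,\,a-1-c]$, hence are $\ge 1-m$ precisely when $c\le m-b$, so every term is $\pi_{*}$-acyclic and exactly one row --- the strand of internal degree $a-1-j-c=0$, at $j=a-1-c$ --- survives, contributing $F^{\svee}_{c+b-a}$ in total degree $c$ with no residual differential needed. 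Your stated choice for (3), resolving the covariant slot $\Omega^{a-1}(a)$ and coresolving the contravariant slot $\Omega^{b-1}(b)$, yields the third-quadrant bicomplex with terms $F^{b-1-j}_{a+i}(-i-j-c-1)$; all its twists are negative, and the term at $(i,j)=(m-a,b-1)$ has twist $a-b-m-c\le -m$ throughout case (3), so the bicomplex is not termwise $\pi_{*}$-acyclic, its contributions land in $\R^{m-1}\pi_{*}$, and no ``surviving strand of internal degree zero in column $c$'' appears. (If you instead meant the first-quadrant bicomplex $\cHom_{\calop}(\KK_{>b-1}(b),\KK_{\le a-1}(a))(-c)$, that one is acyclic only for $c>m-b$ --- which is why the paper reserves it for the middle interval of (2); for $c\le m-b$ several of its rows are strands truncated at the top and carry cohomology in distinct total degrees, so the spectral sequence does not collapse on the first page.) The repair is to use the $({+}{-})$ bicomplex on all of $c\le m-b$, the $({+}{+})$ bicomplex on $m-b<c<a$, and duality for $c\ge a$; but the step as you describe it for (3) would fail.

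A second, smaller gap: the claim that the termwise analysis ``already establishes the general finite-projectivity statement'' is too quick, because in this section $K$ is an arbitrary commutative base ring and the cohomology of a complex of finite projective $K$-modules need not be projective. In the ranges where the answer is an explicit exterior power this is harmless, but for $c<0$ (and dually $c>m$) one must argue separately --- as in Proposition~\ref{c<0} --- that $\pi_{*}(\calm^{b}_{a}(-c))$ is $K$-flat by compatibility of the whole computation with base change, hence projective because it is finitely presented.
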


\begin{remark}
\label{rem:ext}
As the target of $\pi$ is affine and each $\calm^{b}_{a}(-c)$ is
locally free, the usual local-global spectral sequence yields natural
isomorphisms of $K$-modules 
\begin{align*}
\R^{\nu}\pi_{*}(\calm^{b}_{a}(-c))\cong
\Ext^{\nu}_{\calop}(\Omega^{b-1}(b'),\Omega^{a-1}(a')) 
\end{align*}
for any integers $\nu$ and $a',b'$ with $b'-a'=c+b-a$.

On the other hand, as the calculation of higher direct images is local
in the base, the reader may as well replace $\Spec K$ in Theorem
\ref{thm:directimage} by an arbitrary scheme with a locally free sheaf
$F$ of constant rank $m$ on it to obtain the analogous result for the
higher direct images relative to a projective bundle over an arbitrary
base scheme. 
\end{remark}

\begin{remark}
\label{rem:involution}
The results of the Theorem are invariant under the involution
$(a,b,c)\leftrightarrow(m+1-b,m+1-a,c)$ in view of Lemma
\ref{lem:iden}. Note further that either the first or the last
interval in Theorem \ref{thm:directimage}(\ref{thm:di2}) is {\em
empty}, depending on whether or not $a\le b$.
\end{remark}


\begin{proof}[Proof of Theorem \ref{thm:directimage}]
In view of the foregoing remark, we may assume without loss of
generality that $a+b\ge m+1$.  Using Grothendieck-Serre duality\
for the projective morphism $\pi$ we show next that \emph{it suffices
  to establish the claims for $c\le \frac{1}{2}(a-b+m)$}.  

Namely, assume we have shown that in the indicated range the higher
direct images are finite projective $K$-modules and that at most one
higher direct image $\R^{\nu}\pi_{*}(\calm^{b}_{a}(-c))$ is not zero
for given $a,b,c$. Using
\begin{align*}
\R^{\nu}\pi_{*}(\Omega^{m-1}) \cong
\begin{cases}
0&\text{for $\nu\neq m-1$}\\
K&\text{for $\nu = m-1$}
\end{cases}
\end{align*}
the duality theorem for projective morphisms yields that the natural
pairing of complexes of $K$-modules 
\begin{align*}
\R\pi_{*}(\calm^{b}_{a}(-c))\otimes^{\LL}
\R\pi_{*}(\cHom_{\calop}(\calm^{b}_{a}(-c),\Omega^{m-1}))\to 
\R\pi_{*}(\Omega^{m-1})\simeq K[1-m]
\end{align*}
is non-degenerate. The isomorphism (\ref{lem:iden2}) together with the
projection formula $\R\pi_{*}(-\otimes_{\calop}\pi^{*}|F|)\cong
\R\pi_{*}(-)\otimes |F|$ let us rewrite this pairing as
\begin{align*}
\R\pi_{*}(\calm^{b}_{a}(-c))\otimes^{\LL}
\R\pi_{*}(\calm^{a}_{b}(c-m))\otimes^{\LL} |F| \to K[1-m]\,.
\end{align*}

Accordingly, if the total direct image is represented by a single
finite projective $K$-module in cohomological degree $d$, so that
$\R\pi_{*}(\calm^{b}_{a}(-c))\simeq
\R^{d}\pi_{*}\calm^{b}_{a}(-c)[-d]$, we read off
\begin{align*}
\R^{\nu}\pi_{*}(\calm^{a}_{b}(c-m))\cong
\begin{cases}
0&\text{if $\nu\neq m-1-d$}\\
(\R^{d}\pi_{*}\calm^{b}_{a}(-c))^{\svee}\otimes |F|^{\svee}&\text{if
  $\nu =  m-1-d$}\,.
\end{cases}
\end{align*}

Under the involution $(a,b,c)\mapsto (a',b',c') = (b,a,m-c)$, the range 
\begin{align*}
a+b\ge m+1\quad&,\quad \max\lbrace 0,a-b\rbrace \le c\le m-b 
\intertext{is interchanged with the range}
a'+b'\ge m+1\quad&,\quad a'\le  c'\le \min\lbrace a'-b'+m, m\rbrace \,.
\end{align*}
Now assuming that the conclusion of (3) holds, one finds on the one hand
\begin{align*}
(\R^{c}\pi_{*}(\calm^{b}_{a}(-c))^{\svee}\otimes |F|^{\svee}
&\cong 
\R^{m-1-c}\pi_{*}(\calm^{a}_{b}(c-m))\\
&\cong 
\R^{c'-1}\pi_{*}(\calm^{b'}_{a'}(-c'))
\end{align*}
and on the other
\begin{align*}
(F^{\svee}_{c+b-a})^{\svee}\otimes |F|^{\svee}
&\cong 
F_{a'-b'+m-c'}\otimes |F|^{\svee}\cong F_{b'-a'+c'}^{\svee}
\end{align*}
with the last isomorphism due to the pairing induced by exterior
multiplication among the exterior powers of $F$. 

In this way, (3) and (4) are seen to be dual statements. Similarly,
the statements in (1) and (5) are dual to each other, while in (2) the
statements for the first and third interval are interchanged, the
statement for the middle one being selfdual.

It thus remains to prove the theorem for the range $a+b\ge m+1$ and
$c\le \frac{1}{2}(a-b+m)$.  The outline of the argument here is as
follows.




Depending on whether $c \le m-b$ or $m-b < c$, we choose a different
bicomplex to represent $\calm^{b}_{a}(-c)$ in the derived category of
$\PP$. The choice is made so that the individual terms of the
representing bicomplex are \emph{$\pi_{*}$-acyclic}, that is, the
direct image itself will be the only non-vanishing (higher) direct
image, and the bicomplex resulting from applying $\pi_{*}$ will
represent $\R\pi_{*}(\calm^{b}_{a}(-c))$ as a complex of finite
projective $K$-modules. We then analyse this bicomplex along its
``rows''. Each of these is the tensor product over $K$ of a finite
projective $K$-module with a (subcomplex of a) homogeneous strand of
the affine tautological Koszul complex $\KK(\idmap_{F})$, thus, is a
complex with easily determined cohomology. It then remains to assemble
the information so gained. 

Now we turn to the details, where we freely use the well known results
on the higher direct images of the locally free $\calop$-modules
$\calop(i), i\in \ZZ$; see \cite[Prop. 2.1.12]{EGAIII} for the general
case treated here. 

\begin{sit}
\label{sit:E2prop}
With $a+b\ge m+1$, assume first $c\le m-b$. Choosing for each of
$\Omega^{a-1}(a)$ and $\Omega^{b-1}(b)$ the appropriate coresolution
(\ref{Klea}), one can represent $\calm^{b}_{a}(-c)$ by  the following
bicomplex with non-zero terms concentrated in the fourth quadrant: 
\begin{align*}
\EE^{i,-j}_{\scriptscriptstyle{{+}{-}}} &=
\cHom_{\calop}(\KK_{\leqslant b-1}(b), \KK_{\leqslant
  a-1}(a))^{i,-j}(-c)\\ 
&\cong F_{a-1-i}(i+1)\otimes F^{\svee}_{b-1-j}(-1-j)(-c) \\
&\cong F^{b-1-j}_{a-1-i}(i-j-c)\quad\text{for $i,j\ge 0$}\,.
\end{align*}
This bicomplex evidently has the following properties:
\begin{enumerate}[\quad\rm (a)]
\item
\label{E2prop:1}
Each term $\EE^{i,-j}_{\scriptscriptstyle{+-}}$ is the twist by a
power of the distinguished very ample line bundle on $\PP$ of an
$\calop$-module induced from a finite projective $K$-module; 
\item
\label{E2prop:2}
The twist occurring in $\EE^{i,-j}_{\scriptscriptstyle{+-}}$ depends
only upon the total degree $i-j$;
\item 
\label{E2prop:3}
The bicomplex is  supported on the rectangle $[0,a-1]\times [-b+1,0]$
in the $(i,j)$-plane. 
\item 
\label{E2prop:4}
The twists occurring in non-zero terms range over the integers from
$1-b-c$ to $a-1-c$, an integral interval of length $a+b-2$. 
\item
\label{E2prop:5}
As $c \le m-b$ and $a+b\ge m+1$ by assumption, the possible twists
$?(t)$ occurring in non-zero terms of
$\EE^{i,-j}_{\scriptscriptstyle{+-}}$ satisfy $t \ge 1-m$, whence for
each such term the higher direct images \emph{vanish},
$\R^{\nu}\pi_{*}(\EE^{i,-j}_{\scriptscriptstyle{+-}}) =0$ for $\nu\neq
0$.
\end{enumerate}

Property (\ref{E2prop:5}) implies in particular that the total higher
direct image of $\calm^{b}_{a}(-c)$ is represented in the derived
category of $K$ by $\pi_{*}$ of this bicomplex,
\begin{align*}
\R\pi_{*}(\calm^{b}_{a}(-c)) \simeq
\pi_{*}\EE^{\scriptscriptstyle{\bullet,\bullet}}_{\scriptscriptstyle{+-}}\,. 
\end{align*}
\end{sit}

\begin{sit}
The cohomology of $
\pi_{*}\EE^{\scriptscriptstyle{\bullet,\bullet}}_{\scriptscriptstyle{+-}}$
is now readily determined by looking first at the ``rows'' of the
bicomplex. Fixing $j\in [0,b-1]$, the corresponding (row) complex
$\pi_{*}\EE^{\sbullet,-j}_{\scriptscriptstyle{{+}{-}}}$ is
concentrated on the line segment $[0,a-1]\times\lbrace -j\rbrace $ in the
$(i,j)$-plane and has the form
\begin{align}
\label{rowcomplex}
(0\to F_{a-1}\otimes\SS^{-j-c}\to F_{a-2}\otimes\SS^{1-j-c}
\to\cdots\to\SS^{a-1-j-c}\to 0)\otimes F^{\svee}_{b-1-j}[j]\,.
\end{align}
This complex is, up to the signs of the differentials, the translation
by $[j]$ of the tensor product over $K$ of $F^{\svee}_{b-1-j}$ with a
subcomplex of the homogeneous strand in internal degree $a-1-j-c$ in
the affine tautological Koszul complex $\KK(\idmap_{F})$ recalled in
\ref{nsit:projKoszul}. That strand of $\KK(\idmap_{F})$ is exact
except possibly at its ends. More precisely, the situation is as
follows.

\begin{lemma}
For $(i,j)\in [0,a-1]\times [0,b-1]$ and $c\le m-b$, the cohomology
$H^{i,-j}(\pi_{*}\EE^{\sbullet,-j}_{\scriptscriptstyle{{+}{-}}})$ of
the row complex just displayed in (\ref{rowcomplex}) is non-zero only
if
\begin{enumerate}[\quad\rm (1)]
\item $(i,-j)=(0,-j)$ with $-j>c$, and then
\begin{align*}
H^{0,-j}(\pi_{*}\EE^{\sbullet,-j}_{\scriptscriptstyle{{+}{-}}})
&\cong
\pi_{*}(\Omega^{a-1}(-j-c-1))\otimes F^{\svee}_{b-1-j}\\
&\cong
\ker\left(F_{a-1}\otimes\SS^{-j-c}\to
F_{a-2}\otimes\SS^{1-j-c}\right)\otimes F^{\svee}_{b-1-j}\\ 
&\cong
\cok\left(F_{a+1}\otimes\SS^{-2-j-c}\to
F_{a}\otimes\SS^{-1-j-c}\right)\otimes F^{\svee}_{b-1-j} 
\end{align*}
or
\item $(i,-j)=(a-1,c- a+1)$, in which case
\begin{align*}
H^{a-1,c-a+1}(\pi_{*}\EE^{\sbullet,-j}_{\scriptscriptstyle{{+}{-}}})
 \cong F^{\svee}_{b-1-j} 
 \cong F^{\svee}_{c+b-a}\,.
\end{align*}
\end{enumerate}
In either case, the cohomology is a finite projective $K$-module.
\qed
\end{lemma}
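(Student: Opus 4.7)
The plan is to identify the complex displayed in \eqref{rowcomplex}, after stripping off the flat tensor factor $F^\svee_{b-1-j}$ and the cosmetic translation $[j]$, with a truncation of a homogeneous strand of the affine tautological Koszul complex $\KK(\id_F)$ from~\S\ref{nsit:affineKoszul}. Rewriting each surviving term $F_{a-1-i}\otimes \SS^{i-j-c}$ as $F_k\otimes \SS^{d-k}$ via the substitution $k=a-1-i$, the row is precisely the restriction of the internal-degree-$d$ strand from \eqref{eq:strand} to the positions $0\le k\le a-1$, where $d = a-1-j-c$.

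Once this identification is in place, the Proposition immediately following~\eqref{eq:strand} does the heavy lifting: the strand is split exact whenever $d\ne 0$ and collapses to a single copy of $K\cong\SS^0$ in cohomological degree $0$ when $d=0$. The argument then splits on the sign of $d$. If $d<0$ every term vanishes and the row has no cohomology. If $d=0$, only the rightmost surviving term is non-zero, producing the lone cohomology $F^\svee_{b-1-j}\cong F^\svee_{c+b-a}$ at position $i=a-1$; this yields case~(2). If $0<d\le a-1$, the truncation window already contains all non-zero terms, the row reproduces the entire split-exact strand, and its cohomology vanishes. If $d\ge a$, equivalently $-j>c$, the strand's terms with $k\ge a$ lie outside the truncation window and are discarded; since truncating a split-exact bounded complex at the left concentrates all cohomology at the new boundary, the only surviving cohomology is at $i=0$, equal to the kernel of the leftmost surviving differential, and equivalently to the image coming in from $F_a\otimes\SS^{-1-j-c}$ or the cokernel of $F_{a+1}\otimes\SS^{-2-j-c}\to F_a\otimes\SS^{-1-j-c}$. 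This delivers the kernel/cokernel descriptions in case~(1).

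To re-express this boundary cohomology as $\pi_*(\Omega^{a-1}(\,\cdot\,))\otimes F^\svee_{b-1-j}$, note that the truncated Koszul strand coincides term by term with $\pi_*$ of an appropriate twist of the coresolution $\KK_{\leq a-1}$ from~\eqref{Klea}, whose $H^0$ is $\Omega^{a-1}$ at that twist. The hypothesis $c\le m-b$, combined with $j\in[0,b-1]$ and $i\in[0,a-1]$, keeps every internal twist appearing in the row bounded below by $1-m$, ensuring that each term is $\pi_*$-acyclic. Hence $\pi_*$ commutes with cohomology and $H^0$ of the row computes $\pi_*\Omega^{a-1}$ at the appropriate twist. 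Finite projectivity as a $K$-module is then automatic: every differential of a split-exact strand has projective kernel and cokernel, so each cohomology module is again a direct summand of a finite projective $K$-module.

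The main obstacle is purely combinatorial bookkeeping---tracking the signs of the Koszul differential, the twist shifts implicit in $\cHom_{\calop}(\KK_{\leq b-1}(b)^j,-)(-c)$, and the re-indexing between the truncated strand and the coresolution $\KK_{\leq a-1}$, so that the three equivalent descriptions in case~(1) genuinely agree. Once those alignments are pinned down, the entire lemma drops out of the single structural input that every nontrivial homogeneous strand of $\KK(\id_F)$ is split exact.
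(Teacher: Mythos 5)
Your proposal is correct and follows the paper's own argument: the paper likewise identifies each row, after factoring out $F^{\svee}_{b-1-j}$ and the shift $[j]$, with the brutal truncation to positions $k=a-1-i\in[0,a-1]$ of the (split-exact) internal-degree-$(a-1-j-c)$ strand of $\KK(\id_F)$, so that cohomology can survive only at the truncation boundary $i=0$ (when the strand overshoots the window, i.e.\ $-j>c$) or for the degenerate strand $a-1-j-c=0$, i.e.\ at $(i,-j)=(a-1,c-a+1)$. The only detail you leave as ``the appropriate twist'' is that the boundary kernel is $\pi_{*}\bigl(\Omega^{a-1}(a)(-1-j-c)\bigr)=\pi_{*}(\Omega^{a-1}(a-1-j-c))$, which follows from left-exactness of $\pi_{*}$ applied to the inclusion $\Omega^{a-1}(a')\hookrightarrow F_{a-1}(a'-a+1)$, no acyclicity needed for that step.
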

\end{sit}

\begin{nsit}{Visualization.}
\label{diag:1}
The reader might find it helpful to contemplate the following
visualisations in the $(i,j)$-plane, where the place with non-zero
homology with respect to the horizontal differential is marked
$\mathsf x$, those places with
$\pi_{*}\EE^{i,j}_{\scriptscriptstyle{{+}{-}}}\neq 0$ but no
horizontal homology are marked by $\bullet$, and the symbol $\circ$
refers to entries where
$\pi_{*}\EE^{i,j}_{\scriptscriptstyle{{+}{-}}}$ is zero.

We begin with the simplest case, when $0\le c \le m-b$, whence in
particular $0\le c \le a-1$.  We get then the following picture:
\begin{equation}
\label{pict:simplestpicture}
\pi_{*}\EE^{\scriptscriptstyle{\bullet,\bullet}}_{\scriptscriptstyle{+-}}
\quad\equiv\quad
\begin{array}{|c|cccccccc|}
\hline
\scriptstyle{0}&\circ&\cdots&\circ&\bullet&\bullet&\cdots&\bullet&\bullet\\
&\circ&\cdots&\circ&\circ&\bullet&\cdots&\bullet&\bullet\\
&\vdots&\ddots&\vdots&\vdots&\vdots&\ddots&\vdots&\vdots\\
&\circ&\cdots&\circ&\circ&\circ&\cdots&\bullet&\bullet\\
\scriptstyle{c-a+1}&\circ&\cdots&\circ&\circ&\circ&\cdots&\circ&\mathsf x\\
&\circ&\cdots&\circ&\circ&\circ&\cdots&\circ&\circ\\
&\vdots&\ddots&\vdots&\vdots&\vdots&\ddots&\vdots&\vdots\\
\scriptstyle {1-b}&\circ&\cdots&\circ&\circ&\circ&\cdots&\circ&\circ\\
\hline &\scriptstyle 0&&&\scriptstyle{c}&&&&
\scriptstyle {a-1}\\
\hline
\end{array}
\end{equation}
In other words, there is at most one non-vanishing cohomology group
occurring in those rows, whence the entire bicomplex equally only
carries this cohomology. Note that that cohomology indeed appears if,
and only if, $\max\lbrace 0,a-b\rbrace \le c$.
\end{nsit}

In summary, we read off the following result that settles the claims
in Theorem \ref{thm:directimage} for $c$ in the interval $[0,m-b]$.

\begin{proposition}
\label{prop:summary}
For $a+b\ge m+1$ and $0\le c < a-b$, the higher direct images of
$\calm^{b}_{a}(-c)$ all vanish, while for $\max\lbrace 0,a-b\rbrace \le c\le m-b$
the only non-vanishing one is the finite projective $K$-module
\begin{align*}
\R^{c}\pi_{*}(\calm^{b}_{a}(-c))\cong F^{\svee}_{c+b-a}\,.
\end{align*}
\qed
\end{proposition}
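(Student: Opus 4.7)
The plan is to leverage the bicomplex $\EE^{\bullet,\bullet}_{\scriptscriptstyle{+-}}$ already constructed, whose terms represent $\calm^b_a(-c)$ via the coresolutions $\KK_{\leq a-1}(a)$ and $\KK_{\leq b-1}(b)$. Under the hypothesis $0 \le c \le m-b$ together with $a+b \ge m+1$, property (\ref{E2prop:5}) in \ref{sit:E2prop} guarantees that every term $\EE^{i,-j}_{\scriptscriptstyle{+-}}$ is $\pi_*$-acyclic, so one obtains $\R\pi_*(\calm^b_a(-c)) \simeq \pi_*\EE^{\bullet,\bullet}_{\scriptscriptstyle{+-}}$, a bicomplex of finite projective $K$-modules. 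It then suffices to compute its cohomology.

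I would proceed via the ``rows-first'' spectral sequence of this bicomplex. The lemma immediately above identifies the row cohomology: for each fixed $j \in [0, b-1]$, the row $\pi_*\EE^{\bullet,-j}_{\scriptscriptstyle{+-}}$ carries at most one nonzero cohomology entry, either at the leftmost column $i = 0$ (requiring $-j > c$) or at the rightmost column $i = a-1$ (requiring $-j = c-a+1$). Since we are in the range $c \ge 0$, the first alternative would force $j < 0$ and is vacuous; the second occurs precisely when $a-b \le c \le a-1$, and the upper bound is automatic from $c \le m-b$ together with $a+b \ge m+1$. Consequently, for $0 \le c < a-b$ every row is acyclic, hence the bicomplex is acyclic and all higher direct images vanish, while for $\max\lbrace 0, a-b \rbrace \le c \le m-b$ the row spectral sequence has exactly one nonzero entry.

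In this latter range the $E_1$-page is concentrated at the single cell $(a-1, c-a+1)$, whose value, read off from the lemma with $j = a-1-c$, is $F^{\svee}_{b-1-j} = F^{\svee}_{c+b-a}$. All subsequent differentials vanish for dimension reasons, so the spectral sequence degenerates and yields $\R^{c}\pi_*(\calm^b_a(-c)) \cong F^{\svee}_{c+b-a}$ in total cohomological degree $(a-1) + (c-a+1) = c$, as asserted.

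The main obstacle is really the bookkeeping on the inequalities: one must confirm that the single surviving cohomology cell actually lies inside the rectangular support $[0, a-1] \times [-b+1, 0]$ of the bicomplex, that it is genuinely unique so that no later differential can mix in contributions, and that the ``left-endpoint'' row cohomology is absent throughout the prescribed range. Both checks reduce to the assumptions $0 \le c \le m-b$ and $a+b \ge m+1$, and once they are in hand the proposition follows at once from the concentration of the $E_1$-page.
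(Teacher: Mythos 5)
Your proposal is correct and follows essentially the same route as the paper: the paper likewise represents $\calm^b_a(-c)$ by the fourth-quadrant bicomplex $\EE^{\bullet,\bullet}_{\scriptscriptstyle{+-}}$, uses the acyclicity property (e) of \S\ref{sit:E2prop} to reduce to $\pi_*$ of that bicomplex, and reads off from the row-cohomology lemma and the picture \eqref{pict:simplestpicture} that at most the single cell $(a-1,c-a+1)$ survives, giving $F^\svee_{c+b-a}$ in total degree $c$ exactly when $\max\lbrace 0,a-b\rbrace\le c$. The inequality checks you flag (vacuity of the left-endpoint case for $c\ge 0$, and $c\le a-1$ from $c\le m-b$ and $a+b\ge m+1$) are precisely the ones implicit in the paper's diagram.
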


\begin{sit}
In case $c<0$, the corresponding diagram has the form
\[
\pi_{*}\EE^{\scriptscriptstyle{\bullet,\bullet}}_{\scriptscriptstyle{+-}}
\quad\equiv\quad
\begin{array}{|c|ccccccc|}
\hline
\scriptstyle{0}&\mathsf x&\bullet&\bullet&\cdots&\bullet&\bullet&\bullet\\
&\vdots&\vdots&\vdots&\ddots&\vdots&\vdots&\vdots\\
\scriptstyle c&\mathsf x&\bullet&\bullet&\cdots&\bullet&\bullet&\bullet\\
&\circ&\bullet&\bullet&\cdots&\bullet&\bullet&\bullet\\
&\circ&\circ&\bullet&\cdots&\bullet&\bullet&\bullet\\
&\vdots&\vdots&\vdots&\ddots&\vdots&\vdots&\vdots\\
&\circ&\circ&\circ&\cdots&\bullet&\bullet&\bullet\\
&\circ&\circ&\circ&\cdots&\circ&\bullet&\bullet\\
\scriptstyle{c-a+1}&\circ&\circ&\circ&\cdots&\circ&\circ&\mathsf x\\
&\circ&\circ&\circ&\cdots&\circ&\circ&\circ\\
&\vdots&\vdots&\vdots&\ddots&\vdots&\vdots&\vdots\\
\scriptstyle {1-b}&\circ&\circ&\circ&\cdots&\circ&\circ&\circ\\
\hline &\scriptstyle 0&&&&&&
\scriptstyle a-1\\
\hline
\end{array}
\]
with non-zero cohomology along the rows thus occurring only for total
degrees in the interval $[\max\lbrace 1-b,c\rbrace ,0]$.

Now $\R^{\nu}\calm=0$ for $\nu < 0$ and any $\calop$-module $\calm$,
whence the bicomplex
$\pi_{*}\EE^{\scriptscriptstyle{\sbullet,\sbullet}}_{\scriptscriptstyle{+-}}$
that represents $\R\pi_{*}(\calm^{b}_{a}(-c))$ only admits cohomology
in non-negative degrees. Combining these two facts, there can be at
most a single degree, namely $0$, in which there is non-vanishing
cohomology. This amounts to the following result.
\end{sit}

\begin{proposition}
\label{c<0}
For $a+b\ge m+1$ and $c < 0$, the higher direct images of
$\calm^{b}_{a}(-c)$ vanish except possibly\footnote{We will see below
  in Remark~\ref{rem:backhanded} that they are indeed non-zero!} 
for the direct image
$\pi_{*}(\calm^{b}_{a}(-c))$ itself, a finite projective $K$-module
of constant rank.
\end{proposition}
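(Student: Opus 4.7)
The plan is to exploit the bicomplex just constructed. Since $c < 0 \le m-b$, the hypotheses of \ref{sit:E2prop} are satisfied and the bicomplex $\pi_{*}\EE^{\sbullet,\sbullet}_{+-}$, consisting of finite projective $K$-modules, represents $\R\pi_{*}(\calm^{b}_{a}(-c))$ in the derived category. The row-cohomology analysis from the preceding lemma and from the diagram in the current \textsf{sit} locates all non-zero $E_{2}$-terms of the associated spectral sequence at positions whose total degree lies in $[\max\{1-b,c\},0]$, in particular $\le 0$. Consequently the abutment $\R^{\nu}\pi_{*}(\calm^{b}_{a}(-c))$ vanishes for every $\nu > 0$.

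I would then combine this with the elementary vanishing $\R^{\nu}\pi_{*}(-)=0$ for $\nu < 0$, valid for any quasi-coherent $\calop$-module. The two bounds sandwich the non-zero cohomology into the single degree $\nu = 0$, yielding the main assertion: among the higher direct images of $\calm^{b}_{a}(-c)$, only $\pi_{*}(\calm^{b}_{a}(-c))$ can fail to vanish.

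For the finite-projective-of-constant-rank claim, I would observe that $\pi_{*}\EE^{\sbullet,\sbullet}_{+-}$ is a bounded bicomplex of finite projective $K$-modules whose total cohomology is concentrated in degree zero, so that $\pi_{*}(\calm^{b}_{a}(-c))$ is a perfect $K$-complex concentrated in a single degree and hence itself a finite projective $K$-module. Its rank, being an Euler characteristic computed from the constant $K$-ranks of the exterior and symmetric powers of $F$ appearing in the bicomplex entries, is manifestly independent of the point of $\Spec K$. The only subtlety I anticipate is confirming that the collapse of the spectral sequence onto the single degree $\nu = 0$ is stable under arbitrary base change $K \to K'$; this is automatic because both the row-cohomology vanishings and the lower bound $\R^{\nu}=0$ for $\nu<0$ commute with such base change, so the hypothesis $c < 0 \le m-b$ is preserved and the spectral sequence argument runs identically on every fiber.
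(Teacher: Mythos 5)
Your vanishing argument is exactly the paper's: since $c<0\le m-b$ the fourth-quadrant bicomplex $\pi_{*}\EE^{\sbullet,\sbullet}_{\scriptscriptstyle{+-}}$ of finite projective $K$-modules computes $\R\pi_{*}(\calm^{b}_{a}(-c))$, its row cohomology lives only in total degrees in $[\max\lbrace 1-b,c\rbrace,0]$, and the universal vanishing $\R^{\nu}\pi_{*}=0$ for $\nu<0$ squeezes all cohomology into degree $0$. No issue there.

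The step that does not hold as written is the deduction of projectivity from the complex being ``perfect and concentrated in a single degree.'' A bounded complex of finite projective $K$-modules whose cohomology sits in one degree need \emph{not} have projective cohomology: over $K=\ZZ$ the complex $\ZZ\xrightarrow{2}\ZZ$ is perfect with cohomology concentrated in a single degree, yet that cohomology is $\ZZ/2\ZZ$. This matters because throughout Section~\ref{sect:higherdirect} the base $K$ is an arbitrary commutative ring, not a field. What does work --- and is the paper's actual argument --- is precisely the base-change stability you relegate to a closing ``subtlety'' about the rank: the bicomplex commutes with arbitrary base change $K\to K'$, and the concentration of cohomology in degree $0$ persists over every such $K'$ (your argument runs verbatim on each fiber). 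By the standard cohomology-and-base-change criterion this forces $H^{0}=\pi_{*}(\calm^{b}_{a}(-c))$ to be $K$-flat, hence finite projective since it is finitely presented; the constant rank is then the Euler characteristic you describe. So all the needed ingredients are present in your write-up, but the projectivity must be routed through flatness-via-base-change rather than read off from perfection alone.
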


\begin{proof}
We already explained before stating the proposition why the higher
direct images necessarily vanish in degrees different from zero. The
final statement follows then from the universality of the
construction: the determination of the higher direct images of
$\calm^{b}_{a}(-c)$ as described is compatible with base change in the
base $\Spec K$, and the fact that the higher direct images are
concentrated in degree zero is independent of that base. It follows
that $\pi_{*}(\calm^{b}_{a}(-c))$ is $K$-flat, thus finite projective
over $K$ as it is finitely presented. Its rank can be computed through
the Euler characteristic of the ranks of the terms of the bicomplex,
whence the result is still constant across $\Spec K$.
\end{proof}
At this stage, we have established the claims in Theorem
\ref{thm:directimage} for $a+b\ge m+1$ and $c\le m-b$.
\begin{sit} 
\label{sit:concrete}
For further use we give in Lemma \ref{lem:concrete} below
a concrete  interpretation of the isomorphism
\begin{align}
\label{eq:concrete}
\R^{c}\pi_{*}(\calm^{b}_{a}(-c))\cong F^{\svee}_{c+b-a}\,.
\end{align}
for $a+b\ge m-1$ and $0\le c\le m-b$ (see Proposition
\ref{prop:summary}).

It will be convenient to regard $\KK=\L(F(-1)[1])$ as a
$\calo_{\PP}$-linear differentially graded algebra with differential
$d$ obtained by extending the Euler map $F(-1)\to
\calo_{\PP}$.  For $u,v\in \ZZ$ we regard $\KK(u)[v]$ as a
$\calo_{\PP}$-linear $\KK$-DG-bimodule.

Let $\lambda\in F^\svee$. By extending the linear map $\lambda\colon
\KK_1=F(-1)\to \calo_{\PP}(-1)= \KK(-1)_0$ we obtain a derivation
$\KK\to \KK(-1)[1]$ which we denote by $\partial_\lambda$.  The
commutator $d\partial_\lambda+\partial_\lambda d$ is a derivation and
since it is zero on generators it follows $
d\partial_\lambda+\partial_\lambda d=0$. A similar argument shows
$\partial_{\lambda}\partial_{\lambda'} +
\partial_{\lambda'}\partial_{\lambda}=0$.

If $\lambda^1\wedge\cdots \wedge\lambda^p\in F^\svee_p$ then we obtain a
corresponding differential operator $\partial_{\lambda^1}\cdots
\partial_{\lambda^p}\colon  \KK\to \KK(-p)[p]$ commuting with $d$.
This yields a map of complexes
\[
 F^\svee_p\otimes \KK\to \KK(-p)[p]\,.
\]
Put $p=b+c-a$. We obtain a map
\[
F^\svee_{b+c-a}\otimes \KK(b)[-b+1]\to \KK(a-c)[c-a+1]\,.
\]
Truncating in homological degree $0$ and taking into account the shift
incorporated into the definition of $\KK_{\le b+1}$ (see
\S\ref{nsit:(co)resns}) we obtain a map
\begin{equation}\label{eq:premap}
 F^\svee_{b+c-a}\otimes \KK_{\le b-1}(b)\to \KK_{\le -1+a-c}(a-c)
\subset \KK_{\le a-1}(a)(-c)[c]\,.
\end{equation}

\begin{lemma}\label{lem:concrete}
The map
\begin{equation}\label{eq:minidelta}
F^\svee_{b+c-a}\to \Hom_{\calo_{\PP}}(\KK_{\le b-1}(b), \KK_{\le
a-1}(a)(-c))[c] =\R\pi_{*}(\calm^{b}_{a}(-c))[c]
\end{equation}
obtained from \eqref{eq:premap} is a quasi-isomorphism.
\end{lemma}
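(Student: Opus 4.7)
The plan is to exploit the cohomology computation of Proposition~\ref{prop:summary} rather than attempt a direct comparison. For $a+b\ge m+1$ and $\max\{0,a-b\}\le c\le m-b$, that proposition identifies $\R\pi_{*}(\calm^{b}_{a}(-c))$ with $F^\svee_{b+c-a}$ placed in cohomological degree $c$; in particular, both sides of \eqref{eq:minidelta} are finite projective $K$-modules of the same constant rank $\binom{m}{b+c-a}$, concentrated in cohomological degree $0$. Hence \eqref{eq:minidelta} is a quasi-isomorphism iff the induced $K$-linear map on $H^{0}$ is an isomorphism, and by finite projectivity together with compatibility of the whole construction with base change, it suffices to verify this in the universal case $K=\ZZ$, $F=\ZZ^{m}$, on a single basis.

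Fix the standard basis $e_{1},\dots,e_{m}$ of $F$ and dual basis $\lambda^{1},\dots,\lambda^{m}$. Then the source of \eqref{eq:minidelta} has basis $\{\lambda^{I}=\lambda^{i_{1}}\wedge\cdots\wedge\lambda^{i_{p}}\}_{|I|=p}$, where $p=b+c-a$. The proof of Proposition~\ref{prop:summary} in \S\ref{sit:E2prop} exhibits $H^{c}$ of the Hom-bicomplex $\EE^{\sbullet,\sbullet}_{+-}$ as the single $E_{1}$-entry $\EE^{a-1,\,c-a+1}_{+-}\cong F^\svee_{b+c-a}$ sitting at the unique cohomology-bearing corner, with all other row cohomologies in total degree $c$ vanishing. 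Consequently, the $H^{0}$-map of \eqref{eq:minidelta} is computed by projecting the chain map \eqref{eq:premap} onto that corner. At bidegree $(a-1,c-a+1)$ the source is $\KK_{\le b-1}(b)^{a-1-c}=F_{p}(a-c)$ and the target is $\KK_{\le a-1}(a)(-c)^{a-1}=\calop(a-c)$, and the component of the iterated derivation $\partial_{\lambda^{1}}\cdots\partial_{\lambda^{p}}$ at that bidegree is the duality pairing
\[
F^\svee_{p}\otimes F_{p}(a-c)\longrightarrow \calop(a-c),\qquad \lambda^{I}\otimes e_{J}\longmapsto \langle \lambda^{I},e_{J}\rangle,
\]
tensored with $\mathrm{id}_{\calop(a-c)}$. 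This realises the $H^{0}$-map of \eqref{eq:minidelta} as $\pm\mathrm{id}_{F^\svee_{p}}$, which completes the proof.

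The hardest part will be the bookkeeping in the last step: aligning the sign conventions for the two Koszul coresolutions, the totalization of the Hom-bicomplex, the shift $[c]$, and the Leibniz signs in the iterated derivation $\partial_{\lambda^{1}}\cdots\partial_{\lambda^{p}}$, so as to confirm that the corner projection really is $\pm$identity and not some signed variant that could degenerate. Since all constructions involved are defined integrally over $\ZZ$ without denominators, any ambiguity amounts to an overall scalar such as $(-1)^{p(p-1)/2}$, which is a unit in every $K$, so there is no substantive obstacle beyond tracking signs.
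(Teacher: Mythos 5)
Your argument is correct and follows essentially the same route as the paper's proof: both reduce, via the row filtration of the Hom-bicomplex and the picture \eqref{pict:simplestpicture}, to checking that the single component of the map landing in the unique cohomology-carrying corner entry $\cHom_{\calop}\bigl(\KK_{\le b-1}(b)^{a-1-c},\KK_{\le a-1}(a)^{a-1}\bigr)(-c)\cong F^{\svee}_{b+c-a}\otimes\calop$ is an isomorphism. Your identification of that component with the duality pairing $F^{\svee}_{p}\otimes F_{p}\to K$ for $p=b+c-a$ is precisely the ``easy verification'' the paper leaves to the reader, and the extra reduction to $K=\ZZ$ is harmless but unnecessary, since that computation works uniformly over any base.
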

\begin{proof}
Filtering the double complex $\Hom_{\calo_{\PP}}(\KK_{\le b-1}(b),
\KK_{\le a-1}(a))(-c)$ by rows as before, it is sufficient to show that 
the induced map to the only row carrying non-trivial cohomology is 
a quasi-isomorphism. Looking at the picture
\eqref{pict:simplestpicture} we see that we  
have to show that 
\begin{align*}
 F^\svee_{b+c-a}&\to 
\Hom_{\calo_{\PP}}(\KK_{\le b-1}(b)^{a-c-1}, \KK_{\le a-1}(a)^{a-1})(-c)\\
&=\pi_\ast(F^\svee_{b-1-(a-c-1)}(-1-(a-c-1))\otimes \calo_{\PP}(1+(a-1))(-c))\\
&=\pi_\ast(F^\svee_{b+c-a}\otimes \calo_{\PP})\\
&=F^\svee_{b+c-a}
\end{align*}
is an isomorphism. This is an easy verification.
\end{proof}
\end{sit}
\begin{sit}
\label{E++}
Now we turn to the higher direct images of $\calm^{b}_{a}(-c)$ in the
range $m-b<c < a\le m$, still under the assumption that $a+b\ge
m+1$. To this end, we choose the locally free coresolution for
$\Omega^{a-1}(a)$ as in (\ref{Klea}), but the locally free {\em
resolution} for $\Omega^{b-1}(b)$ as in (\ref{K>a}), to represent
$\calm^{b}_{a}(-c)$ by the resulting bicomplex concentrated in the
first quadrant. It has the terms
\begin{align*}
\EE^{i,j}_{\scriptscriptstyle{{+}{+}}} &= \cHom_{\calop}(\KK_{>
  b-1}(b), \KK_{\leqslant a-1}(a))^{i,j}(-c)\\ 
&\cong F_{a-1-i}(i+1)\otimes F^{\svee}_{b+j}(j)(-c)\\
&\cong F^{b+j}_{a-1-i}( i+j+1-c)\quad\text{for $i,j\ge 0$}\,.
\end{align*}
Take note of the following properties, analogous to the properties
(\ref{E2prop:1}) through  (\ref{E2prop:5}) in \S\ref{sit:E2prop} above. 
\begin{enumerate}[\quad\rm (a)]
\item
\label{Eprop:1}
Each term $\EE^{i,j}_{\scriptscriptstyle{++}}$ is the twist of a power
of the canonical line bundle on $\PP$ with an $\calop$-module induced
from a finite projective $K$-module;
\item
\label{Eprop:2}
The twist occurring in $\EE^{i,j}_{\scriptscriptstyle{++}} $ only
depends upon the total degree $i+j$;
\item 
\label{Eprop:3}
The bicomplex is supported on the rectangle $[0,a-1]\times [0,m-b]$ in
the $(i,j)$-plane.
\item 
\label{Eprop:4}
The twists occurring in non-zero terms range over the integers from
$1-c$ to $a-b-c+m$, an integral interval of length $a-b + m-1$.
\item
\label{Eprop:5}
In view of the preceding point, and as $0\le m-b < c \le m$ by our
current assumption, the possible twists $?(t)$ occurring in non-zero
terms of $\EE^{i,j}_{\scriptscriptstyle{++}}$ satisfy $a > t \ge 1-m$
and the lower bound shows again that for each such term the higher
direct images \emph{vanish},
$\R^{\nu}\pi_{*}(\EE^{i,j}_{\scriptscriptstyle{++}}) =0$ for $\nu\neq
0$.
\end{enumerate}
\end{sit}

As before, property (\ref{Eprop:5}) implies in particular that the
total derived direct image of $\calm^{b}_{a}(-c)$ is represented by
the direct image under $\pi_{*}$ of this bicomplex, 
\begin{align*}
\R\pi_{*}\calm^{b}_{a}(-c) \simeq
\pi_{*}\EE^{\scriptscriptstyle{\bullet,\bullet}}_{\scriptscriptstyle{++}}
\end{align*}
and we will determine its cohomology once again by looking first at
the corresponding ``row'' complexes.  Fixing therefore $j\in [0,m-b]$,
the complex
$\pi_{*}\EE^{{\scriptscriptstyle\bullet},j}_{\scriptscriptstyle{{+}{+}}}$
is concentrated on the line segment $[c-j-1,a-1]\times\lbrace j\rbrace $ in the
$(i,j)$-plane and has the form
\begin{align*}
(0\to F_{a+j-c}\to F_{a+j-c-1}\otimes \SS^{1}\to
  \cdots\to\SS^{a+j-c}\to 0)\otimes F^{\svee}_{b+j}[-j]\,. 
\end{align*}
Up to the signs of the differentials, this is the translation by
$[-j]$ of the tensor product over $K$ of $F^{\svee}_{b+j}$ with the
(entire!) homogeneous strand in internal degree $a+j-c$ in the affine
tautological Koszul complex $\KK(\idmap_{F})$ recalled in
\ref{nsit:projKoszul}.

Note that $a-m\le a+j-c \le a-1$ by the assumptions $c\in[m-b+1,m]$
and $j\in[0,m-b]$, whence either
\begin{enumerate}[\quad\rm(i)]
\item $a+j-c <0$, and this complex has \emph{no non-zero terms}, or 
\item $a+j-c =0$, and the strand of the affine Koszul complex has
cohomology equal to its only non-zero term, isomorphic to $K$, in
bidegree $(a-1,c-a)$, thus total degree $c-1$, or
\item $0<a+j-c \le a-1$, and the strand of the Koszul complex is
  \emph{exact}. 
\end{enumerate}

Depicting the situation again, for $m-b < c < a$ the resulting diagram
is of the form
\[
\pi_{*}\EE^{\scriptscriptstyle{\bullet,\bullet}}_{\scriptscriptstyle{++}}
\quad\equiv\quad
\begin{array}{|c|ccccccccc|}
\hline
\scriptstyle{m-b}&\circ&\cdots&\circ&\bullet&\bullet&\cdots&\bullet&\cdots&\bullet\\ 
&\circ&\cdots&\circ&\circ&\bullet&\cdots&\bullet&\cdots&\bullet\\
&\vdots&\ddots&\vdots&\vdots&\vdots&\ddots&\vdots&\ddots&\vdots\\
\scriptstyle 0&\circ&\cdots&\circ&\circ&\circ&\cdots&\bullet&\cdots&\bullet\\
\hline &\scriptstyle 0&&&\scriptstyle{c'}&&&&&
\scriptstyle {a-1}\\
\hline
\end{array}
\]
where we have set $c'=c-(m-b+1)$, satisfying $0\le c' < a+b-m-1 \le
a-1$.  In other words, all the rows here are already exact, so there are
\emph{no non-zero higher direct images}.  We record this as the
following result that covers the claims in Theorem
\ref{thm:directimage} for the range $m-b<c<a$.

\begin{proposition}
For $a+b\ge m+1$ and $m-b < c < a$, all higher direct images of
$\calm^{b}_{a}(-c)$ vanish, $\R^{\nu}\pi_{*}(\calm^{b}_{a}(-c))= 0$
for each integer $\nu$.\qed
\end{proposition}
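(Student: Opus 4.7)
The plan is to recycle the bicomplex $\EE^{\scriptscriptstyle{\bullet,\bullet}}_{\scriptscriptstyle{++}}$ already set up in \S\ref{E++}. By property (\ref{Eprop:5}) there, each term $\EE^{i,j}_{\scriptscriptstyle{++}}$ is $\pi_{*}$-acyclic, so $\pi_{*}\EE^{\scriptscriptstyle{\bullet,\bullet}}_{\scriptscriptstyle{++}}$ already represents $\R\pi_{*}\calm^{b}_{a}(-c)$ in the derived category of $K$. It therefore suffices to prove that the $K$-bicomplex $\pi_{*}\EE^{\scriptscriptstyle{\bullet,\bullet}}_{\scriptscriptstyle{++}}$ has vanishing total cohomology, and since it is supported on the finite rectangle $[0,a-1]\times[0,m-b]$, standard spectral sequence (or direct) arguments reduce this to showing that every row is exact.

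Next I would analyze a single row. For fixed $j\in[0,m-b]$, the complex $\pi_{*}\EE^{\scriptscriptstyle{\bullet},j}_{\scriptscriptstyle{++}}$ is, as already computed, the translate by $[-j]$ of $F^{\svee}_{b+j}$ tensored with the full homogeneous strand of internal degree $a+j-c$ of the affine tautological Koszul complex $\KK(\idmap_{F})$. By the basic fact recalled after \ref{nsit:affineKoszul}, such a strand is split exact as soon as its internal degree is non-zero.

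Thus the crux is the numerical check: under the hypotheses $a+b\ge m+1$ and $m-b<c<a$, the internal degree $a+j-c$ is strictly positive for every $j\in[0,m-b]$. Indeed, the minimum over this range is $a-c$, attained at $j=0$, and the assumption $c<a$ forces $a-c\ge 1$. Consequently case (iii) in the trichotomy of \S\ref{E++} applies uniformly, every row of $\pi_{*}\EE^{\scriptscriptstyle{\bullet,\bullet}}_{\scriptscriptstyle{++}}$ is exact, and the total complex is acyclic.

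There is really no substantive obstacle in this argument: the heavy lifting was done when setting up the first-quadrant bicomplex and verifying its $\pi_{*}$-acyclicity in \S\ref{E++}. The one thing that requires genuine attention is merely making sure the inequalities $m-b<c<a$ and $0\le j\le m-b$ combine to place the internal Koszul-degree strictly in the positive range where the strand is known to be exact, and that case (ii) of the trichotomy (which would occur if $a+j-c=0$ for some admissible $j$) is actually excluded; both follow at once from $c<a$.
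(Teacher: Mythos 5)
Your proposal is correct and follows essentially the same route as the paper: it reuses the first-quadrant bicomplex $\EE^{\scriptscriptstyle{\bullet,\bullet}}_{\scriptscriptstyle{++}}$ of \S\ref{E++}, whose $\pi_{*}$-acyclic terms reduce the claim to exactness of the rows, each of which is (a shift and twist of) the homogeneous strand of the affine Koszul complex in internal degree $a+j-c$. Your numerical check that $c<a$ forces $a+j-c\ge a-c\ge 1$ for all $j\in[0,m-b]$, so that the degree-zero (non-exact) strand never occurs, is exactly the point the paper's trichotomy is making.
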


The proof of Theorem \ref{thm:directimage} is now complete, in view of
the duality considerations at the beginning.
\end{proof}

For the benefit of the reader and for further use below, we visualize
the results in Theorem \ref{thm:directimage} as follows.

\begin{sit}
For $a+b\ge m+1$ and $m\ge a\ge b\ge 1$, depicting by $\bullet$ the
nonzero terms $\R^{\nu}\pi_{*}(\calm^{b}_{a}(-c))$, by $\circ$ or just
empty spaces the vanishing ones, and setting $m'=b-m, a'=b-a$ for
formatting purposes, results in the following diagram in the
$(-c,\nu)$-plane
\[
\begin{array}{|c|c|ccc|ccc|ccc|ccc|cc|}
\hline\scriptstyle\nu&&&&&&&&&&&&&&&\\
\hline
\scriptstyle{m-1}&\cdots&\bullet&&&&&&&&&&&&&\\
&&&\ddots&&&&&&&&&&&&\\
\scriptstyle {a-1}&&&&\bullet&\circ&&&&&&&&&&\\
&&&&&&\ddots&&&&&&&&&\\
&&&&&&&\circ&&&&&&&&\\
\scriptstyle {m-b}&&&&&&&&\bullet&&&&&&&\\
&&&&&&&&&\ddots&&&&&&\\
\scriptstyle {a-b}&&&&&&&&&&\bullet&&&&&\\
&&&&&&&&&&&\circ&&&&\\
&&&&&&&&&&&&\ddots&&&\\
\scriptstyle{0}&&&&&&&&&&&&&\circ&\bullet&\cdots\\
\hline &\scriptstyle {-c}&\scriptstyle{{-}m}&&\scriptstyle{-a}&&&&\scriptstyle{m'}&&
\scriptstyle{a'}&&&\scriptstyle {0}&&\scriptstyle {-c>0}\\
\hline
\end{array}
\]
while the corresponding diagram for $a+b\ge m+1$ and $m\ge b\ge a\ge
1$ is obtained from the above through a halfturn. It looks as follows,
where we have set this time $m'=b-a-m$ and $b'=b-m$,
\[
\begin{array}{|c|cc|ccc|ccc|ccc|ccc|c|}
\hline\scriptstyle\nu&&&&&&&&&&&&&&&\\
\hline
\scriptstyle{m-1}&\cdots&\bullet&\circ&&&&&&&&&&&&\\
&&&&\ddots&&&&&&&&&&&\\
&&&&&\circ&&&&&&&&&&\\
\scriptstyle {m-1-b+a}&&&&&&\bullet&&&&&&&&&\\
&&&&&&\hphantom{\ddots}&\ddots&&&&&&&&\\
\scriptstyle {a-1}&&&&&&&&\bullet&&&&&&&\\
&&&&&&&&&\circ&&&&&&\\
&&&&&&&&&&\ddots&&&&&\\
\scriptstyle {m-b}&&&&&&&&&&&\circ&\bullet&&&\\
&&&&&&&&&&&&&\ddots&&\\
\scriptstyle{0}&&&&&&&&&&&&&&\bullet&\cdots\\
\hline &\scriptstyle
	   {-c}&&\scriptstyle{{-}m}&&&\scriptstyle{m'}&&\scriptstyle{{-}a}& 
&&&\scriptstyle {b'}&&\scriptstyle {0}&\scriptstyle {-c>0}\\
\hline
\end{array}
\]
\end{sit}

Theorem \ref{thm:directimage} can also be reformulated in the
following terms, which are the most useful for the application we have
in mind.

\begin{cor}
\label{cor:van}
With notations as in \ref{thm:directimage}, for arbitrary integers
$a,b,c,d$, the higher direct image $\R^{d}\pi_{*}(\calm^{b}_{a}(-c))$
is \emph{not zero} only in the following cases:
\begin{enumerate}[\quad\rm (1)]
\item If $d-c > 0$, then $d=0$ and, necessarily, $c<0$.
\item If $d-c = 0$, then $c+b\in [\max\lbrace a,b\rbrace ,\min\lbrace m,a+b-1\rbrace ]$.
\item If $d-c = -1$, then $c-a\in  [\max\lbrace 0,m-a-b-1\rbrace ,\min\lbrace m-b,m-a\rbrace ]$.
\item If $d-c < -1$, then $d = m-1$ and, necessarily, $c>m$.
\end{enumerate}
\end{cor}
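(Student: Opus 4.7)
The corollary is just a regrouping of the nonzero cases appearing in Theorem \ref{thm:directimage}, indexed by the value of $d-c$ rather than by $c$ alone. My plan is therefore to (i) reduce to the hypothesis $a+b\ge m+1$ used throughout Theorem \ref{thm:directimage}, (ii) translate each of the surviving cases in Theorem \ref{thm:directimage} into a constraint on $d-c$, and (iii) verify by direct inspection of the excluded ranges in Theorem \ref{thm:directimage}(\ref{thm:di2}) that no other values of $d$ produce nonzero higher direct images.

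First, by Lemma \ref{lem:iden}(\ref{lem:iden1}), the sheaf $\calm^{b}_{a}(-c)$ is isomorphic to $\calm^{m+1-a}_{m+1-b}(-c)$, so replacing $(a,b)$ by $(a',b')=(m+1-b,m+1-a)$ does not change either side of the statement to be proved. The involution $(a,b)\leftrightarrow (m+1-b,m+1-a)$ satisfies $a+b\leftrightarrow 2m+2-a-b$, so exactly one of the two regimes $a+b\ge m+1$ and $a+b\le m+1$ applies, and the corollary need only be verified in the first. Concretely, I would check that each of the four $d-c$-conditions in the corollary is symmetric under this involution: e.g.\ $c+b\in[\max\{a,b\},\min\{m,a+b-1\}]$ becomes $c+b'\in[\max\{a',b'\},\min\{m,a'+b'-1\}]$ after a routine substitution, and likewise for the endpoints of the interval in case (3). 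This step is purely algebraic manipulation.

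Next, under the assumption $a+b\ge m+1$, match Theorem \ref{thm:directimage}'s nonzero cases directly with the corollary's four regimes:
\begin{itemize}
\item Theorem \ref{thm:directimage}(1) ($c<0$, $d=0$) gives $d-c=-c>0$, which is case (1) of the corollary and forces $d=0$.
\item Theorem \ref{thm:directimage}(3) ($\max\{0,a-b\}\le c\le m-b$, $d=c$) gives $d-c=0$, and the range $c+b\in[\max\{a,b\},m]$ coincides with case (2) of the corollary, since $\min\{m,a+b-1\}=m$ when $a+b\ge m+1$.
\item Theorem \ref{thm:directimage}(4) ($a\le c\le \min\{a+m-b,m\}$, $d=c-1$) gives $d-c=-1$, and $c-a\in[0,\min\{m-b,m-a\}]$ matches case (3) of the corollary, using that $\max\{0,m-a-b-1\}=0$ in the present regime.
\item Theorem \ref{thm:directimage}(5) ($c>m$, $d=m-1$) gives $d-c=m-1-c<-1$ and matches case (4) of the corollary.
\end{itemize}
Finally, the vanishing ranges in Theorem \ref{thm:directimage}(2), together with the assertion that at most one $\nu$ gives a nonzero higher direct image, show that no other $(d,c)$ can occur.

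The only real work is the bookkeeping in step (i): one must carefully check that the four intervals written in the corollary are preserved by the involution $(a,b)\leftrightarrow(m+1-b,m+1-a)$, so that the dualization legitimately reduces to the case $a+b\ge m+1$. The remaining matching in step (ii) is automatic once one has paired the four explicit assertions of Theorem \ref{thm:directimage} with the four regimes of $d-c$.
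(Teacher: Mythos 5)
Your overall strategy is the intended one---the paper offers no separate proof of Corollary~\ref{cor:van}, presenting it simply as a reformulation of Theorem~\ref{thm:directimage}---and your case-matching in step (ii) under the hypothesis $a+b\ge m+1$ is correct, as is the observation that the four nonvanishing clauses of the theorem are separated exactly by the value of $d-c$. However, the pivot of your reduction in step (i), namely that all four conditions of the corollary are \emph{preserved} by the involution $(a,b)\leftrightarrow(m+1-b,m+1-a)$, is false for case (3). Take $m=5$ and $(a,b)=(2,3)$: the condition $c-a\in[\max\lbrace 0,m-a-b-1\rbrace,\min\lbrace m-b,m-a\rbrace]$ reads $c\in[2,4]$, whereas for $(a',b')=(m+1-b,m+1-a)=(3,4)$ it reads $c\in[3,4]$. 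The lower endpoint $\max\lbrace 0,m-a-b-1\rbrace$ is genuinely asymmetric (only cases (1), (2) and (4) are involution-invariant), so the ``routine substitution'' you propose would not close, and as written your step (i) fails at exactly the point you flag as the only real work.

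The reduction is nevertheless salvageable, because the corollary asserts only a \emph{necessary} condition for non-vanishing. What you actually need is the one-sided implication $[\text{condition for }(a',b')]\Rightarrow[\text{condition for }(a,b)]$ when $a+b\le m$, i.e.\ that the translated interval for $(a',b')$ is \emph{contained in} the interval stated for $(a,b)$. For case (3) this amounts to $\max\lbrace m+1-b,\,a-2\rbrace\ge\max\lbrace a,\,m-b-1\rbrace$, which holds because $m+1-b\ge a+1$ in the regime $a+b\le m$ (and trivially $m+1-b>m-b-1$); the upper endpoints agree on the nose. You should replace the symmetry check by this containment check. (Two minor points: when $a+b=m+1$ both regimes apply, so your ``exactly one'' should be ``at least one''; and when $a$ or $b$ lies outside $[1,m]$ the sheaf $\calm^{b}_{a}$ vanishes and the statement is vacuous, so the dichotomy is harmless.)
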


Using Remark \ref{rem:ext}, for $c=0$, Theorem \ref{thm:directimage}
returns the following well known fact, namely that the sequence
$\calop(-1)\cong \Omega^{m-1}(m-1),\ldots,\Omega^{0}(0)= \calop$ of
locally free $\calop$-modules ``between $\calop(-1)$ and $\calop$''
is \emph{strongly exceptional} in the sense of Bondal~\cite{Bondal:1989}.
\begin{cor}
In case $c=0$, we have $\calm^{b+1}_{a+1}\cong \cHom_{\calop}(
\Omega^{b}(b), \Omega^{a}(a))$ and \ref{thm:directimage} yields
\begin{align*}
\Ext^{i}_{\calop}(\Omega^{b}(b), \Omega^{a}(a))&\cong
\begin{cases}
\L_{K}^{b-a}F^{\svee}&\text{\rm if $i=0$ and $b\ge a$, and}\\
0&\text{\rm otherwise.}\\
\end{cases}
\end{align*}
\end{cor}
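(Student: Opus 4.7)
The statement is essentially a bookkeeping consequence of Theorem~\ref{thm:directimage} in the special case $c=0$, once the sheaf $\calm^{b+1}_{a+1}$ is correctly identified. The plan is to unwind the definition, convert the higher direct images into $\Ext$ groups via Remark~\ref{rem:ext}, and then specialize the case analysis in Theorem~\ref{thm:directimage}.

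First I would observe that the very definition of $\calm^{b}_{a}$ (together with the twisting identity noted alongside it) gives immediately
\[
\calm^{b+1}_{a+1} \;=\; \cHom_{\calop}\bigl(\Omega^{b}(b+1),\Omega^{a}(a+1)\bigr) \;\cong\; \cHom_{\calop}\bigl(\Omega^{b}(b),\Omega^{a}(a)\bigr),
\]
which establishes the first displayed isomorphism. Since $\pi$ is affine on the base (we compute over $\Spec K$) and $\calm^{b+1}_{a+1}$ is locally free, Remark~\ref{rem:ext} with parameters $(a,b,c) = (a+1,b+1,0)$ and the choice $a'=a$, $b'=b$ (which satisfies $b'-a' = c+(b+1)-(a+1)$) identifies
\[
\Ext^{i}_{\calop}\bigl(\Omega^{b}(b),\Omega^{a}(a)\bigr) \;\cong\; \R^{i}\pi_{*}\bigl(\calm^{b+1}_{a+1}\bigr).
\]

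Next I would invoke Theorem~\ref{thm:directimage} with $(a,b,c)$ replaced by $(a+1,b+1,0)$, assuming first that $(a+1)+(b+1)\geq m+1$, i.e.\ $a+b\geq m-1$. If $a>b$, then $c=0$ lies in the vanishing interval $[0, (a+1)-(b+1)) = [0,a-b)$ of part~(2), and all direct images are zero. If $a\leq b$, then $c=0$ lies in $[\max\{0,a-b\},\,m-b-1] = [0,m-b-1]$ (nonempty because $b\leq m-1$ follows from $\Omega^{b}\neq 0$), and part~(3) yields the only nonzero direct image
\[
\R^{0}\pi_{*}\bigl(\calm^{b+1}_{a+1}\bigr) \;\cong\; F^{\svee}_{0+(b+1)-(a+1)} \;=\; \L_{K}^{b-a}F^{\svee},
\]
as claimed. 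The complementary regime $a+b<m-1$ reduces to the previous one by Lemma~\ref{lem:iden}(\ref{lem:iden1}), which produces the isomorphism $\calm^{b+1}_{a+1}\cong \calm^{m-a}_{m-b}$; the new indices $(m-b,m-a)$ satisfy $(m-a)+(m-b)\geq m+1$, and the same case split applied to them yields exactly the same answer since $(m-a)\leq(m-b) \Longleftrightarrow a\leq b$ and $F^{\svee}_{(m-a)-(m-b)} = F^{\svee}_{b-a}$.

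There is no real obstacle here beyond careful bookkeeping: the proof is a routine dictionary lookup in Theorem~\ref{thm:directimage}. The only point requiring a moment's care is making sure one applies the right half of Lemma~\ref{lem:iden} in the regime $a+b<m-1$ and confirming that the answer $F^{\svee}_{b-a}$ is symmetric under the involution $(a,b)\mapsto(m-b,m-a)$, so that the two cases produce a consistent formula.
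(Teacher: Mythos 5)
Your proposal is correct and is exactly the route the paper intends: the corollary is stated as an immediate consequence of Remark~\ref{rem:ext} (with $c=0$, $a'=a$, $b'=b$) together with the case analysis of Theorem~\ref{thm:directimage} applied to $\calm^{b+1}_{a+1}$, with Lemma~\ref{lem:iden} handling the range $a+b+2<m+1$. One trivial slip: in your last sentence the relevant comparison for the dual indices is $m-b\le m-a\iff a\le b$ (you wrote the inequality with the two sides swapped), but the conclusion you draw from it is the right one.
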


\hide{
\begin{remark}
\label{rem:direct}
{\tt (Needs further work... or could be omitted altogether... or the
description of the differential could be given earlier...)}  For
$c\neq 0$, the only non-zero direct images $\pi_{*}\calm^{b}_{a}(-c)$
occur for $c < 0$ and those admit a simple explicit presentation if we
switch to the third-quadrant representation of $\calm^{b}_{a}(-c)$,
given by
\begin{align*}
\EE^{-i,-j}_{\scriptscriptstyle{{-}{-}}} &=
\cHom_{\calop}(\KK_{\leqslant b-1}(b), \KK_{>a-1}(a))^{-i,-j}(-c)\\ 
&\cong F_{a+i}(-i)\otimes F^{\svee}_{b-1-j}(-1-j)(-c) \\
&\cong F^{b-1-j}_{a+i}(-i-j-c-1)\quad\text{for $i,j\ge 0$.}
\end{align*}
As $F$ is finite projective over $K$, the natural \emph{norm map} 
$$F\otimes F^{\svee}\xto{\cong} \Hom_{K}(F,F)$$ is an isomorphism. The
\emph{cotrace} $\sum_{i}x_{i}\otimes \partial_{i}\in F\otimes
F^{\svee}$ is then the element corresponding to $\id_{F}$ under this
isomorphism, and the differential on the tautological Koszul complex
can be described explicitly in terms of it. Identifying the
$K$-linear forms $\partial_{i}\in F^{\svee}$ with the (skew)
derivations on the exterior algebra $\L_{K}F$ that they uniquely
define, an explicit presentation of $\pi_{*}\calm^{b}_{a}(-c)$ for
$c>1$ has the form
\begin{align*}
\xymatrix{
0&\ar[l]\pi_{*}\calm^{b}_{a}(-c) &\ar[l] F^{b-1}_{a}\otimes \SS^{-c-1}&&
\ar[ll]_{\displaystyle{(h,v)}}
{\begin{matrix}
F^{b-1}_{a+1}\otimes \SS^{-c-2}\\
\oplus\\
F^{b-2}_{a}\otimes \SS^{-c-2}
\end{matrix}}
}
\end{align*}
with
\begin{align*}
h(\omega\otimes p\otimes \eta) &= \pm\sum_{i}\partial_{i}
\omega\otimes x_{i}p\otimes\eta\\ 
v(\omega' \otimes p' \otimes\eta') &= \pm\sum_{j} \omega' \otimes p'
x_{i}\otimes\eta'\wedge \partial_{i} 
\end{align*}
where $\omega\otimes p\otimes \eta$ represents an element in
$F_{a+1}\otimes \SS^{-c-2}\otimes F^{\svee}_{b-1}$, and $\omega'
\otimes p'\otimes \eta'$ an element in $F_{a}\otimes \SS^{-c-2}\otimes
F^{\svee}_{b-2}$.
\end{remark}
} 

To end this section, we determine the ranks of the finite projective
$K$-modules occurring in Theorem \ref{thm:directimage}. They can be
easily determined in closed form by means of the Hilbert-Serre,
a.k.a. the Hirzebruch-Riemann-Roch Theorem and the result is as
follows.

\begin{cor} Let $m\ge a, b\ge 1$ be integers with $a+b\ge m+1$. Denote
  $r^{b}_{a}(z)\in \QQ[z]$ the unique polynomial of degree at most
  $m-1$ that at the integers in the interval $[-m,0]$ takes on the
  values 
\begin{align*}
r^{b}_{a}(-c) =
\begin{cases}
0 &\text{for $0\le c < a-b$,}\\
(-1)^{c}\binom{m}{c+b-a}&\text{for $\max\lbrace 0,a-b\rbrace \le c \le m-b$,}\\
0 &\text{for $m-b < c < a$,}\\
(-1)^{c-1}\binom{m}{c+b-a}&\text{for $a\le c \le \min\lbrace a+m-b,m\rbrace $,}\\
0 &\text{for $a+m-b < c \le m$.}
\end{cases}
\end{align*}
The ranks of the higher direct images of $\calm^{b}_{a}(-c)$ are then
determined uniquely through
\begin{align*}
\sum_{\nu}(-1)^{\nu}\rank_{K}\R^{\nu}\pi_{*}(\calm^{b}_{a}(-c)) =
r^{b}_{a}(c) 
\end{align*}
as for each triple $a,b,c$ at most one term in the sum on the left is
nonzero. 
\end{cor}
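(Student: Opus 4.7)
The plan is to invoke the classical Hilbert-Serre / Hirzebruch-Riemann-Roch theorem on the projective morphism $\pi\colon\PP\to\Spec K$ of relative dimension $m-1$. For any locally free $\calop$-module $\calf$ of finite rank, the function $n\mapsto \chi(\calf(n)) := \sum_{\nu}(-1)^\nu\rank_K\R^\nu\pi_*(\calf(n))$ is given by a polynomial in the integer variable $n$, of degree at most $m-1$, namely the Hilbert polynomial of $\calf$ relative to the canonical polarization $\calop(1)$. Applied to $\calf = \calm^b_a$, this produces a polynomial $P(z)\in\QQ[z]$ of degree $\le m-1$ with
$$P(c) = \sum_{\nu}(-1)^\nu\rank_K \R^\nu\pi_*(\calm^b_a(-c))$$
for every integer $c$; in particular, the last assertion in the corollary (that at most one term of this sum is non-zero) is already furnished by Theorem~\ref{thm:directimage}.

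The next step is to evaluate $P$ at the $m+1$ integers $c \in \{0,1,\dots,m\}$ using Theorem~\ref{thm:directimage}. In each case the theorem singles out the unique cohomological degree $\nu$ that contributes (if any) and identifies the corresponding higher direct image as $F^\svee_{c+b-a} = \L^{c+b-a}F^\svee$, a finite projective $K$-module of rank $\binom{m}{c+b-a}$. Reading off the cohomological degree from Theorem~\ref{thm:directimage} --- namely $\nu = c$ on the interval $[\max\{0,a-b\},m-b]$ and $\nu = c-1$ on the interval $[a,\min\{a+m-b,m\}]$, with vanishing on the other three subintervals of $[0,m]$ --- one sees that the five-case tabulation of $P(c)$ coincides exactly with the piecewise prescription of $r^b_a$ stated in the corollary.

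Finally, a polynomial of degree at most $m-1$ is uniquely determined by any $m$ of its values, so the $m+1$ evaluations above are slightly redundant, but they are automatically consistent since $P$ already exists as such a polynomial and satisfies all of them. Consequently $P$ equals the unique interpolating polynomial $r^b_a$, which is the content of the corollary. The only conceptual input is Hirzebruch-Riemann-Roch for projective bundles; everything else is routine bookkeeping of cohomological degrees, signs, and ranks of exterior powers of $F$, so I do not anticipate any substantive obstacle.
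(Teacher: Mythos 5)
Your proposal is correct and follows essentially the same route the paper intends (the paper offers only the one-line remark that the ranks ``can be easily determined in closed form by means of the Hilbert-Serre, a.k.a.\ the Hirzebruch-Riemann-Roch Theorem''): polynomiality of $\chi(\calm^b_a(-c))$ in the twist with degree bound $m-1$, evaluation at the $m+1$ integers $c\in[0,m]$ via Theorem~\ref{thm:directimage}, and the observation that at most one $\R^\nu\pi_*$ survives. The only caveat is the sign-of-argument ambiguity between $r^b_a(c)$ and $r^b_a(-c)$, which is already present in the corollary's own statement and does not affect your argument.
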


\section{Interlude: Projective Resolutions from Sparse Spectral
  Sequences}\label{sect:interlude} 
In this section we record a ``degeneracy result'' that allows one to
obtain a projective resolution of a bicomplex from one of the
associated spectral sequences, provided the corresponding first page
is ``sparse'' with projective terms. The result applies to bicomplexes
in any abelian category $\cata$ with enough projectives, whence we
assume here this setting.

\begin{nsit}{Categorical notation.}
Let $\K^{-}=\K^{-}(\PP\cata)$ denote the homotopy category of
complexes of projectives from $\cata$ that are bounded in the
direction of the differential, and, for an arbitrary complex $C$ over
$\cata$, denote by $\K^{-}/C$ the corresponding \emph{comma category};
see \cite[II.6]{MacL}. Its objects are thus homotopy classes of
morphisms of complexes $\vp\colon P\to C$ with $P\in \K^{-}$, and its
morphisms from $\vp\colon P\to C$ to $\vp'\colon P'\to C$ are those homotopy
classes of morphisms of complexes $\psi\colon P\to P'$, for which
$\vp'\psi=\vp$ in $\K^{-}$.

Recall that a morphism of complexes is a \emph{quasi-isomorphism} if
it induces an isomorphism in cohomology. If $C$ is any complex over
$\cata$, then a \emph{projective resolution} of $C$ is any
quasi-isomorphism $\vp\colon P\to C$ with source in $\K^{-}$. Such a
projective resolution, if it exists, is an object in $\K^{-}/C$, and in
there it is unique up to isomorphism.
\end{nsit}

\begin{nsit}{Assumptions.}
\label{nsit:assumptionsC}
Fix henceforth a bicomplex $C=(C^{i,j},d)$ supported on the \emph{upper
half-plane} $(i,j)\in\ZZ\times \NN$ and whose associated total
(direct sum) complex exists in the given abelian category
$\cata$. Equivalently, the (countable) direct sums
$C^{n}=\bigoplus_{i+j=n}C^{i,j}$ exist in $\cata$ for each integer
$n$. As $C$ is a bicomplex, the differential of $C$ decomposes as
$d=d_{h}+d_{v}$, where
\begin{alignat*}{2}
d_{h}&= \bigoplus_{i,j}d_{h}^{i,j}&\quad,\quad&d_{h}^{i,j}\colon C^{i,j}\lto
C^{i+1,j}\\ 
d_{v}&= \bigoplus_{i,j}d_{v}^{i,j}&\quad,\quad&d_{v}^{i,j}\colon C^{i,j}\lto
C^{i,j+1} 
\end{alignat*}
represent, respectively, the {horizontal} and {vertical}
components.

Filtering the bicomplex according to column degree, the resulting
spectral sequence \emph{converges} against the cohomology of $C$, as
the bicomplex is supported on the upper half-plane, and it displays on
its first page the vertical cohomology groups. In short,
\begin{align*}
E^{i,j}_{1} = H^{i,j}_{v}(C) =
H^{i,j}(C,d_{v})\quad\Longrightarrow\quad H^{i+j}(C)\,. 
\end{align*}
\end{nsit}

Below we will use the following basic fact. 

\begin{lemma}
\label{lem:projhom}
Let $D=(D,d)$ be a complex whose cohomology objects $H^{n}(D)$ are
\emph{projective} in $\cata$ for each integer $n$. Viewing the graded
object $H=\bigoplus_{n}H^{n}(D)[-n]$ over $\cata$ as a complex with
zero differential, there exists a quasi-isomorphism from it to $D$. In
other words, the cohomology itself constitutes a projective resolution
of $D$.
\end{lemma}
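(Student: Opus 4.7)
The plan is to construct the desired quasi-isomorphism $\phi\colon H \to D$ degreewise by splitting the canonical map from cycles onto cohomology and composing with the inclusion of cycles into $D$.

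First I would form, for each integer $n$, the cycle subobject $Z^n = \ker(d^n\colon D^n \to D^{n+1})$ together with its canonical epimorphism $\pi^n\colon Z^n \twoheadrightarrow H^n(D)$. Since $H^n(D)$ is projective by hypothesis, the epimorphism $\pi^n$ splits, yielding a section $s^n\colon H^n(D) \to Z^n$ with $\pi^n s^n = \idmap$. Writing $\iota^n\colon Z^n \hookrightarrow D^n$ for the structural inclusion, I set $\phi^n = \iota^n s^n\colon H^n(D) \to D^n$.

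Next I would assemble $(\phi^n)_n$ into a morphism of complexes $\phi\colon H \to D$. Because $H$ carries the zero differential, the compatibility condition reduces to checking $d^n \phi^n = 0$ for each $n$; this is automatic because $\phi^n$ factors through $Z^n \subseteq \ker(d^n)$. To see that $\phi$ is a quasi-isomorphism, one observes that the induced map in cohomology $H^n(\phi)\colon H^n(H) = H^n(D) \to H^n(D)$ is precisely $\pi^n s^n = \idmap$, since the image of $\phi^n$ consists of cycles whose classes equal the input by construction of $s^n$.

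There is no genuine obstacle: the single nontrivial input is projectivity of each $H^n(D)$, used once per degree to split the epimorphism $Z^n \twoheadrightarrow H^n(D)$. The rest is bookkeeping. Notice also that the conclusion is a purely degreewise statement and does not require any boundedness of $D$.
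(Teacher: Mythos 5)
Your proof is correct and is essentially identical to the paper's argument: both split the canonical epimorphism from the cycles onto the cohomology using projectivity of each $H^{n}(D)$ and compose the resulting section with the inclusion of cycles into $D$. Your version merely spells out the degreewise bookkeeping that the paper leaves implicit.
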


\begin{proof}
Indeed, let $Z$ denote the complex of cycles, which sits naturally
as a subcomplex of $D$ with zero differential.  The natural
epimorphism $Z\onto H$ of graded objects, or complexes with zero
differentials, admits a section $H\into Z$, as the components of $H$
are projective. The resulting composition $H\into Z\into D$ provides
for the desired quasi-isomorphism.
\end{proof}

Now we can formulate the ``degeneracy criterion''.
\begin{prop} 
\label{projresfromSS}
With $C$ as in \ref{nsit:assumptionsC}, suppose that each of its
vertical cohomology groups $E^{i,j}_{1}$ is \emph{projective} and
assume further that there exist an integer $a$ and a strictly
decreasing sequence of integers $i_{a}>i_{a-1}>\cdots$ such that
$E^{i,j}_{1}=0$ for
\begin{enumerate}[\quad\rm $\bullet$]
\item  $i > i_{a}$ and all $j$, and for
\item  $i+j\neq n$ when $i_{n-1}< i \le i_{n}$.
\end{enumerate}
In this case,
\begin{enumerate}[\quad\rm (1)]
\item for each integer $n$, the direct sum
$P^{n}=\bigoplus_{i+j=n}E^{i,j}_{1}$ exists and is \emph{projective}
in $\cata$; note that $P^{n}=0$ if $n > a$;
\item there exist morphisms $\lbrace \partial^{n}\colon P^{n}\to
P^{n+1}\rbrace _{n\in\ZZ}$ with $\partial^{n+1}\partial^{n}=0$, whence
$P=(P^{n},\partial^{n})$ constitutes a complex in $\K^{-}$;
\item
there exists a quasi-isomorphism $\vp=\lbrace \vp^{n}\rbrace _{n\in\ZZ}\colon P\to C$.
\end{enumerate}
In particular, $\vp\colon P\to C$ constitutes a projective resolution of $C$.
\end{prop}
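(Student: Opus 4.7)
Part~(1) is immediate from the hypothesis that $i_a>i_{a-1}>\cdots$ is strictly decreasing: each strip $J_n := \{\,i\in\ZZ \mid i_{n-1}<i\le i_n\,\}$ is a finite integer interval of length $i_n-i_{n-1}$, so $P^n=\bigoplus_{i\in J_n} E_1^{i,n-i}$ is a finite direct sum of projectives in $\cata$, hence projective; for $n>a$ the hypothesis $E_1^{i,j}=0$ when $i>i_a$ gives $P^n=0$, so indeed $P\in \K^{-}(\PP\cata)$.

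For parts~(2) and~(3) the plan is a homological perturbation argument. I first apply Lemma~\ref{lem:projhom} column-by-column to get quasi-isomorphisms $\alpha_i\colon H_v(C^{i,*})\to C^{i,*}$ landing in $d_v$-cycles; for $i\in J_n$ the source is just the projective $E_1^{i,n-i}$ placed in vertical degree $n-i$, and each $\alpha_i$ is split-monic by projectivity of that source. I now seek $\varphi$ and $\partial$ in bidegree-graded form $\varphi=\sum_{k\ge 0}\varphi_k$ and $\partial=\sum_{l\ge 0}\partial_l$, where $\varphi_k$ shifts bidegree by $(k,-k)$ and $\partial_l$ shifts by $(1+l,-l)$, with $\varphi_0:=\bigoplus_i\alpha_i$. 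Expanding $d\varphi=\varphi\partial$ and matching components by bidegree shift yields the recursion
\[
d_v \varphi_{k+1} \;=\; -\, d_h\varphi_k \;+\; \sum_{j+l=k}\varphi_j\partial_l\,,
\]
solved in order of increasing $k$. Let $R_k$ denote the right-hand side (with $\partial_k$ still to be chosen); by $d^2=0$ and induction $R_k$ is a $d_v$-cocycle in $C^{i+k+1,n-i-k}$. The sparsity hypothesis controls its $E_1$-class: if $(i+k+1,n-i-k)$ lies off the non-vanishing antidiagonal $i+j=n+1$ of strip $J_{n+1}$ then the class vanishes, so one sets $\partial_k=0$ on this summand and chooses a $d_v$-preimage $\varphi_{k+1}$, which exists by projectivity of $E_1^{i,n-i}$. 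Otherwise $(i+k+1,n-i-k)$ lies on the antidiagonal of $J_{n+1}$ and the class defines a component $\partial_k\colon E_1^{i,n-i}\to E_1^{i+k+1,n-i-k}\subset P^{n+1}$; the difference $R_k-\alpha_{i+k+1}\partial_k$ then has trivial $E_1$-class and provides the $d_v$-coboundary to lift.

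Termination of the recursion is precisely what the sparsity hypothesis delivers: the active range for $\partial_k$ on $E_1^{i,n-i}$ is the finite interval $k\in[i_n-i,\,i_{n+1}-i-1]$, and once $k>n-i$ the row index $n-i-k-1$ becomes negative, forcing $\varphi_{k+1}=0$ (and then $R_{k}=0$ by the same row-$0$ absence of coboundaries). Thus only finitely many $\varphi_k,\partial_l$ are nonzero on each summand. Finally, $\partial^2=0$ follows by applying $d$ to $d\varphi=\varphi\partial$: one obtains $\varphi\partial^2 = d\varphi\partial = d^2\varphi = 0$, and $\varphi$ is injective (the minimal-column component of $\varphi(x)$ is $\alpha_i(x_i)$ for the minimal occurring $i$, which is a split monomorphism), so $\partial^2=0$. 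That $\varphi$ is a quasi-isomorphism follows from the fact that $\varphi$ preserves the column filtrations on both sides---$\varphi_k$ raises column index by $k\ge 0$---and reduces on the associated graded to $\bigoplus_i\alpha_i$, a columnwise quasi-isomorphism; so $\varphi$ induces isomorphisms on the $E_1$-pages of the convergent column spectral sequences, and hence on cohomology. The main obstacle, as with any perturbation-lemma argument, is to verify termination of the recursion and coherence of the recursive choices of $d_v$-preimages---exactly what the sparsity hypothesis is crafted to make routine.
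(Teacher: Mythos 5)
Your strategy is genuinely different from the paper's: you run a homological-perturbation (staircase) construction of $\varphi=\sum_{k\ge0}\varphi_k$ and $\partial=\sum_{l\ge0}\partial_l$, whereas the paper filters $C$ by the vertical strips $F_{\nu}=(C^{i,j})_{i>i_{\nu-1}}$, resolves each quotient $F_{\nu}/F_{\nu+1}$ by the single projective $P^{\nu}[-\nu]$ via Lemma~\ref{lem:projhom}, glues by mapping cones over lifted connecting morphisms, and passes to the direct limit. Your part~(1) and your termination analysis are correct.

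There is, however, a genuine gap at the crux of the recursion: the claim that $R_k$ is a $d_v$-cocycle ``by $d^2=0$ and induction.'' Differentiating the recursion and using $d_hd_v=-d_vd_h$ together with the identities already established in lower column shifts, one finds
\[
d_vR_k\;=\;\sum_{p+q+l=k-1}\varphi_p\,\partial_q\,\partial_l\,,
\]
the corresponding graded component of $\varphi\circ\partial\circ\partial$. This vanishes only if the components $\sum_{q+l=k'}\partial_q\partial_l$ of $\partial^2$ with $k'\le k-1$ are already known to be zero; but you establish $\partial^2=0$ only \emph{a posteriori}, from the completed identity $d\varphi=\varphi\partial$, which itself cannot be completed without knowing $d_vR_k=0$ at each stage. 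As written the argument is circular, and sparsity alone does not save it: the composite $\partial_q\partial_l$ lands in a summand of $P^{n+2}$ indexed by a column of the strip $J_{n+2}$, which the hypotheses do not exclude. The gap is reparable, and the repair is the actual content of the step: arguing by induction on $k$ one shows that $d_vR_k=\varphi_0\bigl(\sum_{q+l=k-1}\partial_q\partial_l\bigr)$ modulo terms that vanish by the inductive hypothesis; since the left-hand side is a $d_v$-coboundary while $\varphi_0=\alpha$ is a section of the projection onto vertical cohomology (so $\alpha(x)$ is a coboundary only when $x=0$), both sides must vanish, which simultaneously delivers the cocycle condition for stage $k$ and the component of $\partial^2=0$ needed at stage $k+1$. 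A lesser point: your concluding comparison of column-filtration spectral sequences invokes convergence for a half-plane bicomplex, which the paper's direct-limit argument deliberately avoids; you may lean on the convergence asserted in \ref{nsit:assumptionsC}, but you should say so.
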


\begin{proof}
Consider the (naive) ascending and exhaustive filtration
\begin{align*}
\xymatrix{
F_{a+1}=(C^{i,j})_{i > i_{a},j} \ar@{^(->}[r]
& \cdots\ar@{^(->}[r]
& F_{n}= (C^{i,j})_{i> i_{n-1},j}\ar@{^(->}[r]
& \cdots\ar@{^(->}[r]
& C
}
\end{align*}
on the bicomplex $C$. Each bicomplex $F_{\nu}$, for $\nu\le a+1$,
satisfies the same hypotheses as those assumed for $C$, and we first
establish the theorem for these bicomplexes by descending
induction. The proof will then be finished by passing to the limit.

The bicomplex $F_{a+1}$ is \emph{exact}: indeed, its vertical
cohomology $E^{i,j}_{1}(F_{a+1})$ vanishes by assumption, whence the
(total) cohomology of $F_{a+1}$ is equally $0$ as the associated
spectral sequence, essentially concentrated in the first quadrant,
converges. Accordingly, for $a'\ge a$, we get
$P^{a'}=\bigoplus_{i+j=a'}E^{i,j}_{1} = 0$, and so $\partial^{a'}=0$,
with $\vp_{a+1}\colon 0\to F_{a+1}$ a quasi-isomorphism. This establishes the
initial step of the induction.

Now assume that for some integer $\nu\le a$, 
\begin{enumerate}[\quad\rm (i)]
\item the terms $P^{\nu'}= \bigoplus_{i+j=\nu'}E^{i,j}_{1}$ exist and
  are projective in $\cata$ for $\nu' > \nu$, 
\item we have constructed a complex 
\begin{align*}
\PP^{\nu+1}\equiv\xymatrix{
(0\ar[r]&P^{\nu+1}\ar[r]^{\partial^{\nu+1}} &
P^{\nu+2}\ar[r]&\cdots\ar[r]&P^{a}\ar[r]^{\partial^{a}}&0)}
\end{align*}
\item
and a quasi-isomorphism $\vp_{\nu+1}\colon  \PP^{\nu+1}\to F_{\nu+1}$.
\end{enumerate}
By definition of the filtration, the quotient $F_{\nu}/F_{\nu+1}$ is a
bicomplex concentrated on the vertical strip $[i_{\nu-1}+1,
i_{\nu}]\times\NN$. On this strip of finite width, the vertical
cohomology is by hypothesis concentrated in total degree $\nu$, and so
involves only finitely many terms. Accordingly,
$P^{\nu}=\bigoplus_{i+j=\nu}E^{i,j}_{1}\cong
\bigoplus_{i=i_{\nu-1}+1}^{i_{\nu}}E^{i,\nu-i}_{1}$ is a finite direct
sum of projectives, thus, exists and is itself projective in $\cata$.
 
Moreover, $P^{\nu}[-\nu]$ represents the (total) cohomology of
$F_{\nu}/F_{\nu+1}$, as the spectral sequence
$E^{i,j}_{1}(F_{\nu}/F_{\nu+1})\Longrightarrow
H^{i+j}(F_{\nu}/F_{\nu+1})$ collapses on its first page, due to the
lack of cohomology outside the diagonal $i+j=\nu$. As $P^{\nu}$ is
projective, it follows from Lemma \ref{lem:projhom} that there exists
then a quasi-isomorphism of complexes $\chi^{\nu}\colon
P^{\nu}[-\nu]\xto{\cong} F_{\nu}/F_{\nu+1}$ from the complex with
$P^{\nu}$ as sole possibly non-zero term in degree $\nu$ to
$F_{\nu}/F_{\nu+1}$; it constitutes a projective resolution of
$F_{\nu}/F_{\nu+1}$.

The semi-split exact sequence of complexes 
\[0\to F_{\nu+1}\to F_{\nu}\to F_{\nu}/F_{\nu+1}\to 0
\] 
defines an exact triangle in the derived category $\cald(\cata)$ that
together with the already constructed quasi-isomorphisms accounts for
the solid arrows in
\begin{align*}
\xymatrix{
\PP^{\nu+1}\ar[d]^{\simeq}_{\vp^{\nu+1}}\ar@{..>}[r]&
\cone(\delta^{\nu})\ar@{..>}[r]\ar@{..>}[d]^{\simeq}_{\vp^{\nu}} & 
P^{\nu}[-\nu]\ar[d]^{\chi^{\nu}}_{\simeq}\ar@{-->}[r]^{\delta^{\nu}}&
\PP^{\nu+1}[1]\ar[d]_{\simeq}^{\vp^{\nu+1}[1]}\\
F_{\nu+1}\ar[r]&F_{\nu}\ar[r]& F_{\nu}/F_{\nu+1}\ar[r]^{\epsilon}&F_{\nu+1}[1]
}
\end{align*}
A morphism of complexes $\delta^{\nu}$ that lifts $\epsilon\circ
\chi^{\nu}$ through $\vp^{\nu+1}[1]$ as indicated then exists in
$\cald(\cata)$, as $P^{\nu}[-\nu]$ is in $\K^{-}$ and $\vp^{\nu+1}[1]$ is
a quasi-isomorphism.  Completing the upper row by
$\cone(\delta^{\nu})$, the mapping cone over $\delta^{\nu}$, to an
exact triangle, there exists next in the triangulated category
$\cald(\cata)$ a morphism $\vp^{\nu}\colon \cone(\delta^{\nu})\to F^{\nu}$ as
indicated so that the triple $(\vp^{\nu+1},\vp^{\nu}, \chi^{\nu})$
constitutes a morphism of exact triangles. As the other two components
are isomorphisms in $\cald(\cata)$, the same necessarily holds true for
$\vp^{\nu}$, and, finally, that isomorphism in $\cald(\cata)$ can be
represented by an actual quasi-isomorphism of complexes, as
$\cone(\delta^{\nu})$ is by construction a complex in $\K^{-}$.

It remains to observe that the morphism of complexes $\delta^{\nu}$
involves at most a single non-zero component, represented by a morphism
from $P^{\nu}\to P^{\nu +1}$, due to the support of the complexes
involved. Indeed, this component is nothing but the morphism induced
in cohomology by the composition
\[
F_{\nu}/F_{\nu+1}\xto{\epsilon} F_{\nu+1}[1]\onto F_{\nu+1}/F_{\nu+2}[1]\,.
\] 
It follows in particular that $\PP^{\nu}=\cone(\delta^{\nu})$ has the
desired form, with $\partial^{\nu}$ that single non-zero component of
$\delta^{\nu}$, up to the sign dictated by the convention on
differentials in mapping cones. This completes the inductive step.

As an aside, the reader may note that the preceding argument can as
well be made directly on the level of morphisms of complexes by
invoking the appropriate version of the \emph{horseshoe lemma} to
construct the quasi-isomorphism $\vp^{\nu}$ with source $\PP^{\nu}$ of
the form claimed.

So far, we have constructed a diagram of morphisms of complexes
\begin{align*}
\xymatrix{
\PP^{a+1}\ar@{^(->}[r]\ar[d]^{\simeq}_{\vp^{a+1}}&
\PP^{a}\ar@{^(->}[r]\ar[d]^{\simeq}_{\vp^{a}}&\cdots\ar@{^(->}[r]&
\PP^{n}\ar@{^(->}[r]\ar[d]^{\simeq}_{\vp^{n}}&\cdots\\
F^{a+1}\ar@{^(->}[r]&F^{a}\ar@{^(->}[r] & 
\cdots\ar@{^(->}[r]&F^{n}\ar@{^(->}[r]&\cdots 
}
\end{align*}
and it remains to take the (essentially constant) direct limit
\begin{align*}
\vp=\varinjlim_{n}\vp^{n}\colon P=\varinjlim_{n}\PP^{n}\xto{\simeq}
\varinjlim_{n}F^{n}\cong C 
\end{align*}
to finish the proof.
\end{proof}
We add a few remarks about the essence of this degeneracy criterion.

\begin{remark}
The point is that, under the assumptions made, each differential 
\[
d^{i,n-i}_{r}\colon E^{i,n-i}_{r}\to E^{i+r,n+1-i-r}_{r}
\]
on the later pages $E^{\sbullet,\sbullet}_{r}$ for $r\ge 1$, by
definition a morphism from a \emph{subquotient} of $E^{i,n-i}_{1}$ to
one of $E^{i+r,n+1-i-r}_{1}$, is already induced by the relevant
component of $\partial^{n}\colon  P^{n}=\bigoplus_{i}E^{i,n-i}\to
\bigoplus_{i}E^{i,n+1-i}=P^{n+1}$. Conversely, if there exist such
morphisms $\partial^{n}$ that induce the higher differentials in the
spectral sequence and that satisfy $\partial^{n+1}\partial^{n}=0$,
then projectivity of the $P^{n}$ ensures that the resulting complex is
quasi-isomorphic to $C$, thus, constitutes a projective resolution.

Moreover, the proof shows that the construction of the projective
resolution of $C$ is effective and natural. It suffices to replace
successively the connecting morphisms $F_{\nu}/F_{\nu+1}\to
F_{\nu+1}/F_{\nu+2}[1]$ by the morphisms $P^{\nu}[-\nu]\to
P^{\nu+1}[-\nu-1]$ they induce in cohomology.
\end{remark}

\begin{remark}
\label{remark:canonicalspecialcase}
It seems worthwhile to single out the simplest case. Assume that the
bicomplex $C$ not only satisfies the hypotheses of Proposition
\ref{projresfromSS} but that furthermore there exists for each $n$ at
most one $i'_{n}$ with $i_{n-1}<i'_{n}\le i_{n}$ and
$E^{i'_{n},n-i'_{n}}_{1}\neq 0$. The spectral sequence then
degenerates into a single complex
\begin{align}
\label{oneE1}
\cdots \to E^{i'_{n},n-i'_{n}}_{1}\xto{\partial^{n}}
E^{i'_{n+1},n+1-i'_{n+1}}_{1}\to\cdots\to  
E^{i'_{a},a-i'_{a}}_{1}\to 0
\end{align}
with projective terms that is quasi-isomorphic to $C$, thus constitutes
a projective resolution of $C$ as postulated in Proposition
\ref{projresfromSS}.

The differential $\partial^{n}$ is simply obtained from the
differential $d^{i'_{n},n-i'_{n}}_{r}\colon E^{i'_{n},n-i'_{n}}_{r}\to
E^{i'_{n+1},n+1-i'_{n+1}}_{r}$ on the $r^\text{th}$ page of the spectral
sequence, for $r= i'_{n+1}-i'_{n}$, through the composition
\begin{align*}
\partial^{n}\colon E^{i'_{n},n-i'_{n}}_{1}\onto
E^{i'_{n},n-i'_{n}}_{r}\xto{d^{i'_{n},n-i'_{n}}_{r}} 
E^{i'_{n+1},n+1-i'_{n+1}}_{r}\into
E^{i'_{n+1},n+1-i'_{n+1}}_{1} 
\end{align*}
where the first morphism is necessarily an epimorphism and the last
one a monomorphism as the assumptions guarantee that there are no
nonzero differentials with source equal to $E^{i'_{n},n-i'_{n}}_{r'}$
or with target equal to $E^{i'_{n+1},n+1-i'_{n+1}}_{r'}$ on any
earlier page $r'<r$.
\end{remark}

\section{Direct Images on the Determinantal Variety}
\label{sect:di-det}
We now come to one of the central results. 
\begin{nsit}{The generic morphism.}\label{nsit:genmorph}
In addition to the projective $K$-module $F$ of constant rank $m$,
let $G$ be a second projective $K$-module, of constant rank $n\ge
m$. The $K$-module $H =\Hom_K(G,F)$ is then still projective, of
constant rank $mn$. We view $H$ as the affine $K$-variety of
$K$-rational points of $S = \Sym_K(H^{\svee})$, locally isomorphic to
a polynomial ring over $K$ in $mn$ variables and naturally graded by
the symmetric powers which are in turn finite projective $K$-modules.

The projective $K$-modules $F$ and $G$ extend under $-\otimes S$ to
projective $S$-modules $\F$ and $\G$ respectively\footnote{If one
wishes to keep track of the $S$-grading, $\G$ should be identified
with the graded $S$-module $G\otimes S(-1)$ generated in degree $1$,
while $\F=F\otimes S$ is generated in degree $0$.}. The evaluation
homomorphism $\Hom_{K}(G,F)\otimes G\to F$ yields by adjunction the
$K$-linear inclusion $G\ \into\ F\otimes H^{\svee}\subseteq F\otimes
S$ that induces the \emph{generic morphism} $\vp\colon \G\to \F$ between
these projective $S$-modules. Taking the $m^\text{th}$ exterior power over
$S$ and using that $|\F|=\L^{m}_{S}\F$ is invertible with inverse
$|\F^{\svee}|=\L^{m}_{S}\F^{\svee}= |F^{\svee}|\otimes S$, there
results an $S$-linear form
\begin{align*}
\L^{m}_{S}\G\otimes_{S}|\F^{\svee}| \xto{(\L^{m}_{S}\vp)
  \otimes_{S}1} |\F|\otimes_S|\F^{\svee}| \to S 
\end{align*}
whose image is the defining ideal of the locus where the generic
morphism drops rank and whose cokernel we denote by $R$. Locally,
$\Spec R$ is described by the vanishing of the maximal minors of the
generic $(m\times n)$-matrix.  The $K$-algebra $R$ inherits the
grading from $S$, and its graded components are still finite
projective $K$-modules, as follows from the classical
Gaeta-Eagon-Northcott complex \cite{Gaeta, EN} that resolves $R$
projectively as an $S$-module.  In particular, $R$ is a perfect
$S$-module of grade equal to $n-m+1$.  The singular locus of $\Spec R$
is locally defined by the \emph{submaximal} minors $I_{m-1}(X)$,
whence has codimension $n-m+3$ in $\Spec R$.  In particular, $R$ is
smooth in codimension $2$, a fact we shall exploit below.

Recall as well that $\pi\colon\PP\to\Spec K$ denotes the structure
morphism from the projective space $\PP = \PP(F^\svee) \cong
\PP^{m-1}$ of $K$-linear forms on $F$ to the base scheme.

Set $\caly = \PP \times_{\Spec K} H$, with the canonical projections
$p\colon \caly \to \PP$ and $q\colon \caly \to H$. Note that $q$ can
be identified with $\pi\times_{\Spec K}H$, whence we may view it as
the structure map of the projective bundle $\caly \cong
\Proj_{H}(\F^{\svee})\to H$. In particular, the results of the
Section~\ref{sect:higherdirect} apply, if one replaces there $K$ by
$H$ and $F$ by $\calf$.
\end{nsit}

\begin{nsit}{The incidence variety and its resolution.}
\label{nsit:resRjOZ}
Define as in the Introduction the \emph{incidence variety} 
\[
\calz = \lbrace \,([\lambda],\theta) \in \PP\times_{\Spec K}
H\;|\;\lambda\theta = 0\,\rbrace  \subseteq \caly
\]
and denote by $j$ the natural inclusion $\calz \to \caly$. The
composition $q' = q j\colon \calz \to H$ is then a birational
isomorphism from $\calz$ onto its image $q'(\calz) = \Spec R$, while
$p' =p j\colon \calz\to \PP$ is a vector bundle (with zero
section $\theta=0$). In particular, $p'$ is smooth, thus flat.

The vector bundle $\calz$ admits a compact description in terms of the
bundle of differential forms $\calu =\Omega_\PP^1(1)$.  Since an
element of the fiber $\Omega^1(1)_\lambda$ over a closed point
$\lambda \in \PP$ sits in an exact sequence
\[
0 \to \Omega^1(1)_\lambda \to F \to K \to 0\,,
\]
we obtain a closed
point of $\calz$ by tensoring with $G^\svee$:
\[
0 \to \Omega^1(1)_\lambda \otimes G^\svee \to F \otimes G^\svee \to
G^\svee \to 0
\]
and see thereby that 
\begin{equation}
\label{eq:zdescription}
\calz \cong \underline{\Spec}\left(\Sym_{\PP(F^\svee)}(\Omega^1(1)^\svee
\otimes G)\right)\,.
\end{equation}

The morphism $j\colon \calz \to \caly$ is a \emph{regular immersion}
of codimension $n$, zero-locus of the cosection
\begin{equation}
\label{eq:cosection}
\Phi\colon  q^{*}\G \to p^{*}(\calop(1)) = \caloy(1)
\end{equation}
which corresponds by adjunction to the generic morphism $\G\to q_{*}
\caloy(1)\cong \F$ and is determined locally through
\[
\Phi(q^{*}g_j) = \sum_{i=1}^m f _i \otimes x_{ij}\,.
\]
Put differently, the $S$-module of sections of $\calop(1)\otimes
\calo_H$, isomorphic to $\F$, contains the $K$-linear subspace $F
\otimes H^{\svee}$ and this subspace in turn contains $G$ canonically.
Then $\calz$ is the complete intersection in $\caly=\PP \times_{\Spec
K} H$ given locally by a basis of $n$ sections of $G \subseteq
\Gamma(\caly, \calop(1) \otimes \calo_H)$.

Accordingly, the direct image $j_*\caloz$ is resolved by locally free
$\caloy$-modules through the Koszul complex
\begin{equation}
\label{eq:koszulcomplex}
j_*\caloz \simeq \left(\textstyle{\L}_\caly (q^{*}\G \otimes_\caloy
p^{*}\calop(-1)), \partial_{\Phi(-1)}\right)\,.
\end{equation}
on the $\caloy$-linear form $\Phi(-1)$.  As $j$ is a finite morphism,
indeed a closed immersion, $j_*\caloz$ represents already the total
direct image $\R j_*\caloz$.

\end{nsit}

We now analyse the higher direct images
$(\R^{\nu}q'_{*})p'^{*}(\calm^{b}_{a}(-c))$, using the degeneracy
criterion from the foregoing section. As before, in view of Lemma
\ref{lem:iden}, it suffices to treat the case $a+b\ge m+1$.

\begin{theorem}
\label{thm:hdi}
With $\calm^{b}_{a}(-c)=
\cHom_{\calop}(\Omega^{b-1}(b),\Omega^{a-1}(a))(-c)$ as before, the
complex $(\R^{\sbullet}q'_{*})p'^{*}(\calm^{b}_{a}(-c))$ admits a
projective resolution in $\cald(S)$ by a perfect complex that is supported
on $[-n,m-1]\subseteq\ZZ$ and of amplitude at most $n$.

The higher direct images $(\R^{\nu}q'_{*})p'^{*}(\calm^{b}_{a}(-c))$ 
with $\nu\neq 0$ vanish as soon as
\begin{align*}
c&\le 0\quad\text{or}\quad \text{$c=1$ and $b=m$ or
  $a=1$}\quad\text{or}\quad \text{$c=2, b=m$ and $a=1$.} 
\end{align*}
In these cases, the direct image $q'_{*}p'^{*}(\calm^{b}_{a}(-c))$
admits a resolution
\begin{align*}
0 \to P^{-d} \to \cdots \to P^{0} \to q'_{*}p'^{*}(\calm^{b}_{a}(-c)) \to 0
\end{align*}
by finite projective $S$-modules $P^{\mu}$. 

For $a+b\ge m+1$, the non-vanishing projective modules $P^{\mu}$ are of
the form

\begin{align}
\label{table:mus}
\begin{array}{|c|c|c|}
\hline \mu& P^{\mu}&c\\
\hline 
[m-n-1, c-2]&\R^{m-1}\pi_{*}\calm^{b}_{a}{\scriptstyle(-c+\mu-m+1)}
\otimes_{S}\L^{m-1-\mu}\G&\ge m-n+1
\vphantom{\displaystyle\frac{1}{2}}\\
c-1&\bigoplus_{k=0}^{\min\lbrace m-a,m-b\rbrace }\L^{b+k}\F^{\svee}\otimes_{S}\L^{a-c+k}\G&
\ge a-n  
\vphantom{\displaystyle\frac{1}{2}}\\
c&\bigoplus_{k=\max\lbrace a,b\rbrace }^{m}\L^{k-a}\F^{\svee}\otimes_{S}\L^{k-b-c}\G&
[\max\lbrace a-b-n,-n\rbrace ,0]
\vphantom{\displaystyle\frac{1}{2}}\\
{[c+1,0]}&\pi_{*}\calm^{b}_{a}{\scriptstyle(-c+\mu)}\otimes_{S}\L^{-\mu}\G&
[-n, 0]
\vphantom{\displaystyle\frac{1}{2}}\\
{[-n,0]}&\pi_{*}\calm^{b}_{a}{\scriptstyle(-c+\mu)}\otimes_{S}\L^{-\mu}\G&
< -n
\vphantom{\displaystyle\frac{1}{2}}\\
\hline
\end{array}
\end{align}
\medskip
\medskip

\noindent
Accordingly, the projective dimension $d$ of
$q'_{*}p'^{*}(\calm^{b}_{a}(-c))$, with $a+b\ge m+1$, is given by
\smallskip

\begin{align*}
\begin{array}{|c|c|c|}
\hline d& c &(a,b)\\
\hline n-m+1& 2 & (1,m)\\
 n-m+1& 1 & b=m\\
n-m+1&[m-n,0]&\\
-c+1&[a-n,m-n]&\\
-c&[a-b-n,a-n-1]&a > b\\
-c&[-n,a-n-1]&a \le b\\
-c-1&[-n,a-b-n-1]&a > b\\
 n&<-n&\\
 \hline
\end{array}
\end{align*}
\smallskip

\noindent
In particular, for arbitrary integers $a,b,c$, the $S$-module
$q'_{*}p'^{*}(\calm^{b}_{a}(-c))$ is \emph{perfect} of grade equal to
$n-m+1$ for
\begin{align*}
&\text{$c=m-n-1$ and $a=m$ or $b=1$,}\quad\text{or}\\
&m-n\le c\le 0,\quad\text{or}\\
&\text{$c=1$ and $b=m$ or $a=1$,}\quad\text{or}\\
&\text{$c=2$ and $b=m$ and $a=1$.}
\end{align*}
\end{theorem}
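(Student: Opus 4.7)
The theorem is a sparse-spectral-sequence computation built out of the Koszul resolution of $j_\ast\calo_\calz$ from \eqref{eq:koszulcomplex} and the higher direct images computed in Theorem~\ref{thm:directimage}. Concretely, I would write
\[
\R q'_\ast p'^\ast\calm^b_a(-c)\simeq\R q_\ast\bigl(j_\ast\calo_\calz\otimes_{\caloy} p^\ast\calm^b_a(-c)\bigr)
\]
and resolve $j_\ast\calo_\calz$ by the Koszul complex on $\Phi(-1)$. The $k$-th Koszul term tensored with $p^\ast\calm^b_a(-c)$ is $\L^k q^\ast\G\otimes p^\ast\calm^b_a(-c-k)$, sitting in cohomological degree $-k$ for $0\le k\le n$. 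Applying $\R q_\ast\cong\R\pi_\ast\otimes_K\id_H$ via the projection formula (and recalling $\G=G\otimes S$) produces a bicomplex on $\Spec S$ whose $E_1$-page is
\[
E_1^{-k,\nu} \;=\; \L^k\G\otimes_S \R^\nu\pi_\ast\calm^b_a(-c-k),
\]
converging to $\R q'_\ast p'^\ast\calm^b_a(-c)$.

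The crux is then Corollary~\ref{cor:van}: for each fixed $k$ there is at most one $\nu$ with $E_1^{-k,\nu}\neq 0$, and a short case analysis shows that as $k$ ranges over $[0,n]$ these non-zero positions lie along a strictly decreasing staircase in the $(k,\nu)$-plane, partitioned into four regimes according to whether $\nu-(c+k)$ is $>0$, $=0$, $=-1$ or $<-1$. This is exactly the sparsity hypothesis of Proposition~\ref{projresfromSS}, and each $E_1$-entry is a finite projective $S$-module (as $\L^k\G$ is, and the $\R^\nu\pi_\ast$-factor is finite projective over $K$ by Theorem~\ref{thm:directimage}). The proposition therefore produces a perfect complex $P^\bullet\xrightarrow{\simeq}\R q'_\ast p'^\ast\calm^b_a(-c)$ with at most $n+1$ non-zero terms and total support in $[-n,m-1]$, whose differentials arise from the higher differentials of the spectral sequence as in Remark~\ref{remark:canonicalspecialcase}. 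The explicit identification of the four types of $P^\mu$ in \eqref{table:mus} is then the direct translation of the four regimes of Corollary~\ref{cor:van}. The case $a+b<m+1$ is reduced to $a+b\ge m+1$ via the symmetry \eqref{lem:iden1} of Lemma~\ref{lem:iden}.

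The remaining assertions are read off the table. The higher direct images vanish precisely when every non-zero $P^\mu$ sits in degree $\mu\le 0$, which carves out exactly the listed values of $c$ and the boundary conditions on $a,b$. The projective dimension is the absolute value of the smallest $\mu$ with $P^\mu\neq 0$, yielding the tabulated dimensions. Finally, perfectness at grade $n-m+1$ is automatic: in each range listed the projective dimension equals $n-m+1$ and the module is supported on $\Spec R$, whose grade is also $n-m+1$ by the Eagon--Northcott computation recalled in \S\ref{nsit:genmorph}. The main obstacle is the combinatorial bookkeeping: verifying that the four regimes of Corollary~\ref{cor:van} really do glue into a single staircase, with no hidden overlaps at boundary values of $c$, and matching the resulting $E_1$-entries at each $\mu$ with the direct-sum formulas in \eqref{table:mus}. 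Once this is done, the conclusion of Proposition~\ref{projresfromSS} delivers the resolution essentially automatically, and the remaining bounds fall out of the explicit table.
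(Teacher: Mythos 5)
Your plan follows the paper's proof essentially verbatim: the Koszul resolution of $j_\ast\calo_\calz$ combined with the projection formula yields the bicomplex with $E_1^{i,j}\cong \R^{j}\pi_{*}(\calm^{b}_{a}(i-c))\otimes_S\L^{-i}\G$, the sparsity from Theorem~\ref{thm:directimage}/Corollary~\ref{cor:van} triggers Proposition~\ref{projresfromSS}, and the four regimes of Corollary~\ref{cor:van} translate into the four rows of the table, with the vanishing, projective dimension, and perfectness statements read off afterward. The combinatorial bookkeeping you defer is indeed where the bulk of the paper's proof lives, but the strategy and all the key inputs are correctly identified.
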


\begin{proof}
Observe that $q'_{*}p'^{*}=q_{*}j_{*}j^{*}p^{*}$, whence we can
calculate the desired derived direct image as
\begin{align*}
(\R q'_{*})p'^{*}(\calm^{b}_{a}(-c)) 
\simeq 
\R q_{*}(\R j_{*}(j^{*}p^{*}\calm^{b}_{a}(-c)))\,.
\end{align*}
To evaluate the term on the right, we have first 
\[
j^{*}p^{*}\calm^{b}_{a}(-c) \cong
p^{*}\cHom_{\calop}\left(\Omega^{b-1}(b),\Omega^{a-1}(a)\right)(-c)\otimes_{\caloy}\caloz 
\]
and then
\begin{align*}
\R j_{*}(j^{*}p^{*}\calm^{b}_{a}(-c)) & 
\cong p^{*}\cHom_{\calop}\left(\Omega^{b-1}(b),\Omega^{a-1}(a)\right)(-c)\otimes_{\caloy}
\R j_{*}\caloz
\end{align*}
by the projection formula, as $p^{*}\calm^{b}_{a}(-c)$ is locally free
on $\caly$.  Replacing $\R j_{*}\caloz$ by its locally free
$\caloy$-resolution described in \ref{nsit:resRjOZ} above, we find
that $\R j_{*}(j^{*}p^{*}\calm^{b}_{a}(-c))$ is represented in the
derived category of $\caly$ by a (chain) complex $C$ with terms
\[
C^{-i} =
p^{*}\cHom_{\calop}\left(\Omega^{b-1}(b),\Omega^{a-1}(a)\right)(-c-i)\otimes_{\caloy}  
p^{*}\pi^{*}\L^{i}G\quad,\quad i=0,\ldots,n;
\]
concentrated on the interval $[-n,0]$. We can now determine the higher
direct images under $q_{*}$ of $\R j_{*}p'^{*}\calm^{b}_{a}(-c)$ by
means of the hypercohomology spectral sequence defined by this
complex. The first page $E^{i,j}_{1}$ of that spectral sequence is
concentrated in the second quadrant, supported on the rectangle
$[-n,0]\times[0,m-1]$ in the $(i,j)$-plane, with
\begin{align*}
E^{i,j}_{1}= \R^{j} q_{*}(C^{i}) \Longrightarrow
\R^{i+j}q'_{*}(p'^{*}\calm^{b}_{a}(-c))\,. 
\end{align*} 
Using the projection formula once more and noting that taking (higher)
direct images commutes with flat base change, we obtain next that
\[
E^{i,j}_{1} = \R^{j} q_{*}(C^{i})
\cong \R^{j}\pi_{*}(\calm^{b}_{a}(i-c))\otimes_S \L^{-i}\G\,.
\]
In view of Theorem~\ref{thm:directimage}, for fixed $i\in [-n,0]$,
there is at most one index $j$ for which $E^{i,j}_{1}$ is not zero,
and these terms are finite projective $S$-modules. In particular, the
assumptions of Proposition~\ref{projresfromSS} are satisfied and the
hypercohomology spectral sequence degenerates into a projective
resolution of $(\R q'_{*})p'^{*}(\calm^{b}_{a}(-c))$.

The first page of the spectral sequence is concentrated in total
degrees $[-n,m-1]$, with at most $n$ degrees supporting non-zero terms,
whence the claims about support and amplitude of the projective
resolution follow.

For the detailed analysis of the projective resolutions we exhibit
their terms by means of Theorem \ref{thm:directimage}. Recall that we
assume as there that $a+b\ge m+1$. We proceed by cases.

\begin{enumerate}[\ \rm (1)]
\item
\label{hdi:case1} 
For (total) degree $\mu \le c-2$, Theorem \ref{thm:directimage}, or
Corollary \ref{cor:van}, shows that in the direct sum
\begin{align*}
P^{\mu} &=  \bigoplus_{j=0}^{m-1}E^{\mu-j,j}_{1}
\cong
\bigoplus_{j=0}^{m-1}\R^{j}\pi_{*}(\calm^{b}_{a}(\mu-j-c)) 
\otimes_S \L^{j-\mu}\G\,, 
\end{align*}
only the highest occurring direct image
$\R^{m-1}\pi_{*}(\calm^{b}_{a}(\mu-m+1-c))$ can possibly be non-zero,
\begin{align*}
P^{\mu} &= E^{\mu-m+1,m-1}_{1}\cong
\R^{m-1}\pi_{*}(\calm^{b}_{a}(-c+\mu-m+1))\otimes_S \L^{m-1-\mu}\G\,.
\end{align*}
Moreover, the first factor in the tensor product is indeed non-zero if,
and only if, $c-\mu-1>0$ and the other factor is clearly non-zero if,
and only if, $0\le m-1-\mu\le n$.  This yields $P^{\mu}\neq 0$ exactly
for
\begin{align*}
m-n-1\le \mu \le \min\lbrace m-1,c-2\rbrace \,.
\end{align*}

\item
\label{hdi:case2}  
In total degree $i+j=c-1$, Theorem \ref{thm:directimage}(4) yields
\begin{align*}
P^{c-1}&= \bigoplus_{j=0}^{m-1}E^{i,j}_{1}
\cong
\bigoplus_{j=0}^{m-1}\R^{j}\pi_{*}(\calm^{b}_{a}(-j-1))
\otimes_S \L^{j+1-c}\G\\ 
&\cong
\bigoplus_{j+1=a}^{\min\lbrace a-b+m,m\rbrace }
\F^{\svee}_{b-a+j+1}\otimes_{S}\L^{j+1-c}\G\\
&=\bigoplus_{k=0}^{\min\lbrace m-b,m-a\rbrace }\F^{\svee}_{b+k}\otimes_{S}\L^{a+k-c}\G
\end{align*}
and the second factor in the tensor product is non-zero if, and only
if, $0\le a+k-c\le n$.  Combined with the range $0\le k \le
\min\lbrace m-b,m-a\rbrace $ of the summation, $P^{c-1}$ is thus seen to be
nonzero if, and only if,
\begin{align*}
\max\lbrace c-a,0\rbrace \le \min\lbrace m-a,m-b,n-a+c\rbrace \,,
\intertext{equivalently,}
\max\lbrace c,a\rbrace \le \min\lbrace m,m+a-b,n+c\rbrace \,.
\end{align*}
In case $c\le a$, this condition just becomes $a-n\le c$ as $m-a,m-b\ge 0$.

\item
\label{hdi:case3}  
In total degree $i+j=c$ we obtain from Theorem
\ref{thm:directimage}(3) that
\begin{align*}
P^{c}&= \bigoplus_{j=0}^{m-1}E^{i,j}_{1}\cong
\bigoplus_{j=0}^{m-1}\R^{j}\pi_{*}(\calm^{b}_{a}(-j)) 
\otimes_S \L^{j-c}\G\\
&\cong
\bigoplus_{j=\max\lbrace 0,a-b\rbrace }^{m-b}\F^{\svee}_{b-a+j}\otimes_{S}\L^{j-c}\G\\ 
&=\bigoplus_{k=\max\lbrace a,b\rbrace }^{m}\F^{\svee}_{k-a}\otimes_{S}\L^{k-b-c}\G\,.
\end{align*}
Taking into account that the second factor in the tensor product is
nonzero if, and only if, $0\le k-b-c\le n$ and comparing this with the
range of the summation, it follows that $P^{c}\neq 0$ if, and only if,
\begin{align*}
\max\lbrace a,b,b+c\rbrace  \le \min\lbrace m,n+b+c\rbrace \,.
\end{align*}
If $c\le 0$, this inequality becomes equivalent to
$\max\lbrace a-b-n,-n\rbrace \le c\le 0$.  Note also that $P^{c}= 0$ for $b+c >
m$.

\item
\label{hdi:case4}  
Finally assume that the total degree satisfies $\mu=i+j >c$. In that
case thus $c-i < j$ and Corollary~\ref{cor:van}
shows $E^{i,j}_{1}=0$ for $j\neq 0$, whence
\begin{align*}
P^{\mu}&= E^{\mu,0}\cong \pi_{*}(\calm^{b}_{a}(\mu-c))\otimes_S
\L^{-\mu}\G
\end{align*}
In turn, this term is non-zero if, and only if, $\max\lbrace -n,c+1\rbrace  \le \mu\le 0$.
\end{enumerate}

It remains to exhibit when $P^{\mu}\neq 0$ for some $\mu >0$. By case
(\ref{hdi:case1}), this will occur if $0 < \min\lbrace m-1,c-2\rbrace $, thus, for
$c>2$ (and $m\ge 2$).

If $c=2$, then $P^{1}= P^{c-1}\neq 0$ if, and only if, $\max\lbrace 2,a\rbrace \le
\min\lbrace m,m-b+a, n+2\rbrace $ by case (\ref{hdi:case2}) above. As we always
have $\max\lbrace 2,a\rbrace  \le m < n+2$ and $a\le m-b+a$, the inequality fails
only for $a=1, b=m$. In the latter case, indeed each $P^{\mu}=0$ for
$\mu >0$.

If $c=1$, then the above results yield immediately $P^{\mu} = 0$ for
all $\mu >1 $, and case (\ref{hdi:case3}) shows that $P^{1}=0$ if, and
only if, $\max\lbrace a,b,b+1\rbrace  = \max\lbrace a,b+1\rbrace  > \min\lbrace m,n+b+1\rbrace =m$, which
in turn holds if, and only if, $b=m$.
\end{proof}

\begin{remark}\label{rem:backhanded}
For any $n\ge m$, a projective resolution for
$q'_{*}p'^{*}\calm^{b}_{a}$ cannot be shorter than
displayed. Inspecting $P^{-n+m-1}$, this implies, in a backhanded way,
that $\R^{m-1}\pi_{*}\calm^{b}_{a}(-c)\neq 0$ for each $c > m$, whence
also $\pi_{*}\calm^{b}_{a}(-c)\neq 0$ for $c<0$.
\end{remark}

\begin{example}\label{eg:presOmega-a}
To derive an $S$-presentation for $q'_\ast
p^{\prime\ast}\Omega^{a-1}(a)$, consider that we have 
\begin{align*}
\calm^m_a& =
\cHom_{\calo_\PP}\left(\Omega^{m-1}(m),\Omega^{a-1}(a)\right)\\  
&=\cHom_{\calo_\PP}\left(\L^{m}F\otimes\calop,\Omega^{a-1}(a)\right)\\
&=\L^{m}F^\svee \otimes \Omega^{a-1}(a)
\end{align*}
and hence
\begin{equation}
\label{eq:tmrel}
\Omega^{a-1}(a)=|F|\otimes \calm^m_a\,,
\end{equation}
where $|F|$ is the determinant of $F$; recall~\ref{diffforms}.  From
the second and the third line of the table~\eqref{table:mus}, applied
with $c=0$, we find that $q'_\ast p^{\prime\ast}\calm^m_a$ has a
presentation
\[
\xymatrix{
\L^{m} \calf^\svee \otimes_S \L^a\calg \ar[r] &
\L^{m-a}\calf^\svee \ar[r] &
q'_\ast p^{\prime\ast}\calm^m_a \ar[r] &
0\,.
}
\]
Tensoring with $|F|$ we get a presentation
\begin{equation*}
\xymatrix{
\L^a\calg   \ar[r]^\rho  &
\L^a\calf   \ar[r] &
q'_\ast p^{\prime \ast} \Omega^{a-1}(a)   \ar[r] &
0\,.
}
\end{equation*}
We confirm the identity of $\rho$ below in Theorem~\ref{thm:geometric}.
\end{example}

\section{From Algebra to Geometry}\label{sec:geometricmethods}
We now use the homological results from
Sections~\ref{sect:higherdirect}--\ref{sect:di-det} to prove the
results asserted in the Introduction. 

\begin{nsit}{The non-commutative desingularization.}\label{nsit:defM}
We retain the notations from~\ref{nsit:genmorph}
and~\ref{nsit:resRjOZ}, but from now on $K$ will always be a
\emph{field.}  As in the Introduction, we put 
\[
M_a=\cok\L^a_S\phi
\]
for $1\leq a \leq m$, and $M = \bigoplus_a M_a$.  Set $E = \End_R(M)$,
our intended non-commutative desingularization of $\Spec R$.
\end{nsit}

First we obtain a geometric description of $M_a$.

\begin{theorem}
\label{thm:geometric}
There is an isomorphism $ q'_\ast (p^{\prime\ast}\Omega^{a-1}(a))\cong
M_a$, which fits in the following commutative diagram
\[
\xymatrix{
q'_\ast q^{\prime \ast} \L^a\calf \ar@{->>}[r] & q'_\ast
    p^{\prime\ast}\Omega^{a-1}(a) \\
\L^a\calf  \ar@{->>}[u] \ar@{->>}[r] & 
M_a \ar[u]_{\cong}
}
\]
where the leftmost vertical map is the canonical one, the lower horizontal
map comes from the definition of $M_a$, and the upper horizontal map
is derived from the exact sequence in~\ref{eq:defOmega-a}.
\end{theorem}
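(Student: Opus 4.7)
The plan is to match the presentation of $q'_\ast p'^\ast\Omega^{a-1}(a)$ derived in Example~\ref{eg:presOmega-a} with the defining presentation $\L^a\calg \xrightarrow{\L^a\phi} \L^a\calf \twoheadrightarrow M_a$ of $M_a$. The geometric crux will be that on $\calz$ the pulled-back generic map $q'^\ast\phi$ factors through the tautological subbundle $p'^\ast\calu = p'^\ast\Omega^1(1) \hookrightarrow q'^\ast\calf$: since $\calz$ is cut out by the cosection $\Phi$ of~\eqref{eq:cosection}, which itself equals the composition $q^\ast\calg \xrightarrow{q^\ast\phi} q^\ast\calf \twoheadrightarrow \caloy(1)$, the vanishing $j^\ast\Phi=0$ forces the image of $q'^\ast\phi$ into $\ker(q'^\ast\calf \to \caloz(1)) = p'^\ast\calu$.

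I would first produce the top horizontal of the diagram. Pulling back~\eqref{eq:defOmega-a} along $p'$ yields
\[
0 \to p'^\ast\Omega^a(a) \to q'^\ast\L^a\calf \to p'^\ast\Omega^{a-1}(a) \to 0,
\]
and via the identification $\Omega^a(a) \cong |F|\otimes\calm^m_{a+1}(-1)$ suggested by~\eqref{eq:tmrel}, Theorem~\ref{thm:hdi} (in the case $c=1$, $b=m$) gives $\R^1 q'_\ast p'^\ast\Omega^a(a) = 0$. Pushing down and precomposing with the adjunction unit $\L^a\calf \to q'_\ast q'^\ast\L^a\calf$ realises the top arrow as a surjection.

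Next I would take $\L^a$ of the factorization $q'^\ast\phi\colon q'^\ast\calg \to p'^\ast\calu \hookrightarrow q'^\ast\calf$ to see that $\L^a q'^\ast\phi$ factors through $p'^\ast\L^a\calu = p'^\ast\Omega^a(a)$, the very kernel of the surjection $q'^\ast\L^a\calf \twoheadrightarrow p'^\ast\Omega^{a-1}(a)$. Applying $q'_\ast$ and invoking naturality of the unit $\idmap \to q'_\ast q'^\ast$, I obtain that $\L^a\phi\colon \L^a\calg \to \L^a\calf$ lands in the kernel of the top horizontal arrow, inducing the sought surjection $M_a \twoheadrightarrow q'_\ast p'^\ast\Omega^{a-1}(a)$ as the right vertical and making the square commute.

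To verify that this surjection is an isomorphism, I would compare ranks at a generic point $\theta_0 \in \Spec R$. Such a $\theta_0$ has rank exactly $m-1$ (the complement being the singular locus by~\ref{nsit:genmorph}), so $\L^a\phi_0$ has image of dimension $\binom{m-1}{a}$ and $M_a$ has generic rank $\binom{m}{a}-\binom{m-1}{a} = \binom{m-1}{a-1}$. On the other hand $q'$ is an isomorphism over this rank-$(m-1)$ locus (its fiber over $\theta_0$ is the single reduced point $[\lambda_0]$ spanning $\cok(\theta_0)^\svee$), so base change identifies the fiber of $q'_\ast p'^\ast\Omega^{a-1}(a)$ at $\theta_0$ with $\Omega^{a-1}(a)|_{[\lambda_0]}$, of dimension $\binom{m-1}{a-1}$. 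Since $M_a$ is maximal Cohen--Macaulay, hence torsion-free over the integral domain $R$ (Buchsbaum--Rim), a surjection onto a module of equal generic rank must be an isomorphism. The main technical nuisance will be the naturality bookkeeping in the middle step to ensure the factorization descends to an honest map $M_a \to q'_\ast p'^\ast\Omega^{a-1}(a)$; everything else should be routine.
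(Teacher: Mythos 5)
Your proposal follows the paper's own route for the heart of the argument: the factorization of $q'^{\ast}\phi$ through the tautological subbundle $p'^{\ast}\Omega^{1}(1)\subset q'^{\ast}\calf$ (which the paper obtains by looking at fibres and you obtain, equivalently, from the vanishing of $j^{\ast}\Phi$ on $\calz$), followed by taking exterior powers and comparing the resulting complex with the defining presentation of $M_a$. Where you genuinely diverge is the final step. The paper restricts to the locus $H_0$ where $\phi$ has rank exactly $m-1$, notes that its complement has codimension at least $2$ in $\Spec R$, and concludes from reflexivity of $M_a$ and torsion-freeness of $q'_{\ast}p'^{\ast}\Omega^{a-1}(a)$; you instead try to prove surjectivity of $M_a\to q'_{\ast}p'^{\ast}\Omega^{a-1}(a)$ outright and then use a generic rank count together with torsion-freeness of $M_a$. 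Your rank count and the $\R^{1}$-vanishing extracted from Theorem~\ref{thm:hdi} (case $c=1$, $b=m$) are both correct.

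The gap is in the surjectivity step. The vanishing of $\R^{1}q'_{\ast}p'^{\ast}\Omega^{a}(a)$ gives surjectivity of $q'_{\ast}q'^{\ast}\L^{a}\calf\to q'_{\ast}p'^{\ast}\Omega^{a-1}(a)$, but to conclude that the composite $\L^{a}\calf\to q'_{\ast}p'^{\ast}\Omega^{a-1}(a)$ --- and hence the induced map out of $M_a$ --- is surjective, you also need the adjunction unit $\L^{a}\calf\to q'_{\ast}q'^{\ast}\L^{a}\calf=\L^{a}F\otimes q'_{\ast}\caloz$ to be surjective. That is precisely the assertion $q'_{\ast}\caloz\cong R$ (the leftmost double-headed arrow of the diagram you are asked to produce), i.e.\ essentially the normality of the determinantal ring, and you nowhere justify it. Without it, your generic-rank argument shows only that $M_a\to q'_{\ast}p'^{\ast}\Omega^{a-1}(a)$ is injective and generically an isomorphism, which does not force it to be onto (compare the inclusion of a domain into its normalization). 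The fix is either to supply $q'_{\ast}\caloz\cong R$ (classical, or extractable from Theorem~\ref{thm:hdi} with $(a,b,c)=(m,m,0)$ via Lemma~\ref{lem:iden}), or to finish as the paper does: the two modules agree off a codimension-two subset, $q'_{\ast}p'^{\ast}\Omega^{a-1}(a)$ is torsion-free, and $M_a$ is maximal Cohen--Macaulay, hence reflexive, so the comparison map is an isomorphism.
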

\begin{proof}
Let $i\colon \Spec R\to \Spec S$ be the inclusion. We will construct a
more elaborate version of the claimed diagram
\begin{equation}
\label{eq:bigdiagram}
\xymatrix{
i^\ast q'_\ast\L^a q^{\prime \ast}(\calg)  
    \ar[rr]^{i^\ast(q_\ast\bigwedge^a q^{\prime\ast}(\phi))}   &&
i^\ast q'_\ast\L^a q^{\prime \ast}(\calf)
    \ar[r] &
i^\ast q'_\ast (p^{\prime\ast}\Omega^{a-1}(a))
    \ar[r] &
0
\\
i^\ast\L^a\calg
    \ar[rr]_{i^\ast(\bigwedge^a\phi)} \ar[u]^{\cong}  &&
i^\ast\L^a\calf
    \ar[r] \ar[u]_{\cong}  &
i^\ast M_a 
    \ar[r] \ar[u]_{\cong} &
0
}
\end{equation}
where the two leftmost vertical maps are the canonical ones.

For brevity we will drop below most of the applications of $i^\ast$
from the notations.

Let $H_0\subset H$ be the locus where the rank of $\phi$ is exactly
$m-1$ and put $\calz_0=(q')^{-1}(H_0)$.  Then $q'$ restricted to
$\calz_0$ is an isomorphism.

The map $\phi\colon \calf\to \calg$ pulls back to a map $q^{\prime
\ast} (\phi)\colon q^{\prime \ast}(\calg) \to q^{\prime \ast}
(\calf)$. By looking at fibers it is easy to see that it factors as
\begin{equation} \label{epimono}
\xymatrix{
q^{\prime\ast}(\phi)\colon \quad
q^{\prime \ast}(\calg) \ar[r] &
p^{\prime\ast}(\Omega(1)) \ar@{^{(}->}[r] &
p^{\prime\ast}\pi^\ast F = q^{\prime \ast}(\calf)\,.
}
\end{equation}
Since the exterior product preserves subbundles we get an
factorization
\[
\xymatrix{
\L^a q^{\prime\ast}(\phi)\colon\quad 
\L^a q^{\prime \ast}(\calg) \ar[r] &
p^{\prime\ast}(\Omega^a(a))\ar@{^{(}->}[r] &
\L^a p^{\prime\ast}\pi^\ast F  = \L^a q^{\prime \ast}(\calf)
}
\]
and hence combining this with the pullback of a suitably shifted
version of \eqref{eq:defOmega-a} under $p'$
we get a complex
\begin{equation}\label{complex}
\xymatrix{
\L^a q^{\prime \ast}(\calg) \ar^{\bigwedge^a q^{\prime\ast}(\phi)}[rr] &&
\L^a q^{\prime \ast}(\calf) \ar[r] &
p^{\prime \ast}(\Omega^{a-1}(a)) \ar[r] &
0\,.
}
\end{equation}
Since the first map in \eqref{epimono} is an epimorphism when restricted
to $\calz_0$ and since exterior powers also preserve epimorphisms, we get
that \eqref{complex} is exact when restricted to $\calz_0$. 

It follows that we have a complex
\begin{equation}\label{eq:exactonH0}
\xymatrix{
q'_\ast\L^a q^{\prime \ast}(\calg) 
  \ar^{q'_\ast\bigwedge^a  q^{\prime\ast}(\phi)}[rr]  &&
q'_\ast\L^a q^{\prime \ast}(\calf) \ar[r]  &
q'_\ast p^{\prime\ast}\Omega^{a-1}(a) \ar[r] & 
0
}
\end{equation}
exact on $H_0$.  Comparing this with the right-exact sequence on
$\Spec R$
\[
\xymatrix{
i^\ast\L^a\calg
   \ar^{i^\ast \bigwedge^a\phi}[rr]  &&
i^\ast\L^a\calf \ar[r] &
M_a \ar[r] &
0
}
\]
we obtain \eqref{eq:bigdiagram}, with a uniquely defined rightmost
vertical map.  It remains to show that the vertical maps are
isomorphisms. We will only consider the rightmost one, as the others
are similar but easier.

Since \eqref{eq:exactonH0} is exact on $H_0$ 
we find
\[
 q'_\ast p^{\prime\ast}\Omega^{a-1}(a)\bigl|_{H_0}=M_a\bigl|_{H_0}\,.
\]
Now $q'_\ast p^{\prime\ast}\Omega^{a-1}(a)$ is $R$-torsion free and
$M_a$ is maximal Cohen-Macaulay over $R$ (and hence $R$-reflexive) by
\cite[Corollary 2.6]{Buchsbaum-Rim:1964}. Since the codimension of the
complement of $H_0$ in $\Spec R$ is at least $2$ we obtain that the
induced map $M_a\to q'_\ast p^{\prime\ast}\Omega^{a-1}(a)$ is an
isomorphism.
\end{proof}



\begin{nsit}{A tilting bundle.}\label{sec:tilting} 
Put $\calt_a=p^{\prime \ast}\Omega^{a-1}(a)$ and
 $\calt=\bigoplus_{a=1}^m \calt_a$, bundles on the incidence variety
 $\calz$.  It follows from Theorem~\ref{thm:geometric} that
 $\End_R(q'_\ast \calt) \cong \End_R(M) =E$.  We can now prove
 Theorems~\ref{thm:A} and~\ref{thm:C} from the Introduction.

\begin{theorem}\label{tilting}  
  We have $\calt^{\perp}=0$ in $\cald(\operatorname{Qch}(\calz))$ and
  $\Ext^i_{\calo_\calz}(\calt,\calt)=0$ for $i>0$.  In other words,
  $\calt$ is a classical tilting bundle on $\calz$ in the sense
  of~\cite{Hille-VdB:2007}.
\end{theorem}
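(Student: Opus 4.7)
The plan is to prove the two conditions separately, both of which reduce to results already established in the paper.

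For the \textbf{Ext-vanishing} clause $\Ext^i_{\calo_\calz}(\calt, \calt) = 0$ for $i > 0$, I would begin by identifying the relevant sheaves: since $\calt_a = p'^*\Omega^{a-1}(a)$ is locally free, one has $\cHom_{\calo_\calz}(\calt_a, \calt_b) = p'^*\calm^a_b$. Because $H = \Spec S$ is affine and $q'$ is proper, the Leray spectral sequence (with vanishing higher cohomology on $H$ for quasi-coherent sheaves) yields
\[
\Ext^i_{\calo_\calz}(\calt_a, \calt_b) = H^i(\calz, p'^*\calm^a_b) = \Gamma\bigl(H, \R^i q'_* p'^*\calm^a_b\bigr).
\]
Then I would simply invoke Theorem~\ref{thm:hdi} with $c = 0$, whose general vanishing statement (valid for every pair $a, b$) asserts exactly that these higher direct images vanish. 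Summing over $a$ and $b$ finishes this clause.

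For the \textbf{generation property} $\calt^\perp = 0$, the plan is to reduce to Be\u\i linson's theorem on $\PP$ via the affine morphism $p'$. From~\eqref{eq:zdescription}, $p'\colon \calz \to \PP$ is a geometric vector bundle, so it is affine and flat; hence $p'^*$ is exact and left adjoint to $\R p'_* = p'_*$. Given $C \in \cald(\operatorname{Qch}(\calz))$ with $\R\Hom(\calt, C) = 0$, the adjunction gives
\[
0 = \R\Hom_{\calo_\calz}\bigl(p'^*\Omega^{a-1}(a), C\bigr) = \R\Hom_{\calo_\PP}\bigl(\Omega^{a-1}(a),\, p'_* C\bigr)
\]
for $a = 1, \ldots, m$. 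Be\u\i linson's collection generates $\cald(\operatorname{Qch}(\PP))$, so $p'_* C = 0$; affineness of $p'$ makes $p'_*$ exact and conservative on quasi-coherent sheaves, and applying this to each cohomology sheaf $H^i(C)$ gives $C = 0$.

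The argument is essentially a packaging exercise: Theorem~\ref{thm:hdi} already contains the substance of (ii), and (i) is a standard consequence of Be\u\i linson's theorem combined with the affineness of $p'$. The only conceptual subtlety I would be careful about is ensuring that Be\u\i linson's collection generates the \emph{unbounded} derived category of quasi-coherent sheaves on $\PP$, not just $\cald^b(\coh \PP)$; this is standard for a classical tilting bundle on a smooth projective variety (cf.~\cite{Hille-VdB:2007}), but it is the one spot where one should double-check that the right form of the generation result is invoked.
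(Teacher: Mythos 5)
Your proposal is correct and follows exactly the paper's own argument: the generation statement is deduced from the adjunction $(p'^{*},p'_{*})$ together with Be\u\i linson's theorem on $\PP(F^{\svee})$, and the $\Ext$-vanishing is read off from Theorem~\ref{thm:hdi} with $c=0$. You have merely filled in the details (the identification $\cHom_{\caloz}(\calt_a,\calt_b)\cong p'^{*}\calm^{a}_{b}$, the Leray collapse over the affine base, and the conservativity of $p'_{*}$ for the affine morphism $p'$) that the paper's two-sentence proof leaves implicit.
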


\begin{proof} The condition $\calt^{\perp}=0$ follows immediately by
  considering the adjoint pair $(p^{\prime\ast},p'_\ast)$ and the
  fact, due to Be\u\i linson \cite{Beilinson:1978}, that
  $\bigoplus_{a=1}^m\Omega^{a-1}(a)$ is a tilting bundle on
  $\PP(F^\svee)$. The vanishing of $\Ext$ follows from Theorem
  \ref{thm:hdi} applied with $c=0$.
\end{proof}

\begin{theorem}\label{thm:mcm}
We have $E\cong\End_{\calo_\calz}(\calt)$.  Furthermore $E$ is
noetherian on both sides, is finite over its centre, has finite
global dimension and is a maximal Cohen-Macaulay $R$-module.
\end{theorem}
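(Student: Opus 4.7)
The plan is to establish the five assertions in sequence, using the MCM output of Theorem~\ref{thm:hdi}, the tilting property from Theorem~\ref{tilting}, and the identification $M_a\cong q'_{*}\calt_a$ from Theorem~\ref{thm:geometric}.

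First I would prove $E\cong \End_{\calz}(\calt)$. Since $q'$ is proper with affine target and $\calt$ is tilting, global sections of $\cHom(\calt,\calt)$ compute its endomorphism algebra, and Theorem~\ref{thm:hdi} applied with $c=0$ shows that each $q'_{*}p'^{*}\calm^b_a$ is perfect of grade $n-m+1=\codim_S R$, hence maximal Cohen--Macaulay over $R$ and therefore reflexive. On the other hand $E=\End_R(M)=\bigoplus_{a,b}\Hom_R(M_b,M_a)$ is reflexive because each $M_a$ is MCM. Applying $q'_{*}$ yields a natural $R$-algebra map
\[
\End_{\calz}(\calt)\;=\;\bigoplus_{a,b}q'_{*}p'^{*}\calm^b_a\;\longrightarrow\;\End_R(q'_{*}\calt)\;=\;\End_R(M)\;=\;E,
\]
where $q'_{*}\calt=M$ uses Theorem~\ref{thm:geometric} together with the vanishing $\R^{i}q'_{*}\calt_a=0$ for $i>0$ (Theorem~\ref{thm:hdi} with $c=-1$, since $\calt_a=p'^{*}\calm^1_a(1)$). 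This map is an isomorphism on the open locus of $\Spec R$ over which $q'$ is an isomorphism, whose complement has codimension at least two by the codimension-two smoothness of $R$ recorded in~\S\ref{nsit:genmorph}; since reflexive modules over a normal Cohen--Macaulay ring are determined by their localisations at height-one primes, this forces the map to be a global isomorphism.

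With that identification in hand, $E$ is MCM as a direct sum of MCM modules. It is finitely generated over $R\subseteq Z(E)$, hence module-finite over its centre and therefore left- and right-noetherian. Finite global dimension is the standard consequence of tilting on a smooth variety: Theorem~\ref{tilting} together with~\cite{Hille-VdB:2007} yields a triangulated equivalence $\R\Hom_{\calz}(\calt,-)\colon\cald(\operatorname{Qch}(\calz))\xrightarrow{\sim}\cald(\Mod E)$ sending $\calt$ to $E$. By~\eqref{eq:zdescription}, $\calz$ is a vector bundle over $\PP(F^\svee)$, hence smooth of dimension $(m-1)(n+1)$, so every object of $\cald^b(\coh\calz)$ is perfect; transporting this across the equivalence bounds the projective dimension of every finitely generated $E$-module by $\dim\calz$.

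The main obstacle is the first step: no formal derived-functor argument automatically identifies the sheaf-theoretic $\End_{\calz}(\calt)$ with the $R$-algebraic $\End_R(M)$, and the comparison genuinely requires marrying the MCM/reflexivity output of Theorem~\ref{thm:hdi} to the codimension-two smoothness of the determinantal variety $\Spec R$. Once that bridge is built, the remaining properties are essentially formal consequences.
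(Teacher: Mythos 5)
Your proposal is correct and follows essentially the same route as the paper: the natural map $\End_{\calo_\calz}(\calt)\to\End_R(q'_*\calt)\cong\End_R(M)=E$ is an isomorphism in codimension one because $q'$ is an isomorphism off a locus of codimension at least two, and reflexivity of both sides (the source via Theorem~\ref{thm:hdi} with $c=0$, for which the paper alternatively cites \cite[Lemma 4.2.1]{vandenbergh:flops}) upgrades this to a global isomorphism, while the Cohen--Macaulay property comes from the same application of Theorem~\ref{thm:hdi} and finite global dimension from the tilting equivalence together with smoothness of $\calz$ via \cite[Theorem 7.6]{Hille-VdB:2007}. The only differences are cosmetic: you spell out the noetherianness and finiteness over the centre (which the paper leaves implicit as consequences of $E$ being module-finite over $R$).
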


\begin{proof}  Put ${E}'=\End_{\calo_\calz}(\calt)$.
Since $\calt$ is a tilting bundle on $\calz$ we obtain
$\cald^b(\coh(\calz))\cong \cald^b_f({E}')$. Since
$\calz$ is smooth it follows from \cite[Theorem 7.6]{Hille-VdB:2007} that ${E}'$ has finite global
dimension.

From Theorem \ref{thm:hdi}, applied again with $c=0$, it follows that
${E}'$ is Cohen-Macaulay.

We now have maps
\begin{equation}
\label{eq:tmp1}
{E}' = 
\End_{\calo_\calz}(\calt)  \to
\End_{S}(q'_\ast \calt) \cong 
\End_{S}(M)=
E\,.
\end{equation}
The locus where $\phi$ is not an isomorphism has codimension at least
$2$ in both $\Spec R$ and $\calz$, whence \eqref{eq:tmp1} is an
isomorphism in codimension one. Since both source and target of
\eqref{eq:tmp1} are reflexive (the former e.g.\ by \cite[Lemma
4.2.1]{vandenbergh:flops}) we obtain that \eqref{eq:tmp1} is an
isomorphism.
\end{proof}
\end{nsit}

\section{The Quiverized Clifford Algebra}\label{sect:qCliff}
In this section we compute the algebra structure of the
non-commutative de\-sing\-ularization $E$ defined in~\ref{nsit:defM},
giving in particular an explicit description of $E$ as a path algebra
of a certain quiver with relations derived in a natural way from a
Clifford algebra.

\begin{nsit}{Notation.}\label{nsit:notationfield}
Our setting will be as in Section~\ref{sec:geometricmethods}, so in
particular $K$ is a field.  In addition we fix ordered bases $\lbrace f_1,
\dots, f_m\rbrace $ and $\lbrace g_1, \dots, g_n\rbrace $ for $F$ and $G$, and let
$\lbrace \lambda_1, \dots, \lambda_m\rbrace $, $\lbrace \mu_1, \dots, \mu_n\rbrace $ be the
associated dual bases for $F^\svee$ and $G^\svee$.

We again set $S = \Sym_K(\Hom_K(G,F)^\svee) = \Sym_K(F^\svee\otimes
G)$, which is canonically isomorphic to the polynomial ring over $K$
in the variables $x_{ij} = \lambda_i \otimes g_j$. We let $X$ be the
generic $(m\times n)$-matrix with entries $(x_{ij})_{ij}$, so that $X$
is the matrix of the map $\phi$ when expressed in terms of the bases
$\lbrace g_1, \dots, g_n\rbrace $, $\lbrace f_1,\dots, f_m\rbrace $.

By $\Cliff_S(q_\phi)$ we will denote the Clifford algebra over $S$
associated to the quadratic form $q_\phi\colon \F^\svee \oplus \G \to
S$ which is such that $q_\phi(\lambda,g)=\lambda(\phi(g))$.
Concretely $\Cliff_S(q_\phi)$ is the $S$-algebra generated by
$F^\svee$ and $G$ subject to the relations
\begin{align*}
\lambda_i \lambda_j + \lambda_j \lambda_i &= 0 = \lambda_i^2 
&\text{for $i,j = 1, \dots, m$;}\\
g_i g_j + g_j g_i &= 0 = g_i^2  &\text{for $i,j = 1, \dots, n$; and}\\
\lambda_i g_j + g_j \lambda_i &= x_{ij} &\text{for $i=1, \dots,
  m$, $j=1, \dots, n$}\,.
\end{align*}

\end{nsit}

\begin{nsit}{Quivers.}  
  Let $\Gamma$ be a quiver---a directed graph---on finitely many
  vertices $\lbrace 1, \dots, r\rbrace $.  Let $D$ be a commutative ring (below it
  will be $K$ or $S$).  Denote by $\Gamma_{ij}$ the free $D$-module
  with basis the set of paths in $\Gamma$ from vertex~$i$ to vertex~$j$,
  including the trivial path $e_u$ at each vertex $u$. The direct sum
  $D\Gamma = \bigoplus_{i,j} \Gamma_{ij}$ is naturally a $D$-algebra,
  the \emph{path algebra} of $\Gamma$, with multiplication
  $\Gamma_{jk}\otimes\Gamma_{ij} \to \Gamma_{ik}$ given by
  concatenation of paths where possible, and all other products
  trivial.  (Observe the indexing: we write our paths in functional
  order.) The paths $e_u$ are idempotent and $\sum_u e_u$ is the
  identity element in $D\Gamma$, conveniently denoted by~$1$.  Below
  we will also consider quivers with an infinite number of vertices
  (indexed from $-\infty$ to $\infty$).  In that case $D\Gamma$
  does not have a unit element, but the $e_u$ are local units.

Let $I \subseteq D\Gamma$ be a two-sided ideal.  The pair $(\Gamma,
I)$ is called a \emph{quiver with relations,} and the quotient
$D\Gamma/I$ its path algebra with relations. The relations $I$ will
often be understood and dropped from the notation.
\end{nsit}

\begin{nsit}{Quiverization.}
  If $A$ is a $\ZZ$-graded algebra then we define the \emph{infinite
    quiverization}\footnote{The knowledgeable reader will note
we are basically using the formalism of $\ZZ$-algebras here. See e.g.
\cite{Bondal-Polishchuk:1993}.}
 as the bigraded algebra without unit
    $Q_\infty(A)=\bigoplus_{i,j\in \ZZ} A_{j-i}$ with multiplication
    coming from the multiplication in $A$: $A_{k-j}\times
    A_{j-i}\to A_{k-i}$.  The term ``quiverization'' is meant
    to be informal, indicating that $Q_\infty(A)$ can often be
    advantageously represented as a path algebra of a quiver with
    relations on a set of vertices indexed by~$\ZZ$. If $M$ is a
    $\ZZ$-graded $A$-module then we may view $M$ as as left
    $Q_\infty(A)$-module through the action $A_{j-i}\times
    M_i\to M_j$.  We will denote this $Q_\infty(A)$-module by
    $Q(M)$.

For every $i\in \ZZ$ we have $1\in A_0= Q_\infty(A)_{ii}$. This is an
idempotent in $Q_\infty(A)$ which we denote by $e_i$.  The
\emph{quiverization $Q_r(A)$ of order $r$} of $A$ is defined as the
quotient $Q_\infty(A)/ \sum_{i\not\in [1,r]}
Q_\infty(A)e_iQ_\infty(A)$. It is easy to see that $Q(M)$ is a right
$Q_r(A)$-module provided the grading of $M$ is supported only in degrees
$1,\ldots, r$.  We can often represent $Q_r(A)$ naturally by a quiver
with vertices $[1,r]$.

The following lemma is trivial to prove.
\begin{lemma}
\label{lem:quiverizationprinciple}
The functor $M\leadsto Q(M)$ defines an equivalence of categories between
respectively:
\begin{enumerate}
\item The category of graded $A$-modules and the category of graded
  $Q_\infty(A)$-modules $N$ such that $N=\bigoplus_i e_i N$.
\item The category of graded $A$-modules whose support is 
concentrated in degrees $1,\ldots,r$ and the category of $Q_r(A)$-modules. 
\end{enumerate}
\end{lemma}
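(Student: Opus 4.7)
The plan is to make the (almost tautological) equivalence explicit by writing down an inverse functor and checking, at the level of elements, that it is compatible with the module structures on both sides; no real obstacle is expected since the paper already labels the lemma "trivial."

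First I would pin down the functor $Q$: for a graded $A$-module $M=\bigoplus_{i\in\ZZ}M_i$, the module $Q(M)$ has underlying abelian group $M$ and is given the $Q_\infty(A)$-action in which $e_i$ acts as the projection onto $M_i$ and, for an element $a\in A_{j-i}=e_jQ_\infty(A)e_i$, the action on $M_i$ coincides with the original $A$-action $A_{j-i}\otimes M_i\to M_j$, while $a$ kills $M_k$ for $k\neq i$. By construction $Q(M)=\bigoplus_i e_iQ(M)$, and a morphism $f\colon M\to M'$ of graded $A$-modules, being homogeneous, commutes with the projections to $M_i$ and hence automatically intertwines the $Q_\infty(A)$-actions.

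Next I would construct the inverse functor $P$. Given a graded $Q_\infty(A)$-module $N$ with $N=\bigoplus_i e_iN$, set $P(N)_i:=e_iN$. For $a\in A_k$, the multiplication $a\cdot(-)\colon e_iN\to e_{i+k}N$ defined via $a\in A_k = Q_\infty(A)_{i+k,i}$ depends only on the degree $k$, not on $i$ (because the indices $i,j$ in $A_{j-i}$ label the same abstract copy of $A_k$). Associativity of the $A$-action on $\bigoplus_i e_iN$ then follows at once from associativity in $Q_\infty(A)$: if $a\in A_k$, $a'\in A_{k'}$, acting on $x\in e_iN$ matches the product $aa'\in A_{k+k'} = Q_\infty(A)_{i+k+k',i}$ by the very definition of multiplication in $Q_\infty(A)$. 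The verifications that $PQ\cong \id$ and $QP\cong \id$ are immediate from the construction (on the nose in both directions), establishing part (1).

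For part~(2) I would observe that an $A$-module $M$ is supported in degrees $1,\dots,r$ iff $e_iQ(M)=0$ for $i\notin[1,r]$, iff $Q(M)$ is annihilated by the two-sided ideal $\sum_{i\notin[1,r]}Q_\infty(A)e_iQ_\infty(A)$, iff $Q(M)$ descends to a module over $Q_r(A)$. Conversely, any $Q_r(A)$-module inflates uniquely to such a $Q_\infty(A)$-module. Combining this with part (1) yields the desired equivalence, and since the entire argument is at the level of compatible gradings and the standard correspondence between algebras with enough idempotents and $\ZZ$-algebras, I do not anticipate any genuine obstacle.
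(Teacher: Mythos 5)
Your proof is correct: the explicit construction of the inverse functor $P(N)_i = e_iN$, the associativity check via multiplication in $Q_\infty(A)$, and the identification of modules supported in $[1,r]$ with modules killed by $\sum_{i\notin[1,r]}Q_\infty(A)e_iQ_\infty(A)$ constitute exactly the tautological argument the paper has in mind when it declares the lemma "trivial to prove" and omits any proof. Nothing is missing; the only point worth making explicit is that a (unital) $Q_r(A)$-module automatically decomposes as $\bigoplus_{i=1}^r e_iN$ because $\sum_{i=1}^r e_i$ is the identity of $Q_r(A)$, which justifies your "inflates uniquely" step.
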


\end{nsit}
\begin{nsit}{The doubled Be\u\i linson quiver.}\label{sit:extBquiver}
  It is clear that $\Cliff_S(q_\phi)$ is bigraded by $\deg F=(1,0)$,
  $\deg G=(0,1)$. In this paper we consider two induced
  $\ZZ$-gradings. For the first one (labeled ``the $\ZZ$-grading'') we
  put $\deg F^\svee=-1$, $\deg G=1$. For the second one (``the
  $\NN$-grading'') we put $\deg F^\svee=\deg G=1$.

The \emph{quiverized Clifford algebra} on $F^\svee$ and $G$ is defined
as $C=Q_{m}(\Cliff_S(q_\phi))$ with $\Cliff_S(q_\phi)$ considered as
being graded by the $\ZZ$-grading.  Note that $C$ is still naturally
bigraded.

The $S$-algebra $C$ can be represented as the path algebra with relations
over $S$ of the  \emph{doubled Be\u\i linson quiver}:
\[
\Qtilde: \qquad
\Qtildelambdageer
\]
Note that $g_i,\lambda_j$ serve as the label for $m-1$ different
 arrows.  If there is confusion possible then we use notations like $p
 e_u$ or $e_v p$ to indicate explicitly the starting or ending point
 of the path $p$.

The $a,b$ graded piece $C_{ab}$ of $C$ consists of paths from $a$ to
$b$, thus $C_{ab} = e_b C e_a$.

The relations (with coefficients in $S$) on
$\tilde{\Q}$ are directly derived from those of $\Cliff_S(q_\phi)$:
\begin{align*}
\lambda_i \lambda_j + \lambda_j \lambda_i &= 0 = \lambda_i^2 
&\text{for $i,j = 1, \dots, m$;}\\
g_i g_j + g_j g_i &= 0 = g_i^2  &\text{for $i,j = 1, \dots, n$; and}\\
\lambda_i g_j + g_j \lambda_i &= x_{ij} &\text{for $i=1, \dots,
  m$, $j=1, \dots, n$}
\end{align*}
where we use the convention that whenever there are paths in such
relations that are not defined we silently drop them. This means that
the relation of the third type associated to vertex $1$ is in fact
$\lambda_i g_j=x_{ij}$ and the one associated to vertex $m$ is
$g_j\lambda_i =x_{ij}$.

These relations generate an ideal $\mathcal{J}$ in the path-algebra
$S\tilde{\Q}$ and we have $C=S\tilde{\Q}/\mathcal{J}$. 

\medskip

For further reference we note that $C$ has an involution
\begin{equation}
\label{eq:involution}
\lambda_i\mapsto \lambda_i, \quad g_j\mapsto g_j, \quad e_i\mapsto e_{m+1-i}
\end{equation}
which sends $C_{ab}$ to $C_{m+1-b,m+1-a}$. 
\end{nsit}

\begin{remark}
\label{rem:cubic}
If we prefer to do so we may work over the ground field $K$ instead of
over~$S$.  We find $ C=K\tilde{\Q}/\mathcal{J}' $ where $\mathcal{J}'$
is generated by the relations
\begin{align*}
\lambda_i \lambda_j + \lambda_j \lambda_i &= 0 = \lambda_i^2 
&\text{for $i,j = 1, \dots, m$;}\\
g_i g_j + g_j g_i &= 0 = g_i^2  &\text{for $i,j = 1, \dots, n$;}\\
\lambda_k(\lambda_i g_j + g_j \lambda_i) &= (\lambda_i g_j + g_j
\lambda_i)\lambda_k &\text{for $i,k=1, \dots, 
  m$, $j=1, \dots, n$; and}\\
g_l(\lambda_i g_j + g_j \lambda_i) &= (\lambda_i g_j + g_j
\lambda_i)g_l &\text{for $i=1, \dots, 
  m$, $j,l=1, \dots, n$.}
\end{align*}
The isomorphisms between the former presentation of $C$ and this one
are given by
\[
S\tilde{\Q}/\mathcal{J}\to
K\tilde{\Q}/\mathcal{J}'\colon \lambda_i\mapsto \lambda_i, 
g_j\mapsto g_j, x_{ij}\mapsto \lambda_i g_j + g_j \lambda_i
\]
and
\[
K\tilde{\Q}/\mathcal{J}'\to S\tilde{\Q}/\mathcal{J}
\colon 
\lambda_i\mapsto \lambda_i,
g_j\mapsto g_j\,.
\]
It follows that, when considered as a $K$-algebra, $C$ has cubic relations.
\end{remark}

\begin{nsit}{A Clifford action on $M$.}\label{sec:relCE}
We construct a natural map $C \to E=\End_R(M)$.  
To describe a map $C\to E$ we have to put a left $C$-module structure
on $M$, and according to Lemma~\ref{lem:quiverizationprinciple} it is
sufficient to construct an action of $\Cliff_S(q_\phi)$ on $M$.

 An $S$-endomorphism (or equivalently $R$-endomorphism) of $M=
\bigoplus_{a=1}^m M_a = \bigoplus_{a=1}^m \cok(\L^a \phi)$ is obtained
from a pair of morphisms $\alpha,\beta$ rendering the diagram
\begin{equation}\label{eq:matfact}
\xymatrix{
&
 \L \G \ar[r]^{\L \phi} \ar[d]_\beta & 
 \L \F \ar[r] \ar[d]^\alpha & 
 M \ar[r] \ar@{_..>}[d] 
 & 0\\
& 
 \L \G \ar[r]_{\L\phi} & 
 \L \F \ar[r] & 
 M \ar[r]
 & 0
}
\end{equation}
commutative (putting $\L^0\F=\L^0\G=S$ and $\L^0\phi=\id_S$). We
construct such $\alpha,\beta$ as (super-)differential operators on $\L
\F$ and $\L\G$.

\begin{enumerate}
\item For $\lambda \in \F^\svee$, define skew-derivations
  $\partial_\lambda\colon \L \F \to \L \F$ of degree $-1$ in $\F$ by
  (left) contraction $\lambda \Ydown -$; explicitly, for an element
  $f^{1} \wedge \cdots \wedge f^{a} \in \L^a\F$,
\[
\partial_\lambda(f^{1} \wedge \cdots \wedge f^{a}) = 
\sum_{j=1}^a (-1)^{j-1} \lambda(f^{j}) (f^{1} \wedge \cdots
\wedge \widehat{f^{j}} \wedge \cdots \wedge f^{a})\,.
\]
Then $\partial_\lambda$ extends as well to a skew derivation
$\partial_{\lambda\phi}\colon \L \G \to \L \G$.
Putting $(\alpha,\beta)=(\partial_{\lambda},\partial_{\lambda\phi})$
makes~(\ref{eq:matfact}) commute. Denote the induced endomorphism of $M$
again by $\partial_\lambda$.

\item For $g \in \G$, define $\theta_g\colon \L\G \to
  \L\G$ by the exterior multiplication $\theta_g(-) = g \wedge
  -$.  We have an induced map $\theta_{\phi(g)}\colon
  \L\F \to \L\F$. Putting
  $(\alpha,\beta)=(\theta_{\phi(g)},\theta_g)$ makes
  ~(\ref{eq:matfact}) commute. 
 We denote the induced
endomorphism of $M$ also by $\theta_g$.
\end{enumerate}
Write $\partial_i=\partial_{\lambda_i}$, $\theta_j=\theta_{g_j}$. It
is easy to see that we have
\begin{enumerate}
\item $\partial_i\partial_j + \partial_j\partial_i = 0 = \partial_i^2$
  and $\theta_i\theta_j + \theta_j\theta_i = 0 = \theta_i^2$; and
\item $\partial_i\theta_j + \theta_j\partial_i =
  \partial_i(\phi(g_j)) = x_{ij}$.
\end{enumerate}
and hence we have defined an action of $\Cliff_S(q_\phi)$ on $M$. 
\end{nsit}

We will prove below in Theorem~\ref{thm:qCliffisoEndM} that the
morphism $C\to \End_R(M)$ defined by the action above is an
isomorphism, sending $C_{ab}$ to $\Hom_R(M_b,M_a)$.  Our avenue of
proof once more proceeds by translating to geometry, where we define
an action of the Clifford algebra $C$ on the tilting bundle $\calt$.
We prove (Proposition~\ref{prop:actionsagree}) that the two actions
are compatible with the isomorphism $E \cong
\End_{\calo_\calz}(\calt)$ from Theorem~\ref{thm:mcm}, and then 
(Theorem~\ref{thm:expcan}) that this second action gives an
isomorphism $C \to \End_{\calo_\calz}(\calt)$.


\begin{nsit}{An $S$-presentation for $C$.}
In this section we prove a partial technical result (Lemma
\ref{lem:minresCij2}) 
which we will use in the proof of Theorem~\ref{thm:expcan}.
\end{nsit}

\begin{defn}\label{def:Qinfty} With $\lbrace \lambda_1, \dots,
  \lambda_m\rbrace $ and $\lbrace g_1, \dots, g_n\rbrace $ the fixed bases of $F^\svee$
  and $G$, let $\Q^\infty$ be the doubly infinite quiver over $S$
\[
\quiverQinftylabels
\]
with relations
\begin{align*}
\lambda_i \lambda_j + \lambda_j \lambda_i &= 0 = \lambda_i^2 \\
g_i g_j + g_j g_i &= 0 = g_i^2 \\
\lambda_i g_j + g_j \lambda_i &= x_{ij}\,.
\end{align*}
We define $C^\infty=Q(\Cliff_S(q_\phi))$.
Then $C^\infty$ is the $S$-path algebra of $\Q^\infty$ with relations
as above. Of course $C^\infty$ is again naturally graded by
$C^\infty_{ab} = e_b C^\infty e_a$, and surjects onto $C$.
\end{defn}

Verification of the following version of a Poincar\'e-Birkhoff-Witt
(PBW) basis for $C^\infty$ is routine (and follows formally from the
existence of a similar basis for $\Cliff_S(q_\phi)$). Recall that we
write paths in $\Q_\infty$ in functional order.

\begin{lemma}\label{lem:Cinftyfree}
The algebra $C^\infty$ is free as an $S$-module.  More precisely, a
basis for the graded piece $C^\infty_{ab}$ consists of paths
\begin{equation}\label{eq:pathCinfty}
e_b \lambda_{\beta_b}\lambda_{\beta_{b+1}}\cdots\lambda_{\beta_l}
g_{\alpha_l}g_{\alpha_{l-1}}\cdots g_{\alpha_a} e_a
\end{equation}
with $\alpha_a > \alpha_{a+1} > \cdots > \alpha_{l}$ and $\beta_l <
\beta_{l-1} < \cdots < \beta_{b}$.  
\end{lemma}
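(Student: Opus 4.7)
The plan is to translate the statement back to a PBW-type result for the Clifford algebra itself. By construction $C^\infty$ is the infinite quiverization $Q_\infty(\Cliff_S(q_\phi))$, so $C^\infty_{ab} = e_b C^\infty e_a$ coincides with the homogeneous piece $\Cliff_S(q_\phi)_{b-a}$ in the $\ZZ$-grading with $\deg \lambda_i = -1$ and $\deg g_j = +1$. A path in $\Q^\infty$ from $a$ to $b$ containing $p$ $\lambda$-factors and $q$ $g$-factors satisfies $q - p = b - a$, consistent with that grading. The lemma therefore reduces to a PBW normal form on $\Cliff_S(q_\phi)$, where the chosen order places $\lambda$'s on the left (with strictly decreasing indices, read left to right) and $g$'s on the right (with strictly increasing indices).

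Spanning is routine. The commutator relation $\lambda_i g_j + g_j \lambda_i = x_{ij}$ rewrites any occurrence of $g_j \lambda_i$ as $-\lambda_i g_j + x_{ij}$, producing a shorter path with coefficient in $S$; iterating brings all $\lambda$'s to the left of all $g$'s. The antisymmetry among the $\lambda$'s, and separately among the $g$'s, then sorts each block into the prescribed strict order, while $\lambda_i^2 = 0 = g_j^2$ kills any repeated index. Induction on total length gives the spanning statement, and the vertex endpoints $a$ and $b$ are preserved throughout because every relation, when inserted into a longer path, rewrites one path from $a$ to $b$ as an $S$-linear combination of such paths.

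Linear independence is the classical PBW theorem for the Clifford algebra of a quadratic form on a free module over a commutative ring (see, e.g., Bourbaki, \emph{Alg\`ebre} IX, \S9): the natural length filtration of $\Cliff_S(q_\phi)$ has associated graded the exterior algebra $\L_S(F^\svee \oplus G)$, which is free over $S$ of rank $2^{m+n}$ with basis the ordered products in any fixed total order on the generators. Selecting the order described above and restricting to the graded piece of degree $b-a$, then parameterising the number of $\lambda$'s by $l - b + 1$ and of $g$'s by $l - a + 1$ (so that the net $\ZZ$-degree is $b-a$), yields precisely the displayed paths, indexed by $l\ge \max(a,b)-1$ together with the two strictly monotone index sequences. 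The single genuine external input is the Clifford PBW theorem; everything else is bookkeeping matching that basis to paths in $\Q^\infty$.
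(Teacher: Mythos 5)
Your argument is correct and follows exactly the route the paper indicates: the paper states only that the lemma ``is routine (and follows formally from the existence of a similar basis for $\Cliff_S(q_\phi)$)'', and you have supplied precisely that routine verification — identifying $C^\infty_{ab}$ with $\Cliff_S(q_\phi)_{b-a}$, getting spanning from the rewriting relations, and getting independence from the standard filtration argument $\operatorname{gr}\Cliff_S(q_\phi)\cong\L_S(F^\svee\oplus G)$. The bookkeeping matching ordered Clifford monomials of $\ZZ$-degree $b-a$ to the displayed paths (with $l-b+1$ factors $\lambda$ and $l-a+1$ factors $g$, $l\ge\max\lbrace a,b\rbrace-1$) is also right.
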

We will refer to writing an element of $C^\infty$ in terms of this
basis as the ``PBW expansion for the ordering 
$\lambda_m < \cdots < \lambda_1<
g_1 < \cdots < g_n$.
There is a similar PBW expansion
with the roles of $g_i$,$\lambda_j$ reversed.

\begin{prop}\label{prop:Disfree}  Let $D$ be the kernel
  of the surjection $C^\infty \to C$.  The graded piece $D_{ab}$
  is $S$-generated by two types of paths: those leaving
  $[1,m]$ to the right
\begin{equation}\label{eq:DpathR}
e_b \lambda_{\beta_b}\lambda_{\beta_{b+1}}\cdots\lambda_{\beta_l}
g_{\alpha_l}g_{\alpha_{l-1}}\cdots g_{\alpha_a} e_a
\end{equation}
with $l> m$, $\alpha_a > \alpha_{a+1} > \cdots > \alpha_{l}$, and
$\beta_l < \beta_{l-1} < \cdots < \beta_{b}$; and those leaving
$[1,m]$ to the left
\begin{equation}\label{eq:DpathL}
e_b g_{\alpha_b}g_{\alpha_{b-1}}\cdots
g_{\alpha_l}\lambda_{\beta_l}\lambda_{\beta_{l+1}}\cdots\lambda_{\beta_a}e_a
\end{equation}
with $l<1$, $\beta_a < \beta_{a-1} < \cdots < \beta_l$, and
$\alpha_l> \alpha_{l+1} > \cdots > \alpha_b$.
\end{prop}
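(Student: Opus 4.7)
My plan is to prove the equality $D_{ab} = W_R + W_L$, where $W_R, W_L \subset C^\infty$ denote the $S$-submodules spanned, respectively, by the standard PBW paths of the form \eqref{eq:DpathR} and by the reverse PBW paths of the form \eqref{eq:DpathL}. The inclusion $W_R + W_L \subseteq D$ is immediate, since each listed path factors through some $e_i$ with $i \notin [1,m]$. For the reverse inclusion, I use that $D$ is, by definition, the two-sided ideal of $C^\infty$ generated by $\{e_i : i \notin [1,m]\}$; it therefore suffices to establish two things: first, that every such $e_i$ already lies in $W_R \cup W_L$, and second, that $W_R$ and $W_L$ are themselves two-sided ideals of $C^\infty$.

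The first point is trivial: for $i > m$, the idempotent $e_i$ is itself the empty standard PBW path at $a = b = l = i$, hence $e_i \in W_R$; symmetrically $e_i \in W_L$ when $i < 1$. For the second point, I would invoke the involution \eqref{eq:involution}, which reverses products and interchanges the vertices $i$ and $m+1-i$. Under this involution a standard PBW path of form \eqref{eq:DpathR} (with $l > m$) goes to a reverse PBW path of form \eqref{eq:DpathL} (with $l < 1$), bijectively mapping $W_R$ to $W_L$. Thus it is enough to prove that $W_R$ alone is a two-sided ideal of $C^\infty$.

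For this I would check closure of $W_R$ under left and right multiplication by each algebra generator $g_\gamma$ and $\lambda_\gamma$ (closure under $e_j$ being automatic). Given a standard PBW basis path $p \in W_R$ with parameter $l > m$, each of $g_\gamma p$, $\lambda_\gamma p$, $p g_\gamma$, $p \lambda_\gamma$ can be straightened into a sum of standard PBW basis paths by repeated application of the Clifford relations $g_i g_j = -g_j g_i$, $\lambda_i \lambda_j = -\lambda_j \lambda_i$, and $g_\gamma \lambda_\beta = -\lambda_\beta g_\gamma + x_{\beta\gamma}$. A direct computation shows that every resulting PBW basis summand has parameter $l' \geq l$: the straightened leading term inserts the new generator into the appropriate $\lambda$- or $g$-block and shifts one endpoint, either preserving $l$ or raising it by one; while each correction term produced by the $x_{\beta\gamma}$ summand removes one $\lambda$ and one $g$ but simultaneously shifts an endpoint by one, leaving $l$ unchanged. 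In every case the result lies in $W_R$.

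The main obstacle is the careful endpoint-and-parameter bookkeeping across these PBW straightenings; once past that computation the underlying point is transparent, namely that the Clifford relations of $\Cliff_S(q_\phi)$ can never lower the parameter $l$ of a standard PBW basis path. With $W_R$ and, by the involution, $W_L$ both established as two-sided ideals of $C^\infty$ whose union contains every generator of $D$, the sum $W_R + W_L$ is itself a two-sided ideal containing $D$, yielding $D \subseteq W_R + W_L$ and completing the proof.
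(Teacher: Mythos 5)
Your proof is correct, and it rests on the same combinatorial fact as the paper's --- that PBW straightening in $C^\infty$ can only push the apex (turning vertex) of a path outward, never back toward $[1,m]$ --- but it packages that fact differently. The paper fixes a vertex $l\notin[1,m]$, uses $D_{ab}=\sum_{l\notin[1,m]}C^\infty_{lb}\cdot C^\infty_{al}$, and proves the containment \eqref{eq:leavingtotherightisstable} in one stroke by applying the PBW basis of Lemma~\ref{lem:Cinftyfree} to the middle product $\bm{g}\bm{\lambda}'$ of a concatenation $e_b\bm{\lambda}\bm{g}e_l\bm{\lambda}'\bm{g}'e_a$: the outer blocks $\bm{\lambda}$ and $\bm{g}'$ already have lengths at least $l-b$ and $l-a$, so every resulting PBW term has apex at least $l$. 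You instead show that the target modules $W_R$ and $W_L$ are two-sided ideals by checking closure under each generator and tracking the apex through the straightening, and then note that they contain the idempotents $e_i$, $i\notin[1,m]$, which generate $D$ as a two-sided ideal. Your bookkeeping is sound: the only cases invoking the cross-relation $\lambda_i g_j+g_j\lambda_i=x_{ij}$ are $p\lambda_\gamma$ and $g_\gamma p$, and there the leading term raises the apex by one while each $x_{ij}$-correction term preserves it, exactly as you describe; and the anti-automorphism \eqref{eq:involution} does lift from $C$ to $C^\infty$ (the relations of $\Cliff_S(q_\phi)$ are reversal-symmetric), so it legitimately halves the work. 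The trade-off is chiefly expository: your route is a touch more elementary but requires the generator-by-generator verification to be carried out in full, whereas the paper's explicit containment \eqref{eq:leavingtotherightisstable} is obtained with a single appeal to the PBW lemma and is then reused verbatim in the proof of Lemma~\ref{lem:minresCij2}; with your organization you would want to record that containment separately, since the ideal property of $W_R$ alone does not immediately give the refined statement ``apex $\ge l$'' needed there.
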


\begin{proof} 
We need to prove that the paths \eqref{eq:DpathR} and
\eqref{eq:DpathL} generate $D_{ab}$.  To this end, we claim that with
the natural identifications $\L^k\F^\svee \subseteq C^\infty_{a,a-k}$
and $\L^k\G \subseteq C^\infty_{a,a+k}$ in mind,
\begin{equation}\label{eq:leavingtotherightisstable}
C^\infty_{lb} \cdot C^\infty_{al} \subseteq
\sum_{k\geq l} \L^{k-b} \F^\svee \cdot \L^{k-a} \G
\end{equation}
and, symmetrically,
\[
C^\infty_{lb} \cdot C^\infty_{al} \subseteq 
\sum_{k \leq l} \L^{k-b}\G \cdot \L^{k-a}\F^\svee\,.
\]
Indeed, by Lemma~\ref{lem:Cinftyfree}, any element of $C^\infty_{lb}
\cdot C^\infty_{al}$ is a linear combination of paths of the form
$e_b\bm{\lambda}\bm{g}e_l\bm{\lambda}'\bm{g}'e_a$, where
$\bm{\lambda},\bm{\lambda}'$, respectively $\bm{g},\bm{g}'$ represent
products of $\lambda_i$, respectively $g_j$.  The length of the path
$\bm{\lambda}$ is not less than $l-b$, while that of $\bm{g}'$ is not
less than $l-a$.  Applying Lemma~\ref{lem:Cinftyfree} to the product
$\bm{g\lambda}'$ then gives the first containment.  The other
follows similarly.  
\end{proof}

The presentation in Proposition \ref{prop:Disfree} is not minimal for
the $\NN$-grading on $S$. We next give a slightly smaller
presentation, which is sufficient for our proof of
Theorem~\ref{thm:qCliffisoEndM}, even though it is still not minimal.
For the best result see Proposition~\ref{prop:minresCij2}.

\begin{lemma}\label{lem:minresCij2}
The graded piece $C_{ab}$ has an $S$-free presentation of the form
\begin{equation}
\label{eq:prelimpresentation}
Q\oplus P_1 \xto{\rho} P_0 \to C_{ab} \to 0\,,
\end{equation}
where
\begin{itemize}
\item $P_0 = 
\bigoplus_{\max\lbrace a,b\rbrace \le k \le m}
\L_S^{k-b}\F^\svee
\otimes 
\L^{k-a}_S \G$
\item $P_1 = 
\bigoplus_{\begin{smallmatrix} 0\geq l\geq \max\lbrace a-m,b-m\rbrace 
\end{smallmatrix}}
\L^{b-l}_S\G\otimes \L^{a-l}_S \F^\svee $
\item $Q=\bigoplus_{\begin{smallmatrix} \max\lbrace a-m,b-m\rbrace > l\geq \max\lbrace a-m,b-n\rbrace 
\end{smallmatrix}}
\L^{b-l}_S\G\otimes \L^{a-l}_S \F^\svee$
\end{itemize}
and the map $\rho$ is the restriction of the inclusion of $D$ into
$C^\infty$. Furthermore $\ker(\rho\mid_{P_1})\subseteq S_{>0} P_1$.
\end{lemma}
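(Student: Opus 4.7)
The plan is to construct the presentation in two stages. First, I combine the two PBW bases for $C^\infty_{ab}$ (Lemma~\ref{lem:Cinftyfree}) with Proposition~\ref{prop:Disfree} to assemble the exact complex $Q\oplus P_1\xrightarrow{\rho}P_0\to C_{ab}\to 0$; then I verify the minimality claim $\ker(\rho|_{P_1})\subseteq S_{>0}P_1$.

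For the first stage, fix the PBW ordering $\lambda_m<\cdots<\lambda_1<g_1<\cdots<g_n$. The resulting basis splits $C^\infty_{ab}$ as an internal $S$-module direct sum $P_0\oplus W$, where $P_0$ is the span of basis paths with peak $k\in[\max\{a,b\},m]$---canonically identified with the free $S$-module in the statement---and $W$ is the span of paths with $k>m$. Every path in $W$ visits a vertex outside $[1,m]$, so $W\subseteq D$, and hence $P_0\twoheadrightarrow C_{ab}$. Switching to the opposite PBW basis, Proposition~\ref{prop:Disfree} identifies $D\cap C^\infty_{ab}$ as the $S$-span of $W$ together with the opposite-PBW basis paths of trough $l<1$; modulo $W$ only the latter survive. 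The full nondegenerate range $l\in[\max\{a-m,b-n\},0]$ splits into $[\max\{a-m,b-m\},0]$ (giving $P_1$, where both $a-l\le m$ and $b-l\le m$) and $[\max\{a-m,b-n\},\max\{a-m,b-m\})$ (giving $Q$). The map $\rho$ sends a basis element of $Q\oplus P_1$ to its PBW straightening in $C^\infty_{ab}$ projected onto $P_0$; by construction this is the restriction of $D\hookrightarrow C^\infty$ composed with the projection, and its image equals $\ker(P_0\to C_{ab})$.

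For the minimality claim, grade $C$ via $\deg\lambda_i=\deg g_j=1$ (so $\deg x_{ij}=2$); then $\rho$ is homogeneous. Assume without loss of generality that $a\ge b$. The key observation is that for each generator $e_{l,\alpha,\beta}$ of $P_1$, the ``maximally contracted'' component of $\rho(e_{l,\alpha,\beta})$ sits at the lowest peak $k=a$ in $P_0$ and equals, up to sign, a specific element $\Delta_{l,\alpha,\beta}\in\L^{a-b}F^\svee\otimes_K S_{b-l}$ built from $(b-l)\times(b-l)$ minors of the generic matrix $X$ wedged against the remaining $\lambda$'s. Take a homogeneous $x\in\ker(\rho|_{P_1})$ of degree $d$ and decompose $x=x_{l_*}+x_{>l_*}$ with $l_*=(a+b-d)/2$, $x_{l_*}=\sum_{\alpha,\beta}c_{\alpha\beta}\,e_{l_*,\alpha,\beta}$ ($c_{\alpha\beta}\in K$), and $x_{>l_*}\in S_{>0}P_1$. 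Extract the peak-$a$, $S$-degree-$(b-l_*)$ component of the equation $\rho(x)=0$: a contribution from a generator $e_{l',\alpha',\beta'}$ through an $S$-coefficient $c'$ requires $\deg c'=l_*-l'$, which is negative for $l'>l_*$ and hence impossible, so only $l'=l_*$ with $c'\in K$ contributes. The surviving equation reads $\sum_{\alpha,\beta}c_{\alpha\beta}\,\Delta_{l_*,\alpha,\beta}=0$, and the $K$-linear independence of the $\Delta_{l_*,\alpha,\beta}$ (from standard monomial theory applied to the minors of~$X$) forces $c_{\alpha\beta}=0$. The main technical obstacle is the explicit Clifford sign and index bookkeeping needed to identify the $\Delta_{l,\alpha,\beta}$ with classical standard bitableau expressions so that the linear-independence theorem applies.
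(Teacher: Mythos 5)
Your first stage --- assembling the presentation from the two PBW bases and Proposition~\ref{prop:Disfree}, cancelling the right-leaving generators of $D_{ab}$ against the summands of $C^\infty_{ab}$ with peak $k>m$ --- is correct and is exactly the paper's argument. The gap is in your proof of $\ker(\rho\mid_{P_1})\subseteq S_{>0}P_1$, specifically in the isolation step. The peak-$a$ component of $\rho(e_{l',\alpha',\beta'})$ lies in $\L^{a-b}F^{\svee}\otimes_K S_{b-l'}$, so a contribution $c'\rho(e_{l',\alpha',\beta'})$ lands in $S$-degree $b-l_*$ precisely when $\deg c'=l'-l_*$, not $l_*-l'$. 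For $l'>l_*$ this is \emph{positive}, so those terms are not excluded; in fact for homogeneous $x$ the whole peak-$a$ component of $\rho(x)$ already sits in the single degree $b-l_*$, and the equation you actually extract is $\sum_{\alpha,\beta} c_{\alpha\beta}\Delta_{l_*,\alpha,\beta}+\sum_{l'>l_*}\sum_{\alpha',\beta'} s_{l',\alpha'\beta'}\Delta_{l',\alpha',\beta'}=0$ with $s_{l',\alpha'\beta'}\in S_{>0}$. The $K$-linear independence of the $\Delta_{l_*,\alpha,\beta}$ does not finish from here: minors of different sizes satisfy Laplace-type relations of exactly this shape (a $t\times t$ minor is an $S_{>0}$-linear combination of $(t-1)\times(t-1)$ minors), so a nonzero constant part cannot be ruled out by looking at the lowest peak alone.

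The paper's proof circumvents this by working upstream in $C^\infty$ rather than in one summand of $P_0$: it right-multiplies the relation by the product of the $\lambda_j$ complementary to the $\lambda$-part of the offending generator (and left-multiplies by a complementary product of $g_i$'s). A generator at a smaller $l$ carries strictly more than $a-l'$ of the $\lambda$'s and is therefore annihilated by this multiplication, so a single level survives; every surviving term becomes $g_{i_1}\cdots g_{i_m}\lambda_1\cdots\lambda_m$, whose constant (length-zero) PBW term is a \emph{maximal} $m\times m$ minor, and $K$-linear independence of distinct maximal minors closes the argument. To repair your version you would need an analogous device that kills the interference from the other levels (equivalently, you must use the constraints coming from all peaks $k\in[a,m]$ at once, not only $k=a$); as written, the minimality claim is not established.
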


\begin{proof}
  Our starting point is the free presentation of $C_{ab}$ given in
 Proposition \ref{prop:Disfree}. It takes the form (remember once
 again that paths are written in functional order) $e_b D
 e_a\hookrightarrow e_b C^\infty e_a$, where
\[
e_b C^\infty e_a=\bigoplus_{\max\lbrace a,b\rbrace \le k \le \min\lbrace a+n,b+m\rbrace }
\L_S^{k-b}\F^\svee\otimes \L^{k-a}_S \G
\]
and 
\begin{multline*}
e_bDe_a=\left(\bigoplus_{\begin{smallmatrix} m< k\le \min\lbrace a+n,b+m\rbrace 
\end{smallmatrix}}
\L^{k-b}_S\F^\svee\otimes \L^{k-a}_S \G\right)
\\
\oplus
\left(\bigoplus_{\begin{smallmatrix} 1> l\geq \max\lbrace a-m,b-n\rbrace 
\end{smallmatrix}}
\L^{b-l}_S\G\otimes \L^{a-l}_S \F^\svee\right)\,.
\end{multline*}
In the resulting presentation there is some cancellation, which
simplifies things to
\[
\bigoplus_{\begin{smallmatrix} 1> l\geq \max\lbrace a-m,b-n\rbrace 
\end{smallmatrix}}
\L^{b-l}_S\G\otimes \L^{a-l}_S \F^\svee
\to
\bigoplus_{\max\lbrace a,b\rbrace \le k \le m}
\L_S^{k-b}\F^\svee\otimes \L^{k-a}_S \G
\]
which is \eqref{eq:prelimpresentation}.

Now we prove the additional claim of the lemma.  Assume that we have 
\[
\rho\left(\sum_{\alpha,l=\max\lbrace a-m,b-m\rbrace }^0 s_{l,\alpha}
p_{l,\alpha}\right)=0
\] 
with $p_{l,\alpha}\in \L_S^{b-l}\calg\otimes \L^{a-l}_S \calf^\svee$
and $s_{l,\alpha}\in S$. This can be rewritten as an identity in
$C^\infty$
\begin{equation}
\label{eq:initialsum}
\sum_{\alpha,l=\max\lbrace a-m,b-m\rbrace }^0 s_{l,\alpha}
p_{l,\alpha}=\sum_{\beta,k\ge m+1}t_{k,\beta}q_{k,\beta} 
\end{equation}
with $q_{k,\beta}\in \L_S^{k-b}\calf^\svee \otimes
\L^{k-a}_S \calg$ and  
$t_{k,\beta}\in S$. We may assume that the $s_{l,\alpha}$,
$t_{k,\beta}$ are homogeneous 
for the $\NN$-grading.

Choose $l'$ maximal such that there exists $s_{l',\alpha}\neq 0$.  We
have to show that $s_{l',\alpha}\in S_{>0}$ for all $\alpha$
corresponding to this $l'$. Assume on the contrary that there is some
$\alpha'$ such 
that $s_{l',\alpha'}\not\in S_{>0}$.

By our restriction on $l$ we have $b-l\le m$, $a-l\le
m$ in the expression for $p_{l,\alpha}$. Right-multiplying \eqref{eq:initialsum} by a suitable product of
the $\lambda_j$ and left-multiplying by a suitable product of the
$g_i$, we obtain an identity (using
\eqref{eq:leavingtotherightisstable}) of paths starting and ending in
some vertex $v\in [1,m]$
\begin{equation}
\label{eq:noconstantterm}
\sum_{1\le i_1<\ldots<i_m\le n} s'_{i_1\ldots i_m} g_{i_1}\cdots
g_{i_m} \lambda_1\cdots \lambda_m 
=\sum_{\beta'}t'_{\beta'}q'_{\beta'}
\end{equation}
where $s'_{i_1\ldots i_m}\in S$ and at least one $s'_{i_1\ldots i_m}
\notin S_{>0}$,
$t'_{\beta}\in S$ and the $q'_{\beta'}$ are paths
leaving $[1,m]$ to the right as in \eqref{eq:DpathR}.

The PBW expansion of $g_{i_1}\cdots g_{i_m} \lambda_1\cdots \lambda_m$
in terms of paths going first to the right is of the form
\[
\pm [i_1\cdots i_m|1\cdots m]+(\text{an $S$-linear combination of paths
of positive length})
\]
where $[i_1\cdots i_m|1\cdots m]$ is the minor in $X$ with columns
$i_1,\ldots,i_m$.  

Substituting this into \eqref{eq:noconstantterm} and looking at 
constant terms we obtain an identity in $S$:
\[
\sum_{1\le i_1<\cdots<i_m\le n} \pm s'_{i_1\ldots i_m}[i_1\cdots
  i_m|1\cdots m]=0\,.
\]
This is only possible if all $s'_{i_1\ldots i_m}$ are in $S_{>0}$, yielding
a contradiction.
\end{proof}

\begin{nsit}{A Clifford action on the tilting bundle.}\label{sec:cliffordstuff}
Let $\calt = \bigoplus_a \calt_a = \bigoplus_a
p^{\prime\ast}\Omega^{a-1}(a)$ be the tilting bundle on $\calz$
defined in \S\ref{sec:tilting}. In this section we construct
an algebra morphism $C\to\End_{\calz}(\calt)$ which we show to be
an isomorphism afterwards. To construct the morphism it is sufficient
(according to Lemma \ref{lem:quiverizationprinciple}) to construct a
left action of $\Cliff_S(q_\phi)$ on $\calt$.

We have to give the action of the generators. For the action of
$F^\svee$ we use the composition
\begin{equation}
\label{eq:generator1}
\partial\colon F^\svee\otimes\Omega^{b-1}(b)\to \Omega^1(1)^\svee\otimes_{\PP}
\Omega^{b-1}(b)  \to \Omega^{b-2}(b-1)\,,
\end{equation}
where the first map is obtained as the dual of  the canonical map
\[
\Omega^1(1)\to F\otimes \calop
\]
introduced for example in \eqref{eq:defOmega-a}, while the second map is
contraction.

For the $G$-action we use the composition
\begin{multline}
\label{eq:generator2}
\theta\colon G\otimes p^{\prime\ast}\Omega^{b-1}(b) =q^{\prime \ast}\calg
\otimes_{\caloz} p^{\prime\ast}\Omega^{b-1}(b) \to
p^{\prime\ast}\Omega^1(1)\otimes_{\caloz}
p^{\prime\ast}\Omega^{b-1}(b)\\ 
=p^{\prime\ast}(\Omega^1(1)\otimes_{\calop} \Omega^{b-1}(b))
\to p^{\prime\ast}\Omega^{b}(b+1)
\end{multline}
where the first arrow is obtained from the description \eqref{eq:zdescription}
and the second arrow is multiplication.

One checks that the $F^\svee$- and $G$-actions combine to give
the requested action
\begin{equation*}
\Cliff_S(q_\phi)\otimes \calt\to \calt\,.
\end{equation*}
\end{nsit}

\begin{prop}\label{prop:actionsagree}
The morphisms $C \to E = \End_R(M)$ and $C \to
\End_{\calo_\calz}(\calt)$ defined in \S\ref{sec:relCE} and
\S\ref{sec:cliffordstuff} are compatible with the isomorphism
$\End_{\calo_\calz}(\calt) \to E$ of Theorem~\ref{thm:mcm}.
\end{prop}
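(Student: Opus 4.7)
The plan is to reduce compatibility to the action of the generators of $\Cliff_S(q_\phi)$ and then verify the resulting diagrams at the level of the tautological Koszul complex.

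First I would observe that both $C\to E$ and $C\to\End_{\calo_\calz}(\calt)$ are $S$-algebra homomorphisms, and that, via the quiverization procedure of \S\ref{sit:extBquiver}, $C$ is generated over $S\cdot\sum_a e_a$ by the elements of $F^\svee$ and $G$. So it suffices to verify, for each vertex $a$ and each $\lambda\in F^\svee$ and $g\in G$, that the isomorphism $q'_\ast\colon\End_{\calo_\calz}(\calt)\xrightarrow{\cong}E$ of Theorem~\ref{thm:mcm} carries the geometric action of $\lambda$ on $\calt_a\to\calt_{a-1}$ from~\eqref{eq:generator1} to the algebraic $\partial_\lambda\colon M_a\to M_{a-1}$ of \S\ref{sec:relCE}, and similarly that the geometric action of $g$ from~\eqref{eq:generator2} is sent to the algebraic $\theta_g$.

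Next I would lift everything to $\L^a\calf$. By Theorem~\ref{thm:geometric} the identification $q'_\ast\calt_a\cong M_a$ is realised through compatible surjections from $\L^a\calf$: the defining $\L^a\calf\twoheadrightarrow M_a$ on the algebraic side, and the pullback $q'^{\ast}\L^a\calf\twoheadrightarrow\calt_a=p'^{\ast}\Omega^{a-1}(a)$ of the Koszul quotient from~\eqref{eq:defOmega-a} on the geometric side. Thus the compatibility will follow once I establish commutativity of the two diagrams
\[
\xymatrix@C=3pc{
q'^{\ast}\L^a\calf \ar[r]^{q'^{\ast}\partial_\lambda} \ar@{->>}[d] & q'^{\ast}\L^{a-1}\calf \ar@{->>}[d] \\
p'^{\ast}\Omega^{a-1}(a) \ar[r] & p'^{\ast}\Omega^{a-2}(a-1)
}
\qquad
\xymatrix@C=3pc{
q'^{\ast}\L^{b-1}\calf \ar[r]^{q'^{\ast}\phi(g)\,\wedge} \ar@{->>}[d] & q'^{\ast}\L^b\calf \ar@{->>}[d] \\
p'^{\ast}\Omega^{b-2}(b-1) \ar[r] & p'^{\ast}\Omega^{b-1}(b)
}
\]
on $\calz$, the bottom rows being the geometric $\partial$ and $\theta_g$ respectively.

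The left diagram is pulled back from the analogous diagram on $\PP$, where it commutes because contraction by $\lambda$ is a derivation of the Koszul complex $\KK$ viewed as a DG algebra (compare \S\ref{sit:concrete}): the kernel $\Omega^a(a)$ of the vertical surjection is contraction-stable since it sits in the adjacent place of $\KK$, so the contraction descends to the geometric $\partial$ of~\eqref{eq:generator1}. The right diagram is the more delicate of the two, because its bottom row is genuinely defined only on $\calz$: it depends on the factorisation of $q'^{\ast}(\phi(g))\in q'^{\ast}\calf$ through $p'^{\ast}\Omega^1(1)\hookrightarrow q'^{\ast}\calf$ that is guaranteed exactly by the vanishing on $\calz$ of the cosection $\Phi$ of~\eqref{eq:cosection}. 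Granted this factorisation, commutativity again reduces to a Koszul statement, namely that exterior multiplication on $\KK$ preserves the Koszul filtration, so multiplication by any section of $F\otimes\calop$ that lies in $\Omega^1(1)$ descends through the quotient $F_b\otimes\calop\twoheadrightarrow\Omega^{b-1}(b)$ to the multiplication $\Omega^1(1)\otimes\Omega^{b-1}(b)\to\Omega^b(b+1)$. The main obstacle, and really the only substantive work, will be the bookkeeping of Koszul signs and Serre twists against the conventions fixed in \S\S\ref{nsit:affineKoszul}--\ref{nsit:(co)resns}; once those are reconciled, applying $q'_\ast$ and comparing with the definitions in \S\ref{sec:relCE} will finish the proof.
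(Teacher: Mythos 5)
Your proposal is correct and follows essentially the same route as the paper: both proofs lift the two actions of the Clifford generators to the level of $\L^{\sbullet}\calf$ and $q'^{\ast}\L^{\sbullet}\calf$, check compatibility of the lifts there, and descend via the compatible surjections of Theorem~\ref{thm:geometric}. You spell out more explicitly than the paper the two points it labels ``easy to see'' --- the DG-derivation property of contraction for the $\lambda$-action and the factorisation of $q'^{\ast}\phi(g)$ through $p'^{\ast}\Omega^{1}(1)$ coming from the vanishing of the cosection $\Phi$ on $\calz$ for the $g$-action --- which is exactly where the content lies.
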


\begin{proof}
From the construction in \S\ref{sec:relCE} we know that the
constructed action $C_{ab}\otimes M_a \to M_b$ lifts to an action
\begin{equation}
\label{eq:caction1}
\Cliff_S(q_\phi)_{b-a}\otimes \L^a\calf \to \L^b\calf\,.
\end{equation}
Likewise the same types of formulas show that
the action $C_{ab}\otimes p^{\prime\ast}\Omega^{a-1}(a)\to
p^{\prime\ast}\Omega^{b-1}(b)$ lifts to an action
\begin{equation}\label{eq:newaction1}
\Cliff_S(q_\phi)_{b-a}\otimes q^{\prime\ast} \L^a \calf\to 
q^{\prime\ast} \L^b \calf\,.
\end{equation}
It is now easy to see the \eqref{eq:caction1} and
\eqref{eq:newaction1} are compatible, whence the originals are
compatible by Theorem \ref{thm:geometric}.
%
\end{proof}

\begin{theorem}\label{thm:expcan}
The map $C\to \End_{\calo_\calz}(\calt)$ obtained by applying
Lemma~\ref{lem:quiverizationprinciple} to the action constructed in
\S\ref{sec:cliffordstuff} is an isomorphism.
\end{theorem}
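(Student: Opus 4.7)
The plan is to compare $S$-linear presentations of both sides graded piece by graded piece. On the algebraic side, Lemma~\ref{lem:minresCij2} presents $C_{ab}$ as $Q \oplus P_1 \xto{\rho} P_0 \to C_{ab} \to 0$ with $P_0 = \bigoplus_{k=\max\{a,b\}}^{m} \L^{k-b}_S\calf^\svee \otimes_S \L^{k-a}_S\calg$ and, after reindexing $k=-l$ and interchanging tensor factors, $P_1 \cong \bigoplus_{k=0}^{\min\{m-a,m-b\}} \L^{a+k}_S\calf^\svee \otimes_S \L^{b+k}_S\calg$. On the geometric side, Theorem~\ref{thm:hdi} applied with $c=0$, after interchanging $a$ and $b$ so as to obtain $\Hom_{\calo_\calz}(\calt_a,\calt_b) = q'_\ast p'^\ast \calm^a_b$, yields a presentation whose $P^{0}$- and $P^{-1}$-terms are \emph{identical} to the above $P_0$ and $P_1$.

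The first step is to verify that the induced map $P_0 \to P^{0}$ is the canonical identification. By the construction in \S\ref{sec:cliffordstuff}, the Clifford generators act on $\calt$ by contraction with $\lambda_i$ and by exterior multiplication with $g_j$; hence a PBW basis element $\lambda_{\beta_b}\cdots\lambda_{\beta_k} \otimes g_{\alpha_k}\cdots g_{\alpha_a}$ of $P_0$ acts on $\calt_a = p'^\ast\Omega^{a-1}(a)$ as the corresponding composition of contractions and multiplications, which is exactly the canonical generator of the matching summand of $P^{0}$ from Theorem~\ref{thm:hdi}. In particular, $C_{ab} \to \Hom_{\calo_\calz}(\calt_a,\calt_b)$ is surjective.

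For injectivity we must show that $\ker(P_0 \to \Hom_{\calo_\calz}(\calt_a,\calt_b)) \subseteq \rho(Q\oplus P_1)$; the reverse inclusion is automatic from Proposition~\ref{prop:actionsagree}. Since $P^{-1}$ coincides with $P_1$ as an $S$-module, this reduces to checking that two maps $P_1 \to P_0$ agree: on the algebraic side, $\rho|_{P_1}$ is obtained by PBW straightening a wrong-order path $g_{\alpha_b}\cdots g_{\alpha_l}\lambda_{\beta_l}\cdots\lambda_{\beta_a}$ using the relation $\lambda_i g_j + g_j\lambda_i = x_{ij}$; on the geometric side, the map arises as the first-page differential of the hypercohomology spectral sequence in the proof of Theorem~\ref{thm:hdi}, induced by the Koszul differential $\partial_{\Phi(-1)}$ of~\eqref{eq:koszulcomplex} with $\Phi(q^\ast g_j) = \sum_i f_i\otimes x_{ij}$.

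The main obstacle will be to carry out this last comparison rigorously. Both maps are explicitly determined by the generic-matrix entries $x_{ij}$ and are natural in the chosen $K$-bases of $F$ and $G$, so by naturality the verification reduces to tracking a single representative basis element through each construction and matching coefficients. Once this is done, we conclude $\ker(P_0 \to \Hom_{\calo_\calz}(\calt_a,\calt_b)) = \rho(P_1) \subseteq \rho(Q\oplus P_1) = \ker(P_0 \to C_{ab})$, so $C_{ab} \to \Hom_{\calo_\calz}(\calt_a,\calt_b)$ is injective as well; summing over $a,b$ gives the theorem.
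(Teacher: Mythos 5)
Your overall skeleton for the injectivity step (compare the presentation of $C_{ab}$ from Lemma~\ref{lem:minresCij2} with the presentation of $q'_\ast p'^\ast\calm^a_b$ coming from Theorem~\ref{thm:hdi} with $c=0$) matches the paper's, but there is a genuine gap at the point you treat as immediate. The assertion that the image of a PBW monomial of $P_0$ under the Clifford action ``is exactly the canonical generator of the matching summand of $P^{0}$,'' and hence that $C_{ab}\to\Hom_{\calo_\calz}(\calt_a,\calt_b)$ is surjective, is precisely the hard content of the theorem and cannot be read off from the constructions. The surjection $P^{0}\to q'_\ast p'^\ast\calm^a_b$ is the augmentation of a resolution produced by degenerating a hypercohomology spectral sequence; identifying it with the map induced by the contractions $\partial_\lambda$ and multiplications $\theta_g$ requires lifting the Clifford action to the complexes $p^\ast\KK_{\le b-1}(b)$ and $\LL\otimes_{\caloy}p^\ast\KK_{\le a-1}(a)$, dealing with the fact that $\theta$ is \emph{not} compatible with the differential on $p^\ast\KK$ (the commutator is the cosection $\Phi$, which must cancel against the Koszul differential of $\LL$), and then invoking the explicit quasi-isomorphism of Lemma~\ref{lem:concrete}. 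This is the bulk of the paper's argument (the diagram with $\alpha,\gamma_1,\gamma_2,\delta,\epsilon$); your proposal replaces it with an assertion.

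For injectivity, your plan — explicitly compute the geometric differential $P^{-1}\to P^{0}$, which is a spectral-sequence differential, and match it coefficient-by-coefficient against the PBW-straightening map $\rho\mid_{P_1}$ — is harder than necessary, and the paper deliberately avoids it. Knowing only that the target admits \emph{some} presentation $P_1\to P_0\to\Hom_{\calo_\calz}(\calt_a,\calt_b)\to 0$ of the stated shape, one obtains an induced map $\alpha_1$ on the $P_1$'s making the evident diagram commute; a graded-degree argument combined with the key property $\ker(\rho\mid_{P_1})\subseteq S_{>0}P_1$ (the ``furthermore'' clause of Lemma~\ref{lem:minresCij2}, which your argument never uses) forces $\alpha_1$ to be an isomorphism without ever identifying the geometric differential. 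Finally, both Lemma~\ref{lem:minresCij2} as used and the relevant rows of table~\eqref{table:mus} require $a+b\ge m+1$; the reduction to this case via the involution \eqref{eq:involution} and Lemma~\ref{lem:duality1} is missing from your argument.
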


\begin{proof}
  We have to show that $C_{ba}\to \Hom_{\calo_\calz}(\calt_b,\calt_a)$
  is an isomorphism.  From Lemma~\ref{lem:duality1} below together
  with the involution $C_{ba} \leftrightarrow C_{m+1-a,m+1-b}$ (see
  \eqref{eq:involution}), we easily deduce that we may assume $a+b\ge
  m+1$. {We make this assumption in the rest of the proof.}

As $S$-modules we have 
\[
\Cliff_S(q_\phi)_{a-b}=\bigoplus_{c} \L^c G\otimes
\L^{b-a+c}F^\svee\otimes S 
=\bigoplus_{c} \L^c \calg \otimes  \L^{b-a+c}\calf ^\svee\,.
\]
We equip $\Cliff_S(q_\phi)_{a-b}$ with a filtration $\mathfrak{F}$
obtained from the value of $c$, that is,
\[
\mathfrak{F}_{u} \Cliff_S(q_\phi)_{a-b}=\bigoplus_{c=0}^{u}
\L^c G\otimes \L^{b-a+c}F^\svee\otimes S 
=\bigoplus_{c=0}^{u}\L^c \calg \otimes  \L^{b-a+c}\calf ^\svee\,.
\]
We will start by proving that the induced map
\begin{equation}
\label{eq:simple1}
\mathfrak{F}_{m-b}\Cliff_S(q_\phi)_{a-b}\to
\Hom_{\calo_\calz}(p^{\prime\ast}\Omega^{b-1}(b), 
p^{\prime\ast} \Omega^{a-1}(a))
\end{equation}
is an epimorphism.

In \S\ref{sit:concrete} we have constructed an action by
$\calop$-linear derivations
\begin{equation}
\label{eq:firstaction}
\partial\colon F^\svee\otimes \KK\to \KK(-1)[1]\,.
\end{equation}
This extends to an action by $\caloz$-linear derivations
\[
\partial\colon F^\svee\otimes p^{\prime \ast}\KK\to p^{\prime
  \ast}\KK(-1)[1]\,.
\]
We produce an additional action 
\begin{equation}
\label{eq:secondaction}
\theta\colon G\otimes p^{\prime \ast}\KK \to p^{\prime \ast}\KK(1)[-1]
\end{equation}
by $p^{\prime\ast}\KK$-linearly extending  the $\KK$-linear map
\begin{align*}
G\otimes \KK
&\to G\otimes \Omega(1)^\svee\otimes_{\calo_\PP} \Omega(1)
\otimes_{\PP}\KK\\
&\to G\otimes \Omega(1)^\svee\otimes_{\calo_\PP}  \KK(1)[-1]\\
&\subset 
\caloz\otimes_{\calo_\PP} \KK(1)[-1] \qquad\qquad  \eqref{eq:defOmega-a} \\
&=p^{\prime \ast}\KK(1)[-1]
\end{align*}
where the second arrow is multiplication in the graded sheaf of
algebras $\KK$ via the inclusion $\Omega^1 \subset \KK^{-1}$. Since
the image of this inclusion consists of closed elements the resulting
multiplication is compatible with the differential. (Note: the
multiplication $F\otimes \KK\to \KK(1)[-1]$ is \emph{not} compatible
with the differential.)

One readily checks that \eqref{eq:firstaction} and
\eqref{eq:secondaction} combine to give an $\caloz$-linear action
\begin{equation}
\label{eq:cliffaction}
\Cliff_S(q_\phi)_s\otimes p^{\prime\ast} \KK \to
p^{\prime\ast} \KK(s)[-s]\,.
\end{equation}
Put $s=a-b$. We obtain an action
\[
\Cliff_S(q_\phi)_{a-b}\otimes  p^{\prime\ast} \KK (b)[-b+1]
\to p^{\prime\ast} \KK(a)[-a+1]
\]
which after truncating in homological degree zero becomes
\[
\Cliff_S(q_\phi)_{a-b}\otimes p^{\prime\ast} \KK_{\le b-1} (b)
\to p^{\prime\ast} \KK_{\le a-1}(a)
\]
so that we finally get a composition
\begin{multline}
\label{eq:simple}
\mathfrak{F}_{m-b}
\Cliff_S(q_\phi)_{a-b}\hookrightarrow\Cliff_S(q_\phi)_{a-b}\to
\\ 
\operatorname{RHom}_{\cald(\calz)}(p^{\prime\ast}\KK_{\le b-1} (b),
p^{\prime\ast} \KK_{\le a-1}(a))\cong
\Hom_{\calo_\calz}(\calt_b,\calt_a)\,. 
\end{multline}
It is easy to check that the second map coincides with the one
obtained from our action of $C$ on $\calt$. We will show
that~\eqref{eq:simple} is an epimorphism.

Using
the same methods as above we may define $\caloy$-linear actions
\begin{equation*}
\begin{split}
\partial&\colon F^\svee\otimes p^{\ast} \KK \to p^{\ast} \KK(-1)[1]\\
\theta&\colon G\otimes p^{\ast} \KK \to p^{\ast} \KK(1)[-1]
\end{split}
\end{equation*}
which are compatible with the natural map $p^\ast \KK\to j_\ast p^{\prime
\ast} \KK$. For example $\theta$ is obtained by extending
\begin{align*}
G\otimes \KK
&\to G\otimes F^\svee\otimes_{K} F
\otimes_{K}\KK\\
& \to G\otimes F^\svee\otimes \KK(1)[-1] \\
&\subset 
S\otimes_{\PP} \KK(1)[-1]\\
&=p^{\ast}\KK(1)[-1]\,.
\end{align*}
 Unfortunately $\theta$ is now not compatible with the
differential. However the commutator
\[
d_{\KK}\theta+\theta d_{\KK}\colon G\otimes p^{\ast} \KK\to
p^{\ast} \KK(1) 
\]
is given by multiplication with the cosection $\Phi\colon G\to
\caloy(1)$ defined in \eqref{eq:cosection}. Written compactly,
\[
d_{\KK}\theta+\theta d_{\KK}=\Phi\,.
\]
Let $\LL= (\textstyle{\L}_\caly (q^{*}\G \otimes_\caloy
p^{*}\calop(-1)), \partial_{\Phi(-1)})$ be the Koszul complex of
locally free $\calo_\caly$-modules resolving $j_*\calo_\calz$ which
was introduced in \eqref{eq:koszulcomplex}. Multiplication by elements
of $G$ defines an action
\[
\tilde{\theta}\colon G\otimes \LL\to \LL(1)[-1]
\]
which is again is not compatible with the differential. However one computes
\[
d_{\LL}\tilde{\theta}+\tilde{\theta} d_{\LL}=\Phi
\]
so that the combined actions
\begin{align*}
\partial\overset{\text{def}}{=}\partial_{13}\colon F^\svee\otimes
(\LL\otimes_{\caloy} p^{\ast} \KK) &\to \LL\otimes_{\caloy}
p^{\ast} \KK(-1)[1]\\ 
\Theta\overset{\text{def}}{=}\pm \tilde{\theta}_{12}\otimes
1+1\otimes\theta_{13}\colon  G\otimes (\LL\otimes_{\caloy} p^{\ast} \KK) 
&\to \LL\otimes_{\caloy} p^{\ast} \KK(1)[-1]
\end{align*}
commute with the total differential on the complex associated to the
double complex $\LL\otimes_{\calop} p^{\ast} \KK$. (Here the
subscripts indicate the factors of the tensor product to which the
maps apply.)

It is easy to see that these actions combine to give an action
\begin{equation}
\label{eq:paction}
\Cliff_S(q_\phi)_{s}\otimes (\LL\otimes_{\caloy} p^{\ast} \KK)
\to \LL\otimes_{\caloy} p^{\ast}\KK(s)[-s]
\end{equation}
which is compatible with the total differential and with the natural map
\[
\LL\otimes_{\caloy} p^{\ast}\KK\to j_\ast\calo_{\calz}\otimes_{\caloy} p^{\ast}\KK =
j_\ast p^{\prime\ast}\KK\,.
\]

Put $s=a-b$. Then \eqref{eq:paction} restricts to
a map
\begin{equation}
\label{eq:restrict}
\Cliff_S(q_\phi)_{a-b}\otimes p^\ast\KK_{\le b-1}(b)
\to \LL\otimes_{\caloy} p^\ast\KK_{\le a-1}(a)
\end{equation}
For $t\in \NN$ and $C$ a complex let $\sigma^{\ge t}C$ denote
the naive truncation of $C$ in cohomological degrees $\ge t$. 
Then \eqref{eq:restrict} restricts again to 
\[
\mathfrak{F}_{m-b}\Cliff_S(q_\phi)_{a-b}\otimes p^\ast\KK_{\le b-1}(b)
\to \sigma^{\ge -(m-b)}\LL\otimes_{\caloy} p^\ast\KK_{\le a-1}(a)
\]
We now obtain a commutative diagram
\[
\xymatrix{
\mathfrak{F}_{m-b}\Cliff_S(q_\phi)_{a-b} \ar[r]^(.325)\alpha \ar@{_(->}[dd]_\beta &
   \Hom_{\calo_\caly}(p^\ast\KK_{\le b-1}(b),\sigma^{\ge -(m-b)}\LL\otimes_{\caloy}
   p^\ast\KK_{\le a-1}(a)) \ar[d]^{\gamma_1} \\
&
   \operatorname{RHom}_{\cald{(\caly)}}(p^\ast\KK_{\le b-1}(b),\sigma^{\ge -(m-b)}\LL\otimes_{\caloy}
   p^\ast\KK_{\le a-1}(a)) \ar[d]^{\gamma_2} \\  
 \Cliff_S(q_\phi)_{a-b} \ar[r] \ar@{=}[d] &
   \operatorname{RHom}_{\cald(\caly)}(p^\ast\KK_{\le b-1}(b),\LL\otimes_{\caloy}
   p^\ast\KK_{\le a-1}(a)) \ar[d]^{\delta} \\  
\Cliff_S(q_\phi)_{a-b}\ar[r]_(.375)\epsilon &
   \operatorname{RHom}_{\cald(\calz)}(p^{\prime\ast}\KK_{\le b-1}(b),
   p^{\prime\ast}\KK_{\le a-1}(a))  
}
\]
where the horizontal arrows are obtained from the Clifford algebra
actions and the vertical arrows are the natural ones. The commutativity
of the lower square follows from the above discussion. 

Looking back at \eqref{eq:simple} we see that we have to show that
$\epsilon\beta$ is an epimorphism on degree zero cohomology. So we
have to show that $\delta\gamma_2\gamma_1\alpha$ is an epimorphism on
degree zero cohomology.

The fact that $\LL$ is a resolution of $j_\ast \caloz$ and formal
adjointness arguments imply that $\delta$ is a quasi-isomorphism (in
fact this is the basis of the proof of Theorem
\ref{thm:hdi}).

We claim that $\gamma_2$ is an epimorphism on degree zero cohomology. To this end we look
at the distinguished triangle
\[
\sigma^{\ge -(m-b)}\LL\otimes_{\caloy}
   p^\ast\KK_{\le a-1}(a))
\rightarrow \LL\otimes_{\caloy}
   p^\ast\KK_{\le a-1}(a))\rightarrow \sigma^{< -(m-b)}\LL\otimes_{\caloy}
   p^\ast\KK_{\le a-1}(a))\rightarrow
\]
It is sufficient to prove that
\[
 \Hom_{\cald(\caly)}\left(p^\ast\KK_{\le b-1}(b), \sigma^{< -(m-b)}\LL\otimes_{\caloy}
   p^\ast\KK_{\le a-1}(a)\right)=0
\]
which in turn follows from
\[
 \Hom_{\cald(\caly)}\left(p^\ast\KK_{\le b-1}(b), \LL^{-c}[c]\otimes_{\caloy}
   p^\ast\KK_{\le a-1}(a)\right)=0
\]
for $c>m-b$. 
To prove this last equation we note that 
\[
\Hom_{\cald(\caly)}\left(p^\ast\KK_{\le b-1}(b), \LL^{-c}[c]\otimes_{\caloy}
p^\ast\KK_{\le a-1}(a)\right)= \L^c G \otimes H^c(\PP,\calm^b_a(-c))
\otimes S
\]
The required vanishing now follows from Theorem
\ref{thm:directimage}(2,4,5).

We also claim that $\gamma_1$ is a quasi-isomorphism. This follows
immediately from \S\ref{sit:E2prop}(e) which states that the
sheaf-Homs between the terms of $p^\ast\KK_{\le b-1}(b)$ and $
\sigma^{\ge (m-b)}\LL\otimes_{\caloy} p^\ast\KK_{\le a-1}(a)$ have no
higher cohomology.

Finally we claim  that $\alpha$ is a
quasi-isomorphism.
To prove this we filter the complex $\Hom_{\calo_\caly}(p^\ast\KK_{\le
  b-1}(b),\sigma^{\ge -(m-b)}\LL\otimes_{\caloy} p^\ast\KK_{\le a-1}(a))$ by
 the degrees in the $\LL$-complex and we equip
$\Cliff_S(q_\phi)$ with the filtration $\mathfrak{F}$ defined above.

Taking associated graded complexes we find that we have to show that
\begin{align*}
\L^c G\otimes \L^{a-b+c} F^\svee\otimes S&\to
\Hom_{\caloy}\left(p^\ast\KK_{\le b-1}(b),\LL^{-c}[c]\otimes p^\ast\KK_{\le
  a-1}(a)\right)\\ &=\L^c G\otimes 
\Hom_{\calop}\left(\KK_{\le b-1}(b),\KK_{\le a-1}(a)(-c)[c]\right)
\otimes S
\end{align*}
is a quasi-isomorphism for $c\le m-b$. One verifies that up to sign
this is in fact the map $\id\otimes \partial\otimes\id$ where
$\partial$ is as defined in \eqref{eq:minidelta}. To finish the proof
that \eqref{eq:simple} is an epimorphism it is now sufficient to
invoke Lemma \ref{lem:concrete}.

\medskip

At this point we know that $C_{ba}\to \Hom_{\calo_\calz}(\calt_b,\calt_a)$
is an epimorphism. We will proceed to show that it is an isomorphism.
With notations as in Lemma~\ref{lem:minresCij2} (swapping $a$ and $b$)
we may construct a commutative diagram
\begin{equation}\label{eq:finaldiagram}
\xymatrix{
P_1\oplus Q \ar[r]^(.6)\rho \ar[d]_{\exists}^{(\alpha_1,\alpha_2)} &  
P_0 \ar[r] \ar@{=}[d] &
C_{ba} \ar@{->>}[d] \ar[r] &
0\\
P_1 \ar[r] & 
P_0 \ar[r] &
\Hom_{\calz}(\calt_b,\calt_a) \ar[r] &
0
}
\end{equation}
where $P_0=\mathfrak{F}_{m-b}\Cliff_S(q_\phi)_{a-b}$ is as in
\eqref{eq:simple1}.  The upper exact sequence is obtained from Lemma
\ref{lem:minresCij2}. The arrow $P_0\rightarrow
\Hom_{\calz}(\calt_b,\calt_a)$ is defined as the composition 
$P_0\rightarrow C_{ba} \rightarrow 
\Hom_{\calz}(\calt_b,\calt_a)$. By the
second and third row of \eqref{table:mus} with $c=0$ (also using 
the assumption $a+b\ge m+1$) we know the minimal resolution
of $\Hom_{\calz}(\calt_b,\calt_a)$, which tells us that we can
complete the lower row as we did. 

Then  the existence of
$(\alpha_1,\alpha_2)$ follows but its properties are a priori unknown.
Nonetheless we claim that $\alpha_1$ must be an isomorphism. Assume this is
not the case. Choose two sets of homogeneous bases
$(x_i)_{i=1,\ldots,N}$, $(y_i)_{i=1,\ldots,N}$ for $P_1$ ordered in
ascending degree. Let $A$ be the matrix of $\alpha_1$ with respect to
these bases. Since $A$ is not invertible, easy degree considerations
show that after change of basis $A$ may be put in the form
\[
\begin{pmatrix}
1      & \cdots &   0    &   0    & A_{1 t+1}  & \cdots & A_{1N}\\
\vdots & \ddots & \vdots & \vdots &  \vdots  &        & \vdots \\
0      & \cdots &  1     &   0    & A_{t,t+1}  & \cdots & A_{tN}\\
0      & \cdots &   0    &   0    & A_{t+1 t+1}  & \cdots & A_{1N}\\
\vdots & \ddots & \vdots & \vdots &  \vdots  &        & \vdots \\
0      & \cdots &  0     &   0    & A_{N,t+1}  & \cdots & A_{NN}\\
\end{pmatrix}\,.
\]
It follows that $P_1$, as a graded $S$-module, may be decomposed as
$P_1=P_1'\oplus P''_1$ with $P''_1\cong S(-u)$ for $u=\deg x_t$ such
that the restriction of $\alpha_1$ to $P''_1$ is zero. It then follows
from~\eqref{eq:finaldiagram} that $\rho\mid_{P''_1}=0$ as well. In
other words $P''_1\subseteq \ker (\rho\mid_{P_1})$.  Since
$P''_1\not\subseteq S_{>0}P_1$ this contradicts Lemma
\ref{lem:minresCij2}.

Hence $\alpha_1$ is an isomorphism and as a result $(\alpha_1,\alpha_2)$
is an epimorphism. Then diagram \eqref{eq:finaldiagram} easily yields
that $C_{ba}\to \Hom_{\calo_\calz}(\calt_b,\calt_a)$ is an isomorphism. 
\end{proof}

The following lemma was used.
\begin{lemma}\label{lem:duality1} 
There is a commutative diagram
\[
\xymatrix{
\Cliff_S(q_\phi)_{a-b} \ar[r] \ar_\alpha[d] & 
 q'_\ast p^{\prime\ast} \calm^b_a \ar[d]^\beta \\
\Cliff_S(q_\phi)_{a-b} \ar[r] & q'_\ast p^{\prime\ast} \calm^{m+1-a}_{m+1-b}
}
\]
where the horizontal maps are those in Theorem~\ref{thm:expcan},
$\alpha$ is obtained from the involution on $\Cliff_S(q_\phi)$ which
is the identity on $\calf^\svee\oplus \calg$, and $\beta$ is obtained
from the isomorphism $\calm^b_a\cong \calm^{m+1-a}_{m+1-b}$ exhibited
in Lemma~\ref{lem:iden1}.
\end{lemma}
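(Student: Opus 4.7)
The plan is to exploit the classical fact that, for Clifford algebras acting on exterior-type modules, the reversal anti-involution is intertwined with adjunction under a natural duality pairing. Here the relevant pairing is the exterior-product pairing \eqref{eq:pairing} that already underlies the isomorphism of Lemma~\ref{lem:iden1}.

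First I would give a geometric description of $\beta$. By \eqref{eq:dualityomega} the exterior product identifies $\calt_{m+1-a}=\Omega^{m-a}(m+1-a)$ with $\cHom_{\calop}(\calt_a,\Omega^{m-1}(m+1))$, and after pulling back to $\calz$ this realizes $p^{\prime\ast}\calt_{m+1-a}$ as the twisted dual of $p^{\prime\ast}\calt_a$ relative to $\Omega^{m-1}(m+1)$. Under the identification $q'_\ast p^{\prime\ast}\calm^b_a\cong\Hom_{\calo_\calz}(\calt_b,\calt_a)$ from Theorem~\ref{thm:mcm}, the map $\beta$ then sends a morphism $u\colon\calt_b\to\calt_a$ to the twisted transpose $u^\ast\colon\calt_{m+1-a}\to\calt_{m+1-b}$, obtained by precomposing with $u$.

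Second, I would reduce the commutativity to a check on the generators $\lambda\in F^\svee$ and $g\in G$. The anti-involution $\alpha$ is the identity on $F^\svee\oplus G$ and reverses the order of products, while the twisted transposition $u\mapsto u^\ast$ satisfies $(uv)^\ast=v^\ast u^\ast$. Hence once the generator case is verified, the general case follows by induction on word length in the Clifford algebra. Using $\calt_a\cong(\L^{a-1}\calu)(1)$ from \eqref{eq:defOmega-a}, the contraction $\partial_\lambda$ and the exterior multiplication $\theta_g$ (the latter by $\phi(g)\in\calu$ on $\calz$) act inside $\L^{\sbullet}\calu$. Both are self-adjoint up to sign for the wedge pairing $\L^{b-1}\calu\otimes\L^{m-b}\calu\to\L^{m-1}\calu$: for contraction, the fact that $\eta\wedge\omega$ lies in $\L^m\calu=0$ (since $\rank\calu=m-1$) forces $(\partial_\lambda\eta)\wedge\omega=(-1)^b\,\eta\wedge(\partial_\lambda\omega)$; for exterior multiplication, supercommutativity yields $(\phi(g)\wedge\eta)\wedge\omega=(-1)^{\deg\eta}\,\eta\wedge(\phi(g)\wedge\omega)$. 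Thus $\beta$ carries the action of each generator on $\calt$ to the corresponding action on the dual, proving the generator case.

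The main obstacle is sign bookkeeping. The wedge pairing and the double application of $\cHom(-,\Omega^{m-1}(m+1))$ implicit in Lemma~\ref{lem:iden1} introduce grading-dependent signs, and one must choose the identifications so that the self-adjointness of $\partial_\lambda$ and $\theta_g$ reduces to the literal identity on generators prescribed by $\alpha$, rather than to a sign-twisted variant. Once this normalization is fixed---possibly by twisting $\beta$ by an overall sign depending on $a$, $b$, $m$---the multiplicative rule $(uv)^\ast=v^\ast u^\ast$ matches the order-reversal built into $\alpha$ exactly, and the diagram closes.
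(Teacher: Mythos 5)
Your proposal is correct and follows essentially the same route as the paper: the paper likewise reduces the commutativity to showing that $\partial_\lambda$ and $\theta_g$ are self-adjoint for the wedge pairing \eqref{eq:pairing} underlying Lemma~\ref{lem:iden1}, and then declares that check an easy exercise. You go a bit further by actually carrying out the generator computation (correctly using $\L^m\calu=0$ for the contraction case) and flagging the sign normalization, which is a welcome elaboration rather than a deviation.
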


\begin{proof}
The isomorphism $\calm^b_a\cong \calm^{m+1-a}_{m+1-b}$ in Lemma
\ref{lem:iden1} is derived from the non-degenerate pairing~\eqref{eq:pairing}
\[
-\wedge-\colon \Omega^{a-1}(a)\otimes_{\calop}
\Omega^{m-a}(-a)\to \Omega^{m-1}
\]
and likewise the induced isomorphism $p^{\prime\ast}\calm^b_a\cong
 p^{\prime\ast}\calm^{m+1-a}_{m+1-b}$ can be obtained from the induced
 pairing
\[
-\wedge-\colon p^{\prime\ast}\Omega^{a-1}(a)\otimes_{\calop}
p^{\prime\ast}\Omega^{m-a}(-a)\to p^{\prime\ast}\Omega^{m-1}
\]
It is therefore sufficient to show that for $\lambda\in F^\svee$ and
$g\in G$ the actions of $\partial_\lambda$ and $\theta_g$ as defined
in \S\ref{sec:cliffordstuff} are self-adjoint for this pairing. 
This is an easy exercise which we leave to the reader.
\end{proof}

Combining the above theorem with Proposition~\ref{prop:actionsagree}
we have the main result of this section.

\begin{theorem}\label{thm:qCliffisoEndM}
The endomorphism algebra $E = \End_R(M)$ is isomorphic to the
quiverized Clifford algebra $C$.
\end{theorem}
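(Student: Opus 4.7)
The plan is to assemble this as an immediate consequence of the machinery already built. All the real work has been done in Theorem \ref{thm:expcan} and Proposition \ref{prop:actionsagree}; what remains is only to observe that the diagram they provide commutes and that all the maps are isomorphisms.

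More precisely, I would argue as follows. First, by Theorem \ref{thm:mcm}, there is a canonical isomorphism $\Psi\colon \End_{\calo_\calz}(\calt)\xto{\cong} \End_R(M) = E$ induced by the derived global sections functor $\R q'_*$, using that $M\cong q'_*\calt$ via Theorem \ref{thm:geometric}. Second, Theorem \ref{thm:expcan} provides an algebra isomorphism $\Xi\colon C\xto{\cong} \End_{\calo_\calz}(\calt)$ obtained from the Clifford action on $\calt$ constructed in \S\ref{sec:cliffordstuff}. The composition $\Psi\circ\Xi\colon C\to E$ is therefore an isomorphism of $K$-algebras; my only task is to identify it with the morphism $C\to E$ defined in \S\ref{sec:relCE} via the action of $\Cliff_S(q_\phi)$ on $M = \bigoplus_a \cok \L^a\phi$.

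That identification is exactly the content of Proposition \ref{prop:actionsagree}: on each generator $\lambda_i\in F^\svee$ and $g_j\in G$ of $\Cliff_S(q_\phi)$, the contraction and wedge formulae used in \S\ref{sec:relCE} to produce endomorphisms of $M_a$ agree, after passing through $\Psi$, with the sheaf-theoretic formulae \eqref{eq:generator1} and \eqref{eq:generator2} used in \S\ref{sec:cliffordstuff} to produce endomorphisms of $\calt_a = p'^*\Omega^{a-1}(a)$. Hence the two constructions yield the same map $C\to E$, and this common map is an isomorphism.

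No step here should pose difficulty, since the possibly delicate verifications (the representability of the Clifford action on $\calt$, its compatibility with the module-theoretic action, and especially the injectivity/surjectivity of $C_{ba}\to \Hom_{\calo_\calz}(\calt_b,\calt_a)$ which is proved by the filtration and epi-mono argument of Theorem \ref{thm:expcan}) are already handled. The only thing to be careful about is the direction of composition: since paths in $\tilde{\Q}$ are written in functional order, one should check that the map $C\to E$ being described really sends $C_{ab} = e_b C e_a$ into $\Hom_R(M_a,M_b)$ rather than its opposite, so that the two constructions compose correctly; this is evident from the definitions but worth mentioning explicitly.
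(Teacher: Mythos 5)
Your proposal is correct and is exactly the paper's own argument: the paper proves Theorem~\ref{thm:qCliffisoEndM} precisely by combining Theorem~\ref{thm:expcan} (the geometric action gives an isomorphism $C\to\End_{\calo_\calz}(\calt)$) with Proposition~\ref{prop:actionsagree} (compatibility with the module-theoretic action) and the identification $\End_{\calo_\calz}(\calt)\cong E$ from Theorem~\ref{thm:mcm}. Your closing remark about tracking the functional-order convention for paths, i.e.\ whether $C_{ab}=e_bCe_a$ lands in $\Hom_R(M_a,M_b)$ or its opposite, is a sensible point of care but does not change the argument.
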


\section{The Commutative Desingularization as a Moduli Space}
Having completed the proofs of the statements contained in Theorems
\ref{thm:A}-\ref{thm:C} in the Introduction we now include some
miscellaneous sections. In this section we show that the canonical
commutative desingularization $\calz$ of $\Spec R$ can be obtained as
a fine moduli space for certain representations over the
non-commutative one.

Specifically, we prove in Theorem~\ref{thm:representQtilde} that
$\calz$ represents the functor of flat families of representations
$W$ of $\Qtilde$ which have dimension vector $(1, m-1, \binom{m-1}{2},
\dots, 1)$ and which are generated by $W_m$.  We then identify the
points in $\calz$ corresponding to the simple representations $W$ as
those lying over the non-singular locus of $\Spec R$.
\begin{nsit}{Quiver representations.}
\label{sec:quiverization}
A $K$-representation, $V$, of a quiver $\Gamma$ associates a
(finite-dimensional) $K$-vector space $V_i$ to each vertex $i$ of
$\Gamma$ and a linear map $V(a)\colon V(i) \to V(j)$ for each arrow
$a\colon i\to j$.  A homomorphism $f$ of representations from $V$ to
$V'$ is given by a collections of linear maps for each vertex
$f(i)\colon V(i) \to V'(i)$ so that the obvious diagram commutes.  The
category $\Rep(\Gamma)$ of representations is an abelian category.
The dimension vector of $V$, a function from the vertices of $\Gamma$
to the natural numbers, assigns to $i$ the $K$-rank of $V(i)$.  The
representations of $\Gamma$ with a fixed dimension vector $\theta =
(\theta(i))_i$ are parametrized by the vector space $\prod_{i\to j}
\Hom_K(V(i),V(j))$, and thus the isomorphism classes of
representations $V$ with dimension vector $\theta$ are in one-one
correspondence with the orbits under the action of $\prod_{i}
\GL_{\theta(i)}(K)$.

These notions clearly generalize to the case where $K$ is an arbitrary
commutative ring and each $V(i)$ is a free $K$-module of finite rank.
\end{nsit}

\begin{nsit}{Baby case.}
  The Be\u\i linson algebra associated to a vector space $F$ of rank
  $m$ over the field $K$ is the order-$m$ quiverization (see
  \S\ref{sec:quiverization}) $Q_m(\L F^\svee)$ of the exterior algebra
  of $F^\svee$.

  The Be\u\i linson algebra can be represented as the path algebra of
  the Be\u\i linson quiver
\[
\Q:\qquad
{\xymatrix@C=5pc{
1 &
2 \ar@<2ex>[l]_{\vdots}|{\lambda_1} \ar@<-2ex>[l]|{\lambda_m}  & 
\cdots \cdots \ar@<2ex>[l]_{\vdots}|{\lambda_1} \ar@<-2ex>[l]|{\lambda_m} & 
m \ar@<2ex>[l]_{\vdots}|{\lambda_1} \ar@<-2ex>[l]|{\lambda_m} 
}}
\]
equipped with the anti-commutativity relations $\lambda_i\lambda_j +
\lambda_j\lambda_i = 0 = \lambda_i^2$.  The category
$\Rep(\Q)$ is equivalent to the category of graded left
$\L F^\svee$-modules with support in degrees $1, \dots, m$ (see
Lemma~\ref{lem:quiverizationprinciple}).

For an arbitrary commutative $K$-algebra $A$ we let $\calr(A)$ be the set of
isomorphism classes $W$ of representations of $\mathsf Q$ of the form
\[
W:\qquad
\xymatrix@C=5pc{
W_1 &
W_2 \ar@<2ex>[l]_{\vdots}|{\lambda_1} \ar@<-2ex>[l]|{\lambda_m}  & 
\cdots \cdots \ar@<2ex>[l]_{\vdots}|{\lambda_1} \ar@<-2ex>[l]|{\lambda_m} & 
W_m \ar@<2ex>[l]_{\vdots}|{\lambda_1} \ar@<-2ex>[l]|{\lambda_m} 
}
\]
such that each $W_a$ is a projective $A$-module of rank
$\binom{m-1}{a-1}$, and $W$ is generated by $W_m = A$.  

For a projective $A$-module $P$ of rank $m-1$ and  a \emph{split 
monomorphism} $\alpha\colon P \to F \otimes A$, define a representation
$W_\alpha \in \calr(A)$ by
\[
(W_\alpha)_a = \L^{m-a}_AP^\svee
\]
for $a = 1, \dots, m$, with $P^\svee = \Hom_A(P,A)$. Define the action of
$\lambda \in F^\svee$ on $W_\alpha$ by the left exterior
multiplication
\[
\alpha^\svee(\lambda) \wedge - \colon  \L^{m-a}_A P^\svee \to
\L^{m-a+1}_A P^\svee\,,
\]
where $\alpha^\svee\colon F^\svee\otimes A \onto P^\svee$ is the $A$-dual of
$\alpha$.

\begin{lemma}\label{lem:allrepsQ}
  Every $W \in \calr(A)$ is of the form $W_\alpha$ for a uniquely
  determined $A$-projective $P$ of rank $m-1$ and split monomorphism
  $\alpha\colon P \to F \otimes A$.
\end{lemma}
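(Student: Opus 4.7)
The plan is to read off $\alpha$ from the single datum $W_{m-1}$ and then use the anti-commutativity relations to propagate the identification to all higher vertices.

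Since $W$ is generated by $W_m = A$, the action map extends to a surjection of graded $\L F^\svee$-modules $\pi\colon \L F^\svee \otimes_K A \twoheadrightarrow W$, where $W_a$ sits in degree $m-a$. Its degree-$1$ component is a surjection $F^\svee \otimes_K A \twoheadrightarrow W_{m-1}$. As $W_{m-1}$ is $A$-projective of rank $m-1$ by hypothesis, this surjection splits and its kernel $K$ is a projective $A$-module of rank one. I would then set $P^\svee := W_{m-1}$ and $Q^\svee := K$, so that $0 \to Q^\svee \to F^\svee \otimes A \to P^\svee \to 0$ is split exact; dualizing produces the desired split monomorphism $\alpha\colon P := (W_{m-1})^\svee \into F \otimes_K A$, with $\alpha^\svee$ literally the degree-$1$ component of the action.

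The key step is to show that $W \cong W_\alpha$, for which I would prove that the two-sided ideal $I \subseteq \L F^\svee$ generated by $Q^\svee$ annihilates $W$. By the construction of $Q^\svee$ we have $q \cdot w_m = 0$ for every $q \in Q^\svee$ and $w_m \in W_m$, and anti-commutativity lets us move $q$ to the right: for $\omega \in \L^k F^\svee$,
\[
(q \wedge \omega)\cdot w_m = (-1)^{k}(\omega \wedge q)\cdot w_m = (-1)^{k}\omega\cdot(q\cdot w_m)=0\,.
\]
Since $W$ is generated by $W_m$, this gives $Q^\svee \cdot W = 0$, so $\pi$ factors through a surjection $\bar\pi\colon \L P^\svee \otimes_K A = (\L F^\svee/I)\otimes_K A \twoheadrightarrow W$ of graded $\L F^\svee$-modules.

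In each degree $k$ the map $\bar\pi$ is a surjection $\L^k P^\svee \twoheadrightarrow W_{m-k}$ between finitely generated projective $A$-modules both of rank $\binom{m-1}{k}$, hence an isomorphism. Unpacking the $\L F^\svee$-module structure inherited from the quotient gives exactly the structure defining $W_\alpha$, so $W \cong W_\alpha$. Uniqueness is then transparent from the construction: any presentation of $W$ as some $W_{\alpha'}$ must satisfy $(P')^\svee \cong W_{m-1}$ via $\bar\pi$ in degree $1$, with $(\alpha')^\svee$ the degree-$1$ action map, pinning down $P$ and $\alpha$ up to the unique isomorphism. The one point requiring care is the annihilation argument for $Q^\svee$; everything else is essentially a rank count combined with the quotient description $\L F^\svee/(Q^\svee) = \L P^\svee$.
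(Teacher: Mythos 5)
Your proof is correct and follows essentially the same route as the paper's: extract $P^\svee = W_{m-1}$ and $\alpha^\svee$ as the degree-one component of the action map, show $W$ is a quotient of $\L_A W_{m-1}$, and conclude by comparing ranks of projective modules. The only difference is that you spell out, via the anti-commutation argument, why the ideal generated by $\ker(F^\svee\otimes A\to W_{m-1})$ annihilates $W$ — a step the paper leaves implicit.
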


\begin{proof}
Let $W \in \calr(A)$.  Viewed as a left module over $(\L
F^\svee)\otimes A = \L_A(F^\svee\otimes A)$, $W$ is generated by $W_m =
A$.  This gives in particular a surjective homomorphism
\[
\pi\colon  F^\svee\otimes A \to W_{m-1}\,.
\]
If $W = W_\alpha$ then $W_{m-1} = \L^1_A P^\svee = P^\svee$, and thus
$\alpha = \pi^\svee$ can be reconstructed from $W$, giving uniqueness.

For $W$ arbitrary, put $I = \ker\pi$.  As $W$ is generated by
$W_{m}$ we find that $W$ is a quotient of $\L_A((F^\svee \otimes A)/I)
= \L_A W_{m-1}$.  Since $W$ and $\L_A W_{m-1}$ have the same
rank, we see that $W \cong \L_A W_{m-1}$ is of the form $W_\alpha$.
\end{proof}

  With $\PP=\PP(F^\svee)$ once more the projective space of linear
  forms, let $\calu=\Omega^1(1)=\ker(F\otimes \calo_{\PP}\to
  \calop(1))$ be the tautological bundle.  Any split monomorphism
  $P\to F\otimes A$ with $P$ of rank $m-1$ is uniquely
  obtained as a pullback of $\calu \to F\otimes \calo_{\PP}$
  across an $A$-point $\eta\colon \Spec A \to \PP$ of $\PP$. Combining
  this with Lemma \ref{lem:allrepsQ} we obtain the following
  corollary.

\begin{cor}\label{cor:representQ}
The functor $\calr$ is representable by $\PP(F^\svee)$; equivalently,
$\PP(F^\svee)$ is a fine moduli space for $\calr$.  The universal
bundle is given by $\calb_0 = \L_{\PP(F^\svee)} \calu^\svee$, where
$\lambda \in F^\svee$ acts via $\partial^\svee(\lambda)\wedge-$.\qed
\end{cor}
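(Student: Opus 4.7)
The plan is to chain two natural bijections identifying $\calr(A)$ with $\PP(F^\svee)(A)$ for every commutative $K$-algebra $A$, and then read off the universal family. First, Lemma \ref{lem:allrepsQ} identifies $\calr(A)$ with the set of isomorphism classes of pairs $(P,\alpha)$, where $P$ is a projective $A$-module of rank $m-1$ and $\alpha\colon P\hookrightarrow F\otimes A$ is a split monomorphism; the inverse assigns to $(P,\alpha)$ the representation $W_\alpha$.

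Such a split monomorphism is the same data as a short exact sequence $0\to P\to F\otimes A\to L\to 0$ in which $L=\cok\alpha$ is automatically an invertible $A$-module, equivalently as an isomorphism class of surjections $F\otimes A\twoheadrightarrow L$ onto a line bundle. By the standard functor-of-points description of $\PP(F^\svee)=\Proj_K\Sym_K F$, this set is precisely $\PP(F^\svee)(A)$. Naturality in $A$ is immediate, since all constructions involved commute with base change.

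Finally I would exhibit the universal family. The universal surjection on $\PP=\PP(F^\svee)$ is $F\otimes\calo_\PP\twoheadrightarrow\calop(1)$ with kernel the tautological subbundle $\calu=\Omega^1(1)$. Feeding this universal split monomorphism $\calu\hookrightarrow F\otimes\calo_\PP$ into the construction of Lemma \ref{lem:allrepsQ} produces $\calb_0=\L_\PP\calu^\svee$ with $F^\svee$-action by exterior multiplication through the dual surjection $\partial^\svee\colon F^\svee\otimes\calo_\PP\twoheadrightarrow\calu^\svee$. For any $\eta\in\PP(F^\svee)(A)$, since $\L$ and $(-)^\svee$ commute with pullback along $\eta$, we obtain $\eta^*\calb_0=\L_A(\eta^*\calu)^\svee=W_{\eta^*\calu}$, which is the representation corresponding to $\eta$ under the bijection; hence $\calb_0$ is the universal family. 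There is no serious obstacle here, since the real content sits in Lemma \ref{lem:allrepsQ}; the one piece of bookkeeping worth checking carefully is that the convention $\PP(F^\svee)=\Proj_K\Sym_K F$ really does parametrize rank-$(m-1)$ split sub-bundles of $F\otimes\calo$, so that $\calu$ plays the role of the universal object of the required type.
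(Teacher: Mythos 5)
Your argument is correct and is essentially the paper's: the paper likewise reduces to Lemma~\ref{lem:allrepsQ} and then observes that split monomorphisms $P\hookrightarrow F\otimes A$ of corank one are exactly pullbacks of $\calu\to F\otimes\calo_\PP$ along $A$-points of $\PP(F^\svee)$, which is your functor-of-points identification stated in terms of subbundles rather than quotient line bundles. Your extra care about the convention $\PP(F^\svee)=\Proj_K\Sym_K F$ and the base-change compatibility of $\L$ and $(-)^\svee$ is exactly the right bookkeeping, which the paper leaves implicit.
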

\end{nsit}

\begin{nsit}{Representations of the quiverized Clifford
	algebra.}\label{nsit:repsQtilde}
Reintroduce now the second $K$-vector space $G$ of rank
$n$, with its fixed basis $\lbrace g_1, \dots, g_n\rbrace $, and consider again
from~\S\ref{sit:extBquiver} the doubled Be\u\i linson quiver on
$F^\svee$ and $G$
\[
\Qtilde:\qquad
{\xymatrix@C=5pc{
1 \ar@/_/[r]|{g_1} \ar@/_/@<-4ex>[r]^{\vdots}|{g_n}  & 
2 \ar@/_/[r]|{g_1} \ar@/_/@<-4ex>[r]^{\vdots}|{g_n}
\ar@/_/[l]_{\vdots}|{\lambda_1} \ar@/_/@<-4ex>[l]|{\lambda_m}  &  
\cdots \cdots \ar@/_/[l]_{\vdots}|{\lambda_1} \ar@/_/@<-4ex>[l]|{\lambda_m}
\ar@/_/[r]|{g_1} \ar@/_/@<-4ex>[r]^{\vdots}|{g_n}&  
m  \ar@/_/[l]_{\vdots}|{\lambda_1} \ar@/_/@<-4ex>[l]|{\lambda_m}
}}
\]
with relations as before.  Again let $C$ be its path algebra. 

For an arbitrary commutative $K$-algebra $A$, let $\tilde\calr(A)$ consist of
those isomorphism classes of representations
\[
W:\qquad
\xymatrix@C=5pc{
W_1 \ar@/_/[r]|{g_1} \ar@/_/@<-4ex>[r]^{\vdots}|{g_n}  & 
W_2 \ar@/_/[r]|{g_1} \ar@/_/@<-4ex>[r]^{\vdots}|{g_n}
\ar@/_/[l]_{\vdots}|{\lambda_1} \ar@/_/@<-4ex>[l]|{\lambda_m}  &  
\cdots \cdots \ar@/_/[l]_{\vdots}|{\lambda_1} \ar@/_/@<-4ex>[l]|{\lambda_m}
\ar@/_/[r]|{g_1} \ar@/_/@<-4ex>[r]^{\vdots}|{g_n}&  
W_m  \ar@/_/[l]_{\vdots}|{\lambda_1} \ar@/_/@<-4ex>[l]|{\lambda_m}
}
\]
such that each $W_a$ is a projective $A$-module of rank
$\binom{m-1}{a-1}$, and $W$ is generated as a left $C$-module by
$W_{m}=A$. 

\begin{prop}\label{prop:xijscalars}
Let $W \in \tilde\calr(A)$.  Then the central elements $x_{ij} \in
C$ act as scalars (elements of $A$) on $W$.  Furthermore, $W$
is generated by $W_m$ as a left module over $\L_A (F^\svee
 \otimes A)$.
\end{prop}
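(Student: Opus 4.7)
The plan is to exploit two ingredients: centrality of the $x_{ij}$ in $C$, and the PBW basis of the Clifford algebra $\Cliff_S(q_\phi)$.

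First, observe that $x_{ij}$ is central in $C$: this is immediate from the cubic $K$-algebra presentation in Remark~\ref{rem:cubic}, whose relations express precisely that $\lambda_i g_j + g_j\lambda_i$ commutes with every generator, or alternatively from the fact that $x_{ij}\in S$ lies in the central subring $S\subseteq C$. Since $W_m$ is a projective $A$-module of rank $1$, the $A$-linear endomorphism of $W_m$ induced by $x_{ij}$ must be multiplication by a unique scalar $\alpha_{ij}\in A$. Centrality then propagates to all of $W$: for any $w=p\cdot v$ with $p\in C$ and $v\in W_m$,
\[
x_{ij}\,w \;=\; p\cdot(x_{ij}v) \;=\; p\cdot(\alpha_{ij}v) \;=\; \alpha_{ij}\,w.
\]
Since $W$ is generated as a $C$-module by $W_m$, this proves the first assertion. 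In particular the $S$-action on $W$ factors through the ring homomorphism $\rho\colon S\to A$, $x_{ij}\mapsto \alpha_{ij}$.

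For the second assertion I would invoke the PBW basis of $\Cliff_S(q_\phi)$ in the ordering with $\lambda$'s on the left and $g$'s on the right: every element of $\Cliff_S(q_\phi)$ is a unique $S$-linear combination of ordered monomials $\lambda_I g_J$. Acting on $v\in W_m$, such a monomial first applies $g_J$, which lands in $W_{m+|J|}$. Since the quiver $\tilde\Q$ has no vertex beyond $m$, we have $W_{m+k}=0$ for $k\ge 1$, and hence $g_J v=0$ as soon as $|J|\ge 1$. Only the monomials $\lambda_I$ containing no $g$-factor contribute nontrivially, with their $S$-coefficients acting through $\rho$. Consequently $W=\Cliff_S(q_\phi)\cdot W_m$ is already generated by $W_m$ as a module over $\L_A(F^\svee\otimes A)$.

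The entire argument rests on the single observation that the ``one-sided'' PBW normal form, together with the quiver constraint $W_{m+1}=0$, annihilates every monomial containing at least one $g$. Given this, I do not anticipate any serious obstacle: both assertions follow formally from centrality and PBW.
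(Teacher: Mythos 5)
Your proof is correct and follows essentially the same route as the paper's: both deduce the scalar action of $x_{ij}$ from $W_m=A$ together with centrality, and both use the ordered (PBW) spanning set $\bm{\lambda}\,\bm{g}\,\bm{x}$ plus the vanishing $g_l W_m=0$ (since $W_{m+1}=0$) to reduce generation to $\L_A(F^\svee\otimes A)$. No gaps.
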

\begin{proof}
Each homogeneous $A\otimes C$-linear endomorphism of $W$ is
determined by its action on $W_m$.  From the fact that $W_m=A$, we
deduce that every such endomorphism is given by multiplication by some
element of $A$.  In particular, this holds for multiplication by
$x_{ij}$.

Any element of $C$ can be written as a linear combination of products
$e_b\bm{\lambda g x}e_a$, where $\bm \lambda$, $\bm g$, and $\bm x$ are
products of $\lambda_k$, $g_l$, and $x_{ij}$.  As each $g_l$ acts with
degree $+1$, $g_l W_m = 0$.  It follows that $W$ is generated by $W_m$
over $(\L F^\svee)\otimes A = \L_A(F^\svee\otimes A)$ alone.
\end{proof}

\begin{sit}
Suppose now we are given a projective $A$-module $P$ of rank
$m-1$, and a pair of homomorphisms
\[
\alpha\colon  P \to F\otimes A\,, \qquad
\beta\colon  P^\svee \to G^\svee \otimes A
\]
with $\alpha$ a split monomorphism.  Define $W_{\alpha\beta} \in
\tilde\calr(A)$ to have
\[
(W_{\alpha\beta})_a = \L^{m-a}_AP^\svee
\]
as before, with $\lambda \in F^\svee$ again acting via
$\alpha^\svee(\lambda)\wedge -$, and with $g\in G$ acting via the
contraction $\beta^\svee(g) \Ydown -$.  Explicitly, $g$ sends $u^{1}
\wedge \cdots \wedge u^{m-a}$ to 
\[
\sum_{j=1}^{m-a} (-1)^{j-1}u^{j}(\beta^\svee(g))\left(u^{1} \wedge \cdots \wedge
\widehat{u^{j}} \wedge \cdots \wedge u^{m-a}\right) \quad \in
\L^{m-a-1}_AP^\svee\,. 
\]
\end{sit}
\begin{prop}\label{prop:allrepsQtilde}
Every $W \in \tilde\calr(A)$ is of the form $W_{\alpha\beta}$ for a
uniquely determined projective $P$ of rank $m-1$ and a pair of homomorphisms
\[
\alpha\colon P \to F \otimes A \quad,\quad \beta\colon P^\svee \to
G^\svee\otimes A
\]
with $\alpha$ a split monomorphism.
\end{prop}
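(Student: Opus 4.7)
The plan is to bootstrap from Lemma \ref{lem:allrepsQ} by first forgetting the $G$-action and recovering it afterwards from data on level $m-1$. By Proposition \ref{prop:xijscalars} the $x_{ij}$ act as scalars on $W$, and $W$ is generated by $W_m = A$ already over the exterior subalgebra $\L_A(F^\svee \otimes A)$. Hence, viewed only with its $\lambda$-action, $W$ is an element of $\calr(A)$ for the simple Be{\u\i}linson quiver $\Q$, so Lemma \ref{lem:allrepsQ} produces a unique projective $A$-module $P$ of rank $m-1$ together with a unique split monomorphism $\alpha\colon P \to F \otimes A$ such that $W_a \cong \L^{m-a}_A P^\svee$, with $\lambda \in F^\svee$ acting by $\alpha^\svee(\lambda) \wedge -$.

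Next I would construct $\beta$ directly from the $g$-action at the top of the quiver. For each $j$ the arrow $g_j\colon W_{m-1} \to W_m$ is an $A$-linear map $P^\svee \to A$, that is, an element of $P$; assembling these over $j = 1,\dots,n$ yields $\beta^\svee\colon G \otimes A \to P$, equivalently $\beta\colon P^\svee \to G^\svee \otimes A$. It then remains to verify that each $g_j$ acts on every $W_a$ by contraction with $\beta^\svee(g_j)$. Since $W$ is generated over $\L_A(F^\svee \otimes A)$ by $W_m$, any element of $W_a$ is an $A$-combination of monomials $\lambda_{i_1}\cdots\lambda_{i_{m-a}} \cdot 1$ (with $1$ the distinguished generator of $W_m = A$). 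Iterating the Clifford relation $g_j \lambda_i = x_{ij} - \lambda_i g_j$ and using that $g_j \cdot 1 = 0$ (the image would leave the quiver and is silently suppressed) gives
\[
g_j\bigl(\lambda_{i_1}\cdots\lambda_{i_k}\cdot 1\bigr) = \sum_{l=1}^{k}(-1)^{l-1} x_{i_l j}\, \lambda_{i_1}\cdots\widehat{\lambda_{i_l}}\cdots\lambda_{i_k} \cdot 1\,,
\]
while by construction $\beta^\svee(g_j)(\alpha^\svee(\lambda_i)) = g_j(\lambda_i \cdot 1) = x_{ij}$, so the contraction $\beta^\svee(g_j) \Ydown -$ yields exactly the same expression on the corresponding wedges $\alpha^\svee(\lambda_{i_1}) \wedge \cdots \wedge \alpha^\svee(\lambda_{i_k})$.

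Uniqueness then comes for free: $P$ and $\alpha$ are already uniquely determined by the $\lambda$-action via Lemma \ref{lem:allrepsQ}, and $\beta$ is forced by its very definition as the $g$-action on $W_{m-1}$. The main technical point I anticipate is the last verification above, namely that the contraction formula really agrees on all of $W_a$ and not only on the span of the displayed wedge monomials; but this reduces to checking that those monomials generate $W_a$ as an $A$-module, which in turn follows from the two surjections $\L_A(F^\svee \otimes A) \otimes_A W_m \twoheadrightarrow W$ (the generation hypothesis) and $\alpha^\svee\colon F^\svee \otimes A \twoheadrightarrow P^\svee$ (the dual of the split monomorphism $\alpha$). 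The only genuine subtlety beyond this is ensuring $\alpha$ remains a split monomorphism under the construction, which it does because it was produced that way by Lemma \ref{lem:allrepsQ}.
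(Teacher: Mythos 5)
Your proposal is correct and follows essentially the same route as the paper: both reduce to Lemma \ref{lem:allrepsQ} via Proposition \ref{prop:xijscalars}, recover $\beta$ from the $g$-action at the top of the quiver (equivalently, from the scalars $a_{ij}=g_j\lambda_i$), and then identify the $G$-action with contraction by $\beta^\svee(g)$. Your explicit iteration of the Clifford relation to verify the contraction formula on the monomials $\lambda_{i_1}\cdots\lambda_{i_k}\cdot 1$ merely spells out what the paper summarizes as ``$g_j$ acts as the left super-derivation sending $\alpha^\svee(\lambda_i)$ to $a_{ij}$.''
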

\begin{proof}
Any representation class $W \in \tilde\calr(A)$ can be viewed as an
object of $\calr(A)$ by simply ignoring the rightward-pointing arrows
of $\Qtilde$.  By Lemma~\ref{lem:allrepsQ}, such an
object is necessarily of the form $W_\alpha = \L_A P^\svee$ for some
$P$ and some monomorphism $\alpha\colon  P \into F \otimes A$.  It remains
only to construct $\beta$.

The central elements $x_{ij} = \lambda_i g_j + g_j \lambda_i \in C$ act on
$W$ as multiplication by certain scalars $a_{ij} \in A$.  Applying
this to the generator $1 \in A = W_m$, we obtain
\[
a_{ij} = g_j \lambda_i\,,
\]
so that in particular each $g_j$ acts as the left super-$S$-derivation
on $\L_A P^\svee$ sending $\alpha^\svee(\lambda_i)$ to $a_{ij}$.  Hence
the action of $G$ on $\L_A P^\svee$ is provided by a homomorphism
$\gamma\colon G \otimes A \to P$, which dualizes to a map $\beta\colon
P^\svee \to G^\svee \otimes A$ such that $\gamma(g)$ is given by
contraction with $\beta^\svee(g)$ for each $g \in G$.  This shows that $W
\cong W_{\alpha\beta}$.  As in Lemma~\ref{lem:allrepsQ}, both $\alpha$
and $\beta$ can be reconstructed from $W$.
\end{proof}

Let $\calz$ again be the Springer desingularization of $\Spec R$.  As
in~\S\ref{nsit:resRjOZ}, we write
\[
\calz = \underline{\Spec} \left(\Sym_{\PP(F^\svee)}(\calu^\svee \otimes G)\right)\,.
\]
The bundle $\L_{\PP}\calu^\svee \otimes_\PP \Sym_\PP(\calu^\svee\otimes G)$
carries a natural $C$-action where $\lambda \in
F^\svee$ acts via $\partial^\svee(\lambda) \wedge -$ and $g \in G$
sends a section $e$ of $\calu^\svee$ to $e\otimes g$ and fixes
$\calu^\svee\otimes G$.  Denote this latter super-derivation by
$g\Ydown-$.  Letting $\calb$ be the $\caloz$-module determined by
$\L_{\PP}\calu^\svee\otimes_\PP\Sym_\PP(\calu^\svee\otimes G)$, we see that
$\calb = p'^* \calb_0$, where $\calb_0 = \L_{\PP} \calu^\svee$ is as in
Corollary~\ref{cor:representQ} and $p'\colon \calz \to \PP$ is the
projection.  Of course $\calb$ is still a $C$-module. 

\begin{theorem}\label{thm:representQtilde}
The functor $\tilde\calr$ is representable by $\calz$.  The universal
bundle is given by $\calb = p'^*(\L_{\PP} \calu^\svee)$.
\end{theorem}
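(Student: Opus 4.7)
My plan is to reduce the theorem to Proposition~\ref{prop:allrepsQtilde} combined with the relative-spectrum description of $\calz$ over $\PP=\PP(F^{\svee})$. By that proposition, an element of $\tilde\calr(A)$ corresponds bijectively to an isomorphism class of triples $(P,\alpha,\beta)$ where $P$ is a projective $A$-module of rank $m-1$, $\alpha\colon P\to F\otimes A$ is a split monomorphism, and $\beta\colon P^{\svee}\to G^{\svee}\otimes A$ is an arbitrary $A$-linear map. By Corollary~\ref{cor:representQ} (or rather its proof), the data $(P,\alpha)$ up to unique isomorphism is the same as an $A$-point $\eta\colon\Spec A\to\PP$, under which $P\cong \eta^{*}\calu$ and $\alpha$ corresponds to the pullback of the tautological inclusion $\calu\hookrightarrow F\otimes\calo_{\PP}$.

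Next I would observe that, because $\calz=\underline{\Spec}(\Sym_{\PP}(\calu^{\svee}\otimes G))$, an $A$-point of $\calz$ is exactly a pair $(\eta,\tau)$ with $\eta\colon\Spec A\to\PP$ and $\tau$ an $A$-algebra map $\eta^{*}\Sym_{\PP}(\calu^{\svee}\otimes G)\to A$, which is the same as an $A$-linear map $\eta^{*}(\calu^{\svee}\otimes G)=P^{\svee}\otimes G\to A$, equivalently a map $\beta\colon P^{\svee}\to G^{\svee}\otimes A$. This sets up the desired natural bijection
\[
\Hom(\Spec A,\calz)\;\xto{\cong}\;\tilde\calr(A),\qquad (\eta,\beta)\;\longmapsto\;W_{\alpha\beta}.
\]
Functoriality in $A$ is immediate from the construction, since pullback commutes with exterior powers and contracts the data $(\alpha,\beta)$ componentwise.

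Finally, I would identify the universal bundle. Pulling back $\calb=p'^{*}(\L_{\PP}\calu^{\svee})$ along the $A$-point $\zeta\colon\Spec A\to\calz$ corresponding to $(\eta,\beta)$ gives $\L_{A}(\eta^{*}\calu^{\svee})=\L_{A}P^{\svee}=W_{\alpha\beta}$ at the level of underlying modules. The action of $\lambda\in F^{\svee}$ on $\calb$ is $\partial^{\svee}(\lambda)\wedge -$, which pulls back along $\zeta$ to $\alpha^{\svee}(\lambda)\wedge -$, matching the definition of the $F^{\svee}$-action on $W_{\alpha\beta}$. For the action of $g\in G$, a section $e$ of $\calu^{\svee}$ is sent to $e\otimes g\in\calu^{\svee}\otimes G\subset\Sym_{\PP}(\calu^{\svee}\otimes G)$; under $\zeta^{*}$ this element is evaluated by the structure map $P^{\svee}\otimes G\to A$ determined by $\beta$, which equals the pairing $\langle e,\beta^{\svee}(g)\rangle$. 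Hence the super-derivation $g\Ydown -$ pulls back to contraction by $\beta^{\svee}(g)$, again matching $W_{\alpha\beta}$.

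The main obstacle — really the only nontrivial point — is bookkeeping the compatibility between the two descriptions of the $G$-action: the one on $\calb$ via $e\mapsto e\otimes g$ and the one on $W_{\alpha\beta}$ via contraction by $\beta^{\svee}(g)$. Once the dualities $\Hom_{A}(P^{\svee}\otimes G,A)\cong\Hom_{A}(P^{\svee},G^{\svee}\otimes A)\cong\Hom_{A}(G,P)$ are unravelled, this is a formal verification, and the theorem follows.
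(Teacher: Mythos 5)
Your argument is correct and follows essentially the same route as the paper's proof: an $A$-point of $\calz=\underline{\Spec}(\Sym_{\PP}(\calu^{\svee}\otimes G))$ is unpacked into a point $\eta$ of $\PP$ (giving $(P,\alpha)$ via the tautological inclusion) plus a section of the fibre (giving $\beta$), which by Proposition~\ref{prop:allrepsQtilde} matches $\tilde\calr(A)$ exactly, and the universal bundle is identified by checking the two actions pull back compatibly. Your verification of the $G$-action compatibility is just a more explicit rendering of the step the paper dismisses as ``easy to see.''
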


\begin{proof}
An $A$-point of $\calz$ consists of two pieces of data.  The
first of these is a point $\eta\colon \Spec A \to \PP$, and 
 we obtain from the canonical map $\calu\to F\otimes \calo_{\PP}$ 
a split monomorphism
\[
\partial_\eta\colon  \calu_\eta \to F \otimes A 
\]
with $\calu_\eta$ an $A$-projective of rank $m-1$.  The other
information carried by a point of $\calz$ is an $A$-point $\xi\colon
\Spec A \to \underline{\Spec} \Sym_{\PP}(\calu^\svee_\eta \otimes
G)$.  Such a point corresponds to an $A$-linear map $\calu^\svee_\eta
\otimes G \to A$, which by adjunction yields a homomorphism
$\beta\colon \calu^\svee_\eta \to G^\svee \otimes A$.

Thus the $A$-points of $\calz$ are in one-one correspondence with the
pairs $(\alpha,\beta)$, i.e., with the elements of
$\tilde\calr(A)$.  This proves that $\calz$ represents $\tilde\calr$.
It is easy to see that the induced actions of $F^\svee$ and $G$ on
$\calb_\xi = (\calb_0)_\eta$ define an isomorphism $\calb_\xi \cong
W_{\alpha\beta}$.
\end{proof}
\end{nsit}

\begin{nsit}{Simple representations.}
Our next task is to identify the points of $\calz$ corresponding to
the simple representations $W \in \tilde\calr(A)$.  We shall see that
they are precisely those points lying over the non-singular locus of
$\Spec R$.  We first record an easy lemma.
\end{nsit}

\begin{lemma}\label{lem:WgenbyW1}
Assume that $A = K$.  Then $W \in \tilde\calr(K)$ is simple if and
only if $W$ is generated by $W_1$. 
\end{lemma}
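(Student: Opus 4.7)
The plan is to invoke the parametrization of $\tilde\calr(K)$ given in Proposition~\ref{prop:allrepsQtilde} and then analyse subrepresentations by their lowest nonvanishing graded component. By Proposition~\ref{prop:allrepsQtilde} every $W\in\tilde\calr(K)$ can be written as $W=W_{\alpha\beta}$, where $P$ is a $K$-vector space of dimension $m-1$, $\alpha\colon P\hookrightarrow F$ is a (split) monomorphism, and $\beta\colon P^{\svee}\to G^{\svee}$ is an arbitrary $K$-linear map; in this model $W_a=\L^{m-a}P^{\svee}$, so in particular $W_1=\L^{m-1}P^{\svee}$ is one-dimensional. The dual $\alpha^{\svee}\colon F^{\svee}\twoheadrightarrow P^{\svee}$ is surjective, and each $\lambda\in F^{\svee}$ acts on $w\in W_a$ by $w\mapsto \alpha^{\svee}(\lambda)\wedge w\in W_{a-1}$.

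The forward direction is essentially immediate: if $W$ is simple then $W_1\neq 0$, so $C\cdot W_1$ is a nonzero subrepresentation of $W$ and by simplicity must equal $W$, showing that $W$ is generated by $W_1$.

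For the converse, assume $W=C\cdot W_1$ and let $W'\subseteq W$ be a nonzero subrepresentation. Let $j$ be the smallest index with $W'_j\neq 0$ and pick a nonzero $w\in W'_j$. If $j>1$, then $\alpha^{\svee}(\lambda_i)\wedge w\in W'_{j-1}=0$ for every $i$, and the surjectivity of $\alpha^{\svee}$ forces $\eta\wedge w=0$ for every $\eta\in P^{\svee}$. Expanding $w\in\L^{m-j}P^{\svee}$ in a basis of $P^{\svee}$ and wedging with the dual basis vectors shows that this condition, together with $m-j<m-1$, forces $w=0$, contradicting our choice. Hence $j=1$, so $W'_1=W_1$ since both are one-dimensional, and consequently $W'\supseteq C\cdot W_1=W$, proving that $W$ is simple.

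The argument is essentially routine once Proposition~\ref{prop:allrepsQtilde} is in hand; the only nontrivial step is the small exterior-algebra observation that an element of $\L^kP^{\svee}$ with $k<\dim P$ annihilated by left wedge multiplication with every element of $P^{\svee}$ must be zero, which is a direct one-line basis computation. I therefore do not foresee any substantive obstacle.
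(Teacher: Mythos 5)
Your proof is correct and follows essentially the same route as the paper's: the paper observes that, as a module over $\L P^\svee$, every nonzero subrepresentation of $W\cong\L P^\svee$ contains the one-dimensional socle $\L^{m-1}P^\svee=W_1$, which is exactly the exterior-algebra fact you establish by looking at the lowest nonvanishing graded component.
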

\begin{proof} If we consider only the action of the $\lambda_i$ then
$W=\L P^\svee$. We see that any subrepresentation of $W$ contains
its socle $W_m=\L^{m-1}_A P^\svee$. Hence if $W_1$ generates $W$ then
this subrepresentation 
must be everything. 
\end{proof}
\begin{lemma}\label{lem:simpleCmods}
The following are equivalent for $W = W_{\alpha\beta} \in
\tilde\calr(K)$:
\begin{enumerate}
\item $W$ is a simple left $C$-module;
\item $\beta\colon  P^\svee \to G^\svee$ is a monomorphism.
\end{enumerate}
\end{lemma}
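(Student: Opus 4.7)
The plan is to apply Lemma~\ref{lem:WgenbyW1}, which reduces simplicity of $W$ to the statement that $W_1$ generates $W$ as a $C$-module, and then to compute this submodule directly. First I would fix a generator $\omega$ of the one-dimensional space $W_1 = \L^{m-1}P^\svee$; since $\dim_K P = m-1$, $\omega$ is a top form on $P$.

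The key observation is that each $\lambda_i$ annihilates $\omega$, because $\alpha^\svee(\lambda_i)\wedge\omega\in\L^m P^\svee=0$. By Proposition~\ref{prop:xijscalars} the central elements $x_{ij} = \lambda_i g_j+g_j\lambda_i$ act by scalars, so the Clifford relations allow any element of $C$ to be put in a PBW normal form (for the ordering $\lambda<g$) as a $K$-linear combination of monomials consisting of $\lambda$'s followed by $g$'s. Applied to $\omega$ the trailing $\lambda$'s annihilate, so only pure $g$-monomials contribute, and
\[
g_{j_1}\cdots g_{j_{a-1}}\cdot\omega \;=\; \beta^\svee(g_{j_1})\Ydown\cdots\Ydown\beta^\svee(g_{j_{a-1}})\Ydown\omega \;\in\; W_a\,.
\]
Writing $Q = \beta^\svee(G)\subseteq P$, the degree-$a$ piece of the submodule generated by $\omega$ is then spanned by the iterated contractions $q_1\Ydown\cdots\Ydown q_{a-1}\Ydown\omega$ with $q_i\in Q$. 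Using the standard super-commutation identity $\epsilon_\xi\iota_p+\iota_p\epsilon_\xi = \xi(p)\cdot\id$ on $\L P^\svee$ together with $\epsilon_\xi(\omega) = 0$, I would check that any further application of a $\lambda_i$ to such an iterated contraction yields only another iterated contraction of $\omega$ by elements of $Q$, so nothing new is produced.

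Finally, contraction with the top form $\omega$ gives a canonical isomorphism $\L^{a-1}P\xrightarrow{\cong}\L^{m-a}P^\svee = W_a$ carrying $\L^{a-1}Q$ to exactly the submodule just computed. Hence this submodule equals $W_a$ for all $a\in[1,m]$ if and only if $\L^{a-1}Q = \L^{a-1}P$ for all $a$; taking $a = m$ forces $Q = P$, while the converse is immediate. Since $K$ is a field, surjectivity of $\beta^\svee\colon G\to P$ is equivalent to injectivity of its dual $\beta\colon P^\svee\to G^\svee$, completing the equivalence. I do not expect any real obstacle here: the main step is routine bookkeeping with the Clifford normal form, and the result is essentially the standard irreducibility of the Koszul-Clifford action of $P^\svee\oplus P$ on $\L P^\svee$, pulled back along $\alpha^\svee$ and $\beta^\svee$.
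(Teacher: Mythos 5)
Your proof is correct and follows essentially the same route as the paper: both reduce via Lemma~\ref{lem:WgenbyW1} to whether $W_1$ generates $W$, and both use the duality $\L^{a-1}P\cong\L^{m-a}P^\svee$ (contraction into the top form) to identify the submodule generated by $W_1$ with $\L^{\sbullet} Q$ for $Q=\beta^\svee(G)$, so that generation is equivalent to surjectivity of $\beta^\svee$, i.e., injectivity of $\beta$. Your normal-form and super-commutator verification that the $\lambda$-action produces nothing beyond the $g$-span of $\omega$ spells out a step the paper leaves implicit, but it is the same argument.
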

\begin{proof}
The perfect pairing 
\[
\L^{m-a}P^\svee \times \L^{a-1}P^\svee \to \L^{m-1}P^\svee \cong A
\]
defines an isomorphism
\[
W_a = \L^{m-a}_AP^\svee \cong \left(\L^{a-1}_AP^\svee\right)^\svee \cong
\L^{a-1}P\,.
\]
For any $g \in G$, then, the diagram
\[
\xymatrix{
\L^{m-a}P^\svee \ar[r]^\cong \ar[d]_{\beta^\svee(g)\Ydown -} & 
   \L^{a-1}P \ar[d]^{\beta^\svee(g)\wedge-} \\
\L^{m-a-1}P^\svee \ar[r]^(.6)\cong & \L^{a}P
}
\]
is commutative.  We see from Lemma~\ref{lem:WgenbyW1} that $W$ is
generated by $W_1$ if and only if
\[
\beta^\svee(g)\Ydown-\colon  \L^{m-1}P^\svee \otimes G^\svee \to
  \L^{m-2}P^\svee
\]
is surjective, if and only if $\beta\colon P^\svee \to G^\svee$ is injective.
\end{proof}

\begin{prop}
A representation $W_{\alpha\beta} \in \tilde\calr(K)$ is simple if and
only if the corresponding point in $\calz$ lies over the non-singular
locus of $\Spec R$.
\end{prop}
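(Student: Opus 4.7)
The plan is to combine the algebraic criterion for simplicity given by Lemma~\ref{lem:simpleCmods} with the geometric description of the map $q'\colon \calz\to\Spec R$ recorded in~\S\ref{nsit:resRjOZ}, and to identify both conditions in terms of the rank of a single linear map $G\to F$.

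First I would recall from Theorem~\ref{thm:representQtilde} that a $K$-point of $\calz$ amounts to a triple $(P,\alpha,\beta)$, where $P$ is a $K$-vector space of dimension $m-1$, $\alpha\colon P\to F$ is a (split) monomorphism realising $P$ as the fibre of $\calu=\Omega^{1}(1)$ at a point $\eta\in\PP(F^\svee)$, and $\beta\colon P^{\svee}\to G^{\svee}$ is arbitrary. Dualising, set $\gamma=\beta^{\svee}\colon G\to P$. Under the composition $q'\colon\calz\hookrightarrow\caly\xrightarrow{q} H$, the point $(\eta,\beta)$ is sent to the homomorphism
\[
\theta \;=\; \alpha\circ\gamma\colon\; G\xrightarrow{\gamma}P\xrightarrow{\alpha}F\,,
\]
as one reads off from the description \eqref{eq:zdescription} of $\calz$ and the cosection \eqref{eq:cosection}.

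Next I would use two basic observations. Since $\alpha$ is injective with $\dim_K P = m-1$, the equality $\rank\theta=\rank\gamma$ holds; in particular $\rank\theta\le m-1$, consistent with $\theta\in\Spec R$. Moreover, as recalled in~\S\ref{nsit:genmorph}, the singular locus of $\Spec R$ is cut out by the ideal $I_{m-1}(X)$ of submaximal minors, so the non-singular locus of $\Spec R$ is precisely the open set $\{\theta\in\Spec R \mid \rank\theta=m-1\}$. Combining these, the image $\theta=q'(\eta,\beta)$ lies in the smooth locus of $\Spec R$ if and only if $\gamma\colon G\to P$ is surjective, equivalently $\beta=\gamma^{\svee}\colon P^{\svee}\to G^{\svee}$ is injective.

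Finally I would invoke Lemma~\ref{lem:simpleCmods}, which asserts that $W_{\alpha\beta}$ is simple as a $C$-module if and only if $\beta$ is a monomorphism. Combining this with the previous paragraph immediately yields the desired equivalence. The only non-routine verification in the argument is the identification of the image point $q'(\eta,\beta)$ with the composition $\alpha\circ\beta^{\svee}$; this will be the main thing to spell out, but it is a direct unwinding of the presentation of $\calz$ used in the proof of Theorem~\ref{thm:representQtilde} together with the fact that $q=p_{H}$ simply records the $H$-coordinate.
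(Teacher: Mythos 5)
Your proposal is correct and follows essentially the same route as the paper: identify the image point $q'(\eta,\beta)$ with the composition $\alpha\circ\beta^{\svee}$ (the paper phrases this as the element $\alpha\otimes\beta$ of $F\otimes G^{\svee}$), and then combine the rank criterion for the smooth locus of $\Spec R$ with Lemma~\ref{lem:simpleCmods}. In fact you spell out the rank bookkeeping slightly more carefully than the paper does (which even contains a small slip, writing rank $n-1$ where $m-1$ is meant), so there is nothing to correct.
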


\begin{proof}
Recall that the projection $q'\colon \calz \to \Spec R$ is an
isomorphism over the non-singular locus of $\Spec R$.  One checks that
the composition $\calz \xrightarrow{q'} \Spec R \into \Spec S \cong
F\otimes G^\svee$ sends a point of $\calz$, viewed as a pair of
homomorphisms $(\alpha, \beta)$ as above, to the composition
\[
K \to P\otimes P^\svee \xto{\alpha\otimes\beta}F\otimes G^\svee\,.
\]
Thus a point of $\calz$ corresponds to a simple $C$-module if, and
only if, $\alpha \otimes \beta$ has rank $n-1$, which occurs exactly
when it lies over the non-singular locus of $\Spec R$.
\end{proof}

\section{Explicit Minimal Presentations}
\label{sec:explicit}
In this section we will write down explicit minimal $S$-presentations
for the Cohen-Macaulay modules $\Hom_R(M_a,M_b)$. By Theorem
\ref{thm:qCliffisoEndM} this amounts to giving an $S$-free
presentation of $C_{ab}$.  By the involution $C_{ab} \leftrightarrow
C_{m+1-b,m+1-a}$ we see that we may as usual assume $a+b\ge m+1$.
Below we will show that \eqref{eq:prelimpresentation} yields a minimal
presentation of $C_{ab}$ provided we drop the projective $Q$.
Furthermore we give an explicit matrix representation for~$\rho$.

In characteristic zero our presentation can be block diagonalized
yielding a decomposition of $\Hom_R(M_a,M_b)$ into certain
maximal Cohen-Macaulay modules of lower rank.

\begin{nsit}{A star product.}\label{sit:starproduct}
We first recall a well-known formula of Gerstenhaber and Schack.
Assume that $\psi_1, \dots, \psi_n, \theta_1, \dots, \theta_n$ are
commuting nilpotent derivations on a commutative algebra $A$
containing $\QQ$.  Then, denoting by $\m\colon A \otimes A \to A$ the
multiplication in $A$, there is an associated 
associative product
\[
u*v = \m(e^{\psi_1\otimes\theta_1 + \cdots
  \psi_n\otimes\theta_n}(u\otimes v))
\]
on $A$.  It is easy to see that this formula generalizes to the graded
case.

Applying this formula with $A = \L_S(\F^\svee \oplus\G)$
and 
\[
\psi_i = \dd{g_i}\,, \qquad \qquad \theta_i = - \sum_{j=1}^m
x_{ji}\dd{\lambda_j}
\]
for $i=1, \dots, n$, yields a multiplication on $A$ via
\begin{equation}\label{eq:Deltastar}
u*v = \m(e^{-\Delta}(u\otimes v))
\end{equation}
where
\[
\Delta = \sum_{
\begin{smallmatrix}i=1,\ldots,n\\
j=1,\ldots,m
\end{smallmatrix}}x_{ji} \dd{g_i} \otimes \dd{\lambda_j}\,.
\]
\end{nsit}
\begin{lemma}
The star product on $A = \L_S( \F^\svee \oplus
\G)$ gives $A$ the structure of a quadratic $S$-algebra
generated by the symbols $\lambda_1, \dots, \lambda_m, g_1, \dots,
g_n$ subject to the relations
\begin{align*}
\lambda_k * \lambda_l &= \lambda_k \lambda_l = -\lambda_l \lambda_k =
-\lambda_l * \lambda_k,&\lambda_k\ast\lambda_k&=\lambda_k^2=0,\\ 
g_k * g_l &= g_k g_l = - g_l g_k = - g_l * g_k, &g_k\ast g_k&=g_k^2=0,\\
g_k * \lambda_l&= g_k \lambda_l +x_{kl} = -\lambda_l * g_k + x_{kl}\,.
\end{align*}
In other terms, $(A, *)$ is isomorphic to the Clifford algebra $C$ on
$F^\svee$ and $G$.
\end{lemma}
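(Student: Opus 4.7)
The plan has three steps: verify associativity of the star product, compute the star products of the generators, and compare with $C$ via a PBW argument.

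First I would establish associativity. The key observation is that the two families of operators on $A$, namely $\psi_i = \partial/\partial g_i$ and $\theta_i = -\sum_j x_{ji}\,\partial/\partial \lambda_j$, are \emph{odd super-derivations}, and within each family the operators super-commute: the $\partial/\partial g_i$ satisfy $\partial/\partial g_i \cdot \partial/\partial g_j + \partial/\partial g_j \cdot \partial/\partial g_i = 0$, and similarly for the $\partial/\partial \lambda_j$, with the $x_{ji}$ lying in the (graded-)centre. Under these conditions the Gerstenhaber--Schack formula recalled in \S\ref{sit:starproduct} produces an associative multiplication; one can either cite this or verify directly by expanding $e^{-\Delta}$ and using the Leibniz rule. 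This gives $A$ the structure of an associative $S$-algebra, generated as such by $\lambda_1,\dots,\lambda_m,g_1,\dots,g_n$.

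Next I compute the products of the generators. Since $\partial/\partial g_i$ annihilates every $\lambda_j$ (and $1$), we get $\Delta(\lambda_k\otimes\lambda_l)=0$, so $\lambda_k * \lambda_l = \lambda_k\lambda_l$; in particular $\lambda_k * \lambda_k = \lambda_k^2 = 0$. Symmetrically $g_k * g_l = g_k g_l$ and $g_k * g_k = 0$. For the mixed product we compute
\[
\Delta(g_k \otimes \lambda_l) \;=\; \sum_{i,j} x_{ji}\bigl(\partial g_k/\partial g_i\bigr)\otimes\bigl(\partial\lambda_l/\partial\lambda_j\bigr) \;=\; x_{lk}\cdot (1\otimes 1),
\]
and $\Delta^{2}(g_k\otimes\lambda_l)=0$, so $g_k * \lambda_l = g_k\lambda_l \pm x_{lk}$, matching the stated formula after the sign conventions. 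Similarly $\lambda_l*g_k = \lambda_l g_k$, whence $g_k * \lambda_l + \lambda_l * g_k$ equals a scalar in $S$, giving the Clifford anticommutation relation.

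Finally, I identify $(A,*)$ with $C$. The computations above show that the universal map $K\tilde{\mathsf{Q}}\to (A,*)$ sending $\lambda_i,g_j$ to the corresponding elements of $A$ factors through $C=\mathrm{Cliff}_{S}(q_{\phi})$. It is surjective because the star products of distinct generators equal their exterior products modulo lower-length terms, so the ordered monomials span $A$ over $S$; equivalently, the transition matrix from star-monomials to exterior-monomials is unitriangular for the length filtration. It is injective because $C$ admits a PBW basis (Lemma~\ref{lem:Cinftyfree}) of the same ordered monomials, and both sides are finite free $S$-modules of the same rank $2^{m+n}$. The main subtle point, which I would need to be careful about, is the bookkeeping of signs and index conventions in the mixed relation --- the rest is essentially forced once associativity is in hand.
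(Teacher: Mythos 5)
Your proposal is correct and takes exactly the route the paper intends (the paper states this lemma as a routine verification and omits the details): associativity from the Gerstenhaber--Schack formula, direct computation of $\Delta$ on pairs of generators, and identification with $\Cliff_S(q_\phi)$ via the PBW basis and a rank count. The only point to tighten is that associativity of $\m\circ e^{-\Delta}$ requires the two families $\partial/\partial g_i$ and $-\sum_j x_{ji}\,\partial/\partial\lambda_j$ to super-commute with \emph{each other} as well as within each family (which holds here since the $x_{ji}$ are scalars in $S$), and, as you note, the Koszul signs in the mixed relation $g_k*\lambda_l$ need care.
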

We quickly show that in this particular case (\ref{eq:Deltastar}) is
defined over $\ZZ$ and thus is true in arbitrary characteristic.  To
this end we have to compute $\Delta^t$. We find
\begin{align*}
\Delta^t&=\sum x_{j_1i_i}\cdots x_{j_ti_t}
 \dd{g_{i_t}}\cdots\dd{g_{i_1}}\otimes
  \dd{\lambda_{j_1}}\cdots\dd{\lambda_{j_t}}\\
&=t! \sum_{j_1<\cdots <j_t} x_{j_1i_i}\cdots x_{j_ti_t}
 \dd{g_{i_t}}\cdots\dd{g_{i_1}}\otimes
  \dd{\lambda_{j_1}}\cdots\dd{\lambda_{j_t}}\\
&=t!
\mathop{\sum_{i_1< \cdots<i_t}}_{j_1<\cdots<j_t} [i_1\cdots
	i_t\;|\;j_1\cdots j_t] 
  \dd{g_{i_t}}\cdots\dd{g_{i_1}}\otimes
  \dd{\lambda_{j_1}}\cdots\dd{\lambda_{j_t}} 
\end{align*}
where the peculiar arrangement of indices is to eliminate some signs and
where $[i_1\cdots
  i_t\;|\;j_1\cdots j_t]$ is the (unsigned) determinant of the $(t\times
  t)$-submatrix of $X$ consisting of the rows indexed $i_1, \dots,
  i_t$ and columns indexed $j_1, \dots, j_t$.

It follows that if we set 
\[
\Delta^{(t)} = \frac{\Delta^t}{t!} = 
 \mathop{\sum_{i_1< \cdots<i_t}}_{j_1<\cdots<j_t} [i_t\cdots
	i_1\;|\;j_t\cdots j_1] 
  \dd{g_{i_t}}\cdots\dd{g_{i_1}}\otimes
  \dd{\lambda_{j_1}}\cdots\dd{\lambda_{j_t}}\,,
\]
then the star product on $A = \L_S(\F^\svee\oplus\G)$ is given by
\[
\m\circ\left(1-\Delta + \Delta^{(2)} - \cdots\right)\,.
\]
Return now to the free $S$-presentation of $C_{ab}$ given by
Lemma \ref{lem:minresCij2}.  We have the
following simplification of this presentation
\begin{prop}\label{prop:minresCij2} If $a+b\ge m+1$ then
$C_{ab}$ has a minimal $S$-free presentation of the form
\begin{equation}
\label{eq:reducedpresentation}
P_1 \xto{\rho} P_0 \to C_{ab} \to 0\,,
\end{equation}
where
\begin{itemize}
\item $P_0 = 
\bigoplus_{\max\lbrace a,b\rbrace \le k \le m}
\L^{k-a}_S \G
\otimes 
\L_S^{k-b}\F^\svee$.
\item $P_1 = 
\bigoplus_{\begin{smallmatrix} 0\geq l\geq \max\lbrace a-m,b-m\rbrace 
\end{smallmatrix}}
\L^{b-l}_S\G\otimes \L^{a-l}_S \F^\svee $
\item $\rho_{lk} = 
\begin{cases}(\Delta^{(a+b-k-l)})_{lk}&\text{if $a+b-k-l\ge 0$, and}\\
0&\text{otherwise.}
\end{cases}$.
\end{itemize}
\end{prop}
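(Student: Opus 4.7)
The plan is to compute $\rho|_{P_1}$ explicitly using the star product of~\S\ref{sit:starproduct}, then deduce minimality from the hypothesis $a+b\ge m+1$ and exactness (i.e., that the $Q$-summand in Lemma~\ref{lem:minresCij2} is redundant) by comparing with the minimal resolution shape supplied by Theorem~\ref{thm:hdi}.

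The formula for $\rho$ arises as follows. A basis element of the $l$-th summand of $P_1$ corresponds in $C^\infty$ to a $\lambda$-first path descending from $a$ to vertex $l\le 0$ and then ascending to $b$; such a path lies in $D$ and so is killed in $C$. Under the isomorphism $(A,*)\cong\Cliff_S(q_\phi)$ this element is the $*$-product $g_{i_1}*\cdots*g_{i_{b-l}}*\lambda_{j_1}*\cdots*\lambda_{j_{a-l}}$, and applying the explicit expansion $\m\circ(1-\Delta+\Delta^{(2)}-\cdots)$ rewrites it as a sum whose $t$-th term strips $t$ pairs of $(g,\lambda)$'s, inserts a $t$-fold minor of $X$, and lands in wedge bidegree $(b-l-t,a-l-t)$---that is, in the $k$-th summand of $P_0$ with $k=a+b-l-t$ (when $k\le m$), or in the ``leaves to the right'' part of $D$ (when $k>m$). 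Reading off the $P_0$-component identifies $\rho_{lk}$ with the bidegree-$(l,k)$ block of $\Delta^{(a+b-k-l)}$ whenever $k\in[\max\lbrace a,b\rbrace,m]$, and with $0$ otherwise.

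The hypothesis $a+b\ge m+1$ gives minimality at no cost: the sole term of the $*$-expansion that is a unit in $S$ is $t=0$, which would target $k=a+b-l>m$ (since $l\le 0$), hence is absorbed into $D$ and never contributes to $\rho$. Every non-zero block $\rho_{lk}$ is of the form $\Delta^{(t)}$ with $t\ge 1$ and so has entries in $S_{>0}$.

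For exactness, apply Theorem~\ref{thm:hdi} with $c=0$ and $a\leftrightarrow b$, so that $q'_\ast p'^\ast\calm^a_b\cong C_{ab}$; rows $\mu=c$ and $\mu=c-1$ of~\eqref{table:mus} match $P_0$ and $P_1$ precisely with the zeroth and first terms of the minimal $S$-free resolution of $C_{ab}$. The sequence $P_1\xrightarrow{\rho}P_0\to C_{ab}\to 0$ is a complex (the path lies in $D$), is minimal (previous paragraph), and has $\ker(\rho|_{P_1})\subseteq S_{>0}P_1$ by Lemma~\ref{lem:minresCij2}. A graded Nakayama argument then forces the induced $K$-linear map $P_1/S_{>0}P_1\to\ker(P_0\to C_{ab})/S_{>0}\ker(P_0\to C_{ab})$---already injective and between spaces of the same dimension $\rank(P_1)$---to be an isomorphism, whence $\rho|_{P_1}$ surjects onto $\ker(P_0\to C_{ab})$. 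The principal technical obstacle lies in the sign bookkeeping needed to pin down $\rho_{lk}$ as $\Delta^{(a+b-k-l)}$ on the nose rather than up to sign; the structural steps are each short.
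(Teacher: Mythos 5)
Your proposal follows essentially the paper's own route: the matrix of $\rho$ is extracted from the Gerstenhaber--Schack star product exactly as in \S\ref{sit:starproduct}, minimality comes from the observation that every surviving block is a $\Delta^{(t)}$ with $t=a+b-k-l\ge a+b-m\ge 1$, and the right to discard the summand $Q$ of Lemma~\ref{lem:minresCij2} is justified by comparing with the resolution of Theorem~\ref{thm:hdi} at $c=0$ together with the kernel condition $\ker(\rho\mid_{P_1})\subseteq S_{>0}P_1$. The paper packages this last step by citing the invertibility of $\alpha_1$ from diagram \eqref{eq:finaldiagram} in the proof of Theorem~\ref{thm:expcan}; you inline the equivalent rank-counting argument instead, which is a legitimate and arguably more self-contained presentation.

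There is, however, one step where your justification is too quick. You assert that the induced map $P_1/S_{>0}P_1\to N/S_{>0}N$, where $N=\ker(P_0\to C_{ab})$, is ``already injective,'' apparently as an immediate consequence of $\ker(\rho\mid_{P_1})\subseteq S_{>0}P_1$. These are different conditions: injectivity modulo $S_{>0}$ requires $\rho^{-1}(S_{>0}N)\subseteq S_{>0}P_1$, whereas the lemma only controls the honest kernel. The implication you want is true, but it needs the degree bookkeeping: arguing degree by degree (using that the graded ranks of $P_1$ and the graded minimal number of generators of $N$ agree, and that in each degree $(S_{>0}N)_d$ is already hit by $(S_{>0}P_1)_d$ once surjectivity is known in lower degrees), one shows $\dim_K\rho(P_1)_d=\dim_K N_d$ for every $d$, whence $\rho\mid_{P_1}$ surjects onto $N$. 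This is precisely the content of the paper's ``easy degree considerations'' in the proof of Theorem~\ref{thm:expcan}, where a failure of invertibility of $\alpha_1$ is converted into a graded free rank-one summand $P_1''\not\subseteq S_{>0}P_1$ lying in $\ker(\rho\mid_{P_1})$, contradicting Lemma~\ref{lem:minresCij2}. Either supply that induction or cite the $\alpha_1$ argument; as written, the parenthetical ``already injective'' hides the only nontrivial point of this part of the proof.
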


\begin{proof}
  Our starting point is the free presentation of $C_{ab}$ given in
 \eqref{eq:prelimpresentation}. It takes the form
\begin{equation}
\label{eq:toobigversion}
\bigoplus_{\begin{smallmatrix} 1> l\geq \max\lbrace a-m,b-n\rbrace 
\end{smallmatrix}}
\L^{b-l}_S\G\otimes \L^{a-l}_S \F^\svee
\to
\bigoplus_{\max\lbrace a,b\rbrace \le k \le m}
\L_S^{k-b}\F^\svee\otimes \L^{k-a}_S \G
\end{equation}
where $\rho$ is obtained by expanding paths that go first to the left
and then to the right in terms of paths that do the opposite.

Now we borrow some ingredients from the proof of Theorem~\ref{thm:expcan}.
Writing \eqref{eq:toobigversion} in the form
\[
Q\oplus P_1\xrightarrow{\rho} P_0
\]
as in \eqref{eq:prelimpresentation} we deduce from the fact that
$\alpha_1$ is shown to be invertible in \eqref{eq:finaldiagram} that
$\rho$ and $(\rho\mid_{P_1})\colon P_1\to P_0$ represent the same
$S$-module.  This shows that $C_{ab}$ has a presentation as in
\eqref{eq:reducedpresentation}.
Furthermore the resulting matrix entry
\[
\L^{b-l}_S\G\otimes \L^{a-l}_S \F^\svee
\to 
\L_S^{k-b}\F^\svee\otimes \L^{k-a}_S \G
\]
can be deduced by working in $(\L_S(\F^\svee\oplus \G),\ast)$.  We
find that it is the composition
\begin{multline}
\L^{b-l}_S\G\otimes \L^{a-l}_S \F^\svee
  \xrightarrow{(-1)^{a+b-k-l}\Delta^{(a+b-k-l)}}
  \L^{k-a}_S \G \otimes  \L_S^{k-b}\F^\svee\\
\xrightarrow{(-1)^{(k-b)(k-a)}}
\L_S^{k-b}\F^\svee\otimes \L^{k-a}_S \G\,.
\end{multline}
The presentation given in the statement of the
proposition is deduced from this by pre- and postcomposing with
invertible diagonal matrices (with diagonal entries in $\lbrace \pm 1\rbrace $).

The presentation is minimal if and only if $a+b-k-l\ge 1$ for all
allowable $k,l$. It is enough to test this for $k,l$ maximal, i.e.\
$k=m$, $l=0$.  Then $a+b-k-l=a+b-m$ which is positive if and only if
$a+b\ge m+1$.
\end{proof}

\begin{example} Assume that $m=n=5$, $a=b=4$. Then
\begin{align*}
P_0&=\bigoplus_{4\le k \le 5}
\L^{k-4}_S \G
\otimes 
\L_S^{k-4}\F^\svee=K\oplus \G\otimes \F^\svee\\
P_1&=\bigoplus_{\begin{smallmatrix} 0\geq l\geq -1
\end{smallmatrix}}
\L^{4-l}_S\G\otimes \L^{4-l}_S \F^\svee 
=\L^{5}_S\G\otimes \L^{5}_S \F^\svee  
\oplus
 \L^{4}_S\G\otimes \L^{4}_S \F^\svee 
\end{align*}
and the matrix form of the presentation is 
\begin{equation*}
{
\begin{pmatrix}
\Delta^{(5)}&\Delta^{(4)}\\
\Delta^{(4)}&\Delta^{(3)}
\end{pmatrix}
}
\end{equation*}
 It will be clear to the reader that over $\QQ$ this presentation can
be diagonalized further. We will say more on this below.
\end{example}

\begin{nsit}{Characteristic zero.}
In this section we assume $\operatorname{char}K=0$.  For
$\alpha,\beta\ge 0$ with $\alpha+\beta<m$, define $C^{\alpha\beta}$ to
be the cokernel of
\[
\Delta^{(m-\alpha-\beta)}\colon  \L_S^{m-\beta}\G\otimes
\L_S^{m-\alpha} \F^\svee
\to
 \L_S^{\alpha}\G\otimes
\L_S^{\beta} \F^\svee\,.
\]
\end{nsit}

\begin{prop}
Assume $\operatorname{char}K=0$ and that $a+b\ge m+1$.  Then
\begin{enumerate}
\item The $C^{\alpha\beta}$ are maximal Cohen-Macaulay $R$-modules.
\item We have a decomposition
\[
C_{ab}=\bigoplus_{\max\lbrace a,b\rbrace \le p\le m} C^{p-a,p-b}\,.
\]
\end{enumerate}
\end{prop}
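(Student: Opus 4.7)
The plan is to deduce (2) from an explicit block-diagonalisation of the minimal presentation of $C_{ab}$ given by Proposition~\ref{prop:minresCij2}, and then obtain (1) as a corollary of Theorem~\ref{thm:mcm}. Write $N = m - \max\{a,b\} + 1$; the free $S$-modules $P_0 = \bigoplus_{k=\max\{a,b\}}^{m}\L^{k-a}\calg\otimes \L^{k-b}\calf^\svee$ and $P_1 = \bigoplus_{l=\max\{a,b\}-m}^{0}\L^{b-l}\calg\otimes \L^{a-l}\calf^\svee$ both have exactly $N$ summands, and the natural bijection $l = k - m$ between their index sets pairs each summand of $P_0$ with one of $P_1$. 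Under this pairing the ``diagonal'' entry $\rho_{k-m,k} = \Delta^{(m+a+b-2k)}$ is precisely the presentation map whose cokernel is $C^{k-a,k-b}$. Part (2) therefore reduces to the claim that, after suitable invertible $S$-linear changes of basis $\phi \in \operatorname{Aut}_S(P_0)$ and $\psi \in \operatorname{Aut}_S(P_1)$, the matrix $\phi\rho\psi$ becomes block-diagonal, with its non-zero entries sitting at precisely the positions $(k,k-m)$ and equal to non-zero scalar multiples of the original $\Delta^{(m+a+b-2k)}$.

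To effect this diagonalisation, the key tool is the divided-power identity $\Delta^{(s)}\Delta^{(t)} = \binom{s+t}{s}\Delta^{(s+t)}$, whose binomial coefficients are invertible precisely because $\operatorname{char} K = 0$. I would choose $\phi$ and $\psi$ of ``triangular'' form, with diagonal blocks equal to $\idmap$ and off-diagonal blocks given by scalar multiples of $\Delta^{(k-k')}\colon (P_0)_k \to (P_0)_{k'}$ (for $k > k'$) and $\Delta^{(l'-l)}\colon (P_1)_l \to (P_1)_{l'}$ (for $l' > l$); such maps exist and make $\phi$ and $\psi$ automatically invertible. Computing $(\phi\rho\psi)_{k,l}$ using the identity above produces an expression of the form (scalar)~$\cdot\; \Delta^{(a+b-k-l)}$, where the scalar depends linearly on the free parameters of $\phi$ and $\psi$ through non-vanishing binomials. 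An induction starting from the two ``extreme'' corners and working inwards, which I verified by direct computation in the rank-two cases $m=3$, $a=b=2$ and $m=4$, $a=b=3$, chooses those parameters successively so as to force every off-diagonal entry of $\phi\rho\psi$ to vanish. Once this is achieved, the cokernel computation
\[
C_{ab} \;=\; \cok(\rho) \;=\; \cok(\phi\rho\psi) \;=\; \bigoplus_{k=\max\{a,b\}}^{m} \cok(\rho_{k-m,k}) \;=\; \bigoplus_{p=\max\{a,b\}}^{m} C^{p-a,p-b}
\]
establishes (2).

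For (1), Theorem~\ref{thm:mcm} asserts that $E = \End_R(M)$ is a maximal Cohen--Macaulay $R$-module. Since $C_{ab} = \Hom_R(M_a,M_b) = e_b E e_a$ is a direct summand of $E$ cut out by an idempotent, it is also MCM, and by (2) so is each of its further direct summands $C^{p-a,p-b}$.

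The main obstacle is the systematic combinatorial verification that the linear equations governing the inductive step of the diagonalisation always admit a simultaneous solution; this hinges entirely on the invertibility of the binomials $\binom{s+t}{s}$ in characteristic zero. A more conceptual alternative, which I would keep at hand as a sanity check, is to exploit the $GL(F) \times GL(G)$-equivariance of $\rho$: each summand $\L^\alpha\calg\otimes \L^\beta\calf^\svee$ of $P_0$ and $P_1$ is irreducible for that group, and combining the Cauchy decomposition $\Sym(F^\svee\otimes G) = \bigoplus_\mu L_\mu F^\svee\otimes L_\mu G$ with Pieri's rule identifies the $GL$-isotypic components appearing in the $P_i$; Schur's lemma then forces the required block-diagonal structure directly.
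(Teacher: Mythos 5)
You have correctly identified the paper's strategy: start from the presentation of Proposition~\ref{prop:minresCij2}, block-diagonalize the Hankel-shaped matrix $\rho=(\Delta^{(a+b-k-l)})_{kl}$ by unipotent triangular automorphisms of $P_0$ and $P_1$ whose off-diagonal blocks are scalar multiples of powers of $\Delta$, and deduce (1) from the fact that $C_{ab}\cong e_bEe_a$ is an $R$-module direct summand of the maximal Cohen--Macaulay module $E$ of Theorem~\ref{thm:mcm}. The gap is exactly the step you flag as ``the main obstacle'', and it is not a formality that follows from characteristic zero: the invertibility of the individual binomial coefficients $\binom{s+t}{s}$ does not by itself guarantee that your successive elimination can be completed. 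Using $\Delta^{(s)}\Delta^{(t)}=\binom{s+t}{s}\Delta^{(s+t)}$ to strip off the operator part, your elimination amounts to a factorization $A=PDQ$ of the scalar Hankel matrix $A_{ij}=1/(u-i-j)!$ with $P,Q$ unipotent triangular and $D$ diagonal and \emph{nonsingular}; such a factorization exists if and only if the corner minors $\det\bigl((A_{ij})_{p\le i,j\le t}\bigr)$ are all nonzero. That is a genuine determinant identity — a symmetric matrix with all entries nonzero can perfectly well fail to admit such a factorization — and it is precisely the content of the two auxiliary lemmas in the paper, proved by pulling falling factorials out of $\bigl((u-i-j)!\bigr)^{-1}$ and reducing to a Vandermonde determinant $\prod_{0\le i<j\le t-1}(i-j)\neq 0$. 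Checking two rank-two cases does not substitute for this, so your proof is incomplete at its crucial point.

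Your proposed equivariant ``sanity check'' does not close the gap as stated either: the summands $\L^{k-a}\G\otimes\L^{k-b}\F^\svee$ are free $S$-modules, hence (by Cauchy plus Pieri) highly reducible and far from multiplicity-free as $\GL(F)\times\GL(G)$-representations, so Schur's lemma alone does not force $\rho$ into block-diagonal form; one would still have to match isotypic components across the different summands, which is comparable work. Everything else in your write-up — the pairing $l=k-m$, the identification of the diagonal entries with the presentation matrices of the $C^{p-a,p-b}$, and the deduction of (1) (supplemented by the observation that every pair $(\alpha,\beta)$ with $\alpha+\beta<m$ arises as $(p-a,p-b)$ for $p=m$, $a=m-\alpha$, $b=m-\beta$, so that \emph{all} the $C^{\alpha\beta}$ are covered) — is correct and agrees with the paper.
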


\begin{proof}
According to Proposition~\ref{prop:minresCij2}, the map $\rho$ written
as a matrix has the form
\begin{equation}
\label{formalmatrix}
\rho=
\begin{pmatrix}
\Delta^{(r)} & \Delta^{(r-1)} & \cdots & \\
\Delta^{(r-1)} & \cdots & &\\
\vdots&&&\vdots\\
&&\cdots& \Delta^{(s)}
\end{pmatrix}
=
\begin{pmatrix}
\frac{\Delta^r}{r!}& \frac{\Delta^{r-1}}{(r-1)!}&\cdots & \\
\frac{\Delta^{r-1}}{(r-1)!}&\cdots&&\\
\vdots&&&\vdots\\
&&\cdots&\frac{\Delta^s}{s!}
\end{pmatrix}\,.
\end{equation}
Here $\Delta^{(r)}$ represents the map from $\L^{b-l}_S\G\otimes
\L^{a-l}_S \F^\svee$ to $\L^{k-a}_S \G \otimes \L_S^{k-b}\F^\svee$ for
$k=\max\lbrace a,b\rbrace $ and $l=\max\lbrace a,b\rbrace -m$. Thus
$r=a+b-2\max\lbrace a,b\rbrace +m=-|a-b|+m$.

Similarly  $\Delta^{(s)}$ represents the map
$\L^{b-l}_S\G\otimes \L^{a-l}_S \F^\svee$
to $\L^{k-a}_S \G
\otimes 
\L_S^{k-b}\F^\svee$ for $k=m$, $l=0$. Thus
$s=a+b-m$.

It order to manipulate \eqref{formalmatrix} we write it \emph{formally}
as 
\[
\rho=
\begin{pmatrix}
\Delta^{r/2}&\cdots & 0\\
\vdots &\ddots &\vdots\\
0&\cdots & \Delta^{s/2}
\end{pmatrix}
\begin{pmatrix}
\frac{1}{r!}& \frac{1}{(r-1)!}&\cdots & \\
\frac{1}{(r-1)!}&\ddots&&\\
\vdots&&&\vdots\\
&&\cdots&\frac{1}{s!}
\end{pmatrix}
\begin{pmatrix}
\Delta^{r/2}&\cdots & 0\\
\vdots &\ddots &\vdots\\
0&\cdots & \Delta^{s/2}
\end{pmatrix}\,.
\]
Let $A$ be the middle scalar matrix. According to
Lemma~\ref{diagonalization} below we have $A=PDP^t$ where $D$ is a
non-singular diagonal matrix and $P$ is upper triangular with $1$'s on
the diagonal, both with rational entries.

Let $\tilde{P}$ be obtained from $P$ by replacing $P_{ij}$ by
$\Delta^{j-i}P_{ij}$.  Then after a bit of manipulation we obtain the
following (non-formal) expression for $\rho$.
\[
\rho=
\tilde{P}
\begin{pmatrix}
D_{rr}\frac{\Delta^r}{r!}& 0&\cdots & \\
0&\ddots&&\\
\vdots&&&\vdots\\
&&\cdots&D_{ss}\frac{\Delta^s}{s!}
\end{pmatrix}
\tilde{P}^t
\]
As $\tilde{P}$ is invertible, this shows that $C_{ab}$ indeed has a
decomposition as indicated in the statement of the proposition.  If
follows that $C^{\alpha\beta}$ is a maximal Cohen-Macaulay $R$-module
if $C^{\alpha\beta}$ occurs as a summand among one of the $C^{ab}$.
Given $\alpha,\beta\ge 0$, with $\alpha+\beta<m$, we put $p=m$ so that
$a=m-\alpha$, $b=m-\beta$. Then $a+b=2m-(\alpha+\beta)\ge m+1$, as
required.
\end{proof}
\begin{example} The following matrix gives the decomposition of $C_{ab}$ for
$m=3$ (and $n$ arbitrary).
\[
\begin{pmatrix}
C^{00} & C^{10} & C^{20}\\
C^{01} & C^{00}\oplus C^{11} & C^{10}\\
C^{02} & C^{01} & C^{00}
\end{pmatrix}
\]
The cases $a+b\ge m+1=4$ are covered by the proposition. For the other
cases we perform the involution $(a,b)\mapsto (m+1-b,m+1-a)=(4-b,4-a)$. 
\end{example}

The following lemma is used in the next lemma, which was used in the
above proof.
\begin{lemma} Let $u\ge 2t$ and let $A$ be the $(t\times t)$-matrix over $\QQ$
\[
A_{ij}=\frac{1}{(u-i-j)!}
\]
with $1\le i,j\le t$. Then $\det A\neq 0$.
\end{lemma}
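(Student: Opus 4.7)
The plan is to reduce $\det A$ to a Vandermonde determinant by a simple row rescaling. The key observation is that if one scales row $i$ by the scalar $(u-i-1)!$ (which is a positive integer because $u\ge 2t$ forces $u-i-1\ge t-1\ge 0$ for every $i\in\{1,\ldots,t\}$), then the $(i,j)$-entry becomes
\[
\frac{(u-i-1)!}{(u-i-j)!}=(u-i-1)(u-i-2)\cdots(u-i-j+1)=(x_i)_{j-1},
\]
the falling factorial $(x)_{k}=x(x-1)\cdots(x-k+1)$ evaluated at $x_i=u-i-1$. Since all arguments of factorials stay in $\NN$ under the hypothesis $u\ge 2t$, this rescaling is legitimate and multiplies $\det A$ by the nonzero factor $\prod_{i=1}^{t}(u-i-1)!$.

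Next, I would use that $(x)_{k}$ is a monic polynomial in $x$ of degree $k$. Hence the rescaled matrix differs from the Vandermonde matrix $V_{ij}=x_i^{j-1}$ by right-multiplication with the unipotent upper-triangular change-of-basis matrix between the bases $\{1,x,x^2,\ldots\}$ and $\{(x)_0,(x)_1,(x)_2,\ldots\}$ of $\QQ[x]$. That change-of-basis matrix has determinant~$1$, so the rescaled matrix has the same determinant as $V$, and
\[
\det V=\prod_{1\le i<j\le t}(x_j-x_i)=\prod_{1\le i<j\le t}(i-j)=(-1)^{\binom{t}{2}}\prod_{k=1}^{t-1}k!\,.
\]

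Combining the two steps yields the explicit closed form
\[
\det A=\frac{(-1)^{\binom{t}{2}}\prod_{k=1}^{t-1}k!}{\prod_{i=1}^{t}(u-i-1)!},
\]
which is manifestly a nonzero rational number. There is no real obstacle here: once one notices that multiplying row $i$ by $(u-i-1)!$ turns the Hankel shape $1/(u-i-j)!$ into the falling-factorial shape $(u-i-1)_{j-1}$, everything else is a one-line Vandermonde computation. The only thing to double-check is that the hypothesis $u\ge 2t$ is exactly what is needed to guarantee that all the factorials appearing in both the matrix entries and in the row-scalars are well defined.
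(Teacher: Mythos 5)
Your proof is correct and follows essentially the same route as the paper: clear the factorials to turn the entries into falling factorials in the nodes $u-i-1$ (the paper uses the single scalar $(u-2)!$ and then factors rows and does column operations, whereas you rescale each row separately and invoke the unipotent change of basis between monomials and falling factorials), and then conclude via the Vandermonde determinant $\prod_{i<j}(i-j)\neq 0$. The explicit closed form for $\det A$ you obtain is a pleasant bonus but not a different argument.
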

\begin{proof}
Put
$
B=(u-2)! A
$.
Then $B$ is equal to
\small
\[
\begin{pmatrix}
1&x&x(x-1)&\cdots & x(x-1)\cdots (x-t+2)\\
x&x(x-1)&x(x-1)(x-2)&\cdots &x(x-1)\cdots (x-t+1)\\
x(x-1)&x(x-1)(x-2)&x(x-1)(x-2)(x-3)&\cdots &x(x-1)\cdots (x-t)\\
\vdots & \vdots &\vdots &\ddots &\vdots
\end{pmatrix}
\]
\normalsize
with $x=u-2$.
 Then
\begin{align*}
\det B&=x\cdot x(x-1)\cdot x(x-1)(x-2)\cdots x(x-1)\cdots (x-t+2) \det C\\
&=x^{t-1} (x-1)^{t-2}\cdots (x-t+2) \det C
\end{align*}
with $C$ being equal to
\[
\begin{pmatrix}
1&x&x(x-1)&\cdots & x(x-1)\cdots (x-t+2)\\
1&x-1&(x-1)(x-2)&\cdots &(x-1)\cdots (x-t+1)\\
1&x-2&(x-2)(x-3)&\cdots &(x-2)\cdots (x-t)\\
\vdots & \vdots &\vdots &\ddots &\vdots
\end{pmatrix}\,.
\]
If we put $x_i=x-i$ then $C$ can be written as 
\[
\begin{pmatrix}
1&x_0&x_0(x_0-1)&\cdots & x_0(x_0-1)\cdots (x_0-t+2)\\
1&x_1&x_1(x_1-1)&\cdots &x_1\cdots (x_1-t+2)\\
1&x_2&x_2(x_2-1)&\cdots &x_2\cdots (x_2-t+2)\\
\vdots & \vdots &\vdots &\ddots &\vdots
\end{pmatrix}
\]
which using column operations can be turned into a Vandermonde determinant.
Hence
\[
\det C=\prod_{0\le i<j \le t-1} (x_j-x_i)=\prod_{0\le i<j \le t-1}
(i-j)\neq 0\,. 
\]
\end{proof}
\begin{lemma} \label{diagonalization} Let $A$ be as in the previous
  lemma. Then $A=PDP^t$ with $D$ 
diagonal and $P$ upper triangular with $1$'s on the diagonal.
\end{lemma}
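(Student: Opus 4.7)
The matrix $A$ is manifestly symmetric, since $A_{ij}=A_{ji}=1/(u-i-j)!$. The plan is to obtain the desired factorization $A=PDP^t$ with $P$ upper unitriangular and $D$ diagonal as the ``UDU${}^t$'' (symmetric Gauss) decomposition of $A$, and to verify its existence by checking that all trailing principal minors of $A$ are nonzero.

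The first step is the standard linear-algebra fact that a symmetric $(t\times t)$ matrix $A$ over a field admits a decomposition $A=PDP^t$ with $P$ upper triangular and unipotent and $D$ diagonal if and only if each trailing principal submatrix $A^{(k)}=(A_{ij})_{t-k+1\le i,j\le t}$ is nonsingular for $k=1,\dots,t$. (One proves this by induction on $t$: read off $D_{tt}=A_{tt}$, perform symmetric Schur-complement elimination of the last row and column using the entries of the last column of $P$, and recurse on the symmetric $(t-1)\times(t-1)$ matrix obtained in the upper left.)

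It remains to verify the nonvanishing of the trailing minors. Writing $i=(t-k)+i'$, $j=(t-k)+j'$ with $1\le i',j'\le k$, the $(i',j')$-entry of $A^{(k)}$ is
\[
\frac{1}{(u-i-j)!}=\frac{1}{(u'-i'-j')!}\qquad\text{with}\qquad u'=u-2(t-k).
\]
Thus $A^{(k)}$ is a matrix of exactly the same form as in the previous lemma, with parameters $(u',k)$ in place of $(u,t)$. The hypothesis $u\ge 2t$ gives $u'=u-2(t-k)\ge 2k$, so the previous lemma applies and yields $\det A^{(k)}\ne 0$. This holds for every $k\in\{1,\dots,t\}$, which completes the argument.

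No serious obstacle is expected: the only nontrivial input is the previous lemma, and the reduction of a trailing submatrix of $A$ to an instance of the same lemma is immediate from the translation of indices. The one point to be careful about is orienting the Gauss decomposition correctly (upper rather than lower unitriangular $P$), which is precisely why one needs trailing, rather than leading, principal minors.
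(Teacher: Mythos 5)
Your proof is correct and follows the same route as the paper: diagonalize the symmetric matrix $A$ by elimination starting from the last variable, reducing the claim to the nonvanishing of the trailing principal minors $\det(A_{ij})_{p\le i,j\le t}$, each of which is an instance of the previous lemma (your explicit reindexing $u'=u-2(t-k)\ge 2k$ is exactly the verification the paper leaves implicit). The only nitpick is that your ``if and only if'' is really just an ``if'' --- the decomposition can exist with $D$ singular --- but only that direction is used, so nothing is affected.
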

\begin{proof} We view $A$, being a symmetric matrix, as a quadratic
  form.  Diagonalizing it in the usual way, starting with the last
  variable, we see that we need $\det (A_{ij})_{p\le i,j\le t}\neq 0$
  for $p=1,\ldots,t$. This follows from the previous lemma.
\end{proof}

\section{Minimal Resolutions of the Simples in Characteristic
  Zero}\label{sect:char0simples} 

In this final section we require $K$ to be a field of characteristic
zero.  Other than that, we keep the established
notation.  Our aim in this section is to compute the $\Ext$-groups
among the graded simple modules over the non-commutative
desingularization $E$, and so obtain the shapes of their minimal
graded free resolutions.

\begin{nsit}{The main result.}
We follow the notation of Weyman~\cite{Weyman:book} for Schur modules
$L_\alpha$ corresponding to partitions $\alpha = (\alpha_1, \dots,
\alpha_q)$.  Let $\Gamma(m,n)$ be the set of Young diagrams
(identified, as usual, with partitions) having at most $m$ rows and
$n$ columns. The conjugate partition $\alpha'$ is obtained by
reflection across the line $y=-x$.

A \emph{convex square} of a diagram $\alpha \in \Gamma(m,n)$ is a
square with coordinates $(r, \alpha_r)$ such that $\alpha_{r+1} <
\alpha_r$. For a convex square $(r,c)$ in $\alpha$, let $R_r(\alpha)$ be the
partition obtained from $\alpha$ by dropping the $r^\text{th}$ row.
Similarly, $C_c(\alpha)$ is obtained by dropping $\alpha$'s
$c^\text{th}$ column.  For example, we have indicated below the convex
squares for the partition $(421)$.
\[
\includegraphics*[scale=0.8]{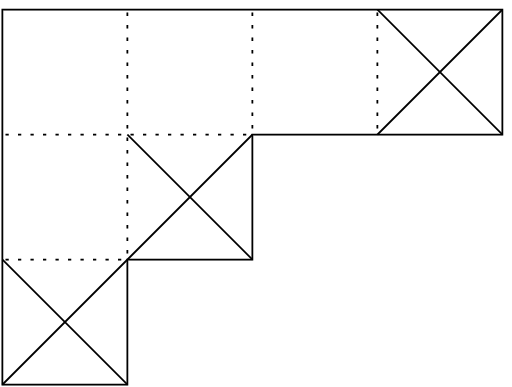}
\]
We obtain three corresponding pairs of partitions
$(C_c(\alpha),R_r(\alpha))$, namely $((321),(21))$, $((311),(41))$, and
$((31),(42))$. 
\end{nsit}

For $a=1,\ldots,m$ let $P_a=\Hom_R(M_a,M)$ be the corresponding 
graded projective left $E$-module and let $S_a$ be the associated
graded simple module. We have:
\begin{theorem}\label{thm:extsimples}
  Assume $\operatorname{char} K=0$. For simple right $E$-modules $S_a$
  and $S_b$, we have
\[
\Ext_E^t(S_b,S_a) \cong \bigoplus L_{C_c(\alpha)} F \otimes
L_{R_r(\alpha)'} G^\svee\,,
\]
where the direct sum is taken over all partitions $\alpha \in
\Gamma(m,n)$ such that $|\alpha| = t+1$, and over all convex squares
$(r,c)$ in $\alpha$ such that $-a+b = -r+c$.
\end{theorem}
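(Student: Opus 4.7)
The plan is to compute $\Ext_E^t(S_b,S_a)$ geometrically via the tilting equivalence of Theorem~\ref{tilting}. Under $\R\Hom_{\calz}(\calt,-)\colon \cald^b(\coh\calz) \xrightarrow{\sim} \cald^b_f(E)$, the simples $S_a$ correspond to objects $\cali_a\in\cald^b(\coh\calz)$ dual to the tilting summands, characterized by $\R\Hom_{\calz}(\calt_b,\cali_a)=\delta_{ab}K$. The first task is to construct these $\cali_a$ explicitly. Since $\calt_a=p'^*\Omega^{a-1}(a)=p'^*(\L^{a-1}\calu\otimes\calop(1))$ is pulled back from Be\u\i linson's tilting bundle on $\PP$, one expects $\cali_a = p'^*\calj_a$ for a ``dual Be\u\i linson'' object $\calj_a$ built from the Euler sequence $0\to\calu\to F\otimes\calop\to\calop(1)\to 0$: concretely, $\calj_a$ will be a cohomologically shifted exterior-power twist involving $\L^{\bullet}\calu^\svee$.

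With these dual objects identified, $\Ext_E^t(S_b,S_a)\cong \Ext^t_{\calz}(\cali_b,\cali_a)$. Using the description $\calz=\underline{\Spec}_\PP(\Sym_\PP(\calu^\svee\otimes G))$ of~\eqref{eq:zdescription}, the projection formula converts this into a cohomology computation on $\PP$:
\[
\Ext^t_{\calz}(\cali_b,\cali_a)\;\cong\; H^t\bigl(\PP,\;\cHom_\PP(\calj_b,\calj_a)\otimes_{\calop}\Sym_\PP(\calu^\svee\otimes G)\bigr).
\]
In characteristic zero, Cauchy's formula decomposes
\[
\Sym_\PP(\calu^\svee\otimes G)\;\cong\; \bigoplus_{\lambda} L_\lambda\calu^\svee\otimes_K L_\lambda G^\svee,
\]
the sum running over partitions $\lambda$ with at most $\min(m-1,n)$ rows. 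Combining this with the exterior-power factors from $\calj_a$ and $\calj_b$ via Littlewood--Richardson, the problem reduces to evaluating $H^*(\PP,L_\mu\calu^\svee\otimes\calop(e))$ for various partitions $\mu$ and twists $e$ determined by $a$, $b$, and $\lambda$.

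The final ingredient is Bott's theorem on $\PP^{m-1}$: such cohomology vanishes unless a specific dominance condition on the weight $(e,-\mu_1,\ldots,-\mu_{m-1})$ holds after a shuffling by the affine Weyl dot-action, in which case it concentrates in one cohomological degree and equals a single Schur module $L_\nu F$. Repackaging the partition $\mu$, the twist $e$ (controlled by $a-b$), and the Cauchy index $\lambda$ into the partition $\alpha\in\Gamma(m,n)$ of the statement, the Bott non-vanishing condition becomes exactly the existence of a convex square $(r,c)$ in $\alpha$: the straightening deletes column $c$, producing $L_{C_c(\alpha)}F$, while the Cauchy factor $L_\lambda G^\svee$ becomes $L_{R_r(\alpha)'}G^\svee$ under the corresponding removal of row $r$; the relation $-a+b=-r+c$ records how the $\calop$-twist $a-b$ matches the cohomological slope of Bott, and the offset $|\alpha|=t+1$ (rather than $t$) comes from a single shift built into the dual exceptional object $\cali_a$.

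The main obstacle is Step~1: constructing the dual exceptional collection $\cali_a$ explicitly (and verifying the $\delta_{ab}$ property) in a form that makes the subsequent Cauchy--Bott combinatorics align cleanly with the convex-square formalism; in particular, pinning down all shifts so that the ``size $+1$'' and ``slope $=b-a$'' conditions emerge as stated. Once that step is carried out, the remaining chain of reductions --- projection formula, Cauchy, Littlewood--Richardson, and Bott --- is a careful but essentially formal unwinding, with the characteristic-zero hypothesis entering precisely at the Cauchy step.
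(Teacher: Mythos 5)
Your overall strategy --- transport the simples through the tilting equivalence, reduce to a sheaf computation on $\PP$, then apply Cauchy and Bott --- is the paper's strategy, but Step 1 as you propose it fails, and the failure propagates. The objects of $\cald^b(\coh(\calz))$ corresponding to the simples $S_a$ cannot be of the form $p'^*\calj_a$: a nonzero pullback along $p'$ has full support on $\calz$, whereas the $S_a$ are finite-dimensional, so the corresponding objects must be supported on the fibre of $q'$ over the origin, i.e.\ on the zero section $u\colon\PP\to\calz$. The correct identification (Lemma~\ref{lem:identsimples} in the paper) is $S_{a+1}\leftrightarrow u_*\calop(-a)[a]$; the defining property $\R\Hom_{\calz}(p'^*A,u_*B)\cong\R\Hom_{\PP}(A,B)$ holds because $p'\circ u=\id$, whereas for a pullback one gets $\R\Hom_{\PP}(A,B\otimes\Sym_\PP(\calu^\svee\otimes G))$, which is an $S$-module of positive dimension rather than $\delta_{ab}K$. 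For the same reason your displayed reduction formula, with $\Sym_\PP(\calu^\svee\otimes G)$ tensored in, would produce infinite-dimensional answers; the correct reduction (Proposition~\ref{prop:Hodge}) comes from the Koszul resolution of $u_*$ and therefore involves the \emph{exterior} powers $\L^s(\calq\otimes G)$ with a cohomological shift by $s$ --- consistent with the conjugate partition $L_{R_r(\alpha)'}G^\svee$ in the statement, which arises from the exterior, not the symmetric, Cauchy formula.

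A second, independent gap: even with the right resolution, the complex computing $\R\cHom_{\calz}(u_*\calu,u_*\calv)$ must be shown to split into the direct sum of its terms before Cauchy and Bott can be applied termwise. The paper does this by observing that the centre of $\GL(G)$ acts with distinct weights on the terms, forcing all differentials to vanish. Your proposal never addresses this degeneration. The remaining combinatorics (Serre duality, Bott's theorem, and the translation of rim hooks into convex squares) is indeed the formal unwinding you describe, but it only starts once the two points above are fixed.
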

The proof of this theorem will occupy the remainder of the section.

\begin{example}
We can evaluate the sum above for small values of $t$, obtaining the
first few terms of the resolution of $S_a$:
\footnotesize
\[
\xymatrix@C=1pc{
P_a  &
\mbox{\ensuremath{
\begin{array}{c}
P_{a-1}(-1)\otimes F^\svee \\ 
{}\oplus{} \\ 
P_{a+1}(-1) \otimes G
\end{array}
}}
\ar[l] &
\mbox{\ensuremath{
\begin{array}{c}
P_{a-2}(-2) \otimes \SS^2 F^\svee \\ 
{}\oplus{} \\ 
P_{a-1}(-3)\otimes \L^2 F^\svee \otimes G\\
{}\oplus{} \\ 
P_{a+1}(-3) \otimes F^\svee\otimes \L^2 G\\
{}\oplus{} \\ 
P_{a+2}(-2) \otimes \SS^2 G
\end{array}
}}
\ar[l] &
\mbox{\ensuremath{
\begin{array}{c}
P_{a-3}(-3) \otimes \SS^3 F^\svee \\ 
{}\oplus{} \\ 
P_{a-2}(-4) \otimes L_{21} F^\svee \otimes G\\
{}\oplus{} \\ 
P_{a-1}(-5) \otimes \L^3 F^\svee \otimes \SS^2 G\\
{}\oplus{} \\ 
P_{a}(-4) \otimes \L^2 F^\svee \otimes \L^2 G\\
{}\oplus{} \\ 
P_{a+1}(-5) \otimes \SS^2 F^\svee \otimes \L^3 G\\
{}\oplus{} \\
P_{a+2}(-4) \otimes F^\svee \otimes L_{21} G\\
{}\oplus{} \\
P_{a+3}(-3) \otimes \SS^3 G 
\end{array}
}}
\ar[l]
}
\]
\normalsize where we understand $P_i =0$ if $i \notin [1,m]$.  From
this resolution we can read off the generators and relations of
$C\cong E$.  Of course, the result is consistent with
Remark~\ref{rem:cubic}. The interpretation of the higher terms in the
resolution remains open.
\end{example}

\begin{nsit}{Translation into geometry.}
As a matter of notational convenience in this section, we dualize and
work with the \emph{twisted tangent bundle} $\calq:= \calu^\svee =
(\Omega^1(1))^\svee$ on $\PP = \PP(F^\svee)$ defined by exactness of
the sequence
\begin{equation*}\label{eq:defcalq}
0 \to \calop(-1) \to F^\svee \otimes \calop \to \calq \to 0\,.
\end{equation*}
With the same argument as in  Theorem~\ref{tilting}  it follows
that $\calm' = p'^*(\L \calq)$ is also a tilting bundle on
$\calz$.  In particular, we have the exact equivalence of
categories
\begin{equation*}
\rHom_{\calo_\calz}(p'^*(\L \calq), -) \colon 
\cald^b(\coh(\calz)) \xto{\phantom{space}} \cald(E)
\end{equation*}
since $\End_{\calo_\calz}(p'^*(\L \calq))\cong 
\End_{\calo_\calz}(\calt)^\text{op}=
E^\text{op}$.
This equivalence sends each $p'^*(\L^a \calq)$, $a = 0,
\dots, m-1$, to the graded projective left $E$-module $P_{a+1}$.
\end{nsit}

\begin{lemma}\label{lem:identsimples} Let $u\colon \PP\to \calz$ be
the zero section of the vector bundle $p'\colon \calz\to \PP$ (see
\S\ref{nsit:resRjOZ}).  The object in $\cald^b(\coh(\calz))$
corresponding to the simple module $S_{a+1}$ is $u_\ast\calop(-a)[a]$.
\end{lemma}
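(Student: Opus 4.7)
The plan is to verify the claim by computing the image of $u_\ast\calop(-a)[a]$ under the equivalence and checking that it matches $S_{a+1}$ vertex-by-vertex. Since the $P_{b+1}$ generate $\cald(E)$, and since $P_{b+1}$ is projective with $\rHom_E(P_{b+1},S_{a+1})=K\cdot\delta_{a,b}$ in cohomological degree $0$, it is enough to show
\[
\rHom_{\calo_\calz}(p'^\ast(\L^b\calq),\,u_\ast\calop(-a)[a])\;\cong\;K\cdot\delta_{a,b}
\]
concentrated in cohomological degree $0$, for each $b\in\{0,\ldots,m-1\}$.

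First I would apply the adjunction $(u^\ast,u_\ast)$. Since $p'^\ast(\L^b\calq)$ is locally free and $p'\circ u=\id_\PP$, this yields
\[
\rHom_{\calo_\calz}(p'^\ast(\L^b\calq),\,u_\ast\calop(-a)[a])\;\cong\;\rHom_{\calo_\PP}(\L^b\calq,\,\calop(-a))[a].
\]
Next I would identify $\L^b\calq$ with a bundle of twisted differential forms. Dualising the defining sequence $0\to\calop(-1)\to F^\svee\otimes\calop\to\calq\to 0$ gives $\calq\cong T_\PP(-1)$, and combined with the standard isomorphism $\L^b T_\PP\cong\Omega^{m-1-b}(m)$ this yields
\[
\L^b\calq \;\cong\; \Omega^{m-1-b}(m-b).
\]
Dualising and twisting, $\cHom_{\calop}(\Omega^{m-1-b}(m-b),\calop(-a))\cong\Omega^b(b-a)$, so
\[
\Ext^j_\PP(\Omega^{m-1-b}(m-b),\,\calop(-a))\;\cong\;H^j(\PP,\Omega^b(b-a)).
\]

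Now I would invoke Bott's formula (equivalently, the case $c=0$ of Theorem~\ref{thm:directimage} after applying Lemma~\ref{lem:iden}): for $a\in\{0,\ldots,m-1\}$ the group $H^j(\PP,\Omega^b(b-a))$ vanishes unless $a=b$ and $j=b$, in which case it is $K$. Twisting by $[a]$ shifts this single copy of $K$ into cohomological degree $b-a$, which equals $0$ precisely when $a=b$. This gives exactly the desired pattern of Hom-groups, and so identifies $u_\ast\calop(-a)[a]$ with $S_{a+1}$.

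The computation is essentially a routine Bott/Be\u\i linson calculation; the real work will be bookkeeping. In particular I will have to keep the index shifts consistent (between $a$, $b$, the $+1$ vertex offset, and the indexing of $\L^a\calq$ versus $\Omega^{m-1-a}(m-a)$), and I will need to verify that the internal $\ZZ$-grading (induced by $\deg F^\svee=-1$, $\deg G=1$ via the $\GG_m$-action on $\calz$ that scales the fibres of $p'$) places the resulting $K$ in the correct internal degree; this last point reduces to the observation that $u$ lands in the fixed locus of the $\GG_m$-action, so $u_\ast\calop(-a)$ sits in internal degree $0$.
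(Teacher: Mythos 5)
Your argument is correct and is essentially the paper's own proof: both reduce by adjunction along $u$ to computing $H^t(\PP,\Omega^b(b-a))$ and conclude by Bott's formula (in the paper, via Theorem~\ref{thm:directimage} applied to $\calm^m_{b+1}(-a-1)$). The only cosmetic difference is that you pass through $\L^b T_{\PP}$, silently dropping a harmless one-dimensional twist by $|F|^\svee$, whereas the paper uses $\cHom(\L^b\calq,\calop(-a))\cong \L^b\calu(-a)=\Omega^b(b-a)$ directly.
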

\begin{proof}
We must show that $\Ext_{\calo_\calz}^t(p'^*\L^b\calq,
u_\ast\calop(-a)[a])$ is one-dimensional if $t=0$ and $a=b$, and
vanishes otherwise.  By adjunction it suffices to prove
\[
\Ext_{\calo_\PP}^t\left(\L^b\calq,\calop(-a)\right) =H^t(\PP, \Omega^b(b-a))=
\begin{cases}
K & \text{if } t=a=b, \text{and}\\
0 & \text{otherwise.}
\end{cases}
\]
Computing
\[
\Omega^b(b-a)=\calm^m_{b+1}(-a-1)\otimes |F|^\svee\,,
\]
we finish the proof by invoking Theorem \ref{thm:directimage}. 
\end{proof}

Hence in order to prove Theorem \ref{thm:extsimples} it is sufficient
to compute 
\[
\Ext_{\calo_\calz}^t\left(u_\ast\calop(-b)[b], u_\ast\calop(-a)[a]\right) =
\Ext_{\calo_\calz}^{t-b+a}\left(u_\ast\calop(-b), u_\ast\calop(-a)\right)\,.
\]
To this end we prove something more general.

\begin{prop}\label{prop:Hodge}
Let $\calu, \calv$ be objects in $\cald^b(\coh(\PP))$.  Then 
\[
\Ext_{\calo_\calz}^t\left(u_\ast\calu, u_\ast\calv\right) = 
\bigoplus_{s} \Ext_{\calo_\PP}^{t-s}\left(\L^{s}(\calq\otimes G)\otimes_\PP
\calu, \calv\right)\,.
\]
\end{prop}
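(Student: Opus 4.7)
My plan is to exploit the fact that $p'\colon \calz\to\PP$ is a vector bundle with zero section $u$. By~\eqref{eq:zdescription} and the identification $\calq=(\Omega^1(1))^\svee$, the total space $\calz$ is $\underline{\Spec}_\PP(\Sym_\PP(\calq\otimes G))$, so $u$ is a regular closed immersion whose conormal bundle is canonically $\cale:=\calq\otimes G$. The argument reduces to the standard calculation of $Lu^\ast u_\ast$ for such a zero section, followed by the adjunction $Lu^\ast\dashv \R u_\ast$.

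In detail, the Koszul complex associated to the tautological inclusion $\cale\hookrightarrow \Sym_\PP\cale=p'_\ast\caloz$ provides a locally free $\caloz$-resolution of $u_\ast\calop$ with $(-s)$-th term $p'^\ast(\L^s\cale)$. Tensoring over $\caloz$ with the $p'$-flat sheaf $p'^\ast\calu$ yields a locally free resolution of $u_\ast\calu$ whose $(-s)$-th term is $p'^\ast(\L^s\cale\otimes\calu)$. Applying $u^\ast$ term-by-term returns $\L^s\cale\otimes\calu$ in cohomological degree $-s$, and all Koszul differentials die: they factor through the ideal map $p'^\ast\cale\to\caloz$, whose image in $u^\ast\caloz=\calop$ is zero. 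It follows that
$$L u^\ast u_\ast\calu \;\cong\; \bigoplus_{s\ge 0}\bigl(\L^s(\calq\otimes G)\otimes_\PP\calu\bigr)[s]$$
in $\cald^b(\coh(\PP))$, with the decomposition canonical.

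The derived adjunction $Lu^\ast\dashv \R u_\ast$, combined with $\R u_\ast=u_\ast$ since $u$ is a closed immersion, now gives
$$\R\Hom_{\caloz}(u_\ast\calu,u_\ast\calv)\;\cong\;\R\Hom_{\calop}(Lu^\ast u_\ast\calu,\calv)\;\cong\;\bigoplus_{s}\R\Hom_{\calop}\!\bigl(\L^s(\calq\otimes G)\otimes\calu,\calv\bigr)[-s].$$
Taking $H^t$ on both sides produces the desired formula.

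The main technical point is the direct-sum splitting of $Lu^\ast u_\ast\calu$ — \emph{a priori} one only has the associated graded, with cohomology sheaves $\L^s(\calq\otimes G)\otimes\calu$ sitting in degree $-s$. For an arbitrary regular closed immersion such a splitting can fail, but for the zero section of a vector bundle it is automatic, since the Koszul differentials vanish identically after $u^\ast$, not merely up to homotopy. Everything else in the plan (the Koszul resolution, flatness of $p'^\ast$, the adjunction, and exactness of $u_\ast$) is entirely formal.
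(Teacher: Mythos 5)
Your proof is correct. It rests on the same Koszul resolution of $u_*\calu$ over $\caloz$ that the paper uses; the only genuine difference is how the degeneration into a direct sum is justified. The paper applies $\cHom_{\caloz}(-,u_*\calv)$ to the resolution and then argues that all differentials of the resulting complex $\bigl(\cHom_{\calop}(\L^s(\calq\otimes G)\otimes\calu,\calv)\bigr)_s$ vanish because the centre of $\GL(G)$ acts with distinct weights on the distinct terms; you instead observe that the Koszul differentials are contraction with a cosection that factors through the augmentation ideal and hence die identically upon applying $u^*$, giving the canonical splitting $Lu^*u_*\calu\cong\bigoplus_s\L^s(\calq\otimes G)\otimes\calu[s]$ before any $\Hom$ is taken, after which adjunction finishes the argument. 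Your route is marginally more elementary (no equivariance needed) and makes the splitting visibly canonical rather than merely forced; the paper's weight argument is robust in situations where one only controls the terms of the complex and not the explicit differentials. The one small housekeeping point you should make explicit, as the paper does, is the initial reduction to $\calu$ a bounded complex of locally free $\calop$-modules, which is what legitimises both the flatness of $p'^*\calu$ and the termwise computation of $Lu^*$.
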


\begin{proof}
We may assume that $\calu$ is a bounded complex of locally free
$\calop$-modules.  The locally free resolution of $u_\ast\calu$ as
$\caloz$-module is then given by
\[
\cdots 
\to \L^2(\calq\otimes G)\otimes_{\calo_\PP} \calu \otimes_{\calo_\PP} \caloz
\to \calq\otimes G \otimes_{\calo_\PP} \calu \otimes_{\calo_\PP} \caloz 
\to \calu\otimes_{\calo_\PP} \caloz \to 0\,.
\]
It follows that $\R\cHom_{\calo_\calz}(\calu,\calv)$ is equal in
$\cald^b(\coh(\PP))$ to 
the complex
\small
\[
0 
\to \cHom_{\calo_\PP}(\calu, \calv) 
\to \cHom_{\calo_\PP}(\calq\otimes G\otimes_{\calo_\PP} \calu, \calv)
\to \cHom_{\calo_\PP}(\L^2(\calq\otimes G) \otimes_{\calo_\PP} \calu, \calv)
\to \cdots\,.
\]
\normalsize
We note however that the center of $\GL(G)$ acts with different
weights on the terms of this complex.  It follows that the
maps are necessarily all zero, whence
\[
\R\cHom_{\calo_\calz}(\calu, \calv) = \bigoplus_s
\cHom_{\calo_\PP}\left(\L^s(\calq \otimes G)\otimes_{\calo_\PP} \calu,
\calv\right)[-s]\,.
\]
This implies the form claimed.
\end{proof}

\begin{proof}[Proof of Theorem~\ref{thm:extsimples}]
From Lemma~\ref{lem:identsimples} and Prop.~\ref{prop:Hodge} we obtain
\begin{align*}
\Ext_E^t(S_b,S_a) &=
\Ext_{\calo_\calz}^{t-b+a}\left(u_\ast\calop(-b+1)[b-1],u_\ast\calop(-a+1)[a-1]\right)
\\ &= \bigoplus_s H^{t-b+a-s}\left(\PP, \L^s(\calq\otimes G)^\svee(b-a)\right)\,.
\end{align*}
Expanding $\L^s(\calq\otimes G)$ according to the Cauchy formula (this
is the first time we use  $\operatorname{char} K=0$)
\[
\L^s(\calq\otimes G) = \bigoplus_{|\alpha|=s} L_\alpha\calq \otimes
L_{\alpha'}G
\]
we find
\[
\Ext_E^t(S_b,S_a) 
= \bigoplus_\alpha H^{t-b+a-|\alpha|} (\PP, (L_\alpha
\calq)^\svee(b-a)) \otimes L_{\alpha'}G^\svee\,.
\]
To continue, we apply Serre duality:
\[
H^{t-b+a-|\alpha|}(\PP, (L_\alpha \calq)^\svee(b-a)) = 
H^{m-1-t+b-a+|\alpha|}(\PP,(L_\alpha\calq)(a-b-m))^\svee \otimes
|F^\svee|\,.
\]
Using a straightforward application of Bott's theorem (see 
the discussion after the current proof) the direct sum can
now be written as
\begin{align}\label{eq:rimhook}
\Ext_E^t(S_b,S_a) 
&= 
\mathop{\bigoplus_{\alpha <_{m-a+b} \beta}}_{l(\beta-\alpha)
  = m-t-a+b+|\alpha|} L_\beta F \otimes L_{\beta'}G^\svee \otimes
|F^\svee| \\
&= 
\mathop{\bigoplus_{\alpha <_{m-a+b} \beta}}_{c(\beta-\alpha)
  = t-|\alpha|+1} L_\beta F \otimes L_{\beta'}G^\svee \otimes
|F^\svee|
\end{align}
where the notation $\alpha <_s \beta$ means that $\beta-\alpha$ is a
\emph{rim hook} (or \emph{border strip}) of length $s$ ending at the
$m^\text{th}$ row. Recall that a rim hook is a connected skew tableau
not containing any $(2\times 2)$-squares.  We write $l(\beta-\alpha)$ for the
number of rows in $\beta-\alpha$, and $c(\beta-\alpha)$ for the number
of columns.

The formula~\eqref{eq:rimhook} can be expressed symmetrically as 
\begin{equation}\label{eq:symmetricrimhooks}
 \Ext_E^t(S_b,S_a) = 
\mathop{\mathop{\bigoplus_{\mu \cong_{c,r} \nu}}_{t-|\nu|=c-1}}_{-a+b=-r+c-1}
L_\mu F \otimes L_{\nu'} G^\svee\,.
\end{equation}
In this sum $\mu$ runs over partitions with at most $m-1$ rows and $m$
columns, while $\nu$ runs over those with at most $m$ rows and $m-1$
columns.  The notation $\mu \cong_{c,r} \nu$ indicates that $\nu$
contains an embedded $r\times c$ rectangle as shown
\[
\includegraphics*[scale=0.6]{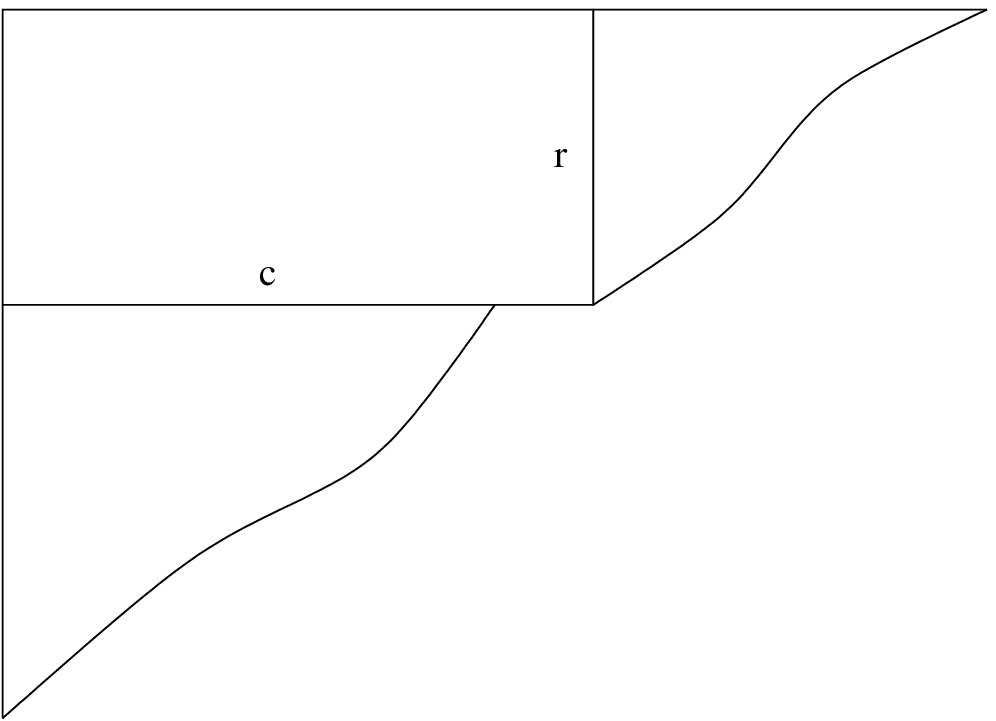}
\]
with $r \geq 0$, $c> 0$, and $\mu$ is obtained by replacing the
rectangle by an $(r+1)\times (c-1)$ rectangle.
\[
\includegraphics*[scale=0.6]{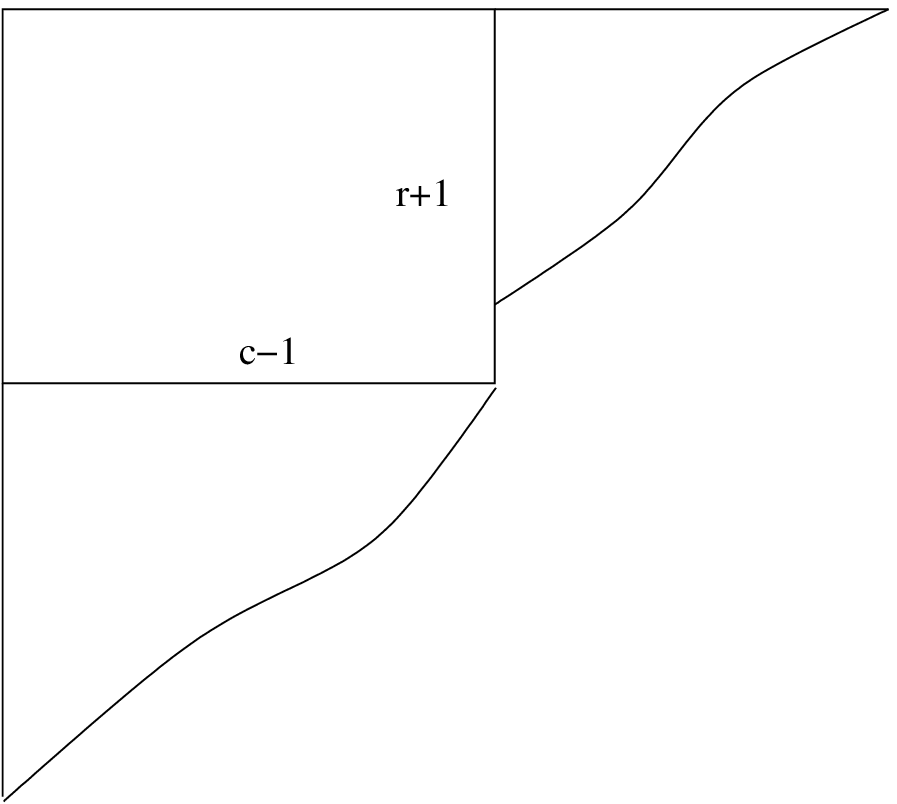}
\]
It is now easy to obtain the statement of Theorem~\ref{thm:extsimples}
from~\eqref{eq:symmetricrimhooks}, completing the proof.
\end{proof}

\begin{sit}
In the previous proof we have used Bott's theorem for which we provide
a brief reminder to the reader. Let $G$ be a reductive group and let
$T\subset B\subset P\subset G$ be respectively a maximal torus $T$, a
Borel subgroup $B$ and a parabolic subgroup $P$. For a dominant weight
$\theta\in X(T)$ let $L^G_\theta$ be the corresponding simple
$G$-representation.

Taking fibers in $[P]\in G/P$ provides an equivalence between rational
$P$-rep\-re\-sen\-ta\-tions and $G$-equivariant quasi-coherent sheaves on
$G/P$. Denote this equivalence by $\tilde{?}$. Let
$H=P/\rad P$ be the reductive part of $P$ and let
$L_\chi^H$ be the simple $H$-representation associated to a
$H$-dominant weight $\chi\in X(T)$.  We view $L^H_\chi$ as a
$P$-representation.

Bott's theorem computes
the cohomology of $\tilde{L}^H_\chi$ as follows 
\begin{equation}
\label{eq:bott}
H^i(G/P,\tilde{L}^H_\chi)
=\begin{cases}
L^G_\theta&\parbox{8cm}{\offinterlineskip \lineskip 3pt if there
  exists a (necessarily unique) $w\in W$ such that  
$\theta=w\cdot \xi$ is $G$-dominant and $l(w)=i$}
 \\
0&\text{otherwise.}
\end{cases}
\end{equation}
where $W$ is the Weyl group of $G$ and where $w\cdot
\xi=w(\xi+\rho)-\rho$ is the twisted Weyl group action, with $\rho$ as
usual being half the sum of the positive roots.

\medskip

Now in the setting of this paper choose an identification
$F^\svee=K^m$ and let $G=\GL_m(K)$. 
 Then $\PP(F^\svee)=G/P$ where $P$ is the stabilizer of the point
$p=(0,\ldots,0,1)$. Let $T=\lbrace \operatorname{diag}(t_1,\ldots,t_m)\rbrace \subset G$
be the diagonal torus. We view $t_1,\ldots,t_m$ as characters of $T$.

The roots of $G$ are $t_i t_j^{-1}$, $i\neq j$, with the positive
roots being those for which $i>j$ (in this setting the negative roots
are the non-zero weights of $\operatorname{Lie}(B)$).  The
$G$-dominant weights are of the form $t_1^{\alpha_1}\cdots
t_m^{\alpha_m}$ with $\alpha_1 \geq \dots \geq \alpha_m$. Thus the
dominant weights $\alpha$ are partitions with at most $m$ rows and one
has $L_\alpha^G=L_\alpha F^\svee$.  The (twisted) action of the Weyl
group is generated by the reflections
\begin{equation}
\label{eq:twisted}
s_i\colon  t_i^{\alpha_i}t_{i+1}^{\alpha_{i+1}} \mapsto
t_i^{\alpha_{i+1}-1}t_{i+1}^{\alpha_i+1}\,.
\end{equation}
The $G$-equivariant exact sequence 
\[
0\to \calop(-1)\to F^\svee\otimes \calop\to
\calq\to 0
\]
yields a $P$-equivariant exact sequence
\[
0\to \calop(-1)_p\to F^\svee\to
\calq_p\to 0
\]
with $\dim \calop(-1)_p=1$, $\dim \calq_p=m-1$. Such an exact sequence
is unique and must be isomorphic to 
\[
0\to K\to K^m\to K^{m-1}\to 0
\]
where the first non-trivial map is the injection into the last factor
and the second non-trivial map is the projection onto the first
$m-1$ factors. This means that $\calop(-1)=\tilde{L}^H_{t_m}$,
$\calq=\tilde{L}^H_{t_1}$ where $H=\GL_{m-1}(K)\times \GL_1(K)$.
Looking at the stalk in $p$ we also compute that for a partition
$\alpha$ with at most $m-1$ rows we have $
L_\alpha\calq(-s)=\tilde{L}^H_{t_1^{\alpha_1}\cdots
  t_{m-1}^{\alpha_{m-1}}t_m^s} $.  Hence to compute the cohomology of
$L_\alpha\calq(-s)$ using \eqref{eq:bott} we must try to flatten the
factor $t_m^s$ in the weight $t_1^{\alpha_1}\cdots
t_{m-1}^{\alpha_{m-1}}t_m^s$ using the twisted Weyl group action
\eqref{eq:twisted}. We see that this is only possible if there is a
partition $\beta$ with $m$ rows such that $\beta-\alpha$
is a rim hook with $s$ boxes and the number of reflections we need
in that case is one less than the number of rows in $\beta-\alpha$.
This completes the derivation of~\eqref{eq:rimhook}.
\end{sit}

 \bibliographystyle{amsplain}

 \newcommand{\arxiv}[2][AC]{\mbox{\href{http://arxiv.org/abs/#2}{\sf arXiv:#2 [math.#1]}}}
 \newcommand{\oldarxiv}[2][AC]{\mbox{\href{http://arxiv.org/abs/math/#2}{\sf arXiv:math/#2 [math.#1]}}}
 \providecommand{\MR}[1]{\mbox{\href{http://www.ams.org/mathscinet-getitem?mr=#1}{#1}}}
 \renewcommand{\MR}[1]{\mbox{\href{http://www.ams.org/mathscinet-getitem?mr=#1}{#1}}}
\providecommand{\bysame}{\leavevmode\hbox to3em{\hrulefill}\thinspace}
\providecommand{\MRhref}[2]{%
 \href{http://www.ams.org/mathscinet-getitem?mr=#1}{#2}
}
\def\cprime{$'$}

\end{document}